\def\AA{\mathord{\mathbf{A}}}
\def\CC{\mathord{\mathbf{C}}}
\def\RR{\mathord{\mathbf{R}}}
\def\QQ{\mathord{\mathbf{Q}}}
\def\ZZ{\mathord{\mathbf{Z}}}
\def\NN{\mathord{\mathbf{N}}}
\def\PP{\mathord{\mathbf{P}}}
\def\pp{\mathord{\mathfrak{p}}}
\def\d{{\rm d}}
\def\ord{{\rm ord}}
\def\minus{\setminus}
\def\Hom{\mathop{\rm Hom}}
\def\im{\mathop{\rm im}}
\def\Im{\mathop{\rm Im}}
\def\Spec{\mathop{\rm Spec}}
\def\Spf{\mathop{\rm Spf}}
\def\Proj{\mathop{\rm Proj}}
\def\div{\mathop{\rm div}}
\def\rk{\mathop{\rm rk}}
\def\adeg{\mathop{\rm \widehat{\deg}}}
\def\trdeg{\text{\rm trdeg}}
\def\an{\text{\rm an}}
\def\mult{\text{\rm mult}}
\def\defeq{\coloneqq}
\def\tensor{\otimes}
\def\into{\hookrightarrow}
\def\to{\longrightarrow}
\def\mapsto{\longmapsto}
\newtheorem{theorem}{Theorem}[section]
\newtheorem{prop}[theorem]{Proposition}
\newtheorem{lemma}[theorem]{Lemma}
\newtheorem{coro}[theorem]{Corollary}
\theoremstyle{definition}
\newtheorem{ex}[theorem]{Example}
\newtheorem{defi}[theorem]{Definition}
\newtheorem{obs}[theorem]{Remark}
\numberwithin{equation}{section}
\title{Algebraic independence for values of integral curves}
\author{Tiago J. Fonseca}
\subjclass[2010]{11J81, 14G40, 37F75, 32A22}
\address{Max-Planck-Institut für Mathematik,
        Vivatsgasse 7, 53111 Bonn, Germany.}
\email{fonseca@mpim-bonn.mpg.de}
\date{\today}
\begin{document}
\maketitle

\begin{abstract}
  We prove a transcendence theorem concerning values of holomorphic maps from a disk to a quasi-projective variety over $\overline{\mathbf{Q}}$ that are integral curves of some algebraic vector field (defined over $\overline{\mathbf{Q}}$).  These maps are required to satisfy some integrality property, besides a growth condition and a strong form of Zariski-density that are natural for integral curves of algebraic vector fields.


  This result generalizes a theorem of Nesterenko concerning algebraic independence of values of the Eisenstein series $E_2,E_4,E_6$. The main technical improvement in our approach is the replacement of a rather restrictive hypothesis of polynomial growth on Taylor coefficients by a geometric notion of \emph{moderate growth} formulated in terms of Value Distribution Theory.  
\end{abstract}

\tableofcontents

\section{Introduction}

\subsection{A theorem of Nesterenko}

This work was motivated by questions related to the following algebraic independence result.

Let $E_2$, $E_4$, and $E_6$ be the classical Eisenstein series, seen as holomorphic functions on the complex unit disk $D \defeq \{q \in \CC \mid |q|<1\}$, explicitly defined by
\begin{align*}
E_{2}(q) = 1 - 24 \sum_{j=1}^{\infty}\sigma_1(j)q^j\text{, }\ \ E_{4}(q) = 1+240 \sum_{j=1}^{\infty}\sigma_3(j)q^j\text{, } \ \ 
E_6(q) = 1-504 \sum_{j=1}^{\infty}\sigma_5(j)q^j
\end{align*}
for every $q \in D$, where $\sigma_k(j) \defeq \sum_{d\mid j}d^k \in \ZZ$. Let us also consider the $q$-expansion of the $j$-invariant
\begin{align*}
J(q) = 1728\frac{E_4(q)^3}{E_4(q)^3 -E_6(q)^2} = \frac{1}{q} + 744 + \sum_{j=1}^{\infty}c(j)q^j\text{.}
\end{align*}

\begin{theorem}[Nesterenko \cite{nesterenko96}]\label{thmnes}
  For every $z \in D\minus\{0\}$, we have
  \begin{align*}
   \trdeg_{\QQ}\QQ(z,E_2(z),E_4(z),E_6(z))\ge 3\text{.}
  \end{align*}
\end{theorem}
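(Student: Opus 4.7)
My plan is to follow Nesterenko's original strategy, which decomposes into three pieces: construction of an auxiliary polynomial via Siegel's lemma, an analytic upper bound via Cauchy's estimates, and a multiplicity (zero) estimate for the Ramanujan vector field, all combined through the Philippon--Nesterenko criterion for algebraic independence.

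Step 1 is the geometric setup. The Ramanujan identities
\[
q\frac{dE_2}{dq} = \frac{E_2^2-E_4}{12},\quad q\frac{dE_4}{dq} = \frac{E_2E_4-E_6}{3},\quad q\frac{dE_6}{dq} = \frac{E_2E_6-E_4^2}{2}
\]
show that the map $\varphi : D \to \AA^4(\CC)$, $q \mapsto (q, E_2(q), E_4(q), E_6(q))$, is an integral curve of an algebraic vector field $\xi$ on $\AA^4_{\QQ}$. Since $J = 1728\, E_4^3/(E_4^3-E_6^2)$ is transcendental over $\CC(q)$, the image of $\varphi$ is Zariski-dense in $\AA^4$. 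Arithmetically, $E_2, E_4, E_6$ have (essentially) $\ZZ$-integral $q$-expansions.

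Step 2 is the reduction. Fix $z_0 \in D\minus\{0\}$, set $\theta_i\defeq E_i(z_0)$, and assume for contradiction that $\trdeg_\QQ\QQ(z_0,\theta_2,\theta_4,\theta_6)\le 2$. By the Philippon--Nesterenko criterion it is enough to exhibit, for each large $N$, a nonzero polynomial $P_N \in \ZZ[T_0,T_2,T_4,T_6]$ of degree and logarithmic height $O(N)$ such that $|P_N(z_0,\theta_2,\theta_4,\theta_6)|$ is extremely small but bounded below in a precise quantitative sense. Siegel's lemma, applied to the integral $q$-expansion of $P_N \circ \varphi$, produces such a polynomial with the additional property that $f_N \defeq P_N\circ\varphi$ vanishes at $q=0$ to order $\gtrsim N^4$. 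Because $\xi$ is algebraic, each iterated derivative $(q\,d/dq)^k f_N$ remains polynomial in $(q, E_2, E_4, E_6)$ with controlled degree and height growth.

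Step 3 is the double bound. Cauchy's formula on a circle of radius slightly larger than $|z_0|$, together with the local boundedness of the $E_i$ on $D$, yields an exponentially small upper bound for $|f_N^{(k)}(z_0)|$ in terms of the vanishing order at $0$. On the other hand, assuming $f_N(z_0) \ne 0$, the hypothesis $\trdeg \le 2$ produces, via a Liouville-type inequality, a lower bound of merely polynomial decay in $N$. The crucial obstacle is to guarantee that some $(q\,d/dq)^k f_N$ indeed does not vanish at $z_0$, i.e., that $f_N$ is not identically zero along the integral curve $\varphi$. This is Nesterenko's \emph{multiplicity estimate} (the D-property) for the Ramanujan vector field: no nonzero polynomial of degree $\le N$ can vanish along $\varphi$ to order more than $O(N^4)$. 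Proving this zero lemma is the technical heart of the argument; it rests on the Zariski-density of $\varphi$ and on an inductive analysis of $\xi$-invariant subvarieties of $\AA^4$, and it is the step I expect to be hardest. Once the zero estimate is available, comparing the analytic upper bound with the Liouville lower bound yields exactly the double inequality needed, and the Philippon--Nesterenko criterion delivers the contradiction.
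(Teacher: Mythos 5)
Your outline follows Nesterenko's method as sketched in the introduction, but Step 3 misidentifies the source of the crucial lower bound, and this is a genuine gap. Philippon's criterion requires you to produce, \emph{unconditionally}, polynomials $Q_n$ with controlled degree and height satisfying the two-sided estimate $-an^4 \le \log|Q_n(z,E_2(z),E_4(z),E_6(z))| \le -bn^4$; the lower bound is part of the \emph{input} to the criterion, not something you may extract from the contradiction hypothesis $\trdeg \le 2$. A Liouville-type inequality is not available here: the coordinates $z, E_2(z), E_4(z), E_6(z)$ need not be algebraic, and for points of transcendence degree $2$ no elementary Liouville bound holds --- handling exactly this situation is the entire content of Philippon's elimination-theoretic criterion, which your argument would then be using circularly. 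Moreover, a ``polynomial decay in $N$'' lower bound is not what is needed; the criterion demands a lower bound of the precise shape $e^{-an^4}$, matching the exponent of the upper bound.

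The correct derivation of the lower bound, which is the most delicate part of Nesterenko's proof, is an integrality argument: the first non-zero Taylor coefficient of $f_n = P_n\circ\varphi$ at $q=0$ is a non-zero integer, hence has absolute value at least $1$; if all derivatives $f_n^{(j)}(z)$ for $j$ up to order roughly $n\log n$ were smaller than $e^{-\alpha n^4}$, a comparison of the jets of $f_n$ at $q=z$ and at $q=0$ (using that the vanishing order at $0$ is at most $Cn^4$ by the Zero Lemma, so the leading coefficient is within reach) would force that leading integer coefficient to have absolute value strictly less than $1$, a contradiction. This is where both the Zero Lemma and the integrality of the $q$-expansions are genuinely used, and it is precisely the step your proposal replaces by an unavailable Liouville estimate. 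A secondary, smaller point: Zariski-density of $(q,E_2,E_4,E_6)$ in $\AA^4$ requires the algebraic independence of these four functions over $\CC$ (a theorem of Mahler), not merely the transcendence of $J$ over $\CC(q)$, which only controls a two-dimensional projection.
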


This result is an improvement of Barré-Sirieix's, Diaz's, Gramain's, and Philibert's breakthrough \cite{stephanois96} concerning the solution of a conjecture of Mahler: for every algebraic $z \in D\minus\{0\}$, $J(z)$ is transcendental. 

In order to fully motivate our contributions, we next sketch the main steps of Nesterenko's original proof.

In view of an algebraic independence criterion due to Philippon (\cite{philippon86} Théorème 2.11; see also \cite{nesterenko96} Lemma 2.5), it suffices to construct a sequence of polynomials with integral coefficients $Q_n \in \ZZ[X_0,X_1,X_2,X_3]$, for $n \gg 0$, such that $\deg Q_n = O(n\log n)$, $\log \|Q_n\|_{\infty} = O(n\log^2n)$ --- here, $\|Q_n\|_{\infty}$ denotes the maximum of the absolute values of all the coefficients of $Q_n$ ---, and
\begin{align*}
   -an^4 \le \log |Q_n(z, E_2(z),E_4(z),E_6(z))|  \le - b n^4
\end{align*}
for some real constants $a>b>0$.

For this, Nesterenko implemented a \emph{method} benefiting from the fact that $E_2$, $E_4$, and $E_6$ have integral Taylor coefficients in their $q$-expansion and satisfy the so-called \emph{Ramanujan equations}:
\begin{align*}
q\frac{d E_2}{dq} = \frac{E_2^2 - E_4}{12}\text{, }\ \ q\frac{d E_4}{dq} = \frac{E_2E_4 - E_6}{3}\text{, } \ \ q\frac{d E_6}{dq} = \frac{E_2E_6 - E_4^2}{2}
\text{.}
\end{align*}
It is also essential in his construction that
\begin{enumerate}
  \item[(i)] [\emph{Growth condition}] for each $k \in \{1,2,3\}$, the sequence of Taylor coefficients $(E_{2k}^{(j)}(0)/j!)_{j\ge 0}$ grows polynomialy in $j$, and
  \item[(ii)] [\emph{Zero Lemma}]\footnote{In Diophantine Approximation and Transcendental Number Theory, ``Zero Lemma'' is an umbrella term covering several auxiliary results involving estimates of number of zeros in a certain region, or zeros multiplicities, of polynomials composed with analytic functions.} there exists a constant $C>0$ such that
    \begin{align*}
     \ord_{q=0}P(q,E_2(q),E_4(q),E_6(q))\le C(\deg P)^4
    \end{align*}
    for every non-zero polynomial $P \in \CC[X_0,X_1,X_2,X_3] \minus\{0\}$.
\end{enumerate}
The first condition can be easily deduced from the explicit description of the Taylor coefficients of $E_{2k}$ given above. The second, which may be regarded as a strong form of algebraic independence between the functions $q$, $E_2(q)$, $E_4(q)$, and $E_6(q)$, is a non-trivial consequence of Nesterenko's \emph{$D$-property} (Definition \ref{dprop} below; cf. \cite{nesterenko96} Paragraph 6), an algebraic property concerning the global behavior of the foliation in $\CC^4$ induced by the vector field
\begin{align}\label{vecfield}
v \defeq x_0\frac{\partial}{\partial x_0} + \frac{(x_1^2 - x_2)}{12}\frac{\partial}{\partial x_1} + \frac{(x_1x_2-x_3)}{3}\frac{\partial}{\partial x_2} + \frac{(x_1x_3-x_2^2)}{2}\frac{\partial}{\partial x_3}\text{.}
\end{align}
A considerable part of \cite{nesterenko96} is devoted to a proof of a stronger form of the estimate in (ii).

Nesterenko's method goes as follows.
\begin{enumerate}
  \item Using that the Taylor coefficients of $E_2$, $E_4$, and $E_6$ are integers of polynomial growth (property (i) above), we may apply Siegel's Lemma (\cite{lang66} I.1 Lemma 1) to obtain \emph{auxiliary polynomials} with integral coefficients $P_n \in \ZZ[X_0,X_1,X_2,X_3]\minus \{0\}$ such that $\deg P_n =n$, $\log \|P_n\|_{\infty} =O(n\log n)$, and
\begin{align*}
\ord_{q=0} P_n(q,E_2(q),E_4(q),E_6(q)) \ge c n^4
\end{align*}
for some constant $c>0$.

\item For a fixed $z \in D \minus \{0\}$, the next step consists in proving the existence of a sequence $j_n = O(n\log n)$ and of constants $\alpha >\beta >0$ such that the composed function $f_n(q) \defeq P_n(q,E_2(q),E_4(q),E_6(q))$ satisfies
  \begin{align*}
   -\alpha n^4 \le \log |f_n^{(j_n)}(z) | \le -\beta n^4
  \end{align*}
  for $n \gg 0$. The main point for obtaining the above lower bound is that, if all the Taylor coefficients of $f_n$ at $q=z$ up to a sufficiently large order are too small, then its first non-zero Taylor coefficient at $q=0$ will have absolute value $<1$, thereby contradicting its integrality. Here, we also make essential use of property (ii) above. This is the most delicate part of the argument.

\item Finally, for $n\gg 0$, if we consider the differential operator
  \begin{align*}
    v^{[j_n]} \defeq 12^{j_n}v \circ (v-1)\circ \cdots \circ (v-(j_n-1))\text{,}
  \end{align*}
  then the Ramanujan equations imply that $Q_n \defeq v^{[j_d]}(P_n) \in \ZZ[X_0,X_1,X_2,X_3]$ satisfies
  \begin{align*}
   (12q)^{j_n}f_n^{(j_n)}(q)  = Q_n(q,E_2(q),E_4(q),E_6(q))
  \end{align*}
  for every $q \in D$. The required properties for $Q_n$ are now easily deducible from (1) and (2).
\end{enumerate}  

\subsection{A puzzling remark}

One of the most striking features of the above method is its generality.

Indeed, a close inspection of the previous arguments suggests that, if $f_1,\ldots,f_m$ are holomorphic functions on the unit disk $D$ with integral Taylor coefficients at $q=0$, satisfying some algebraic differential equations with rational coefficients, and verifying conditions akin to (i) and (ii) above, then, \emph{mutatis mutandis}, the above method applied to the system $(f_1,\ldots,f_m)$ in place of $(E_2,E_4,E_6)$ would produce another transcendence result.

This was certainly known to specialists; see, for instance, \cite{nesterenko01} Section 3, where the pertinent properties satisfied by $E_2$, $E_4$, and $E_6$ were axiomatized as above --- more generally, see Philippon's notion of \emph{$K$-functions}, introduced in \cite{philippon98}. Clearly, one may produce examples of such $f_i$ \emph{ad libitum} by algebraically manipulating Eisenstein series, but this procedure does not lead to \emph{new} transcendence results. The problem on the existence of functions $f_1,\ldots,f_m$ satisfying the above properties, but not ``related''  to classical modular forms (in some imprecise sense), was explicitly stated by Zudilin in \cite{zudilin03}.

Since the publication of \cite{COGP91} and \cite{morrison93}, it became apparent that the phenomenon of Mirror Symmetry provides a large class of functions with integral Taylor coefficients with respect to some \emph{canonical coordinate} and which satisfy natural algebraic differential equations --- see \cite{LY98}, \cite{zudilin02}, \cite{KVS06}, \cite{KR10}, \cite{DRR17} for integrality issues. In \cite{zudilin00}, Zudilin studies some candidates within Mirror Symmetry for playing the role of $(f_1,\ldots,f_m)$, but the few cases where he is able to prove all the required properties of Nesterenko's method --- those linked to elliptic curves and $K3$ surfaces --- are all of modular nature.

It becomes clear in Zudilin's work that one of the main obstructions in applying this method for such functions is condition (i) (and, in particular, that the radius of convergence is equal to 1), which is not verified in general. In a more basic level, computing radii of convergence or getting global information on the domain of definition of such functions pertaining to Mirror Symmetry is a current research problem; see \cite{KR12} for results on certain families of mirror maps. 

The following phenomenon provides further evidence that condition (i) is overly restrictive. Let $f$ be the holomorphic function on $D$ given by $f(q) = qJ(q)$ and set $\theta \defeq q\frac{d}{dq}$. Since $\QQ(J,\theta J, \theta^2J) = \QQ(E_2,E_4,E_6)$ (see, for instance, the explicit formulas in \cite{nesterenko96} Paragraph 1), it follows from Theorem \ref{thmnes} that
\begin{align*}
\trdeg_{\QQ}\QQ(z,f(z), \theta f(z), \theta^2f(z))\ge 3
\end{align*}
for any $z \in D\minus \{0\}$. However, Nesterenko's method \emph{cannot} be \emph{directly} applied to the system $(f,\theta f, \theta^2f)$ since the sequence $c(j)$ does not grows polynomialy in $j$.\footnote{Actually, $c(j) \sim \frac{e^{4\pi \sqrt{j}}}{\sqrt{2}j^{3/4}}$; see \cite{petersson32} or \cite{rademacher38}.} All the other good properties are nevertheless satisfied: $f$, $\theta f$, and $\theta^2f$ have integral Taylor coefficients, the Ramanujan equations imply that $f$ satisfies a third order algebraic differential equation with rational coefficients, and a condition similar to (ii) also holds.

This paper grew from an observation of J.-B. Bost and H. Randriambololona that the growth condition (i) in Nesterenko's method could be replaced by a geometric notion of \emph{moderate growth} formulated in terms of characteristic functions \emph{à la} Nevanlinna Theory. Besides being weaker than the growth condition in (i), which in principle enlarges the domain of application of Nesterenko's method, this geometric growth condition is preserved under some algebraic manipulations on the input functions, thereby eliminating the odd phenomenon explained in last paragraph.

We next explain our main results. Further directions and open problems are indicated below.

\subsection{Our main results: a geometric approach}

Our main theorem is a general geometric formulation of Nesterenko's method valid for arbitrary rings of algebraic integers and more general quasi-projective ambient spaces.

Let us first informally introduce the geometric notions which will replace conditions (i) and (ii) above.

\subsubsection{Moderate growth}

Let $X$ be a smooth projective variety over $\CC$, and $h$ be a $C^{\infty}$ Hermitian metric on the complex manifold $X(\CC)$. Let $\omega \defeq -\Im h$ be the positive real (1,1)-form on $X(\CC)$ associated to $h$. To fix ideas, the reader may consider the example $X = \PP_{\CC}^n$ endowed with the Fubini-Study metric, for which $\omega$ is given in homogeneous coordinates $\mathbf{z} = (z_0:\cdots: z_n)$ by
\begin{align*}
\omega = \frac{i}{2\pi}\partial \overline{\partial}\log  |\mathbf{z}|^2 = \frac{i}{2\pi} \left(\frac{\sum_{j=0}^ndz_j\wedge d\overline{z}_j}{|\mathbf{z}|^2} - \frac{\sum_{j,k=0}^n\overline{z}_jz_kdz_j\wedge d\overline{z}_k}{|\mathbf{z}|^4} \right)\text{,}
\end{align*}
where $|\mathbf{z}|^2 = \sum_{j=0}^n |z_j|^2$.

Let $R>0$ be a real number and denote by $D_R \defeq \{z \in \CC \mid |z|<R\}$ the complex disk of radius $R$ centered at the origin. One may measure the growth of an analytic map $\varphi: D_R \to X(\CC)$  as follows. For each $t \in (0,R)$, the area of the ``disk'' $\varphi(D_t)$ in $X(\CC)$ with respect to the metric $h$ is given by
\begin{align*}
  A_{\varphi}(t) \defeq \int_{D_t}\varphi^*\omega\text{.}
\end{align*}
We may then form the \emph{characteristic function}
\begin{align*}
  T_{\varphi} : (0,R) &\to \RR_{\ge 0}\\
       r &\mapsto T_{\varphi}(r) \defeq \int_{0}^r A_{\varphi}(t)d\log t\text{,}
\end{align*}
and we say that $\varphi$ has \emph{moderate growth} in $X$ if
\begin{align*}
\limsup_{r \to R-} \frac{T_{\varphi}(r)}{\log \frac{1}{1-\frac{r}{R}}} < +\infty
\end{align*}
By the compactness of $X(\CC)$, moderate growth does not depend on the choice of Hermitian metric.

When $R=1$, any analytic map $\varphi: D \to \CC^n \subset \PP^n(\CC)$ whose coordinates have Taylor coefficients of polynomial growth has moderate growth in $\PP_{\CC}^n$ (see Example \ref{polgrowth} below). Therefore, moderate growth generalizes the growth condition (i) in Nesterenko's method.

Moderate growth is nonetheless more flexible than polynomial growth on Taylor coefficients. For instance, as long as the image of $\varphi : D_R \to X(\CC)$ is Zariski-dense in $X$, moderate growth is a birational invariant in the following sense: if $f:X \to Y$ is a birational morphism between smooth projective varieties over $\CC$, then $\varphi$ has moderate growth in $X$ if and only if $f\circ \varphi$ has moderate growth in $Y$ (see Theorem \ref{funcfield} and Corollary \ref{corobirinv} below). In particular, this allows us to define, via compactifications, an unambiguous notion of moderate growth in smooth \emph{quasi-projective} varieties. 


\subsubsection{ZL-density}

Let $k$ be a field, $X$ a projective variety over $k$ of dimension $n$, and $L$ an ample line bundle on $X$. Consider a parametrized \emph{formal curve} $\hat{\varphi} : \Spf k[\![q]\!] \to X$ in $X$, i.e., $\hat{\varphi}$ is a morphism of formal $k$-schemes.

We say that $\hat{\varphi}$ is \emph{ZL-dense} in $X$ if there exists a constant $C>0$ such that, for every integer $d\ge 1$ and every non-zero global section $s \in \Gamma(X,L^{\tensor d})\minus\{0\}$, we have
\begin{align*}
  \ord_{q=0}\hat{\varphi}^*s \le C d^n\text{.}
\end{align*}
The exponent $n=\dim X$ in the above polynomial bound is the smallest possible (see Proposition \ref{propsmallest} below). Moreover, since $L$ is ample, a ZL-dense formal curve has Zariski-dense image. Thus, ZL-density may be regarded as a strong form of Zariski-density.

For a formal curve $\hat{\varphi}: \Spf k[\![q]\!] \to \AA^n_k \subset \PP^n_k$ the above notion boils down to a classical Zero Lemma property --- here, $L=\mathcal{O}_{\PP^n_k}(1)$. For instance, taking $k=\CC$, $n=4$, and $\hat{\varphi}$ defined by the system of formal series $(q,E_2(q),E_4(q),E_6(q))$, the Zero Lemma in condition (ii) above amounts to asserting that $\hat{\varphi}$ is ZL-dense in $\PP^4_{\CC}$. 

We shall prove that ZL-density does not depend on the choice of $L$ (see Proposition \ref{amplediv} below). Actually, if $X$ is only quasi-projective and the closed point $\hat{\varphi}(0)$ in the image of $\hat{\varphi}$ is a regular point of $X$, then we shall prove that ZL-density does not depend on the choice of a projective compactification of $X$ (see Corollary \ref{corostandard} below).

\subsubsection{Statement of our main theorem and proof method}

Let $K$ be a number field and $\mathcal{O}_K$ be its ring of integers. By an \emph{arithmetic scheme} over $\mathcal{O}_K$ we mean an integral scheme $\mathcal{X}$ endowed with a separated and flat morphism of finite type $\mathcal{X} \to \Spec \mathcal{O}_K$ .

The following theorem formalizes and generalizes Nesterenko's method.

\begin{theorem}\label{lethm}
  Let $\mathcal{X}$ be a quasi-projective arithmetic scheme over $\mathcal{O}_K$ of relative dimension $n\ge 2$, with smooth generic fiber $\mathcal{X}_K$, and let $\hat{\varphi}: \Spf \mathcal{O}_K[\![q]\!] \to \mathcal{X}$ be a morphism of formal $\mathcal{O}_K$-schemes such that, for every field embedding $\sigma : K \hookrightarrow \CC$, the formal curve $\hat{\varphi}_{\sigma} : \Spf \CC[\![q]\!] \to \mathcal{X}_{\sigma}$, obtained from $\hat{\varphi}$ by base change, lifts to an analytic curve $\varphi_{\sigma}: D_{R_{\sigma}}\subset \CC \to \mathcal{X}_{\sigma}^{\an}$ defined on a disk of radius $R_{\sigma}>0$ centered at the origin.

  Assume that
  \begin{align*}
\prod_{\sigma: K \into \CC} R_{\sigma}=1
  \end{align*}
  and that there exists a vector field $v \in \Gamma(\mathcal{X}_K,T_{\mathcal{X}_K/K})\minus\{0\}$ on the generic fiber of $\mathcal{X}$ such that $\hat{\varphi}_K : \Spf K[\![q]\!] \to \mathcal{X}_K$ satisfies the differential equation
  \begin{align*}
    q\frac{d\hat{\varphi}_K}{dq} = v\circ \hat{\varphi}_K\text{.}
  \end{align*}
  If, moreover,
  \begin{enumerate}
  \item the formal curve $\hat{\varphi}_K$ is ZL-dense in $\mathcal{X}_K$, and
  \item for each field embedding $\sigma: K \hookrightarrow \CC$, the analytic curve $\varphi_{\sigma}: D_{R_{\sigma}} \to \mathcal{X}_{\sigma}^{\an}$ has moderate growth,
  \end{enumerate}
  then, for every $\sigma : K \hookrightarrow \CC$, and every $z \in D_{R_{\sigma}}\smallsetminus\{0\}$, the field of definition $K(\varphi_{\sigma}(z))$ of the complex point $\varphi_{\sigma}(z)$ in $\mathcal{X}_K$ satisfies
  \begin{align*}
\trdeg_{\QQ} K(\varphi_{\sigma}(z))\ge n-1\text{.}
  \end{align*}
\end{theorem}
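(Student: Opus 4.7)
The plan is to geometrize Nesterenko's three–step method recalled in the introduction, carrying out each step in the setting of the arithmetic scheme $\mathcal{X}$, and then to apply Philippon's algebraic independence criterion to the complex point $\varphi_{\sigma}(z) \in \mathcal{X}_K(\CC)$. To set up, I fix a projective compactification $\overline{\mathcal{X}}$ of $\mathcal{X}$ over $\mathcal{O}_K$, an ample hermitian line bundle $\overline{\mathcal{L}}$ on $\overline{\mathcal{X}}$ (with $C^{\infty}$ metrics at each archimedean place $\sigma$), and I consider the projective $\mathcal{O}_K$–modules $E_d \defeq \Gamma(\overline{\mathcal{X}},\mathcal{L}^{\tensor d})$ equipped with their sup–norms $\|\cdot\|_{\sigma,\sup}$. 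Since $\hat{\varphi}(0)$ is a (closed) point, the pullback $\hat{\varphi}^{*}\mathcal{L}$ is a free $\mathcal{O}_K[\![q]\!]$–module of rank one, so after a trivialization sections $s \in E_d$ give rise to formal series $\hat{\varphi}^{*}s \in \mathcal{O}_K[\![q]\!]$, whose analytifications at each place are the holomorphic functions $f_{\sigma,s} \defeq \varphi_{\sigma}^{*}s$ on $D_{R_\sigma}$.

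\textbf{Step 1 (auxiliary sections).} Using moderate growth of each $\varphi_{\sigma}$, I bound the sup–norms of the $f_{\sigma,s}$ on smaller disks in terms of $\|s\|_{\sigma,\sup}$ via Nevanlinna-theoretic estimates (Jensen's formula relating $T_{\varphi_\sigma}$ to $\log\|\varphi_\sigma^{*}s\|$). Combined with $\prod_\sigma R_\sigma = 1$, these bounds fit the input of an arithmetic Siegel lemma (conveniently phrased via Bost's slope inequalities on the hermitian bundle $\overline{E}_d$). Since $\rk E_d \sim \frac{(L^n)}{n!}d^n [K:\QQ]$, imposing vanishing of order $N \sim c\, d^n$ at $q=0$ cuts out a submodule of positive rank for any $c$ small enough, hence yields a nonzero $s_d \in E_d$ with
\begin{align*}
\deg s_d = d,\qquad \log \|s_d\|_{\sigma,\sup} = O(d\log d),\qquad \ord_{q=0}\hat{\varphi}^{*}s_d \ge c\, d^n.
\end{align*}

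\textbf{Step 2 (derivative estimates at $z$).} Fix $\sigma_0 : K \into \CC$ and $z \in D_{R_{\sigma_0}}\minus\{0\}$, and set $f_d \defeq \varphi_{\sigma_0}^{*}s_d$. I aim to find $j_d = O(d\log d)$ and $\alpha > \beta > 0$ with
\begin{align*}
-\alpha d^n \le \log |f_d^{(j_d)}(z)| \le -\beta d^n.
\end{align*}
The upper bound is immediate from Cauchy's estimates and Step 1. For the lower bound, I argue by contradiction: if every $f_d^{(j)}(z)$ were smaller than $e^{-\alpha d^n}$ for $j<M$, a Schwarz–type interpolation on $D_{R_{\sigma_0}}$ would force $f_d$ to be divisible, up to a tiny error, by $(q-z)^M$, and hence would push the first nonzero Taylor coefficient of $\hat{\varphi}^{*}s_d$ at $q=0$ below $1$ at $\sigma_0$. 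Combining this with the moderate growth estimates at the \emph{other} archimedean places (which bound the same coefficient from above) and with the product formula for the integer $\hat{\varphi}^{*}s_d \in \mathcal{O}_K[\![q]\!]$, one reaches a contradiction — provided $M$ is controlled, which is exactly what the ZL–density of $\hat{\varphi}_K$ applied to $s_d$ guarantees: it caps $\ord_{q=0}\hat{\varphi}^{*}s_d$ by $O(d^n)$.

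\textbf{Step 3 (differentiation along $v$ and Philippon's criterion).} The differential equation $q\, d\hat{\varphi}_K/dq = v \circ \hat{\varphi}_K$ means that for any local section $s$ of $\mathcal{L}^{\tensor d}$, $q \tfrac{d}{dq}\hat{\varphi}^{*}s = \hat{\varphi}^{*}(\nabla_v s)$ for a suitable derivation $\nabla_v$; iterating this $j_d$ times and clearing the denominators of $v$ on $\overline{\mathcal{X}}_K$ produces a section $t_d \in \Gamma(\overline{\mathcal{X}}_K,\mathcal{L}^{\tensor d'}\tensor\mathcal{M})$ (with $d' = O(d+j_d)$ and $\mathcal{M}$ a fixed auxiliary bundle absorbing the denominators) whose value at $\varphi_{\sigma_0}(z)$ encodes $q^{j_d}f_d^{(j_d)}(z)$ up to factors controlled by Step 2. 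The resulting family of sections $t_d$, with controlled degrees, heights, and controlled evaluations (bounded from above \emph{and} from below) at $\varphi_{\sigma_0}(z)$, is exactly what Philippon's projective algebraic independence criterion needs in order to conclude $\trdeg_{\QQ} K(\varphi_{\sigma_0}(z)) \ge n-1$.

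The main obstacle is Step 2, where I must replace Nesterenko's use of polynomial growth on Taylor coefficients by the geometric moderate growth hypothesis, while simultaneously dealing with all archimedean places, the product formula, and the ZL–density bound. The combination of these non–classical tools — Nevanlinna-type estimates for Schwarz interpolation, full adelic integrality rather than $\ZZ$–integrality, and ZL–density in place of the classical Zero Lemma — constitutes the technical heart of the argument and governs the choices of exponents $c, \alpha, \beta, j_d$ throughout.
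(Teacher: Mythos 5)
Your plan is correct and follows essentially the same route as the paper: auxiliary sections of $L^{\tensor d}$ via Bost's slope method with norms controlled by moderate growth and $\prod_\sigma R_\sigma=1$ (Lemma \ref{lemma1}), a derivative estimate at $z$ obtained by playing the jet/Schwarz estimates of Corollary \ref{corotaylor2} at $\sigma_0$ against the bounds at the other places and the Arakelov-degree (product formula) lower bound, with ZL-density capping the vanishing order (Lemma \ref{lemma2}), and finally differentiation along $v$ after clearing denominators, fed into the projective generalization of Philippon's criterion (Lemma \ref{lemma3} and Appendix \ref{genphil}). The only cosmetic difference is your bookkeeping of the pole of $v$ via an auxiliary twist $\mathcal{M}$ rather than extra powers of $L$ through $s_0$, and your upper bound in Step 2 should be read as Schwarz-type propagation of the high vanishing at $q=0$ followed by Cauchy, not Cauchy alone.
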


Let us remark that the conditions of \emph{ZL-density} and of \emph{moderate growth}, corresponding to conditions (i) and (ii) in Nesterenko's method, are actually very mild hypotheses.

For instance, ZL-density is automatic whenever $\hat{\varphi}$ is a smooth integral curve of some vector field satisfying Nesterenko's $D$-property. When the ambient space is an affine space, this is also a theorem of Nesterenko (\cite{nesterenko96} Theorem 6), which was recently extended to a geometric framework by Binyamini \cite{binyamini14}. In Appendix \ref{appzl} we explain how to slightly modify Binyamini's arguments to prove a similar statement for any smooth quasi-projective variety.

Moderate growth, in turn, is satisfied for curves having uniformly bounded derivative on the disk (endowed with the Poincaré metric; see Example \ref{bd} for a precise statement). In particular, a theorem of Brunella (\cite{FS08} Theorem 16; see also \cite{FS08} Theorem 15) implies that, for a generic one dimensional holomorphic foliation (with singularities) $\mathcal{F}$ on $\PP^n(\CC)$, any integral curve to $\mathcal{F}$ parametrized by a disk has moderate growth.

In the broader context of Transcendental Number Theory, our result may be regarded as complementary to the Siegel-Shidlovsky and Schneider-Lang theories, which also deal with algebraic independence or transcendence of values of integral curves of algebraic vector fields (see \cite{gasbarri13}, \cite{gasbarri10}, and \cite{herblot12} for general geometric formulations). Indeed, while the Siegel-Shidlovsky and Schneider-Lang criteria handle curves parametrized by \emph{parabolic} Riemann surfaces, our theorem deals with the \emph{hyperbolic} case.

Our proof of Theorem \ref{lethm} bears the same general structure of Nesterenko's method. We also start by reducing it to a diophantine approximation statement: Theorem \ref{lethm'} below. This is done via the same algebraic independence criterion of Philippon; we explain in Appendix \ref{genphil} how to generalize it to arbitrary quasi-projective varieties. The first step in the method, concerning the construction of ``auxiliary polynomials'', is replaced in our geometric framework by a construction of ``auxiliary sections'' given by Theorem \ref{auxsec} below, the proof of which makes essential use of Bost's \emph{method of slopes} in Arakelov Theory (\cite{bost95}; cf. \cite{bost01}, \cite{bost06}). The second step also involves estimating some higher order derivative --- here, our main tool is a general result comparing, for a section of a Hermitian line bundle on a disk, norms of jets at two distinct points; see Proposition \ref{proptaylor} below and its corollaries. The third and last step is essentially the same trick using the differential equation as explained above.

\subsection{Further directions and open problems}

With Theorem \ref{lethm} in hand, we may turn the puzzling remark explained above into a precise mathematical question: \emph{is there any example of application of Theorem \ref{lethm} whose resulting transcendence statement is not contained in Theorem \ref{thmnes}?} 

As promising as the potential candidates from the theory of Mirror Symmetry may seem, one must face, given our current state of knowledge, the logical possibility of a negative answer.  However, let us remark that a \emph{proof} of this fact would be as \emph{remarkable} as the discovery of a new example, since it would imply that modular functions and their derivatives are the only ones satisfying the (quite general) hypotheses of Theorem \ref{lethm}, which make no explict reference to their geometric nature in terms of moduli of elliptic curves.

This paper, specially our geometric formulation of Theorem \ref{lethm}, should also be seen as a first step in a larger program aiming to adapt and apply Nesterenko's method to the study of algebraic independence of abelian periods of genus $g>1$.

Since values of quasimodular forms can be expressed in terms of periods of elliptic curves, the theorem of Nesterenko recovers in particular a famous result of Chudnovsky \cite{chudnovsky80} establishing Grothendieck's Period Conjecture for complex multiplication elliptic curves. A family of analogs of $(E_2,E_4,E_6)$ related to the Siegel moduli spaces $\mathcal{A}_g$ was constructed in \cite{fonseca16} and \cite{fonseca17}; in this setting involving analytic functions of several variables, in addition to being easier to read the relevant properties required by Nesterenko's method from geometry, the appearance of truly geometric phenomena --- such as positive dimensional ``special subvarieties" of $\mathcal{A}_g$ --- strongly suggests a geometric approach. 

\subsection{Organization of this article}

A great effort has been done to isolate all the different techniques intervening in Nesterenko's method and to place them in their natural generality. This distillation process is aimed not only at improving the readability of our paper, but also at making these techniques suitable for other applications in Diophantine Approximation. 

Section \ref{strongzd} contains the definition of ZL-density and some of its basic properties; here we use elementary Intersection Theory. Section \ref{mgje} defines characteristic functions and moderate growth for 2-forms on a disk and contains basic versions of the jet estimates we shall need later; our main result here is Proposition \ref{proptaylor} (see also Corollary \ref{corotaylor2}). Both Sections \ref{strongzd} and \ref{mgje} are self-contained and are of independent interest. 

Section \ref{mgqp} treats the special case of moderate growth for analytic curves (as explained in this introduction) and it depends only on the beginning of Section \ref{mgje}. Its main objective is to prove that, under a non-degeneracy hypothesis, this concept is a birational invariant of the target space (Theorem \ref{funcfield}); this is essentially classical material on Nevanlinna Theory.

Section \ref{secauxsec} is devoted to the construction of ``auxiliary sections'' in a geometric context. Here, we combine the concepts of moderate growth developed in Sections \ref{mgje} and \ref{mgqp} with Bost's \emph{slope inequality} to obtain Theorem \ref{auxsec}. This section contains a review of the prerequisites in Arakelov Theory.

In Section \ref{derivesections} we explain how vector fields induce derivations on global sections of line bundles and we provide some $L^{\infty}$ estimates. This section is also self-contained and of independent interest.

Section \ref{finalproof} contains a proof of Theorem \ref{lethm}. The reader will recognize, in Lemmas \ref{lemma1}, \ref{lemma2}, and \ref{lemma3}, natural generalizations of the three steps of Nesterenko's method explained above.

Finally, Appendices \ref{genphil} and \ref{appzl} concern geometric generalizations of results of Philippon and Binyamini originally stated only for affine (or projective) spaces, as explained above. In Appendix \ref{appzl} we make use of the basic constructions of Section \ref{derivesections}.

\subsection{Acknowledgments}

This work was supported by a public grant as part of the FMJH project, and is part of my PhD thesis at Universit\'{e} Paris-Sud, Orsay, under the supervision of Jean-Benoît Bost. I thank him and Hugues Randriam for allowing me to use their preliminary non-published notes on moderate growth as a starting point for this paper. I am also grateful to Dinh Tuan Huynh for a fruitful discussion on Nevanlinna Theory and to Daniel Bertrand for pointing me out a missing reference. Finally, I thank the Max-Planck-Institut f\"{u}r Mathematik, Bonn, for the excellent working conditions in which this paper was finished.

\subsection{Terminology and notation}

\subsubsection{}

By an (algebraic) \emph{variety} over a field $k$ we mean a separated integral scheme of finite type over $k$. 

\subsubsection{}\label{defsemiample}

Recall that a line bundle $L$ on a scheme $X$ is \emph{semiample} if there exists an integer $m\ge 1$ such that $L^{\tensor m}$ is generated by its global sections. Observe that ample line bundles are semiample, and that semiampleness is preserved under pullbacks.

\subsubsection{}

A real (1,1)-form $\omega$ on a complex manifold $M$ can always be written, in local coordinates $(z_1,\ldots,z_n)$ on $M$, as
\begin{align*}
  \omega = \frac{i}{2}\sum_{k,l=1}^nh_{kl}dz_k\wedge d\overline{z}_l
\end{align*}
where $H\defeq (h_{kl})_{1\le k,l\le n}$ is a Hermitian matrix. We say that $\omega$ is \emph{positive} (resp. \emph{semipositive}) if the matrix $H$ is positive-definite (resp. positive-semidefinite). Note that semipositive (1,1)-forms are stable under pullbacks. 

\subsubsection{}

By a Hermitian line bundle $\overline{L}=(L,\| \ \|)$ on a complex manifold $M$, we mean a holomorphic line bundle $L$ on $M$ endowed with a $C^{\infty}$ Hermitian metric $\| \ \|$. If $\Theta$ denotes the curvature of the Chern connection on $L$ associated to $\| \ \|$ (locally, $\Theta = - \partial \overline{\partial}\log \|e\|^2$ where $e$ is some trivialization of $L$), then we define the \emph{Chern curvature} of $\overline{L}$ by
\begin{align*}
c_1(\overline{L}) = \frac{i}{2\pi} \Theta\text{.}
\end{align*}
This is a closed real $C^{\infty}$ $(1,1)$-form on $M$ whose class in $H^2(M,\RR)$ coincides with the first Chern class $c_1(L)$. We say that $\overline{L}$ is \emph{positive} (resp. \emph{semipositive}) if $c_1(\overline{L})$ is positive (resp. semipositive).

\subsubsection{}\label{dc}

We use the standard notation
$$
d^c = \frac{i}{4\pi}(\overline{\partial} - \partial)= \frac{1}{4\pi} \left(r\frac{\partial}{\partial r}\tensor d\theta -\frac{1}{r}\frac{\partial}{\partial \theta}\tensor dr \right)\text{,}
$$
so that $dd^c = \frac{i}{2\pi}\partial \overline{\partial}$.

\subsubsection{}
The continuous function $\log^+: \RR \to \RR$ is defined by
\begin{align*}
  \log^+x = \begin{cases}
    \log x & \text{ if }x\ge 1\\
    0 & \text{ otherwhise}.
            \end{cases}
\end{align*}

\section{ZL-dense formal curves in quasi-projective varieties}\label{strongzd}

In this section we introduce the purely algebraic concept of \emph{ZL-dense} formal curves, and we prove some of its basic properties. This notion refines the property of being Zariski-dense and isolates the content of the Zero Lemma necessary in Nesterenko's method; that is, a formal curve satisfies the Zero Lemma if and only if it is ZL-dense.

\subsection{Degree of a divisor with respect to a line bundle}

Let $k$ be a field and $X$ be a variety over $k$. 

Recall from \cite{fulton84} 2.5 that (the isomorphism class of) a line bundle $L$ on $X$ defines an additive operator
\begin{align*}
\alpha \mapsto c_1(L)\cap \alpha
\end{align*}
on the abelian group of algebraic cycles in $X$ modulo rational equivalence; if $\alpha$ is the class of a subvariety $V$ of $X$, then $c_1(L)\cap \alpha$ is by definition the class of the cycle in $V$ associated to any Cartier divisor $D$ of $V$ for which $L|_V\cong \mathcal{O}_V(D)$. The $r$-fold composition of this operator with itself is denoted by $\alpha \mapsto c_1(L)^r\cap \alpha$. 

We say that a cycle class $\alpha$ in $X$ is \emph{semipositive} if there exists an integer $m\ge 1$ such that $m\alpha$ can be represented by a non-negative cycle in $X$ (i.e., a cycle of the form $\sum_i m_i[V_i]$ with each $m_i\ge 0$). For instance, the cycle class of a Cartier divisor $D$ is semipositive if and only if some positive multiple of $D$ is linearly equivalent to an effective divisor.

\begin{lemma}\label{lemmasemiample}
Let $L$ be a semiample line bundle on $X$ (see \ref{defsemiample} for a definition). Then, for any semipositive cycle class $\alpha$ in $X$, $c_1(L)\cap \alpha$ is semipositive.
\end{lemma}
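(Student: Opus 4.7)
The plan is to reduce everything to the tautological construction of $c_1(L)\cap[V]$ as the class of the divisor of zeros of an auxiliary section. First I would unpack the two hypotheses: by semiampleness of $L$ there exists $m\ge 1$ such that $L^{\otimes m}$ is generated by its global sections, and by semipositivity of $\alpha$ there exist $N\ge 1$, subvarieties $V_i\subset X$, and integers $n_i\ge 0$ such that
$$N\alpha = \sum_i n_i\,[V_i]$$
in the group of cycles modulo rational equivalence. The goal becomes to show that $Nm\cdot(c_1(L)\cap\alpha)$ is represented by an effective cycle.

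Next I would analyze the summands one by one. Since global sections restrict to global sections, $L^{\otimes m}|_{V_i}$ is again generated by global sections. As $V_i$ is integral and nonempty, I can pick a nonzero section $s_i\in\Gamma(V_i,L^{\otimes m}|_{V_i})$ (for instance, the restriction of any global section of $L^{\otimes m}$ which does not vanish at some chosen point of $V_i$). The divisor of zeros $D_i\defeq\div(s_i)$ is then an effective Cartier divisor on $V_i$ satisfying $L^{\otimes m}|_{V_i}\cong\mathcal{O}_{V_i}(D_i)$, so by the very definition of $c_1(L^{\otimes m})\cap[V_i]$ recalled from \cite{fulton84} 2.5, the class $c_1(L^{\otimes m})\cap[V_i]=m\bigl(c_1(L)\cap[V_i]\bigr)$ is represented by the non-negative cycle on $X$ attached to $D_i$.

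Assembling the pieces by linearity:
$$Nm\cdot\bigl(c_1(L)\cap\alpha\bigr) = c_1(L^{\otimes m})\cap N\alpha = \sum_i n_i\,\bigl(c_1(L^{\otimes m})\cap[V_i]\bigr) = \sum_i n_i\,[D_i],$$
which is a non-negative cycle in $X$ since all $n_i\ge 0$ and each $[D_i]$ is effective. This exhibits $Nm\cdot(c_1(L)\cap\alpha)$ as represented by a non-negative cycle, so $c_1(L)\cap\alpha$ is semipositive, as required.

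The argument is largely formal once the right sections are in hand; the only mildly delicate point is the choice of the $s_i$ and the verification that global generation of $L^{\otimes m}$ on $X$ implies global generation (and in particular the existence of a nonzero section) of $L^{\otimes m}|_{V_i}$, so that the divisor-of-zeros construction applies to each $V_i$. Everything else reduces to bilinearity of the cap product in the line bundle and the cycle class.
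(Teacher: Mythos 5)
Your proposal is correct and follows essentially the same route as the paper: decompose a multiple of $\alpha$ into a non-negative combination of subvarieties, use global generation of $L^{\otimes m}$ to produce a nonzero section of $L^{\otimes m}|_{V_i}$ on each integral $V_i$, and represent $c_1(L^{\otimes m})\cap[V_i]$ by the resulting effective divisor of zeros. No gaps.
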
  

\begin{proof}
  Let $m\ge 1$ be an integer such that $m\alpha$ is represented by the cycle $\sum_{i}m_i[V_i]$, with each $m_i\ge 0$.

  As $L$ is semiample, there exists an integer $n\ge 1$ such that $L^{\tensor n}$ is generated by global sections. In particular, for any subvariety $V$ of $X$, the line bundle $L^{\tensor n}|_V$ on $V$ admits a non-zero global section $s_V$.

 For every $i$, $c_1(L^{\tensor n})\cap [V_i]$ is the cycle class induced by the effective Cartier divisor $\div(s_{V_i})$ on $V_i$, so that  $nm\,  c_1(L)\cap \alpha = c_1(L^{\tensor n})\cap m\alpha$ is represented by the non-negative cycle $\sum_i m_i [\div(s_{V_i})]$.
\end{proof}

Still following the terminology of \cite{fulton84}, for any line bundle $L$ on $X$, and any $r$-cycle class $\alpha$ in $X$, the \emph{$L$-degree} of $\alpha$ is defined by
\begin{align*}
  \deg_L\alpha = \deg( c_1(L)^{r}\cap \alpha)\text{,}
\end{align*}
where $\deg$ denotes the degree function on zero-cycle classes. If $D$ is a Cartier divisor on $X$, then we denote by
\begin{align*}
\deg_LD = \deg(c_1(L)^{\dim X - 1}\cap [D])
\end{align*}
the $L$-degree of the cycle class $[D]$ induced by $D$.

Observe that the degree of a semipositive zero-cycle class is non-negative. In the next result, we use the following easy consequence of Lemma \ref{lemmasemiample}: if $L$ is semiample and $\alpha$ is semipositive, then $\deg_L\alpha \ge 0$ (cf. \cite{fulton84} Lemma 12.1).

\begin{prop}\label{propsemiample}
  Let $X$ be a projective variety over a field $k$ and $L$ (resp. $M$) be an ample (resp. semiample) line bundle on $X$. Then, there exists an integer $m\ge 1$ such that, for any semipositive $r$-cycle class $\alpha$ in $X$, we have
  \begin{align*}
    0 \le \deg_M\alpha \le m^r\deg_L\alpha\text{.}
  \end{align*}
\end{prop}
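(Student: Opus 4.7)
The plan is to compare $c_1(M)^r \cap \alpha$ with $c_1(L^{\otimes m})^r \cap \alpha = m^r \, c_1(L)^r \cap \alpha$ after writing $L^{\otimes m}$ as $M \otimes (L^{\otimes m}\otimes M^{-1})$ with the second factor semiample, then exploit additivity of Chern class operators and the positivity provided by Lemma \ref{lemmasemiample}.

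For the lower bound, I would first observe that if $N$ is any semiample line bundle on $X$ and $\beta$ is any semipositive cycle class, then Lemma \ref{lemmasemiample} applied inductively shows that $c_1(N)^s \cap \beta$ is semipositive for every $s \ge 0$. Applied with $N = M$ and $\beta = \alpha$, it follows that $c_1(M)^r \cap \alpha$ is a semipositive $0$-cycle class, whose degree is therefore non-negative; this gives $\deg_M \alpha \ge 0$.

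For the upper bound, since $L$ is ample and $X$ is projective, there exists an integer $m \ge 1$ such that $L^{\otimes m} \otimes M^{-1}$ is ample, hence semiample. I would then expand
\begin{align*}
c_1(L^{\otimes m})^r \cap \alpha
= \bigl( c_1(M) + c_1(L^{\otimes m}\otimes M^{-1}) \bigr)^r \cap \alpha
= \sum_{i=0}^{r} \binom{r}{i}\, c_1(M)^i \, c_1(L^{\otimes m}\otimes M^{-1})^{r-i} \cap \alpha,
\end{align*}
using the commutativity and additivity of first Chern class operators acting on cycle classes modulo rational equivalence (\cite{fulton84} Prop. 2.5 and Ch. 3). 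By the same iterated application of Lemma \ref{lemmasemiample} used for the lower bound (now with both $M$ and $L^{\otimes m}\otimes M^{-1}$ as inputs), every summand is a semipositive $0$-cycle class, hence has non-negative degree. Isolating the $i = r$ term and using $c_1(L^{\otimes m})^r = m^r \, c_1(L)^r$, I obtain
\begin{align*}
m^r \deg_L \alpha \;=\; \deg\bigl(c_1(L^{\otimes m})^r \cap \alpha\bigr) \;\ge\; \deg\bigl(c_1(M)^r \cap \alpha\bigr) \;=\; \deg_M \alpha,
\end{align*}
which is the required inequality.

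The only genuinely non-formal point is the existence of $m$ with $L^{\otimes m}\otimes M^{-1}$ ample, but this is a standard consequence of the ampleness of $L$ on a projective variety, so I do not expect any serious obstacle. The rest is bookkeeping with Chern operators and appeals to Lemma \ref{lemmasemiample}.
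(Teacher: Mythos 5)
Your argument is correct and is essentially the paper's own proof: both choose $m$ so that $N=L^{\otimes m}\otimes M^{\vee}$ is semiample, expand $c_1(L^{\otimes m})^r\cap\alpha=c_1(N\otimes M)^r\cap\alpha$ binomially, and use Lemma \ref{lemmasemiample} to see that every term is the degree of a semipositive zero-cycle class. The only (immaterial) difference is that the paper asks merely for $N$ semiample rather than ample.
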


\begin{proof}
  Let $m\ge 1$ be an integer such that $N\defeq L^{\tensor m}\tensor M^{\vee}$ is semiample. For any $r$-cycle class $\alpha$ in $X$, we have
  \begin{align*}
   m^r\deg_L\alpha = \deg_{L^{\tensor m}}\alpha = \deg_{N\tensor M}\alpha = \sum_{s=0}^r \binom{r}{s}\deg_M(c_1(N)^{s}\cap \alpha)\text{.}
  \end{align*}
 Since $N$ is semiample and $\alpha$ is semipositive, it follows from Lemma \ref{lemmasemiample} that each $c_1(N)^s\cap\alpha$ is semipositive. As $M$ is also semiample, we conclude that each term in the right-hand side of the above equation is non-negative, so that $m^r\deg_L\alpha \ge \deg_M\alpha\ge 0$.
\end{proof}

\begin{obs}
By combining the above proposition with an induction argument in $r$, one can actually prove the following generalization. Let $X$ be a projective variety over a field $k$, $L$ be an ample line bundle on $X$, and $M$ be \emph{any} line bundle on $X$. Then there exists a constant $C>0$ such that $|\deg_M\alpha|\le C \deg_L \alpha$ for every semipositive cycle class $\alpha$ in $X$.
\end{obs}

\begin{coro}\label{comparable}
  Let $X$ be a projective variety over a field $k$. If $L$ and $M$ are ample line bundles on $X$, then there exist constants $C_1,C_2 > 0$ such that
  \begin{align*}
   C_1\deg_MD \le \deg_LD \le C_2\deg_MD
  \end{align*}
  for any effective Cartier divisor $D$ in $X$. \hfill $\blacksquare$
\end{coro}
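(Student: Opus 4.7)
The plan is to deduce this directly from Proposition \ref{propsemiample} by exploiting the fact that ample line bundles are in particular semiample, and that the cycle class of an effective Cartier divisor is semipositive by definition.

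More precisely, set $n = \dim X$ and let $D$ be an effective Cartier divisor on $X$. Then $[D]$ is a semipositive $(n-1)$-cycle class. Applying Proposition \ref{propsemiample} with the ample line bundle $L$ in the role of ``$L$'' and with $M$ (which is ample, hence semiample) in the role of ``$M$'', we obtain an integer $m_1 \ge 1$, depending only on $L$ and $M$, such that
\begin{align*}
0 \le \deg_M D \le m_1^{n-1}\deg_L D\text{.}
\end{align*}
This gives the left-hand inequality with $C_1 = m_1^{-(n-1)}$.

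Symmetrically, applying Proposition \ref{propsemiample} with the roles of $L$ and $M$ interchanged yields an integer $m_2 \ge 1$ such that $\deg_L D \le m_2^{n-1}\deg_M D$, giving the right-hand inequality with $C_2 = m_2^{n-1}$. Note that the constants $C_1, C_2$ depend only on $L$ and $M$ (and $n$), not on $D$, as required. There is no real obstacle here: the entire content has already been packaged in Proposition \ref{propsemiample}, and the corollary is just the observation that its hypothesis applies to effective divisors and that the roles of $L$ and $M$ can be swapped when both are ample.
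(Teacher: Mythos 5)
Your proof is correct and is exactly the deduction the paper intends: the corollary is stated as an immediate consequence of Proposition \ref{propsemiample}, obtained by noting that an effective Cartier divisor gives a semipositive $(n-1)$-cycle class and that ampleness of both $L$ and $M$ lets you apply the proposition in both directions.
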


\subsection{ZL-dense formal curves in projective varieties}

Let $X$ be an algebraic variety over a field $k$ and consider the ring of formal power series $k[\![q]\!]$. By a \emph{formal curve} in $X$ we mean a morphism of $k$-schemes $\hat{\varphi} : \Spec k[\![q]\!] \to X$, or, equivalently, a morphism of formal $k$-schemes $\hat{\varphi} : \Spf k[\![q]\!] \to X$. The $k$-point of $X$ obtained by composing the $k$-point of $\Spec k[\![q]\!]$ given by the ideal $(q)\subset k[\![q]\!]$ with $\hat{\varphi}$ is denoted by $\hat{\varphi}(0)$.

Let $\hat{\varphi} : \Spec k[\![q]\!] \to X$ be a formal curve in $X$, and $D$ be an effective Cartier divisor in $X$. We define the \emph{intersection multiplicity} of $D$ with $\hat{\varphi}$ (at $\hat{\varphi}(0)$) by
\begin{align*}
\mult_{\hat{\varphi}}D \defeq \ord_{0} \hat{\varphi}^*f\text{,}
\end{align*}
where $f \in \mathcal{O}_{X, \hat{\varphi}(0)}$ is any local equation for $D$ around $\hat{\varphi}(0)$. This clearly does not depend on the choice of $f$. The multiplicity function $\mult_{\hat{\varphi}}$  is additive and takes values in $\NN \cup\{+\infty\}$.

\begin{defi}
  Let $X$ be a projective variety of dimension $n$ over a field $k$ and let $L$ be any ample line bundle on $X$. We say that a formal curve $\hat{\varphi}: \Spec k[\![q]\!] \to X$ in $X$ is  \emph{ZL-dense} if  there exists a constant $C>0$ such that
  \begin{align}\label{boundZLdense}
   \mult_{\hat{\varphi}}D \le C (\deg_LD)^n
  \end{align}
  for every effective Cartier divisor $D$ in $X$.
\end{defi}

Observe that the choice of $L$ in the above definition is irrelevant by Corollary \ref{comparable}.

Let us remark that the exponent $n = \dim X$ intervening in the polynomial bound (\ref{boundZLdense}) is the smallest possible one:

\begin{prop}\label{propsmallest}
Let $X$ be a projective variety of dimension $n$ over $k$ endowed with an ample line bundle $L$, and $\hat{\varphi}$ be a formal curve in $X$. Then there exists $\varepsilon>0$ and sequence of effective Cartier divisors $(D_i)_{i\ge 1}$ on $X$ satisfying
  \begin{align*}
    \lim_{i\rightarrow  +\infty}\deg_L D_i  = +\infty \ \ \text{ and }\ \ \mult_{\hat{\varphi}}D_i \ge \varepsilon(\deg_LD_i)^n\text{ for every }i\ge 1\text{.} 
  \end{align*}
\end{prop}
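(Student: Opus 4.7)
The plan is to realize each $D_i$ as the divisor of zeros $\div(s_i)$ of a non-zero global section of a large tensor power $L^{\otimes d_i}$ vanishing to sufficiently high order along $\hat\varphi$. The existence of such sections will follow from a dimension count, the key input being asymptotic Riemann--Roch for the ample line bundle $L$.

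Set $R \defeq k[\![q]\!]$; since $R$ is local, the pullback $\hat\varphi^{*}L^{\otimes d}$ is free of rank one, and a choice of generator $e_d$ attaches to each $s \in H^0(X,L^{\otimes d})$ a unique formal series $\tau(s) \in R$ via $\hat\varphi^{*}s = \tau(s)\, e_d$. Comparing with a local trivialization of $L^{\otimes d}$ near $\hat\varphi(0)$ shows that, for any non-zero $s$, one has $\mult_{\hat\varphi}\div(s) = \ord_{q=0}\tau(s)$. For each $m\ge 1$, the evaluation map
\begin{equation*}
\mathrm{ev}_{d,m} : H^0(X,L^{\otimes d}) \xrightarrow{\ s\,\mapsto\, \tau(s)\ } R \twoheadrightarrow R/(q^m)
\end{equation*}
is therefore a $k$-linear map whose target has $k$-dimension $m$, and every non-zero element of $\ker\mathrm{ev}_{d,m}$ provides a section whose zero divisor has intersection multiplicity at least $m$ with $\hat\varphi$.

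The second step is to invoke asymptotic Riemann--Roch. Since $L$ is ample, Serre vanishing yields $h^0(X,L^{\otimes d}) = \chi(X,L^{\otimes d})$ for $d \gg 0$, and Snapper's lemma gives $h^0(X,L^{\otimes d}) = \bigl((L^n)/n!\bigr) d^n + O(d^{n-1})$, where $(L^n) = \deg_L L > 0$. Fixing any $c \in \bigl(0,(L^n)/n!\bigr)$ and setting $m(d) \defeq \lceil c\, d^n \rceil$, we have $h^0(X,L^{\otimes d}) > m(d)$ for $d \gg 0$, so $\ker \mathrm{ev}_{d,m(d)} \ne 0$. Any non-zero $s_d$ in this kernel produces an effective Cartier divisor $D_d \defeq \div(s_d)$ with $\mathcal{O}_X(D_d) \simeq L^{\otimes d}$; hence $\deg_L D_d = d\cdot (L^n) \to +\infty$ while $\mult_{\hat\varphi}D_d \ge c\, d^n$. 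Taking $\varepsilon \defeq c/(L^n)^n$ then gives $\mult_{\hat\varphi}D_d \ge \varepsilon\,(\deg_L D_d)^n$ for all $d$ large enough, proving the claim. I do not foresee a serious technical obstacle: the argument rests entirely on asymptotic Riemann--Roch, which is standard and holds for any projective variety over any field.
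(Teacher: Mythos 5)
Your argument is correct. It rests on the same underlying principle as the paper's proof --- a pigeonhole/dimension count: sections of $L^{\otimes d}$ form a space of dimension $\sim \mathrm{const}\cdot d^n$, while imposing vanishing to order $m$ along $\hat{\varphi}$ is only $m$ linear conditions --- but you implement it differently in the general case. The paper first treats $X=\PP^n_k$, where the count is the explicit number $\binom{i+n}{n}>i^n/n!$ of monomials of degree $i$, and then reduces the general case to this one by choosing a finite surjective morphism $f:X\to\PP^n_k$ with $f^*\mathcal{O}(1)\cong L^{\otimes m}$ (which costs a projection-formula argument but keeps the input entirely elementary). You instead work directly on $X$ and get the dimension count from Serre vanishing plus asymptotic Riemann--Roch, which makes the reduction step unnecessary and yields the cleaner normalization $\deg_L D_d = d\,(L^n)$ and the explicit $\varepsilon = c/(L^n)^n$; the trade-off is that you invoke a (standard but heavier) cohomological input. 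Two cosmetic points: your notation $(L^n)=\deg_L L$ should read $\deg_L X$ (the $L$-degree of the fundamental cycle, as in the paper's $\deg_{L_K}\mathcal{X}_K$), and since your estimate only holds for $d\gg 0$ you should reindex the sequence to start at a sufficiently large $d_0$ so that the inequality holds for every term, as the statement requires. Neither affects the validity of the proof.
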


In the above statement we allow the possibility that $\mult_{\hat{\varphi}}D_i = +\infty$ (i.e. the divisor $D_i$ vanishes identically along the formal curve $\hat{\varphi}$) by adopting the standard convention that $+\infty\ge t$ for every $t\in\RR$.

\begin{proof}
  Assume first that $X=\PP^n_k$ and $L=\mathcal{O}(1)$. Consider the natural projection $\AA^n_k\minus\{0\} \to \PP^n_k$ and lift $\hat{\varphi}$ to some $\hat{\psi}: \Spec k[\![q]\!] \to \AA^n_k\minus\{0\}$. Let $i\ge 1$ be an integer. If $P = \sum_{|I|=i}a_IX^I \in k[X_0,\ldots,X_n]$ is a homogeneous polynomial of degree $i$, seen as a regular function on $\AA^n_k\minus\{0\}$, then we may write
  $$
  \hat{\psi}^*P  = \sum_{j=0}^{\infty}\left(\sum_{|I|=i}a_Ib_{I,j}\right)q^j \in k[\![q]\!]\text{,}
  $$
  for some $b_{I,j} \in k$ depending on the coefficients of the $n+1$ formal series defining $\hat{\psi}$. Since
  \begin{align*}
   \text{card}\{I \in \NN^{n+1}\mid |I|=i\} = \binom{i+n}{n}> \frac{1}{n!}i^n\text{,}
  \end{align*}
  it follows from elementary linear algebra that there exists a non-zero homogeneous polynomial $P_i$ of degree $i$ such that  $\ord_0 \hat{\psi}^*P_i\ge \frac{1}{n!}i^n$. By considering the Cartier divisors $D_i$ on $\PP^n_k$ induced by $P_i$, we see that we may take $\varepsilon = 1/n!$ in this case.

  The general case follows from the above one by considering a finite surjective morphism $f:X \to \PP^n_k$ satisfying $f^*\mathcal{O}(1)\cong L^{m}$ for some $m\ge 1$.
\end{proof}

\begin{obs}
It follows from the above proof that the Cartier divisors $D_i$ can actually be taken in the linear system $|L^{\tensor m i}|$ for some fixed integer $m\ge 1$.
\end{obs}

Any ZL-dense formal curve $\hat{\varphi}$ in a projective variety $X$ has a dense image in the Zariski topology. Indeed, since $X$ is projective, any Zariski-closed subset of $X$ is contained in the support of some effective Cartier divisor of $X$; then, one simply remarks that ZL-density implies that $\mult_{\hat{\varphi}}D < + \infty$ for any effective Cartier divisor $D$ in $X$, so that the image of $\hat{\varphi}$ is not contained in the \mbox{support of $D$}. 

The following example shows that the converse is not true in general.

\begin{ex}[Lacunary series]
  Let $k$ be a field and $(n_i)_{i\ge 0}$ be an increasing sequence of natural numbers satisfying $\lim_{i\rightarrow  +\infty}\frac{n_{i+1}}{n_i^2} =   +\infty$.  If $h \in k[\![q]\!]$ is any formal series of the form
  \begin{align*}
    h(q) = \sum_{i\ge 0}a_i q^{n_i}\text{,} \ \ \ \ a_i\neq 0
  \end{align*}
  then the formal curve $\hat{\varphi} : \Spec k[\![q]\!] \to \PP^2_k$, given in homogeneous coordinates by \mbox{$\hat{\varphi}(q)=(1:q:h(q))$}, is not ZL-dense. Indeed, for any integer $d\ge 0$, we may consider the homogeneous polynomial of degree $n_d$
  \begin{align*}
P_d = X_0^{n_d-1}X_2 - \sum_{i=0}^d a_i X_0^{n_d-n_i}X_1^{n_i}
  \end{align*}
  so that
  \begin{align*}
   \frac{\mult_{\hat{\varphi}} \div(P_d)}{n_d^2} = \frac{n_{d+1}}{n_d^2} \to +\infty
  \end{align*}
  as $d\rightarrow  +\infty$.

  Observe that the image of $\hat{\varphi}$ is indeed Zariski-dense. By contradiction, if $C\subset \PP^2_k$ is an irreducible curve containing the image of $\hat{\varphi}$, then, for any effective Cartier divisor $D$ in $\PP^2_k$ whose support does not contains $C$, we have $\mult_{\hat{\varphi}}D = i(\hat{\varphi}(0),C\cdot D) \le \deg C \cdot \deg D$. By construction, this is absurd for $D=\div (P_d)$ and $d$ sufficiently large.
\end{ex}

For natural examples of ZL-dense formal curves we refer to Appendix \ref{appzl}.

\subsection{Reformulation in terms of sections of an ample line bundle}

Let $X$ be an algebraic variety over a field $k$ and $\hat{\varphi}: \Spec k[\![q]\!] \to X$ be a formal curve. If $L$ is any line bundle on $X$, and $s$ is a section of $L$ on a neighborhood of $\hat{\varphi}(0)$,  we may consider the vanishing order of $\hat{\varphi}^*s \in \Gamma(\Spec k[\![q]\!], \hat{\varphi}^*L)$ at $q=0$, which coincides with the intersection multiplicity of the effective Cartier divisor $\div(s)$ with $\hat{\varphi}$: 
\begin{align*}
   \ord_{0}\hat{\varphi}^*s = \mult_{\hat{\varphi}} \div(s)\text{.} 
\end{align*}
The next proposition shows that ZL-density is a condition that has to be checked only for Cartier divisors arising from sections of powers of some fixed ample line bundle.

\begin{prop}\label{amplediv}
  Let $X$ be a projective variety of dimension $n$ over a field $k$ and $L$ be an ample line bundle on $X$. A formal curve $\hat{\varphi}: \Spec k[\![q]\!] \to X$ is ZL-dense if and only if there exists a constant $C>0$ such that
  \begin{align*}
   \ord_{0}\hat{\varphi}^*s \le C d^n
  \end{align*}
  for any integer $d\ge 1$, and any $s \in \Gamma(X, L^{\tensor d}) \minus \{0\}$.
\end{prop}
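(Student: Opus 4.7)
The forward direction is immediate: if $\hat{\varphi}$ is ZL-dense and $s \in \Gamma(X,L^{\tensor d})\minus\{0\}$, then $D \defeq \div(s)$ is effective with $[D] = d\,c_1(L)\cap[X]$, so $\deg_L D = d(L^n)$ and thus $\ord_0\hat{\varphi}^*s = \mult_{\hat{\varphi}}D \le C(L^n)^n d^n$.

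For the converse, the plan is to reduce an arbitrary effective Cartier divisor $D$ on $X$ to one of the form $\div(s)$ with $s \in \Gamma(X,L^{\tensor d})$ and $d = O(\deg_L D)$, via a finite projection onto $\PP^n_k$. Applying graded Noether normalization to $R \defeq \bigoplus_{d\ge 0}\Gamma(X,L^{\tensor d})$ — taking the resulting generators in a common degree by raising to suitable powers — one finds an integer $m\ge 1$ together with a finite surjective morphism $\pi:X\to \PP^n_k$ satisfying $\pi^*\mathcal{O}(1)=L^{\tensor m}$.

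Let $D$ be an effective Cartier divisor on $X$ and set $e\defeq\deg_L D$. Since $\pi$ is finite and $\PP^n_k$ is smooth, $\pi_*D$ is an effective Cartier divisor on $\PP^n_k$, and the projection formula gives
\[
\deg_{\mathcal{O}(1)}\pi_*D = \pi_*D\cdot\mathcal{O}(1)^{n-1} = D\cdot\pi^*\mathcal{O}(1)^{n-1} = m^{n-1}\deg_L D = m^{n-1}e.
\]
Hence $\pi_*D=\div(f)$ for a non-zero homogeneous $f\in k[x_0,\ldots,x_n]$ of degree $m^{n-1}e$, and $\pi^*f$ is a non-zero element of $\Gamma(X,L^{\tensor m^ne})$. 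Applying the hypothesis to $\pi^*f$ yields
\[
\mult_{\hat{\varphi}}(\pi^*\pi_*D)=\ord_0\hat{\varphi}^*(\pi^*f)\le C(m^ne)^n=Cm^{n^2}e^n.
\]

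The final, and principal, step is the push-pull inequality $\pi^*\pi_*D\ge D$ as effective Cartier divisors on $X$: once established, the chain $\mult_{\hat{\varphi}}D\le\mult_{\hat{\varphi}}(\pi^*\pi_*D)\le Cm^{n^2}(\deg_L D)^n$ closes the argument. I would verify it componentwise: writing $D=\sum_V m_V[V]$ and fixing a prime divisor $V_0$ of $D$ with image $W_0\defeq\pi(V_0)$ (itself a prime divisor on $\PP^n_k$, since $\pi$ is finite), the standard formulas $\pi_*[V]=[k(V):k(\pi(V))]\,[\pi(V)]$ and $\pi^*[W]=\sum_{\pi(V')=W}e(V'/W)[V']$ give the coefficient of $V_0$ in $\pi^*\pi_*D$ as $e(V_0/W_0)\sum_{\pi(V)=W_0}m_V\,[k(V):k(W_0)]\ge m_{V_0}$, since all three factors are positive integers. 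Summing over prime divisors shows $\pi^*\pi_*D\ge D$. The chief technical care lies in this ramification/residue-degree bookkeeping and in ensuring the existence of a finite morphism $\pi$ with $\pi^*\mathcal{O}(1)$ a power of $L$ over an arbitrary base field; both are classical.
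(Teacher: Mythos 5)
Your overall route is the same as the paper's: the forward direction via $\deg_L\div(s)=d\deg_L X$, and the converse via a finite surjection $\pi:X\to\PP^n_k$ with $\pi^*\mathcal{O}(1)\cong L^{\tensor m}$, the pushforward $\pi_*D$, the projection formula $\deg_{\mathcal{O}(1)}\pi_*D=m^{n-1}\deg_LD$, and the push--pull inequality; your constant $Cm^{n^2}$ matches as well. The one step where your argument has a genuine gap is precisely the step you identify as the principal one. You verify $\pi^*\pi_*D\ge D$ componentwise as an inequality of \emph{Weil divisors} (cycles), via ramification indices and residue degrees. But $X$ is an arbitrary projective variety, not assumed normal, and $\mult_{\hat{\varphi}}$ is defined only through \emph{local equations} of Cartier divisors. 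To conclude $\mult_{\hat{\varphi}}D\le\mult_{\hat{\varphi}}(\pi^*\pi_*D)$ you need $\pi^*\pi_*D-D$ to be effective \emph{as a Cartier divisor}, i.e.\ its local equations must be regular functions near $\hat{\varphi}(0)$. On a non-normal variety a Cartier divisor whose associated Weil divisor is effective need not be effective as a Cartier divisor (the local rings at codimension-one points are not discrete valuation rings, and $\bigcap_V\mathcal{O}_{X,\eta_V}$ can be strictly larger than $\mathcal{O}_X$), so the cycle-level inequality does not by itself deliver the multiplicity comparison. Indeed, the paper only defines $\mult_{\hat{\varphi}}$ of a Weil divisor later, and only under the hypothesis that $\hat{\varphi}(0)$ is a regular point.

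The repair is exactly the paper's device: define $\pi_*D$ locally by the norm, $\pi_*D=[(\mathrm{Norm}_\pi(h_i),U_i)_i]$ for $h_i$ a local equation of $D$ on $\pi^{-1}(U_i)$ (this is a Cartier divisor because $\PP^n_k$ is normal, and its associated Weil divisor is the cycle pushforward, so your degree computation stands). Then Cayley--Hamilton applied to multiplication by $h_i$ on $\pi_*\mathcal{O}_X$ shows that $h_i$ divides $\pi^*\mathrm{Norm}_\pi(h_i)$ in $\mathcal{O}_X(\pi^{-1}(U_i))$, which gives the effectivity of $\pi^*\pi_*D-D$ at the level of local equations, hence $\mult_{\hat{\varphi}}D\le\mult_{\hat{\varphi}}(\pi^*\pi_*D)=\mult_{\pi\circ\hat{\varphi}}(\pi_*D)$ directly. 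With that substitution your proof coincides with the paper's.
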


\begin{proof}
  The necessity follows from the fact that, for any $s \in \Gamma(X,L^{\tensor d})\minus \{0\}$,  $\deg_L \div (s) = (\deg_LX)d$.

  To prove the sufficiency, fix any finite surjective morphism $f: X \to \PP^n_k$ such that $f^*\mathcal{O}(1)$ is isomorphic to $L^{\tensor m}$ for some $m\ge 1$. If $E$ is an effective Cartier divisor in $\PP_k^n$, then there exists a section $s \in \Gamma(\PP^n_k, \mathcal{O}(\deg_{\mathcal{O}(1)} E))$ satisfying $E = \div (s)$, so that
  \begin{align}\label{aux1}
   \mult_{f\circ \hat{\varphi}} E = \ord_{0}(f\circ \hat{\varphi})^*s = \ord_{0}\hat{\varphi}^*(f^*s) \le C (m\deg_{\mathcal{O}(1)}E)^n = Cm^n(\deg_{\mathcal{O}(1)}E)^n\text{.}
  \end{align}
  
  Let $D$ be an effective Cartier divisor in $X$. Since $f$ is finite and $\PP^n_k$ is normal, we may define the pushforward $f_*D$ by taking norms: there is an open affine covering $(U_i)_i$ of $\PP^n_k$ such that $D$ admits a local equation $h_i$ on each $f^{-1}(U_i)$, and we define $f_*D= [(\text{Norm}_f(h_i),U_i)_i]$. As $f^*f_*D-D$ is effective, we obtain
  \begin{align*}
   \mult_{\hat{\varphi}}D \le \mult_{\hat{\varphi}}f^*f_*D = \mult_{f\circ \hat{\varphi}}f_*D\text{.}
  \end{align*}
  Note that the Weil divisor associated to $f_*D$ coincides with the pushforward (of cycles) of the Weil divisor associated to $D$ (cf. \cite{fulton84} Proposition 1.4). In particular, the projection formula gives
  \begin{align*}
   \deg_{\mathcal{O}(1)}f_*D = \deg_{L^{\tensor m}}D = m^{n-1}\deg_{L}D
  \end{align*}
  so that, by (\ref{aux1}),
  \begin{align*}
    \mult_{\hat{\varphi}}D \le Cm^n (\deg_{\mathcal{O}(1)}f_*D)^n = Cm^{n^2}(\deg_LD)^n\text{.}
  \end{align*}
\end{proof}

An advantage of considering the above equivalent form of ZL-density stems from the vector space structure of the sets $\Gamma(X,L^{\tensor d})$, $d\ge 1$. In general, a formal curve $\hat{\varphi}: \Spec k[\![q]\!] \to X$ induces, for every integer $d\ge 1$, a decreasing filtration by linear subspaces $(E_d^i)_{i\ge 0}$ on the $k$-vector space $E_d \defeq \Gamma(X,L^{\tensor d})$ defined by $E_d^i\defeq \{s \in E_d \mid \ord_0\hat{\varphi}^*s\ge i\}$.

\begin{obs}
Since $E_d$ is finite dimensional, there exists $i_d\ge 1$ such that $E_d^{i_d}= \bigcap_{i\ge 0}E_d^i = \{s \in E_d \mid \hat{\varphi}^*s=0\}$.  In other words, for every $s \in E_d$ such that $\hat{\varphi}^*s \neq 0$, we have $\ord_0\hat{\varphi}^*s <i_d$. In particular, this shows that one may replace in Proposition \ref{amplediv} the condition ``for any integer $d\ge 1$" by the weaker ``for any sufficiently large integer $d$".
\end{obs}

As a first application of Proposition \ref{amplediv}, we use the filtration $(E_d^i)_{i\ge 0}$ to show that ZL-density is a geometric property.

\begin{prop}
Let $X$ be a geometrically integral projective variety over a field $k$ and \linebreak$\hat{\varphi} : \Spf k[\![q]\!] \to X$ be a formal curve. Then, for any field extension $K$ of $k$, the formal curve $\hat{\varphi}_K : \Spf K[\![q]\!] \to X_K$, obtained from $\hat{\varphi}$ by base change, is ZL-dense in $X_K$ if and only if $\hat{\varphi}$ is ZL-dense in $X$.
\end{prop}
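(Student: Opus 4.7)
The plan is to reformulate ZL-density via Proposition \ref{amplediv} as a sequence of finite-dimensional linear-algebra conditions on spaces of global sections, and then to check that each such condition is invariant under the base change $k \rightsquigarrow K$. Fix an ample line bundle $L$ on $X$, so that $L_K$ is ample on $X_K$ and $\dim X_K = \dim X = n$. For each integer $d \ge 1$, set $E_d \defeq \Gamma(X, L^{\tensor d})$ and $E_d^K \defeq \Gamma(X_K, L_K^{\tensor d})$; since $X$ is proper over $k$ and $K/k$ is flat, flat base change yields a canonical isomorphism $E_d^K \simeq E_d \tensor_k K$.

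Next I would trivialize the free $k[\![q]\!]$-module $\hat{\varphi}^* L^{\tensor d}$ and use the induced trivialization of $\hat{\varphi}_K^* L_K^{\tensor d}$. Under this choice the pullback $\hat{\varphi}^*: E_d \to k[\![q]\!]$ is $k$-linear, and the subspaces $E_d^i \defeq \{s \in E_d \mid \ord_0 \hat{\varphi}^* s \ge i\}$ are the kernels of the compositions $E_d \to k[\![q]\!] \to k[\![q]\!]/(q^i)$. The corresponding maps for $\hat{\varphi}_K$ are literally the $K$-base change of the ones for $\hat{\varphi}$, so flatness of $k \to K$ yields
\begin{align*}
(E_d^K)^i = E_d^i \tensor_k K\text{,} \quad \text{hence} \quad \dim_K(E_d^K)^i = \dim_k E_d^i
\end{align*}
for every $d, i \ge 0$.

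The conclusion then follows directly: by Proposition \ref{amplediv} together with the remark following it, $\hat{\varphi}$ is ZL-dense in $X$ if and only if there exists $C > 0$ such that $E_d^{\lceil C d^n \rceil + 1} = 0$ for all sufficiently large $d$, and the analogous criterion characterizes ZL-density of $\hat{\varphi}_K$ in $X_K$ upon replacing $E_d^{\lceil C d^n \rceil + 1}$ by $(E_d^K)^{\lceil C d^n \rceil + 1}$. Thanks to the dimension equality above, these two vanishing conditions are equivalent (and in fact hold for the same constants $C$). I do not anticipate a substantial obstacle here: the only point requiring genuine care is the compatibility of the truncation filtration with flat base change, i.e., the commutation of $\ker$ with $-\tensor_k K$, which is immediate from the flatness of $K$ over $k$.
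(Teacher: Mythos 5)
Your proof is correct and follows essentially the same route as the paper: reformulate ZL-density via Proposition \ref{amplediv}, identify $E_d^i$ as the kernel of the truncated evaluation map $E_d \to \Gamma(\Spec k[\![q]\!],\hat{\varphi}^*L^{\tensor d})\tensor_k k[\![q]\!]/(q^i)$, and use flatness of $K/k$ to get $(E_d^K)^i = E_d^i\tensor_k K$, so the order bounds transfer (up to a harmless adjustment of the constant coming from your $\lceil Cd^n\rceil$ versus $\lfloor Cd^n\rfloor$). No gaps.
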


\begin{proof}
  Let $d\ge 1$ and $i\ge 0$ be integers. Note that $E_d\tensor_kK$ may be canonically identified with $\Gamma(X_K, L_K^{\tensor d})$. Moreover, since $E_d^i$ is the kernel of the $k$-linear map
  \begin{align*}
    E_d &\to \Gamma(\Spec k[\![q]\!],\hat{\varphi}^*L^{\tensor d})\tensor_k k[\![q]\!]/(q^{i})\\
    s &\mapsto \hat{\varphi}^*s \mod q^{i}\text{,}
  \end{align*}
  we conclude that $E_d^{i}\tensor_k K = \{t \in E_d\tensor_kK \mid \ord_0\hat{\varphi}_K^*t \ge i\}$.

  In particular, for every integer $d\ge 1$, and any real number $\kappa >0$, $\ord_0\hat{\varphi}^*s\le \kappa$ for every $s \in \Gamma(X,L^{\tensor d})\minus\{0\}$ if and only if $\ord_0\hat{\varphi}^*_Kt\le \kappa$ for every $t \in \Gamma(X_K,L_K^{\tensor d})\minus\{0\}$.
\end{proof}

As another application of Proposition \ref{amplediv}, we prove the following result which will be used in our proof of Proposition \ref{propmod}.

\begin{prop}\label{surj}
Let $f:X \to Y$ be a surjective morphism between projective varieties of dimension $n$ over a field $k$ and let $\hat{\varphi}:\Spec k[\![q]\!] \to X$ be a formal curve in $X$. If $\hat{\varphi}$ is ZL-dense in $X$, then $f\circ \hat{\varphi}$ is ZL-dense in $Y$.
\end{prop}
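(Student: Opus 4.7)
The plan is to reduce the question to the sections-based formulation of ZL-density provided by Proposition \ref{amplediv} and then apply a short intersection-theoretic computation. Fix once and for all ample line bundles $L$ on $X$ and $M$ on $Y$; by Proposition \ref{amplediv}, to establish that $f \circ \hat{\varphi}$ is ZL-dense in $Y$ it is enough to produce a constant $C' > 0$ such that
$$
\ord_0 (f \circ \hat{\varphi})^* t \le C' d^n
$$
for every integer $d \ge 1$ and every nonzero $t \in \Gamma(Y, M^{\otimes d})$.

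Given such a $t$, I would form the pullback $f^*t \in \Gamma(X, (f^*M)^{\otimes d})$. The surjectivity of $f$ between integral varieties forces $f^*t$ to remain nonzero: locally trivializing $M^{\otimes d}$, a regular function on $Y$ whose pullback to $X$ vanishes must vanish on $f(X) = Y$, and hence on $Y$. Set $D \defeq \div(f^*t)$, an effective Cartier divisor on $X$. By the identity $\ord_0 \hat{\varphi}^* s = \mult_{\hat{\varphi}}\div(s)$ recalled just before Proposition \ref{amplediv},
$$
\ord_0 (f \circ \hat{\varphi})^* t = \ord_0 \hat{\varphi}^*(f^*t) = \mult_{\hat{\varphi}} D,
$$
so the ZL-density of $\hat{\varphi}$ in $X$ yields a constant $C > 0$, independent of $d$ and $t$, with $\mult_{\hat{\varphi}} D \le C (\deg_L D)^n$.

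The final step is to bound $\deg_L D$ linearly in $d$. Since $D$ is the divisor of a section of $(f^*M)^{\otimes d}$, its cycle class in $A_{n-1}(X)$ is $[D] = d \cdot (c_1(f^*M) \cap [X])$, and therefore
$$
\deg_L D = \deg\bigl(c_1(L)^{n-1} \cdot c_1(f^*M) \cap [X]\bigr)\cdot d = c_0 \, d,
$$
where $c_0$ is the intersection number of $L^{n-1}$ with $f^*M$ on $X$, a finite integer depending only on $L$, $M$ and $f$. Plugging this into the previous estimate gives $\ord_0 (f \circ \hat{\varphi})^* t \le C c_0^n\, d^n$, so the criterion of Proposition \ref{amplediv} holds with $C' = C c_0^n$. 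There is no genuinely hard step; the only point requiring care is the non-vanishing of $f^*t$, which is precisely where surjectivity of $f$ is used, and the verification that $\deg_L$ of a divisor associated to a section of $(f^*M)^{\otimes d}$ is linear in $d$, which is immediate from the definition of $c_1(L)\cap -$.
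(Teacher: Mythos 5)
Your argument is correct. The only delicate points — non-vanishing of $f^*t$ (which is exactly where surjectivity enters) and the linearity of $\deg_L \div(f^*t)$ in $d$ — are both handled properly: $f$ dominant between integral schemes forces $f^*t\neq 0$, and in $A_{n-1}(X)$ one has $[\div(f^*t)]=d\,c_1(f^*M)\cap[X]$, so $\deg_L\div(f^*t)=c_0 d$ with $c_0=\deg\bigl(c_1(L)^{n-1}c_1(f^*M)\cap[X]\bigr)$ independent of $d$ and $t$ (and $\ge 0$ since $f^*M$ is semiample, cf. Lemma \ref{lemmasemiample}).

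Your route differs from the paper's in how the bound is extracted from ZL-density on $X$. The paper stays entirely within the section formulation: it fixes an ample $L$ on $X$ admitting a nonzero global section $s$, notes that $N\defeq L\otimes f^*M$ is ample, and bounds $\ord_0\hat{\varphi}^*(f^*t)\le \ord_0\hat{\varphi}^*(s^{\otimes d}\otimes f^*t)\le Cd^n$ by applying Proposition \ref{amplediv} to the ample bundle $N$ — the twist by $s^{\otimes d}$ turns the pullback (a section of the merely semiample $(f^*M)^{\otimes d}$) into a section of $N^{\otimes d}$, at the cost of an auxiliary section and of invoking the independence of ZL-density on the choice of ample bundle. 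You instead apply the original divisor form of the definition to $D=\div(f^*t)$ and pay with a short intersection-theoretic computation of $\deg_L D$. Both proofs hinge on the same key observation ($f^*t\neq 0$ by surjectivity) and both conclude via Proposition \ref{amplediv} on $Y$; yours avoids the auxiliary section $s$ (and hence any need to replace $L$ by a power with sections), while the paper's avoids computing degrees altogether.
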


\begin{proof}
  Let $L$ be an ample line bundle on $X$ admitting a global section $s \in \Gamma(X,L)\minus \{0\}$, and $M$ be any ample line bundle on $Y$. Since $f^*M$ is semiample, $N\defeq L \tensor f^{*}M$ is ample (\cite{EGAII} Proposition 4.5.6 (ii)).

  Let $d\ge 1$ be an integer, and $t\in \Gamma(Y,M^{\tensor d})\minus\{0\}$. Since $f$ is surjective, $f^*t\neq 0$. Thus $s^{\tensor d}\tensor f^*t$ is a non-zero global section of $N^{\tensor d}$ and, since $\hat{\varphi}$ is ZL-dense in $X$, there exists a constant $C>0$ independent of $d$ such that
  \begin{align*}
    \ord_{0}\hat{\varphi}^*(s^{\tensor d}\tensor f^*t) \le Cd^n\text{.}
  \end{align*}
To complete the proof, it is sufficient to remark that
  \begin{align*}
\ord_{0}(f\circ \hat{\varphi})^*(t) = \ord_{0}\hat{\varphi}^*(f^*t) \le \ord_{0}\hat{\varphi}^*(s^{\tensor d}\tensor f^*t)\text{.}
  \end{align*}
\end{proof}

\begin{obs}
The above proposition, combined with the arguments in the proof of Proposition \ref{amplediv}, actually shows that for any finite surjective morphism $f:X \to Y$ between projective varieties over a field $k$, with $Y$ normal, a formal curve $\hat{\varphi}: \Spec k[\![q]\!] \to X$ is ZL-dense in $X$ if and only if $f\circ \hat{\varphi}$ is ZL-dense in $Y$. 
\end{obs}

\subsection{ZL-density in quasi-projective varieties}

 We defined a notion of ZL-density for formal curves in \emph{projective} varieties. In this paragraph, under a mild technical condition, we extend this notion, via compactification, to formal curves in \emph{quasi-projective} varieties. To assure that we obtain a well defined notion, we must show that this does not depend on the choice of compactification.

We start by reformulating ZL-density in terms of \emph{Weil divisors} in place of Cartier divisors. Let $X$ be an algebraic variety over a field $k$, and $\hat{\varphi}: \Spec k[\![q]\!] \to X$ be a formal curve. If $\hat{\varphi}(0)$ is a regular point of $X$, then we may define the intersection multiplicity of a Weil divisor with $\hat{\varphi}$: if $U$ is a regular open neighborhood of $\hat{\varphi}(0)$ and $Z$ is a Weil divisor on $X$, then $Z\cap U$ is induced by some Cartier divisor $D$ on $U$, and we define
\begin{align*}
  \mult_{\hat{\varphi}}Z = \mult_{\hat{\varphi}}D\text{.}
\end{align*}
We may thus mimic the proof of Proposition \ref{amplediv} to obtain the following result.

\begin{prop}
  Let $X$ be a projective variety of dimension $n$ over a field $k$, $L$ be an ample line bundle on $X$, and $\hat{\varphi} : \Spec k[\![q]\!] \to X$ be a formal curve such that $\hat{\varphi}(0)$ is a regular point of $X$. Then, $\hat{\varphi}$ is ZL-dense in $X$ if and only if there exists a constant $C>0$ such that, for every effective Weil divisor $Z$ on $X$,
  \begin{align*}
    \mult_{\hat{\varphi}}Z \le C (\deg_LZ)^n
  \end{align*}
\end{prop}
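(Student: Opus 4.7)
The plan is to mimic the proof of Proposition \ref{amplediv} at the level of Weil divisors, leveraging the hypothesis that $\hat{\varphi}(0)$ is regular to make $\mult_{\hat{\varphi}}Z$ well-defined, together with the key observation that when an effective Weil divisor is pushed forward along a finite surjection to a smooth variety, the resulting cycle is automatically Cartier.

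For the direction ``Weil bound $\Longrightarrow$ ZL-density'' I would simply observe that every effective Cartier divisor $D$ on $X$ determines an effective Weil divisor $[D]$ with $\deg_LD=\deg_L[D]$, and since $\hat{\varphi}(0)$ is regular, a local equation for $D$ at $\hat{\varphi}(0)$ is also a local equation for $[D]$, giving $\mult_{\hat{\varphi}}D=\mult_{\hat{\varphi}}[D]$. The Weil bound applied to $[D]$ then yields the Cartier bound defining ZL-density.

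For the converse, I would assume $\hat{\varphi}$ is ZL-dense and invoke Proposition \ref{amplediv} to fix $C>0$ with $\ord_{0}\hat{\varphi}^{*}s\le Cd^{n}$ for every $d\ge 1$ and every $s\in\Gamma(X,L^{\tensor d})\minus\{0\}$. Choosing a finite surjective morphism $f:X\to\PP^{n}_{k}$ with $f^{*}\mathcal{O}(1)\cong L^{\tensor m}$, given an effective Weil divisor $Z$ on $X$ I would form its cycle-theoretic pushforward $f_{*}Z$ on $\PP^{n}_{k}$; since $\PP^{n}_{k}$ is smooth, $f_{*}Z$ is an effective Cartier divisor, cut out by some homogeneous polynomial $P$ of degree $\deg_{\mathcal{O}(1)}f_{*}Z=m^{n-1}\deg_{L}Z$ (projection formula). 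The essential new step would be the cycle inequality $f^{*}f_{*}Z\ge Z$, which I would verify prime divisor by prime divisor: at the generic point of a prime divisor $W\subset X$, the coefficient of $W$ in $f^{*}f_{*}W$ equals $[\kappa(W):\kappa(f(W))]\cdot e_{W/f(W)}\ge 1$, while coefficients on other prime divisors are nonnegative. Using that $\mult_{\hat{\varphi}}$ is additive and monotone on effective Weil divisors over the regular locus of $X$, I would conclude
\begin{align*}
\mult_{\hat{\varphi}}Z \le \mult_{\hat{\varphi}}(f^{*}f_{*}Z) = \ord_{0}\hat{\varphi}^{*}(f^{*}P) \le C(m\deg_{\mathcal{O}(1)}f_{*}Z)^{n} = Cm^{n^{2}}(\deg_{L}Z)^{n},
\end{align*}
using that $f^{*}P$ is a non-zero global section of $L^{\tensor m\deg_{\mathcal{O}(1)}f_{*}Z}$.

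The main obstacle will be bookkeeping at the Weil-divisor level: I expect the cycle inequality $f^{*}f_{*}Z\ge Z$ to require a careful local calculation with residue degrees and ramification indices (replacing the Cartier-divisor argument via norms used in Proposition \ref{amplediv}), and I must verify that $\mult_{\hat{\varphi}}$ extended to Weil divisors via a regular neighborhood of $\hat{\varphi}(0)$ is well-defined, additive, and monotone.
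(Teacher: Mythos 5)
Your proposal is correct and follows essentially the route the paper intends (the paper only says to ``mimic the proof of Proposition \ref{amplediv}'' and leaves the details out): the one genuinely new ingredient at the Weil-divisor level, namely replacing the norm construction of $f_*D$ by the cycle-theoretic pushforward and checking $f^*f_*Z\ge Z$ coefficient by coefficient via $[\kappa(W):\kappa(f(W))]\,e_{W/f(W)}\ge 1$, is exactly what is needed, and your use of the regularity of $\hat{\varphi}(0)$ to make $\mult_{\hat{\varphi}}$ well-defined, additive, and monotone on effective Weil divisors is the correct justification. No gaps.
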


We are now in position to prove that a modification away from $\hat{\varphi}$ does not affect ZL-density.

\begin{prop}\label{propmod}
Let $f:X \to Y$ be a proper morphism between projective varieties over a field $k$, and $U$ be an open subset of $Y$ such that $f$ induces an isomorphism $f^{-1}(U) \stackrel{\sim}{\to} U$. If $\hat{\varphi}: \Spec k[\![q]\!] \to f^{-1}(U)\subset X$ is a formal curve such that $\hat{\varphi}(0)$ is a regular point of $X$, then $\hat{\varphi}$ is ZL-dense in $X$ if and only if $f\circ \hat{\varphi}$ is ZL-dense in $Y$.
\end{prop}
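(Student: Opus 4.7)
The plan is to prove each implication separately. Observe first that since $U$ is a non-empty open of the irreducible scheme $Y$ it is dense, so $f(X)$, which is closed by properness, contains $\overline{U}=Y$; hence $f$ is surjective, and $\dim X = \dim f^{-1}(U) = \dim U = \dim Y =: n$. Consequently, the forward implication ``$\hat{\varphi}$ ZL-dense in $X$ $\Rightarrow$ $f \circ \hat{\varphi}$ ZL-dense in $Y$'' is a direct consequence of Proposition \ref{surj}.

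For the reverse implication, I would work with the Weil divisor reformulation of ZL-density given in the proposition immediately preceding this one, which applies because $\hat{\varphi}(0)$ is regular in $X$ and, by transport through the isomorphism $f^{-1}(U) \stackrel{\sim}{\to} U$, the point $(f\circ \hat{\varphi})(0) = f(\hat{\varphi}(0)) \in U$ is a regular point of $Y$. Fix ample line bundles $M$ on $X$ and $L$ on $Y$; then $f^*L$ is semiample. For a prime Weil divisor $Z$ on $X$ two cases arise. If $Z \subseteq X \smallsetminus f^{-1}(U)$, then $Z$ avoids $\hat{\varphi}(0)$ and thus $\mult_{\hat{\varphi}}Z = 0$. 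Otherwise $Z \cap f^{-1}(U)$ is a dense open of $Z$ mapped isomorphically into $U$, so $f$ restricts to a birational morphism $Z \to f(Z)$ and the pushforward is the prime divisor $f_*Z = \overline{f(Z \cap f^{-1}(U))}$ on $Y$; transporting a local equation of $Z$ near $\hat{\varphi}(0)$ through $f^{-1}(U) \stackrel{\sim}{\to} U$ yields a local equation of $f_*Z$ near $f(\hat{\varphi}(0))$, whence $\mult_{\hat{\varphi}}Z = \mult_{f\circ \hat{\varphi}} f_*Z$.

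Granting ZL-density of $f \circ \hat{\varphi}$ in $Y$, there is a constant $C>0$ with $\mult_{f\circ \hat{\varphi}} f_*Z \le C(\deg_L f_*Z)^n$. The projection formula gives $\deg_L f_*Z = \deg_{f^*L}Z$, and since $f^*L$ is semiample and $M$ is ample, Proposition \ref{propsemiample} furnishes a constant $C' > 0$ (independent of $Z$) with $\deg_{f^*L}Z \le C' \deg_M Z$. Combining these inequalities yields $\mult_{\hat{\varphi}}Z \le C(C')^n (\deg_M Z)^n$ for every prime divisor $Z$. For a general effective Weil divisor $Z = \sum_i n_i Z_i$ with $n_i \in \ZZ_{>0}$, additivity of $\mult_{\hat{\varphi}}$ together with the elementary inequality $\sum_i n_i x_i^n \le (\sum_i n_i x_i)^n$ for nonnegative reals $x_i$ (valid since $n_i \le n_i^n$ and $\sum_i a_i^n \le (\sum_i a_i)^n$ by the multinomial expansion) upgrade this estimate to $Z$ itself, establishing ZL-density of $\hat{\varphi}$ in $X$.

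The main technical point I anticipate is the identity $\mult_{\hat{\varphi}}Z = \mult_{f\circ \hat{\varphi}} f_*Z$ for prime divisors meeting the exceptional locus $X \smallsetminus f^{-1}(U)$: the argument rests on the purely local nature of the intersection multiplicity at the regular point $\hat{\varphi}(0)$, which allows the defining equation to be chosen inside $f^{-1}(U)$ and transported without loss through the isomorphism with $U$.
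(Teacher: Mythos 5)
Your proposal is correct and follows essentially the same route as the paper: the forward implication via surjectivity and Proposition \ref{surj}, and the converse via the Weil-divisor reformulation, the identity $\mult_{\hat{\varphi}}Z=\mult_{f\circ\hat{\varphi}}f_*Z$ (valid since $f$ is an isomorphism over $U$ and $\hat{\varphi}$ factors through $f^{-1}(U)$), the projection formula, and Proposition \ref{propsemiample} applied to the semiample bundle $f^*L$. The only cosmetic difference is your reduction to prime divisors followed by the aggregation inequality, which is harmless but unnecessary, since the multiplicity identity and the degree comparison already hold for arbitrary effective Weil divisors.
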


\begin{proof}
  Since $f:X \to Y$ is a proper birational morphism, and $Y$ is irreducible, $f$ is surjective. By Proposition \ref{surj}, if $\hat{\varphi}$ is ZL-dense in $X$, then $f\circ\hat{\varphi}$ is ZL-dense in $Y$.

  Conversely, suppose that $f\circ\hat{\varphi}$ is ZL-dense in $Y$. Fix an ample line bundle $L$ (resp. $M$) on $X$ (resp. $Y$), and let $Z$ be an effective Weil divisor on $X$. Since $f$ is an isomorphism over $U$ and $\hat{\varphi}$ factors through $f^{-1}(U)$, we have
  \begin{align*}
    \mult_{\hat{\varphi}}Z = \mult_{f\circ \hat{\varphi}}f_*Z\text{.}
  \end{align*}
  As $f\circ\hat{\varphi}$ is ZL-dense in $Y$, there is a constant $C_1>0$ (not depending on $Z$) such that
  \begin{align*}
    \mult_{f\circ\hat{\varphi}}f_*Z \le C_1 (\deg_M f_*Z)^n\text{,}
  \end{align*}
  where $n=\dim Y = \dim X$.   By the projection formula, $\deg_M f_*Z = \deg_{f^*M}Z$. Since $f^*M$ is semiample, it follows from Proposition \ref{propsemiample} that there exists a constant $C_2>0$ such that $\deg_{f^*M}Z\le C_2 \deg_L Z$. We conclude that
  \begin{align*}
   \mult_{\hat{\varphi}}Z \le C_1C^n_2 (\deg_LZ)^n\text{.}
  \end{align*}
\end{proof}

Let us remark that it is essential in our proof above to consider the pushforward of a Weil divisor on $X$ under the proper morphism $f$. This explains why we reformulated ZL-density in terms of Weil divisors as the pushforward of a Cartier divisor under a proper morphism is not well defined in general.



\begin{coro}\label{corostandard}
Let $X$ be a quasi-projective variety over a field $k$, and $\hat{\varphi}: \Spec k[\![q]\!] \to X$ be a formal curve such that $\hat{\varphi}(0)$ is a regular point of $X$. If $j_i:X \hookrightarrow \overline{X}_i$, $i=1,2$, are two projective compactifications of $X$, then $j_1\circ \hat{\varphi}$ is ZL-dense in $\overline{X}_1$ if and only if $j_2\circ \hat{\varphi}$ is ZL-dense in $\overline{X}_2$. 
\end{coro}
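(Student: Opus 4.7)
The plan is to reduce the statement to a double application of Proposition \ref{propmod} by constructing a common projective compactification of $X$ that dominates both $\overline{X}_1$ and $\overline{X}_2$. Concretely, I would consider the locally closed immersion $(j_1,j_2)\colon X\hookrightarrow \overline{X}_1\times\overline{X}_2$ and take $\overline{X}_3$ to be the reduced Zariski-closure of its image. Since $X$ is irreducible and $\overline{X}_1\times\overline{X}_2$ is projective, $\overline{X}_3$ is an integral projective variety, and $(j_1,j_2)$ factors through an open immersion $j_3\colon X\hookrightarrow\overline{X}_3$. The two projections $p_i\colon \overline{X}_3\to\overline{X}_i$ are proper morphisms of projective varieties satisfying $p_i\circ j_3=j_i$.

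The technical core of the argument is to verify that each $p_i$ fits the hypotheses of Proposition \ref{propmod} with $U\defeq j_i(X)$; that is, that $p_i$ restricts to an isomorphism $p_i^{-1}(j_i(X))\xrightarrow{\sim}j_i(X)$, and that $p_i^{-1}(j_i(X))$ coincides with the open subscheme $j_3(X)\subset\overline{X}_3$. This follows from the standard observation that $\overline{X}_3$ is the closure (in $\overline{X}_i\times\overline{X}_{3-i}$) of the graph of the morphism $j_{3-i}\circ j_i^{-1}\colon j_i(X)\to\overline{X}_{3-i}$; over the open subset $j_i(X)$ where this morphism is defined, the closure of the graph coincides with the graph itself, so the projection to $\overline{X}_i$ is an isomorphism there, with preimage equal to $j_3(X)$. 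I expect this to be the main obstacle: one must genuinely rule out extraneous points of $\overline{X}_3$ lying over $j_i(X)$, which is exactly where the identification $p_i^{-1}(j_i(X))=j_3(X)$ bites.

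Granted this, set $\tilde{\varphi}\defeq j_3\circ\hat{\varphi}\colon\Spec k[\![q]\!]\to\overline{X}_3$, so that $p_i\circ\tilde{\varphi}=j_i\circ\hat{\varphi}$, and observe that $\tilde{\varphi}(0)=\hat{\varphi}(0)$ is regular in $\overline{X}_3$ since $j_3$ is an open immersion and regularity is local. Proposition \ref{propmod} applied to $f=p_i$, $U=j_i(X)$, and the formal curve $\tilde{\varphi}$ (which factors through $p_i^{-1}(U)=j_3(X)$) then yields, for each $i\in\{1,2\}$, the equivalence
\[
\tilde{\varphi}\text{ is ZL-dense in }\overline{X}_3\iff j_i\circ\hat{\varphi}\text{ is ZL-dense in }\overline{X}_i.
\]
Chaining the two equivalences through the common condition on $\overline{X}_3$ gives the desired equivalence between ZL-density of $j_1\circ\hat{\varphi}$ in $\overline{X}_1$ and of $j_2\circ\hat{\varphi}$ in $\overline{X}_2$.
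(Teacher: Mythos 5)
Your proof is correct and is essentially the paper's own argument: the paper likewise forms the scheme-theoretic image of $(j_1,j_2)$ in $\overline{X}_1\times_k\overline{X}_2$ and applies Proposition \ref{propmod} to the two projections. Your additional verification via the graph closure that $p_i^{-1}(j_i(X))=j_3(X)$ is exactly the point the paper leaves implicit, and it is carried out correctly.
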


\begin{proof}
Consider the scheme theoretic image $\overline{X}$ of $(j_1,j_2): X \to \overline{X}_1\times_k\overline{X}_2$ and apply Proposition \ref{propmod} to the natural projections $\overline{X} \to \overline{X}_i$, $i=1,2$.
\end{proof}


This enables us to define a good notion of ZL-density in a quasi-projective variety.

\begin{defi}
Let $X$ be a quasi-projective variety over a field $k$, and $\hat{\varphi}: \Spec k[\![q]\!] \to X$ be a formal curve such that $\hat{\varphi}(0)$ is a regular point of $X$. We say that $\hat{\varphi}$ is \emph{ZL-dense} in $X$ if there exists a projective compactification $j: X \hookrightarrow \overline{X}$ of $X$ such that $j\circ \hat{\varphi}$ is ZL-dense in $\overline{X}$.
\end{defi}

\section{Moderate growth and jet estimates on complex disks}\label{mgje}

In this section we introduce \emph{characteristic functions} and \emph{moderate growth} of certain 2-forms on a complex disk; these are purely analytic notions. We then proceed to establishing natural estimates on jets of sections of holomorphic line bundles on disks. In a sense, our exposition is more basic than the usual accounts on Value Distribution Theory, since characteristic functions of analytic curves will be a special case of our construction.

The kind of jet estimates we consider here play a central role in Diophantine Approximation and transcendence proofs. They notably appear in such proofs using the formalism of Arakelov Geometry, to estimate the height of evaluation maps, when applying Bost's method of slopes (see, for instance, \cite{bost95}, \cite{bost01}, \cite{graftieaux05}, \cite{gasbarri10}, \cite{gasbarri13}, \cite{herblot12}). 

\subsection{Characteristic functions}\label{charfunc}

Let $r>0$ be a real number, and $p\in D_r \defeq \{z \in \CC \mid |z|<r\}$. Recall that the \emph{Green's function} of $D_r$ at $p$ is defined by
\begin{align*}
g_{D_r,p}: \CC &\to (-\infty,+\infty]\\
                   z&\mapsto \log^+ \left|\frac{r^2 -\overline{p}z}{r(z-p)}\right|\text{.}
\end{align*}
This is a superharmonic (thus locally integrable) function on $\CC$, real-valued and continuous on $\CC\minus\{p\}$, strictly positive and harmonic (thus $C^{\infty}$) on $D_r\minus \{p\}$, and vanishing identically on $\CC \minus D_r$.

For any locally bounded $2$-form $\alpha$ defined on an open neighborhood of $D_r$, we denote
\begin{align*}
T_{\alpha, p}(r) \defeq \int_{\CC}g_{D_r,p}\alpha\text{.}
\end{align*}

\begin{obs}\label{intparts}
An integration by parts with $u(t) = \int_{D_t}\alpha$ and $v(t)=\log t$ shows that
\begin{align*}
T_{\alpha,0}(r) = \int_0^r \left( \int_{D_t}\alpha\right)\frac{dt}{t}\text{.}
\end{align*}
Since $g_{D_r,p}$ is the composition of $g_{D_r,0}$ with the automorphism of $D_r$ given by
\begin{align*}
  \sigma_{r,p}(z) = \frac{r^2(z-p)}{r^2-\overline{p}z}\text{,}
\end{align*}
we obtain
\begin{align*}
T_{\alpha,p}(r) = \int_0^r \left( \int_{D_t}(\sigma_{r,p}^{-1})^*\alpha\right)\frac{dt}{t}\text{.}
\end{align*}
\end{obs}

Let $R>0$ be a real number, and $\alpha$ be a locally bounded semipositive $(1,1)$-form on the disk $D_{R}$. The non-decreasing function
\begin{align*}
  T_{\alpha} : (0,R) &\to \RR_{\ge 0}\\
                 r &\mapsto T_{\alpha,0}(r)
\end{align*}
is the \emph{characteristic function} of $\alpha$ in $D_R$.

We shall be particularly interested in the following special case. Let $\overline{L} = (L, \| \ \|)$ be a semipositive Hermitian line bundle on $D_R$. The \emph{characteristic function} of $\overline{L}$ in $D_R$ is defined by
\begin{align*}
T_{\overline{L}} \defeq T_{c_1(\overline{L})}\text{.}
\end{align*}
We also denote $T_{\overline{L},p}(r) = T_{c_1(\overline{L}),p}(r)$ for $p \in D_r\subset D_R$. 

\begin{obs}
  Let $d \ge 1$ be an integer. As $c_1(\overline{L}^{\tensor d}) = d\cdot c_1(\overline{L})$, we have $T_{\overline{L}^{\tensor d},p}(r) = d\cdot T_{\overline{L},p}(r)$.
\end{obs}

\subsection{Forms of moderate growth}

Let $R>0$ be a real number, and $\alpha$ be a locally bounded semipositive (1,1)-form on $D_R$.

\begin{defi}\label{defmodgrowth}
  We say that $\alpha$ has \emph{moderate growth} if there exist constants $a,b>0$ such that
  \begin{align*}
    T_{\alpha}(r) \le a + b \log \frac{1}{1-\frac{r}{R}}
  \end{align*}
  for any $r\in(0,R)$. When $\alpha = c_1(\overline{L})$ for some semipositive Hermitian line bundle $\overline{L}$ on $D_R$, we rather say that $\overline{L}$ has \emph{moderate growth}.
\end{defi}

The motivating example of a form of moderate growth is the following one.

\begin{ex}[Poincaré form]\label{poincareform}
Let
\begin{align*}
d\mu_R \defeq \frac{i}{2}\left(\frac{R}{R^2-|z|^2} \right)^2 dz \wedge d\overline{z}
\end{align*}
be the 2-form associated to the surface element of the \emph{Poincaré metric} $\frac{R}{R^2-|z|^2}|dz|$ on $D_R$. A direct computation shows that, for any $r\in(0,R)$,
  \begin{align*}
   T_{d\mu_R}(r) = \frac{\pi}{2}\log \frac{1}{1+\frac{r}{R}} + \frac{\pi}{2}\log \frac{1}{1-\frac{r}{R}}\text{.}
  \end{align*}
  Thus, the 2-form $d\mu_R$ on $D_R$ has moderate growth. 
\end{ex}

We have defined moderate growth for a form $\alpha$ as a growth condition on $T_{\alpha,0}(r)$ with respect to $r$. Our next result shows that a similar growth condition for $T_{\alpha,p}(r)$ holds uniformly for $p$ varying in a fixed compact subset.

\begin{lemma}\label{lemmagreenfunc}
  Let $0<r_0<r_1<R_1<R$ be real numbers. Then there exists a constant $C>0$ such that, for every $p\in D_{r_0}$, and every $r\in [R_1,R)$, we have
  \begin{align*}
    g_{D_r,p} \le g_{D_{r_1},p} + Cg_{D_r,0}\text{.}
  \end{align*}
\end{lemma}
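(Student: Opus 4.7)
The plan is to split the disk into the annulus $r_1 \le |z| \le r$ and the inner disk $|z| \le r_1$, treat each piece by a direct computation using the explicit formula for the Green's function, and then patch them together using the maximum principle. Outside $D_r$ there is nothing to prove since $g_{D_r,p}$ vanishes there.

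First I would compute, for $z \in D_r$ with $|z| \le r_1$,
\begin{align*}
g_{D_r,p}(z)-g_{D_{r_1},p}(z)=\log\frac{r_1|r^2-\overline{p}z|}{r|r_1^2-\overline{p}z|}
\end{align*}
and bound the right-hand side uniformly in $p\in D_{r_0}$, $r\in[R_1,R)$, $|z|\le r_1$. Indeed $|r^2-\overline{p}z|\le R^2+r_0r_1$, $|r_1^2-\overline{p}z|\ge r_1(r_1-r_0)>0$, and $r\ge R_1$, so the difference is bounded above by a constant
\begin{align*}
C_1\defeq \log\frac{R^2+r_0r_1}{R_1(r_1-r_0)}
\end{align*}
that depends only on $R, R_1, r_1, r_0$. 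This takes care of the inner disk because on $|z|\le r_1$ one has $g_{D_r,0}(z)=\log(r/|z|)\ge\log(R_1/r_1)>0$, so choosing $C\defeq C_1/\log(R_1/r_1)$ already gives $g_{D_r,p}(z)-g_{D_{r_1},p}(z)\le C\,g_{D_r,0}(z)$ there.

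On the annulus $r_1\le|z|<r$ the function $g_{D_{r_1},p}$ vanishes, so I need the stronger inequality $g_{D_r,p}\le C\,g_{D_r,0}$. Here I would use the maximum principle. Both $g_{D_r,p}$ and $g_{D_r,0}$ are harmonic on $\{r_1<|z|<r\}$ (no singularities in this annulus, since $|p|<r_0<r_1$), continuous up to the closure, and vanish on $|z|=r$. On the inner circle $|z|=r_1$ the function $g_{D_{r_1},p}$ vanishes, so the bound already established on $|z|=r_1$ reads $g_{D_r,p}(z)\le C_1\le C\,g_{D_r,0}(z)$ with the same $C$. Applying the maximum principle to the harmonic function $C\,g_{D_r,0}-g_{D_r,p}$ on the annulus yields $g_{D_r,p}\le C\,g_{D_r,0}$ there.

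The main obstacle — such as it is — is keeping the estimates uniform in $p$ and $r$, which is why it is crucial that $R<\infty$ and that $p$ and $r$ stay in the compact intervals $\overline{D_{r_0}}$ and $[R_1,R)$ \emph{separated from $\partial D_r$ and $\partial D_{r_1}$ respectively}; this separation is exactly what makes the two factors in $C_1$ well-defined and finite. Combining the two regions produces the desired inequality $g_{D_r,p}\le g_{D_{r_1},p}+C\,g_{D_r,0}$ on all of $\CC$ with a constant $C$ depending only on $r_0,r_1,R_1,R$.
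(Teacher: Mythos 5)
Your proof is correct and follows essentially the same route as the paper: the decisive step in both is the explicit bound $\log\frac{R^2+r_0r_1}{R_1(r_1-r_0)}\big/\log\frac{R_1}{r_1}$ for the ratio $g_{D_r,p}/g_{D_r,0}$ on $\partial D_{r_1}$, propagated by the maximum principle. The only (cosmetic) difference is that on the inner disk the paper also invokes the maximum principle, applied to the subharmonic function $g_{D_r,p}-g_{D_{r_1},p}-C\,g_{D_r,0}$, whereas you handle that region by the direct cancellation of the $\log|z-p|$ singularities.
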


\begin{proof}
  Let $r \in [R_1, R)$ and $p\in D_{r_0}$. We set
  \begin{align*}
   C_{r,p} \defeq \max_{z\in \partial D_{r_1}}\frac{g_{D_r,p}(z)}{g_{D_r,0}(z)}
  \end{align*}
  and
  \begin{align*}
   f_{r,p} \defeq g_{D_{r},p} - g_{D_{r_1},p} - C_{r,p}g_{D_r,0}\text{.}
  \end{align*}
  By definition of $C_{r,p}$, we see that $f_{r,p}\le 0$ over $\partial D_{r_1}$. Moreover, $f_{r,p}$ vanishes identically on $\partial D_r$. Since $f_{r,p}$ is subharmonic over the domains $D_{r_1}$ and $D_r \minus \overline{D}_{r_1}$, by the Maximum Principle, we conclude that $f_{r,p}\le 0$ everywhere.

  To finish the proof, it is sufficient to remark that $C_{r,p}$ is uniformly bounded for $r\in [R_1,R)$ and $p\in D_{r_0}$. Indeed, for $z \in \partial D_{r_1}$, we have
  \begin{align*}
    \frac{g_{D_r,p}(z)}{g_{D_r,0}(z)} = \frac{\log\left|\frac{r^2-\overline{p}z}{r(z-p)} \right|}{\log\left|\frac{r}{z} \right|} \le \frac{\log \frac{R^2 + r_0r_1}{R_1(r_1-r_0)}}{\log \frac{R_1}{r_1}}\text{.}
  \end{align*}
\end{proof}

\begin{prop}\label{lemmacompact}
Let $R>0$ be a real number, $K\subset D_R$ be a compact subset, and $\alpha$ be a locally bounded semipositive (1,1)-form on $D_R$. Fix $R_1\in(0,R)$ such that $K\subset D_{R_1}$. If $\alpha$ has moderate growth, then there exist real numbers $a,b>0$ such that
  \begin{align*}
{\sup}_{p\in K}T_{\alpha,p}(r) \le a + b \log \frac{1}{1-\frac{r}{R}}
  \end{align*}
  for every $r\in [R_1,R)$.
\end{prop}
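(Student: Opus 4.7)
The natural approach is to reduce to the unrestricted characteristic function $T_\alpha(r)=T_{\alpha,0}(r)$ via the pointwise estimate on Green's functions furnished by Lemma \ref{lemmagreenfunc}, picking up a bounded error term along the way.

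Concretely, since $K\subset D_{R_1}$ is compact, I would first choose auxiliary radii $r_0,r_1$ with $K\subset D_{r_0}$ and $r_0<r_1<R_1<R$. Lemma \ref{lemmagreenfunc} then provides a constant $C>0$ (depending on $r_0,r_1,R_1,R$ but not on $p$ or $r$) such that
\begin{equation*}
g_{D_r,p}(z)\le g_{D_{r_1},p}(z)+C\,g_{D_r,0}(z)
\end{equation*}
for all $z\in\CC$, all $p\in D_{r_0}\supset K$, and all $r\in[R_1,R)$. Integrating this inequality against the semipositive form $\alpha$ (so that both sides, when integrated, are well defined and the inequality is preserved), we obtain
\begin{equation*}
T_{\alpha,p}(r)\le \int_{\CC} g_{D_{r_1},p}\,\alpha + C\,T_\alpha(r).
\end{equation*}

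The main (and only nontrivial) point is then to show that the first term on the right is uniformly bounded for $p\in K$. This follows from the explicit formula
\begin{equation*}
g_{D_{r_1},p}(z)=\log|r_1^2-\overline{p}z|-\log r_1-\log|z-p|
\end{equation*}
valid on $D_{r_1}$ (where $g_{D_{r_1},p}>0$), the fact that $g_{D_{r_1},p}$ is supported on $\overline{D}_{r_1}$, and the fact that $\alpha$ is bounded on the compact set $\overline{D}_{r_1}\subset D_R$. Indeed the first two summands are uniformly bounded for $p\in K$, $z\in\overline{D}_{r_1}$, and $-\log|z-p|$ is locally integrable with $L^1$ norm on $\overline{D}_{r_1}$ uniformly bounded in $p\in K$ (for instance by comparison with $\int_{|w|\le 2r_1}\log\frac{1}{|w|}\,dA(w)$). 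Hence there is a constant $C'>0$, independent of $p\in K$ and $r\in[R_1,R)$, with $\int_\CC g_{D_{r_1},p}\,\alpha\le C'$.

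Combining the two estimates and invoking the moderate growth hypothesis $T_\alpha(r)\le a_0+b_0\log\frac{1}{1-r/R}$, we conclude
\begin{equation*}
\sup_{p\in K}T_{\alpha,p}(r)\le C'+C\,T_\alpha(r)\le (C'+Ca_0)+Cb_0\log\frac{1}{1-\frac{r}{R}},
\end{equation*}
which is the desired bound with $a\defeq C'+Ca_0$ and $b\defeq Cb_0$. I do not anticipate any serious obstacle here; the only subtlety is the verification that the integral of $g_{D_{r_1},p}$ against $\alpha$ is uniformly bounded in $p$, which is where local boundedness of $\alpha$ enters in an essential way.
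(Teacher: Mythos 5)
Your proof is correct and follows essentially the same route as the paper: the same choice of auxiliary radii $r_0<r_1<R_1$, the same application of Lemma \ref{lemmagreenfunc}, and the same reduction to bounding $T_{\alpha,p}(r_1)=\int_{\CC}g_{D_{r_1},p}\,\alpha$ uniformly over $p\in K$. The only (harmless) difference is in that last step: the paper simply observes that $p\mapsto T_{\alpha,p}(r_1)$ is continuous, hence bounded on the compact $K$, whereas you bound it directly from the explicit formula for the Green's function together with the uniform local integrability of $-\log|z-p|$.
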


\begin{proof}
  Let $0<r_0<r_1$ be real numbers such that $K\subset D_{r_0}$ and $r_1<R_1$. By Lemma \ref{lemmagreenfunc}, there exists a constant $C>0$ such that
  \begin{align*}
   g_{D_r,p} \le g_{D_{r_1},p} + C g_{D_r,0} 
  \end{align*}
  for every $r\in [R_1,R)$ and every $p\in K$, so that
  \begin{align*}
   T_{\alpha,p}(r) \le T_{\alpha,p}(r_1) + C T_{\alpha,0}(r)\text{.}
  \end{align*}
  Since $\alpha$ has moderate growth, to conclude it is sufficient to remark that the function $p \mapsto T_{\alpha,p}(r_1)$ is continuous, thus bounded on the compact $K$.
\end{proof}

\subsection{Jets and characteristic functions}

Let $r>0$ be a real number and $p \in D_r$. We define a probability measure $\pi_{r,p}$ supported on $\partial D_r$ by
  \begin{align*}
   \int \psi \pi_{r,p} = \frac{1}{2\pi}\int_{0}^{2\pi}\psi(\sigma_{r,p}^{-1}(re^{i\theta}))d\theta\text{,}
  \end{align*}
 where $\sigma_{r,p}$ is the function defined in Remark \ref{intparts}. For the next proposition, we shall need the following classical result.

\begin{lemma}\label{greenfunc}
As an equality of distributions on $\CC$, we have
  \begin{align*}
   -2dd^cg_{D_r,p} = \delta_p - \pi_{r,p}  \text{.}
  \end{align*}
\end{lemma}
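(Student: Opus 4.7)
The plan is to split the argument into a reduction to the base point $p=0$ and a direct computation in that case.

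For the reduction, observe that by Remark \ref{intparts} one has $g_{D_r,p} = g_{D_r,0} \circ \sigma_{r,p}$, where $\sigma_{r,p}$ is a M\"obius transformation whose only pole $r^2/\overline{p}$ lies strictly outside $\overline{D_r}$; in particular it restricts to a biholomorphism of a neighborhood of $\overline{D_r}$ sending $p$ to $0$ and $\partial D_r$ to itself. Since $dd^c$ commutes with pullback by holomorphic maps (an equality of distributions here, because $\sigma_{r,p}$ is a biholomorphism on a neighborhood of the support of $g_{D_r,p}$), one obtains $dd^c g_{D_r,p} = \sigma_{r,p}^*(dd^c g_{D_r,0})$. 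The pullback of a Borel measure by a diffeomorphism is the pushforward by the inverse, so $\sigma_{r,p}^*\delta_0 = \delta_p$ and $\sigma_{r,p}^*\pi_{r,0} = \pi_{r,p}$, the latter identification being literally the definition of $\pi_{r,p}$ recalled just above the lemma. Hence the general statement follows formally from the case $p=0$.

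For $p=0$ the function $g_{D_r,0}(z) = (\log r - \log|z|)\,\chi_{D_r}(z)$ is harmonic on $D_r\setminus\{0\}$, vanishes on $\partial D_r$ and outside $\overline{D_r}$, and has a logarithmic singularity at the origin. For a test function $\psi\in C^\infty_c(\CC)$, I would compute $\langle dd^c g_{D_r,0},\psi\rangle = \int_{D_r} g_{D_r,0}\, dd^c\psi$ (the exterior contributes nothing) by applying the Green formula for $dd^c$,
\[
\int_{\Omega}(u\, dd^c v - v\, dd^c u) = \int_{\partial\Omega}(u\, d^c v - v\, d^c u),
\]
on $\Omega = D_r \setminus \overline{B_\epsilon(0)}$ with $u = g_{D_r,0}$, $v = \psi$, and then letting $\epsilon \to 0$. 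Harmonicity kills the interior $v\, dd^c u$ term; the vanishing of $g_{D_r,0}$ on $\partial D_r$ kills the $u\, d^c v$ contribution on the outer boundary; and the small-circle integral $\int_{\partial B_\epsilon} g_{D_r,0}\, d^c\psi$ is $O(\epsilon\log\epsilon)$. Using the polar formula for $d^c$ recalled in \ref{dc} one finds $d^c(\log|z|) = d\theta/(4\pi)$, hence $d^c g_{D_r,0} = -d\theta/(4\pi)$ on $D_r\setminus\{0\}$. Therefore the inner-boundary term $\int_{\partial B_\epsilon}\psi\, d^c g_{D_r,0}$ tends to $-\tfrac12\psi(0)$ (which reproduces the classical identity $dd^c\log|z|=\tfrac12\delta_0$), while the outer-boundary term equals $-\int_{\partial D_r}\psi\, d^c g_{D_r,0} = \tfrac{1}{4\pi}\int_0^{2\pi}\psi(re^{i\theta})\,d\theta = \tfrac12\int\psi\, d\pi_{r,0}$. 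Assembling gives $dd^c g_{D_r,0} = \tfrac12(\pi_{r,0} - \delta_0)$, equivalent to $-2\,dd^c g_{D_r,0}=\delta_0-\pi_{r,0}$.

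I do not expect a genuine obstacle here. The only mildly delicate step is the $\epsilon\to 0$ analysis around the logarithmic singularity in Step~2, which is entirely standard; the remainder is careful bookkeeping of signs, orientations, and the polar expression for $d^c$.
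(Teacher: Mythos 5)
Your proof is correct and follows exactly the route the paper intends: its one-line proof ("Apply Remark \ref{intparts} and Stokes' Theorem") is precisely your two steps, namely the reduction to $p=0$ via the M\"obius automorphism $\sigma_{r,p}$ from Remark \ref{intparts}, followed by the Green/Stokes computation on $D_r\setminus\overline{B_\epsilon(0)}$ handling the logarithmic singularity. The signs and the normalization (consistent with $dd^c\log|z|=\tfrac12\delta_0$) check out.
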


\begin{proof}
Apply Remark \ref{intparts} and Stokes' Theorem (see also \ref{dc}).
\end{proof}

Let $U$ be an open subset of $\CC$ and $\overline{L} =(L,\| \ \|)$ be a Hermitian line bundle over $U$. If $s\in \Gamma(U,L)$ and $z\in U$, the  \emph{$m^{th}$ jet of $s$ at $z$} is denoted by $j^m_zs$. When $s$ has vanishing order at least $m$ at $z$, $j^m_zs$ is simply an element of the fiber of $L\tensor (\Omega^1_{U})^{\tensor m}$ at $z$. In this case, if $r$ is a real number strictly greater than $|z|$, we denote by 
$$
\|j^m_zs\|_r
$$ the norm of $j_z^ms$ with respect to the metric $\| \ \|$ on $L$ and the norm on $\Omega^1_{D_r,z}$ given by the dual of the Poincaré metric $\frac{r}{r^2-|z|^2}|dz|$ on $D_r$.

The following result, relating jets of sections with characteristic functions, is a basic tool in Nevanlinna Theory (see, for instance, \cite{NW04} Section 2.3); variants of it were used in the context of Diophantine Approximation in \cite{bost01} Proposition 4.14, \cite{bost04} Section 3, and \cite{gasbarri10} Theorem 5.13.

\begin{prop}\label{propmain}
  Let $R>0$ be a real number, $\overline{L} = (L, \| \ \|)$ be a semipositive Hermitian line bundle on $D_R$, and $p\in D_R$. For every real number $r\in (|p|,R)$ and every global section $s \in \Gamma(D_R,L)\minus \{0\}$, if $m \defeq \ord_{p}s$ denotes the vanishing order of $s$ at $p$, we have
 \begin{align}\label{mainformula}
   \log \|j_p^ms\|_r =T_{\overline{L},p}(r) + \int\log \|s\| \pi_{r,p} - \int g_{D_r,p}\delta_{\div(s)-m[p]}\text{.} 
 \end{align}
\end{prop}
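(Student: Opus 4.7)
The identity is a Poisson-Jensen formula for sections of a Hermitian line bundle on a disk. The plan is to derive it by pairing the Poincaré-Lelong current equation against the Green's function $g_{D_r,p}$ via Lemma \ref{greenfunc}, after first removing the zero of $s$ at $p$. Concretely, for any meromorphic section $t$ of $L$ on $D_R$ that is holomorphic and non-vanishing at $p$, the Poincaré-Lelong equation reads $-2dd^c\log\|t\|=c_1(\overline{L})-\delta_{\div(t)}$ as currents on $D_R$. Pairing with $g_{D_r,p}$, exchanging $-2dd^c$ onto the Green's function via integration by parts (legitimate since $\log\|t\|$ and $g_{D_r,p}$ are both locally integrable with \emph{disjoint} logarithmic singularities), and invoking Lemma \ref{greenfunc} in the form $-2dd^c g_{D_r,p}=\delta_p-\pi_{r,p}$, one obtains the \emph{baseline identity}
$$\log\|t(p)\| = T_{\overline{L},p}(r) + \int\log\|t\|\,\pi_{r,p} - \int g_{D_r,p}\,\delta_{\div(t)}.$$

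To treat the actual $s$, which vanishes at $p$ to order $m$, I would desingularize by setting $s' \defeq s/(z-p)^m$: this is a meromorphic section of $L$ on $D_R$ with $\div(s')=\div(s)-m[p]$, holomorphic and non-vanishing at $p$. Choosing a local holomorphic trivialization $e$ of $L$ near $p$ and writing $s=fe$ with $f(z)=a_m(z-p)^m+O((z-p)^{m+1})$ (so $a_m=f^{(m)}(p)/m!$), one has $s'(p)=a_m\,e(p)$, and invariantly $j_p^m s = a_m\,(dz)^{\tensor m}|_p \tensor e(p)$. Since the dual of the Poincaré metric $\frac{r}{r^2-|z|^2}|dz|$ assigns to $dz$ at $p$ the norm $(r^2-|p|^2)/r$, this gives
$$\|j_p^m s\|_r = |a_m|\left(\frac{r^2-|p|^2}{r}\right)^{\!m}\|e(p)\| = \|s'(p)\|\cdot\left(\frac{r^2-|p|^2}{r}\right)^{\!m}.$$

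Applying the baseline identity to $t=s'$ and translating back via $\log\|s'\|=\log\|s\|-m\log|z-p|$ reduces everything to computing $\int\log|z-p|\,\pi_{r,p}$. Using the explicit inverse $\sigma_{r,p}^{-1}(w)=r^2(w+p)/(r^2+w\overline{p})$ one finds $|\sigma_{r,p}^{-1}(re^{i\theta})-p| = (r^2-|p|^2)/|r+e^{i\theta}\overline{p}|$; since $r+w\overline{p}$ has no zero on $|w|\le 1$ (as $|p|<r$), the mean value property yields $\tfrac{1}{2\pi}\int_0^{2\pi}\log|r+e^{i\theta}\overline{p}|\,d\theta=\log r$, whence $\int\log|z-p|\,\pi_{r,p}=\log\tfrac{r^2-|p|^2}{r}$. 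Substituting, the two $m\log\tfrac{r^2-|p|^2}{r}$ corrections — one converting $\log\|s'(p)\|$ into $\log\|j_p^m s\|_r$, the other converting $\int\log\|s'\|\,\pi_{r,p}$ into $\int\log\|s\|\,\pi_{r,p}$ — cancel exactly, producing (\ref{mainformula}). The main delicate point is precisely this last cancellation: it is a direct reflection of the Möbius invariance of the Poincaré metric on $D_r$, and is exactly what singles out the dual Poincaré metric as the natural norm with which to measure the jet $j_p^m s$ in (\ref{mainformula}).
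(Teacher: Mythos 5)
Your argument is correct and follows the same route as the paper: both rest on pairing the Poincaré--Lelong equation against $g_{D_r,p}$ and invoking Lemma \ref{greenfunc} twice. The only difference is bookkeeping for the zero at $p$ --- you divide by $(z-p)^m$ and verify explicitly that the two $m\log\frac{r^2-|p|^2}{r}$ corrections cancel, whereas the paper adds $mg_{D_r,p}$ to $\log\|s\|$ so that the cancellation is absorbed into Lemma \ref{lemmamainprop}, which identifies the limit at $p$ directly with $\log\|j_p^ms\|_r$.
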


We start with a lemma that follows immediately from the explicit formula for the Green's functions on disks (cf. Paragraph \ref{charfunc}).

\begin{lemma}\label{lemmamainprop}
  With the above notation, if $E\subset D_R$ denotes the support of the divisor $\div(s)$, then the distribution $\log \|s\| + m g_{D_r,p}$ on $D_R$ defines a $C^{\infty}$ function over $(D_r \minus E)\cup \{p\}$ and a continuous function over $\partial D_r \minus E$.  Moreover,
    \begin{align*}
     \lim_{z\rightarrow p}(\log \|s(z)\| + m g_{D_r,p}(z)) = \log \|j_p^ms\|_r\text{.}
    \end{align*}\hfill$\blacksquare$    
\end{lemma}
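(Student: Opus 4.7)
The plan is local analysis centered at $p$. The key observation is that the $-m\log|z-p|$ singularity of $m g_{D_r,p}$ at $p$ exactly cancels the $m\log|z-p|$ contribution to $\log\|s\|$ arising from the zero of $s$ of order $m$. Choose a holomorphic trivialization $e$ of $L$ on a small open neighborhood $V \subset D_r$ of $p$ and write $s|_V = f\cdot e$ with $f$ holomorphic. Since $\ord_p s = m$, factor $f(z) = (z-p)^m u(z)$ with $u$ holomorphic and, shrinking $V$ if necessary, nowhere vanishing on $V$. The identity
\[
|r^2 - \bar p z|^2 - r^2|z-p|^2 = (r^2-|p|^2)(r^2-|z|^2) > 0,
\]
valid for $z, p\in D_r$, shows that the argument of $\log^+$ in the definition of $g_{D_r,p}(z)$ is $>1$ throughout $D_r$, so
\[
g_{D_r,p}(z) = \log|r^2 - \bar p z| - \log r - \log|z-p| \qquad (z \in D_r).
\]
Combining gives, on $V \setminus \{p\}$,
\[
\log\|s(z)\| + m\, g_{D_r,p}(z) = \log|u(z)| + \log\|e(z)\| + m\log|r^2 - \bar p z| - m\log r.
\]
Every term on the right extends smoothly to all of $V$ (the first since $u$ is nonvanishing, the third since $|\bar p z| < r^2$ for $z, p\in D_r$), so the distribution $\log\|s\| + m g_{D_r,p}$ coincides on $V$ with this $C^{\infty}$ function; this gives smoothness across $p$.

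On $(D_r\setminus E)\setminus \{p\}$, both $\log\|s\|$ (since $s$ does not vanish there) and $g_{D_r,p}$ (harmonic away from $p$) are $C^\infty$, yielding smoothness throughout $(D_r\setminus E)\cup\{p\}$. On $\partial D_r \setminus E$, the function $g_{D_r,p}$ is continuous (it is continuous on $\CC\setminus\{p\}$ by its explicit formula, and in fact vanishes on $\partial D_r$) and $\log\|s\|$ is continuous, so their sum is continuous.

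To identify the limit at $p$, evaluate the local formula above at $z = p$:
\[
\lim_{z\to p}\bigl(\log\|s(z)\| + m\,g_{D_r,p}(z)\bigr) = \log|u(p)| + \log\|e(p)\| + m\log \tfrac{r^2 - |p|^2}{r}.
\]
By definition, $j^m_p s = u(p)\cdot e(p) \otimes (dz|_p)^{\otimes m}$ in $L_p\otimes (\Omega^1_{D_r,p})^{\otimes m}$, where the leading coefficient $u(p) = f^{(m)}(p)/m!$ comes from the factorization $f(z) = (z-p)^m u(z)$. The Poincaré metric $\tfrac{r}{r^2-|z|^2}|dz|$ on $T_{D_r}$ assigns to $\partial/\partial z|_p$ the length $\tfrac{r}{r^2-|p|^2}$, so the dual norm on $\Omega^1_{D_r,p}$ assigns to $dz|_p$ the value $\tfrac{r^2-|p|^2}{r}$. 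Therefore
\[
\|j^m_p s\|_r = |u(p)|\cdot \|e(p)\|\cdot \Bigl(\tfrac{r^2-|p|^2}{r}\Bigr)^m,
\]
and the desired equality follows by taking logarithms.

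There is no truly hard step here; the proof is a direct unwinding of definitions. The only points of attention are the two normalizations: that $g_{D_r,p}$ loses its $\log^+$ inside $D_r$ (which is what allows the cancellation of singularities), and that the dual Poincaré metric on $\Omega^1_{D_r,p}$ contributes the factor $\tfrac{r^2-|p|^2}{r}$ per jet level, exactly matching the term $m\log\tfrac{r^2-|p|^2}{r}$ produced by the Green's function.
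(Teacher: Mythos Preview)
Your proof is correct and is precisely the detailed verification that the paper leaves implicit: the paper states only that the lemma ``follows immediately from the explicit formula for the Green's functions on disks'' and supplies no further argument, and your computation unpacks exactly this, cancelling the $-m\log|z-p|$ singularity of $m\,g_{D_r,p}$ against the $m\log|z-p|$ contribution from the zero of $s$ and then matching the constant term with the definition of $\|j_p^m s\|_r$ via the dual Poincar\'e metric.
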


Observe now that, for a fixed $s$, both sides in formula (\ref{mainformula}) are continuous with respect to $r$. Since $E$ is discrete, we may thus assume that $\partial D_r \cap  E = \emptyset$.

\begin{proof}[Proof of Proposition \ref{propmain}]
 The Poincaré-Lelong formula yields the identity of currents
  \begin{align*}
   dd^c \log \|s\|^2 = \delta_{\div(s)} -c_1(\overline{L})\text{.}
  \end{align*}
  Thus, by Lemma \ref{greenfunc},
  \begin{align*}
 c_1(\overline{L})=  -2dd^c(\log \|s\| + mg_{D_r,p})+\delta_{\div(s)-m[p]}  + m\pi_{r,p}\text{.}
  \end{align*}
  Since $\pi_{r,p}$ is supported on $\partial D_r$, and $g_{D_r,p}$ vanishes identically on $\partial D_r$, we obtain
  \begin{align*}
     T_{\overline{L},p}(r) = \int g_{D_r,p}\cdot(-2dd^c(\log \|s\| + mg_{D_r,p})) + \int g_{D_r,p}\delta_{\div(s)-m[p]}\text{.}
  \end{align*}
  Note that, by our choice of $r$ and by Lemma \ref{lemmamainprop}, the above products of distributions are well-defined.

  By another application of Lemma \ref{greenfunc},
  \begin{align*}
     T_{\overline{L},p}(r) &= \int (\log \|s\| + mg_{D_r,p})(\delta_p -\pi_{r,p}) + \int g_{D_r,p}\delta_{\div(s)-m[p]}\\
                            & =\int (\log \|s\| + mg_{D_r,p})\delta_p - \int \log\|s\| \pi_{r,p} + \int g_{D_r,p}\delta_{\div(s)-m[p]}
  \end{align*}
  where in the second equality we used once again that $g_{D_r,p}$ vanishes identically on $\partial D_r$. To conclude, we apply once more Lemma \ref{lemmamainprop}, which ensures that the function $\log \|s\| + mg_{D_r,p}$ tends to $\log \|j^m_ps\|_r$ at $p$.
\end{proof}

\begin{coro}
Let us keep the notation of Proposition \ref{propmain}. Then,
\begin{equation}\label{jet1}
    \log \| j_p^m s \|_r \le T_{\overline{L},p}(r) + \log \| s \|_{L^{\infty}(\partial D_r)} \text{.}
  \end{equation}
 If, moreover, $p'$ is another point of $D_r$, and $m'$ denotes the vanishing order of $s$ at $p'$, then
  \begin{equation}\label{jet2}
   \log \| j_p^m s \|_r \le T_{\overline{L},p}(r) + \log \| s \|_{L^{\infty}(\partial D_r)}  -m'g_{D_r,p}(p')\text{.}
  \end{equation}
\end{coro}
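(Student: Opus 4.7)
The plan is to deduce both inequalities directly from the exact identity
\[
\log \|j_p^ms\|_r = T_{\overline{L},p}(r) + \int \log \|s\|\,\pi_{r,p} - \int g_{D_r,p}\,\delta_{\div(s)-m[p]}
\]
provided by Proposition \ref{propmain}, by bounding the last two terms in the appropriate direction.

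First I would handle the boundary integral. Since $\pi_{r,p}$ is by definition a probability measure supported on $\partial D_r$, one has the trivial bound
\[
\int \log \|s\|\,\pi_{r,p} \le \int \log \|s\|_{L^{\infty}(\partial D_r)}\,\pi_{r,p} = \log \|s\|_{L^{\infty}(\partial D_r)}.
\]
Next I would observe that, by the very definition of $m = \ord_p s$, the Weil divisor $\div(s) - m[p]$ is effective. Combined with the fact (recalled in Paragraph \ref{charfunc}) that the Green's function $g_{D_r,p}$ is non-negative everywhere on $\CC$, this gives
\[
\int g_{D_r,p}\,\delta_{\div(s)-m[p]} \ge 0,
\]
and substituting both estimates into the identity of Proposition \ref{propmain} yields inequality (\ref{jet1}).

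For the refined inequality (\ref{jet2}), the only additional input is that the extra point $p'\in D_r$ contributes an isolated mass $m'[p']$ to the effective divisor $\div(s) - m[p]$ (we may assume $p'\neq p$, since the hypothesis refers to ``another'' point). Decomposing
\[
\div(s) - m[p] = m'[p'] + \bigl(\div(s) - m[p] - m'[p']\bigr),
\]
where the second summand remains effective, and using once more the non-negativity of $g_{D_r,p}$ on $\CC$, we obtain
\[
\int g_{D_r,p}\,\delta_{\div(s)-m[p]} \ge m'\, g_{D_r,p}(p').
\]
Substituting this sharper lower bound, together with the same estimate on the boundary integral as before, into the formula of Proposition \ref{propmain} produces (\ref{jet2}). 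There is no real obstacle here: both estimates are just applications of the positivity of $g_{D_r,p}$ and of the fact that $\pi_{r,p}$ is a probability measure; the whole content of the corollary is already contained in Proposition \ref{propmain}.
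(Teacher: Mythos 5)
Your proposal is correct and follows exactly the paper's own argument: bound the boundary term by $\log\|s\|_{L^{\infty}(\partial D_r)}$ using that $\pi_{r,p}$ is a probability measure, then use the non-negativity of $g_{D_r,p}$ together with the effectivity of $\div(s)-m[p]$ (resp.\ of $\div(s)-m[p]-m'[p']$) to discard, or retain only the $m'[p']$ part of, the last term in Proposition \ref{propmain}. Nothing to add.
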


\begin{proof}
  Since $\pi_{r,p}$ is a probability measure over $\partial D_r$, we have $\int \log \|s\| \pi_{r,p}\le \log \|s\|_{L^{\infty}(\partial D_r)}$. Thus, the estimate (\ref{jet1}) (resp. (\ref{jet2})) follows immediately from the non-negativity both of the function $g_{D_r,p}$ and of the distribution $\delta_{\div(s)-m[p]}$ (resp. $\delta_{\div(s)-m[p]-m'[p']}$). 
\end{proof}


\subsection{A first application of moderate growth}

We shall need the following elementary inequality.

\begin{lemma}\label{trick}
  Let $A$, $B$, and $R$ be positive real numbers. Set
  \begin{align*}
    r\defeq R \frac{B}{A+B}\text{.}
  \end{align*}
  If $\log (B/A)\ge 2$, then
  \begin{align*}
  A \log \left(\frac{1}{1-\frac{r}{R}}\right) - B \log r \le 2A \log \left(\frac{B}{A}\right) - B \log R\text{.}
  \end{align*}
\end{lemma}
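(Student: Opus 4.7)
The plan is to substitute the given value of $r$ directly into the inequality and reduce the claim to an elementary scalar estimate. First I would observe that with $r = R \cdot B/(A+B)$ one has $1 - r/R = A/(A+B)$, so
$$A \log \frac{1}{1 - r/R} - B \log r = A \log \frac{A+B}{A} + B \log \frac{A+B}{B} - B \log R.$$
Cancelling the common term $-B \log R$ from both sides of the desired inequality, the claim becomes
$$A \log(1 + B/A) + B \log(1 + A/B) \le 2A \log(B/A).$$
Setting $t \defeq B/A$ (so that the hypothesis reads $\log t \ge 2$) and dividing through by $A$, this is equivalent to the one-variable inequality
$$\log(1+t) + t \log(1 + 1/t) \le 2 \log t \quad \text{for all } t \ge e^2.$$

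The plan for this scalar inequality is to apply the standard bound $\log(1+x) \le x$, valid for $x > 0$, twice. It yields $t \log(1 + 1/t) \le 1$, and also $\log(1+t) = \log t + \log(1 + 1/t) \le \log t + 1/t \le \log t + 1$ (using $t \ge 1$). Summing, the left-hand side is at most $\log t + 2$, which is $\le 2 \log t$ precisely when $\log t \ge 2$---exactly the hypothesis.

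No genuine obstacle arises in this argument. The only conceptual remark worth making is that the specific value $r = RB/(A+B)$ is not pulled out of thin air: it is the unique critical point on $(0,R)$ of the function $r \mapsto A \log(1/(1-r/R)) - B \log r$, obtained by solving $A/(R-r) = B/r$. So the lemma really amounts to evaluating this function at its minimizer and then using the crude estimate $\log(1+x) \le x$, together with the hypothesis $\log(B/A) \ge 2$, to absorb the lower-order term $B \log(1 + A/B)$ into the factor of $2$ on the right-hand side. The bound is even sharp at $t = e^2$, which is reassuring.
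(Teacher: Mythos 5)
Your proof is correct and follows essentially the same route as the paper: substitute the explicit value of $r$, reduce to a one-variable inequality in $t = B/A$ (the paper normalizes $A = R = 1$ instead, which amounts to the same reduction), and conclude via $\log(1+x) \le x$ together with the hypothesis $\log(B/A) \ge 2$. The paper even makes your closing remark about $r$ being the minimizer of $t \mapsto A\log(1/(1-t/R)) - B\log t$ immediately after the lemma.
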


\begin{proof}
  By homogeneity, we may assume that $A=R=1$, so that $r = B/(1+B)$ and our statement is equivalent to:
  \begin{align*}
   \log (1+B) - B \log (B/(1+B)) \le 2 \log B
  \end{align*}
  when $\log B \ge 2$. By subtracting $\log B$ from both sides, we see that this is yet equivalent to:
  \begin{align*}
(1+B)\log (1 + 1/B) \le \log B
  \end{align*}
  when $\log B \ge 2$. Now, this last inequality follows trivially from the fact that $\log(1+1/B)\le 1/B$.
\end{proof}

  

Note that $r$ as above is the minimum of the real function $t \mapsto  A \log \left(\frac{1}{1-\frac{t}{R}}\right) - B \log t$ defined on the open interval $(0,R)$. 

\begin{prop}\label{prop1mg}
  Let $R>0$ be a real number and $\overline{L}=(L, \| \ \|)$ be a semipositive Hermitian line bundle on $D_R$. If $\overline{L}$ has moderate growth, then there exist constants $\kappa_1,\kappa_2>0$ such that for every integer $d\ge 1$ and every bounded global section $s \in \Gamma(D_R,L^{\tensor d})\minus \{0\}$, if we denote $m \defeq \ord_0s$, then
  \begin{align*}
  \log \|j_0^ms\|_R \le \kappa_1 d + \kappa_2 d\log^+ m + \log \|s\|_{L^{\infty}(D_R)}\text{.}
  \end{align*}
\end{prop}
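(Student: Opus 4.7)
The plan is to combine the jet estimate (\ref{jet1}) with moderate growth at a well-chosen auxiliary radius $r < R$, and then optimize in $r$ using Lemma \ref{trick}. First I would record a rescaling of jet norms at the base point $0$: since the dual Poincaré cotangent norm on $D_r$ satisfies $\|dz|_0\|_r = r$, any section $s$ with $\mathrm{ord}_0 s = m$ obeys
\[
\log \|j_0^m s\|_R \;=\; \log \|j_0^m s\|_r \;+\; m \log(R/r)
\]
for every $r \in (0, R]$.

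Next, applying (\ref{jet1}) to $\overline{L}^{\otimes d}$ at $p = 0$ and $r \in (0, R)$, and using $T_{\overline{L}^{\otimes d}, 0}(r) = d\, T_{\overline{L}}(r)$, the moderate-growth estimate $T_{\overline{L}}(r) \le a + b \log \tfrac{1}{1-r/R}$, and the trivial bound $\|s\|_{L^\infty(\partial D_r)} \le \|s\|_{L^\infty(D_R)}$, I would combine everything with the rescaling to obtain
\[
\log \|j_0^m s\|_R \;\le\; d a \;+\; \Bigl[d b \log \tfrac{1}{1-r/R} + m \log(R/r)\Bigr] \;+\; \log \|s\|_{L^\infty(D_R)}
\]
for every $r \in (0, R)$.

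It remains to minimize the bracketed expression in $r$. When $m \ge e^2 d b$, I would apply Lemma \ref{trick} with $A = d b$, $B = m$, and $r = R m / (d b + m)$, which bounds the bracket by $2 d b \log(m/(db)) \le 2 d b \log^+ m$. When $m < e^2 d b$, the critical $r$ provided by the lemma would not lie in a useful range, so I would instead take a fixed $r$, say $r = R/2$, and bound the bracket by $(d b + m)\log 2 = O(d)$. Combining the two regimes and absorbing $d a$ together with the small-$m$ contribution into $\kappa_1$ and the large-$m$ contribution into $\kappa_2$ gives the claimed inequality.

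The main conceptual step is the jet-norm rescaling, which lets one trade a factor $(R/r)^m$ in the jet norm against the growth of $T_{\overline{L}}$ as $r \to R^-$. The main technical obstacle is the optimization itself: Lemma \ref{trick} is only applicable in the regime where $m/(db)$ is large enough, so the case $m \ll db$ must be handled separately by a brute-force choice of $r$, and one needs to package both regimes into a single bound of the stated shape.
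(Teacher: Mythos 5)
Your proposal is correct and follows essentially the same route as the paper: the rescaling $\log\|j_0^ms\|_R=\log\|j_0^ms\|_r+m\log(R/r)$, the jet estimate (\ref{jet1}) combined with moderate growth, and the two-regime optimization via Lemma \ref{trick} (large $m$) versus $r=R/2$ (small $m$) are exactly the steps in the paper's argument. The only detail to tidy is the passage from $2bd\log(m/(bd))$ to $2bd\log^+ m$, which requires $bd\ge 1$; the paper handles this by assuming without loss of generality that $b>1$ (enlarging $b$ preserves the moderate-growth bound).
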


\begin{proof}
Since $\overline{L}$ has moderate growth, there exist real numbers $a,b>0$ such that
\begin{align*}
T_{\overline{L}}(r) \le a + b \log\left( \frac{1}{1-\frac{r}{R}}\right)
\end{align*}
for any $ r \in(0,R)$. We may assume that $b>1$. By the jet estimate (\ref{jet1}), for any $r\in(0,R)$, we have
\begin{align*}
 \log \|j_0^ms\|_r\le d\cdot T_{\overline{L}}(r) + \log \|s\|_{L^{\infty}(\partial D_r)} \le d\cdot T_{\overline{L}}(r) + \log \|s\|_{L^{\infty}(D_R)}\text{.}
\end{align*}
As
\begin{align*}
 \|j_0^ms\|_r = \|j_0^ms\|_R\left(\frac{r}{R} \right)^m\text{,}
\end{align*}
we obtain
\begin{align}\label{bound}
 \log \|j_0^ms\|_R \le ad + bd\log \left(\frac{1}{1-\frac{r}{R}} \right) - m \log r + m\log R + \log \|s\|_{L^{\infty}(D_R)}\text{.}
\end{align}

The result being trivial for $m=0$, we may assume that $m>0$. We now consider two cases. If $\log (m/bd)< 2$, then we may take $r \defeq R/2$ in (\ref{bound}) to obtain
\begin{align*}
\log \|j_0^ms\|_R \le (a + (1+e^2)b\log 2)d + \log \|s\|_{L^{\infty}(D_R)}\text{.}
\end{align*}
If $\log (m/bd)\ge 2$, we apply Lemma \ref{trick} for $A = bd$ and $B = m$:
\begin{align*}
  \log \|j_0^ms\|_R \le ad + 2bd\log \left(\frac{m}{bd} \right) + \log \|s\|_{L^{\infty}(D_R)}\le ad + 2bd\log m + \log \|s\|_{L^{\infty}(D_R)}\text{.}
\end{align*}
\end{proof}

\subsection{Bounding jets via Taylor coefficients at another point}

In this paragraph, we compare Taylor coefficients at different points. We start with a general result, and next we explain how moderate growth improves the estimate.

\begin{prop}\label{proptaylor}
Let $R>0$ be a real number, $\overline{L} = (L,\| \ \|)$ be a semipositive Hermitian line bundle on $D_R$, and $q \in D_R\minus \{0\}$. Fix a real number $R_0$ satisfying $|q|<R_0<R$, and a global holomorphic section $s_0 \in \Gamma(D_R,L)$ such that $s_0(q)\neq 0$. Then there exists a real number $\kappa >1$ such that, for every integer $J\ge 1$, every integer $d\ge 1$, and every global section $s \in \Gamma(D_R,L^{\tensor d})\minus\{0\}$, if $f$ denotes the germ of holomorphic function at $q$ such that $s=fs^{\tensor d}_0$ in a neighborhood of $q$, and if $m \defeq \ord_0s$, then
\begin{align*}
   \log \|j_0^ms\|_R \le \log \left(\left(\frac{R_0}{|q|} \right)^{-J}\left(\frac{R}{R_0}\right)^m\|s\|_{L^{\infty}(\partial D_{R_0})} + \kappa^{d+m+J}\max_{0\le j < J}\frac{|f^{(j)}(q)|}{j!} \right) + d\cdot T_{\overline{L}}(R_0)
\end{align*}
\end{prop}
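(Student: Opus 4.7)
The plan is to split $s$ into a ``polynomial approximation at $q$'' piece plus a remainder that vanishes to order at least $J$ at $q$, and to estimate each piece using the jet bounds derived from Proposition~\ref{propmain}. Set $c_j := f^{(j)}(q)/j!$ for $0 \le j < J$ and $M := \max_{0 \le j < J} |c_j|$, and define the global sections of $L^{\otimes d}$ on $D_R$
$$
s_1 := \Bigl(\sum_{j=0}^{J-1} c_j (z-q)^j\Bigr)\, s_0^{\otimes d}, \qquad s_2 := s - s_1.
$$
By Taylor's theorem $\ord_q s_2 \ge J$. Since the $m$-th Taylor coefficient at $0$ is linear in the section,
$$
\|j_0^m s\|_R \le \|j_0^m s_1\|_R + \|j_0^m s_2\|_R,
$$
where $\|j_0^m \cdot\|_R$ is interpreted as the Poincaré norm of the $m$-th Taylor coefficient (which vanishes when the section has order at $0$ strictly greater than $m$).

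For the $s_2$ piece, I apply the jet estimate \eqref{jet2} at $p = 0$, $p' = q$, with $m' = J$ --- licit because $\ord_q s_2 \ge J$ and the RHS of \eqref{jet2} is monotone in $m'$. Using $g_{D_{R_0}, 0}(q) = \log(R_0/|q|)$ and the scaling $\|j_0^m \cdot\|_R = (R/R_0)^m \|j_0^m \cdot\|_{R_0}$, this yields
$$
\|j_0^m s_2\|_R \le e^{d\, T_{\overline L}(R_0)}\, (|q|/R_0)^J (R/R_0)^m \, \|s_2\|_{L^\infty(\partial D_{R_0})},
$$
and $\|s_2\|_\infty \le \|s\|_\infty + \|s_1\|_\infty$ produces the first term of the target bound plus an error absorbable into the second. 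For the $s_1$ piece, \eqref{jet1} gives $\|j_0^m s_1\|_R \le e^{d T_{\overline L}(R_0)} (R/R_0)^m \|s_1\|_\infty$; the polynomial bound $|P(z - q)| \le J(R_0 + |q|)^{J-1} M$ on $\partial D_{R_0}$ then yields $\|s_1\|_\infty \le C_0^{d+J} M$ for a constant $C_0 > 0$ depending only on $R_0, |q|, \|s_0\|_{L^\infty(\partial D_{R_0})}$. Combining the two estimates and absorbing $(R/R_0)^m \cdot C_0^{d+J}$ into $\kappa^{d+m+J}$ for $\kappa > 1$ large enough (depending on $R, R_0, |q|, s_0, \overline L$, but not on $d, m, J, s$) gives the desired inequality after exponentiating.

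The principal technical subtlety is that \eqref{jet1} and \eqref{jet2} in the corollary of Proposition~\ref{propmain} are stated for $m = \ord_0 s$, whereas $\ord_0 s_1$ and $\ord_0 s_2$ may differ from $m = \ord_0 s$ in our decomposition. The case $\ord_0 s_i > m$ is trivial since the corresponding Taylor coefficient vanishes; the remaining case $\ord_0 s_i < m$ requires a mild generalization of the jet estimates to non-leading Taylor coefficients. Working in a global holomorphic trivialization of $L$ on the simply-connected disk $D_R$ and applying Cauchy's integral formula to suitable auxiliary functions --- for instance, to $h_2 / b_q^J$ where $h_2 := s_2/e^{\otimes d}$ and $b_q$ is the Blaschke factor of $D_{R_0}$ at $q$, in order to preserve the $(|q|/R_0)^J$ factor arising from the zero of $s_2$ at $q$ --- yields the required estimates at the cost of additional constants of the form $\kappa^d$, which are absorbable into $\kappa^{d+m+J}$. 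The careful tracking of these constants, especially in the regime where $\ord_0 s_2$ is strictly less than $m$ and one must propagate the $(|q|/R_0)^J$ smallness to a higher-order Taylor coefficient at $0$, is the main bookkeeping obstacle of the proof.
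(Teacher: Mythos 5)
Your strategy is the same as the paper's in outline (truncate at $q$, treat the remainder with the jet estimates), but the decomposition you chose is where the argument genuinely breaks. You truncate the Taylor expansion of $f$ itself, so that, as you note, $\ord_0 s_1$ and $\ord_0 s_2$ are in general strictly less than $m$ --- and this is the typical case (e.g.\ whenever $f(q)\neq 0$, $s_0(0)\neq 0$ and $m\ge 1$), not an edge case. The estimates (\ref{jet1}) and (\ref{jet2}) concern only the \emph{leading} jet, and the ``mild generalization to non-leading Taylor coefficients'' that you invoke to close the gap is false: in the flat situation, take $h(z)=(z-q)^J$ and $m=J$; then $\ord_q h\ge J$, $\sup_{\partial D_{R_0}}|h|\le (2R_0)^J$, but the $m$-th Taylor coefficient of $h$ at $0$ equals $1$, whereas the bound you need would force $1\le (|q|/R_0)^J R_0^{-J}(2R_0)^J=(2|q|/R_0)^J$, absurd for $|q|<R_0/2$. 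The obstruction is structural: the $(|q|/R_0)^J$ gain comes from evaluating $h/(z^m b_q^J)$ at the single point $0$ by the maximum principle, which requires $\ord_0 h\ge m$; for a non-leading coefficient the Cauchy integral over $\partial D_{R_0}$ sees only $|b_q|=1$ (no gain), and shrinking the contour to recover $|b_q|\approx |q|/R_0$ destroys the $R_0^{-m}$ factor. Moreover the loss cannot be hidden in $\kappa^{d+m+J}$: in the stated inequality the factor $(R_0/|q|)^{-J}$ multiplies $\|s\|_{L^{\infty}(\partial D_{R_0})}$ with no such constant, and a defect on the $\|s_2\|$ term propagates to $\|s\|_{L^{\infty}(\partial D_{R_0})}$, not to $\max_j|f^{(j)}(q)|/j!$. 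So this is a missing idea, not bookkeeping.

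The paper removes the difficulty at the level of the decomposition itself: after normalizing $s_0$ so that it trivializes $L$ at both $0$ and $q$, it expands $f(z)/z^m=\sum_j a_j(z-q)^j$ around $q$ and sets $g(z)=z^m\sum_{j<J}a_j(z-q)^j$, $s_1=gs_0^{\tensor d}$, $s_2=s-s_1$. Then \emph{both} pieces vanish to order at least $m$ at $0$, so (\ref{jet1}) and (\ref{jet2}) apply to their leading jets exactly as in your sketch, and the only extra cost is converting the $a_j$ back into the data $f^{(k)}(q)/k!$ by Leibniz, which produces factors such as $\binom{k+m-1}{k}<2^{m+J}$ and $|q|^{-(m+k)}$ that are absorbed into $\kappa^{d+m+J}$. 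If you replace your $s_1$ by this one, the rest of your argument goes through essentially unchanged.
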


Let us first remark that if such a constant $\kappa >0$ exists for $s_0 \in \Gamma(D_R,L)$ trivializing $L$ at $q$, then an analogous constant $\bar{\kappa}>0$ will exist for \emph{any other} trivialization $\bar{s}_0$ of $L$ in a neighborhood of $q$ --- we \emph{do not} require $\bar{s}_0$ to be a global section. Indeed, if we write $s = \bar{f}\bar{s}^{\tensor d}_0$ and $\bar{s}_0=us_0$ in a neighborhood of $q$, then $f = \bar{f}u^d$ and 
  \begin{align*}
  \max_{0\le j<J}\frac{|f^{(j)}(q)|}{j!}\le \max_{0\le j < J}\sum_{k+ l=j}\frac{|\bar{f}^{(k)}(q)|}{k!}\frac{|(u^d)^{(l)}(q)|}{l!}\le \left(J \max_{0\le j < J}\frac{|(u^d)^{(j)}(q)|}{j!}\right) \max_{0\le j < J}\frac{|\bar{f}^{(j)}(q)|}{j!}\text{.}
  \end{align*}
  We conclude by the Cauchy inequalities, which ensure that $\max_{0\le j < J}\frac{|(u^{d})^{(j)}(q)|}{j!}$ grows at most exponentially in $d+J$.

\begin{proof}
By the above remark, up to replacing $s_0$ by $z^{-\ord_0(s_0)}s_0$, we can assume that $s_0$ trivializes $L$ both at $q$ and at $0$. Let $a_j \in \CC$ be defined by the expansion
  \begin{align*}
    f(z) = z^m \sum_{j=0}^{\infty}a_j(z-q)^j
  \end{align*}
  in a neighborhood of $q$, and set
  \begin{align*}
   g(z)\defeq z^m\sum_{j=0}^{J-1}a_j(z-q)^j\text{.}
  \end{align*}
  Note that $g$ extends uniquely to a holomorphic function on $D_R$. Let $s_1,s_2 \in \Gamma(D_R,L^{\tensor d})$ be given by $s_1 \defeq gs^{\tensor d}_0$ and $s_2\defeq s-s_1$.  Observe that both $s_1$ and $s_2$ have vanishing order at least $m$ at $0$.

  Next, we estimate $\|j^m_0s_i\|_{R_0}$, $i=1,2$; for this, we shall first assume that each $j_0^ms_i\neq 0$. By the jet estimate (\ref{jet1}) for $p=0$, we have
  \begin{align*}
   \log \|j_0^ms_1\|_{R_0} \le d\cdot T_{\overline{L}}(R_0) + \log \|s_1\|_{L^{\infty}(\partial D_{R_0})}\text{.}
  \end{align*}
  Since $\ord_qs_2\ge J$, by the jet estimate (\ref{jet2}) for $p=0$ and $p'=q$, we have
  \begin{align*}
   \log \|j_0^ms_2\|_{R_0} \le d\cdot T_{\overline{L}}(R_0) + \log \|s_2\|_{L^{\infty}(\partial D_{R_0})}  -\log \left(\frac{R_0}{|q|} \right)J\text{.}
  \end{align*}
  Thus
  \begin{align*}
    \|j_0^ms\|_R  &= \|j_0^ms\|_{R_0}\left(\frac{R}{R_0}\right)^m \le (\|j_0^ms_1\|_{R_0} + \|j_0^ms_2\|_{R_0})\left(\frac{R}{R_0}\right)^m\\
                        &\le \left (\left(\frac{R}{R_0}\right)^m\|s_1\|_{L^{\infty}(\partial D_{R_0})} +  \left(\frac{R_0}{|q|} \right)^{-J}\left(\frac{R}{R_0}\right)^m\|s_2\|_{L^{\infty}(\partial D_{R_0})}\right)\exp(d\cdot T_{\overline{L}}(R_0))\text{.}
  \end{align*}
  Using that $\|s_2\|_{L^{\infty}(\partial D_{R_0})} \le  \|s_1\|_{L^{\infty}(\partial D_{R_0})} + \|s\|_{L^{\infty}(\partial D_{R_0})}$, we get
  \begin{align*}
    \|j_0^ms\|_R \le \left(\left(\frac{R_0}{|q|} \right)^{-J}\left(\frac{R}{R_0}\right)^m\|s\|_{L^{\infty}(\partial D_{R_0})} +  \left(1 + \left(\frac{R_0}{|q|} \right)^{-J}\right)\left(\frac{R}{R_0}\right)^m\|s_1\|_{L^{\infty}(\partial D_{R_0})}\right)\exp(d\cdot T_{\overline{L}}(R_0))\text{.}
  \end{align*}
  It should be clear at this point that the same estimate holds if $j^m_0s_1=0$ or $j^m_0s_2=0$.

  We now estimate $\|s_1\|_{L^{\infty}(\partial D_{R_0})}$. For any $z \in \partial D_{R_0}$, we have
 \begin{align*}
   |g(z)| &= R_0^m \left|\sum_{j=0}^{J-1}a_j(z-q)^j \right|\le R_0^m\left(\sum_{j=0}^{J-1}(2R_0)^j\right)\max_{0\le j < J}|a_j| \le JR_0^m\max\{1,(2R_0)^J\}\max_{0\le j < J}|a_j|\text{,}
 \end{align*}
so that
 \begin{align*}
   \|s_1\|_{L^{\infty}(\partial D_{R_0})} = \sup_{z \in \partial D_{R_0}}|g(z)|\|s_0(z)\|^d\le \|s_0\|^d_{L^{\infty}(\partial D_{R_0})}JR_0^m\max\{1,(2R_0)^J\}\max_{0\le j < J}|a_j| .
 \end{align*}

 To finish, we must bound the coefficients $a_j$. By definition, for any $j\in \NN$, 
  \begin{align*}
   a_j = \frac{1}{j!}\left.\frac{d^j}{dz^j}\right|_{z=q}\left(\frac{f(z)}{z^m}\right) = \sum_{k=0}^j\left(\frac{(-1)^k}{q^{m+k}}\binom{k+m-1}{k}\frac{f^{(j-k)}(q)}{(j-k)!} \right)\text{.}
  \end{align*}
  If $j< J$, then, for any $0\le k \le j$, we have the (crude but sufficient) estimate
  \begin{align*}
    \binom{k+m-1}{k} < \binom{J+m -1}{J} < 2^{m+J-1} < 2^{m+J}\text{,}
  \end{align*}
  so that
  \begin{align*}
    |a_j| \le \left(\sum_{k=0}^j\frac{1}{|q|^{m+k}}\right)2^{m+J}\max_{0\le k \le j}\frac{|f^{(k)}(q)|}{k!}\text{.}
  \end{align*}
  Thus,
  \begin{align*}
    \max_{0\le j < J}|a_j| \le J (2\max\{1,|q|^{-1}\})^{m+J}\max_{0\le j <J}\frac{|f^{(j)}(q)|}{j!}\text{.}
  \end{align*}
\end{proof}

\begin{prop}\label{boundcompact}
  Let $R>0$ be a real number, $\overline{L} = (L,\| \ \|)$ be a semipositive Hermitian line bundle over $D_R$, and let $K\subset D_R$ be a compact subset. If $\overline{L}$ has moderate growth, then there exist real numbers $\kappa_0,\kappa_1>1$ and an integer $d_0\ge 1$ such that, for any integer $d\ge d_0$ and any bounded section $s \in \Gamma(D_R,L^{\tensor d})\minus \{0\}$ for which $m \defeq \ord_0s$ satisfies $m\ge \kappa_0 d$, we have
  \begin{align*}
   \|s(z)\| \le m^{\kappa_1d}\left(\frac{|z|}{R} \right)^m\|s\|_{L^{\infty}(D_R)} 
  \end{align*}
  for every $z \in K\minus \{0\}$.
\end{prop}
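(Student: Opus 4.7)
The plan is to apply the jet estimate (\ref{jet2}) at the pair of points $p=z$ and $p'=0$, exploiting the fact that $s$ vanishes to high order $m$ at $0$. If $s(z)=0$ the claimed inequality is trivial, so one may assume $s(z)\neq 0$, i.e.\ $\ord_z s=0$; in that case $\|j_z^0 s\|_r=\|s(z)\|$, and the estimate (\ref{jet2}) applied to $s\in\Gamma(D_R,L^{\otimes d})$ reads
\begin{align*}
\log\|s(z)\| \le d\cdot T_{\overline{L},z}(r) + \log\|s\|_{L^\infty(\partial D_r)} - m\cdot g_{D_r,z}(0),
\end{align*}
valid for any $r\in(|z|,R)$. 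A direct computation gives $g_{D_r,z}(0)=\log(r/|z|)$ when $|z|<r$, which I will rewrite as $\log(R/|z|)-\log(R/r)$ in order to isolate the desired term $m\log(|z|/R)$.

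Next I would fix $R_1\in(0,R)$ with $K\subset D_{R_1}$ and apply Proposition \ref{lemmacompact}, yielding constants $a,b>0$ (with $b>1$ without loss of generality) such that $T_{\overline{L},z}(r)\le a+b\log\bigl(1/(1-r/R)\bigr)$ uniformly in $z\in K$ and $r\in[R_1,R)$. Substituting everything and bounding $\|s\|_{L^\infty(\partial D_r)}\le\|s\|_{L^\infty(D_R)}$, I obtain
\begin{align*}
\log\|s(z)\|-\log\|s\|_{L^\infty(D_R)}+m\log(R/|z|) \le ad + bd\log\tfrac{1}{1-r/R}+m\log(R/r).
\end{align*}

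The main step is then to optimize over $r$. Applying Lemma \ref{trick} with $A=bd$ and $B=m$, the optimal choice is $r=Rm/(bd+m)$, which yields
\begin{align*}
bd\log\tfrac{1}{1-r/R}+m\log(R/r) \le 2bd\log(m/bd) \le 2bd\log m,
\end{align*}
provided the two requirements $\log(m/bd)\ge 2$ and $r\ge R_1$ are met. The first amounts to $m\ge b e^2 d$; the second, using $r=Rm/(bd+m)\ge R_1\iff m(R-R_1)\ge R_1 bd$, amounts to $m\ge R_1 b d/(R-R_1)$. Both are guaranteed by choosing $\kappa_0 \defeq \max\{b e^2,\,R_1 b/(R-R_1)\}$. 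Also note $|z|<r$ follows from $|z|\le R_1\le r$.

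Putting everything together and choosing $d_0\ge 1$ large enough that $m\ge\kappa_0 d_0\ge e^a$ (so that $ad\le d\log m$), one finds
\begin{align*}
\log\|s(z)\| \le \log\|s\|_{L^\infty(D_R)} - m\log(R/|z|) + (1+2b)d\log m,
\end{align*}
which, upon exponentiating, is precisely the claimed bound with $\kappa_1\defeq 1+2b$. The argument is essentially an optimization exercise once the jet estimate and the uniform moderate growth bound are in place; the only subtlety is the careful tracking of the thresholds on $m/d$ needed to make Lemma \ref{trick} applicable and to keep $r$ inside the regime $[R_1,R)$ where the characteristic function bound is valid.
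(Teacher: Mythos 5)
Your proposal is correct and follows essentially the same route as the paper: jet estimate (\ref{jet2}) at $p=z$, $p'=0$, the uniform bound of Proposition \ref{lemmacompact} on $[R_1,R)$, and optimization via Lemma \ref{trick} with $A=bd$, $B=m$, with the same two thresholds defining $\kappa_0$. The only (harmless) difference is bookkeeping of the constant term $ad$: the paper absorbs it using $ad-2bd\log(bd)\le 0$ for $d\ge d_0$ and gets $\kappa_1=2b$, while you absorb it into an extra $d\log m$ and get $\kappa_1=1+2b$.
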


\begin{proof}
 Fix any $R_1\in(0,R)$ such that $K\subset D_{R_1}$. By Proposition \ref{lemmacompact}, there exist real numbers $a,b>0$ such that
  \begin{align*}
   T_{\overline{L}, z}(r) \le a + b \log \frac{1}{1-\frac{r}{R}}
  \end{align*}
  for any $z \in K$ and every $r \in [R_1,R)$.

  Let $s \in \Gamma(D_R,L^{\tensor d})\minus\{0\}$ be a bounded section, and $z \in K\minus \{0\}$. We may assume that $\ord_zs = 0$. By the jet estimate (\ref{jet2}) for $p=z$ and $p'=0$, we have, for every $r\in [R_1,R)$,
  \begin{align*}
    \log \|s(z)\| &\le d\cdot T_{\overline{L},z}(r) + \log \|s\|_{L^{\infty}(\partial D_r)} -m\log \frac{r}{|z|}\\
                             &  \le ad + bd \log \frac{1}{1-\frac{r}{R}} - m \log r + m \log |z| + \log \|s\|_{L^{\infty}(D_{R})}\text{.}
  \end{align*}
 Assume that $m\ge e^2bd$ (i.e. $\log(m/bd)\ge 2$). It follows from Lemma \ref{trick} for $A=bd$ and $B = m$ that, if
\begin{align*}
  r\defeq R \frac{m}{bd+m}\text{,}
\end{align*}
then
\begin{align*}
bd \log \frac{1}{1-\frac{r}{R}} -m\log r \le 2bd\log \frac{m}{bd} - m\log R\text{.}
\end{align*}
If we also require that $m\ge b\frac{R_1}{R-R_1}d$, then $r\ge R_1$, so that
\begin{align*}
  \log \|s(z)\| \le 2bd\log m + (ad -2bd\log bd) + m\log \frac{|z|}{R}+ \log \|s\|_{L^{\infty}(D_R)}\text{.}  
\end{align*}
Now, for every integer $d\ge e^{\frac{a}{b}}/b$, we have $ad - 2bd\log bd \le 0$, and we get
\begin{align*}
\log \|s(z)\| \le 2bd\log m +m\log \frac{|z|}{R} + \log \|s\|_{L^{\infty}(D_R)}\text{.}
\end{align*}
We may thus take $\kappa_0 \defeq b \max\left\{e^2, \frac{R_1}{R-R_1}\right\}$, $\kappa_1 \defeq 2b$, and $d_0 \defeq \lceil e^{\frac{a}{b}}/b\rceil$. 
\end{proof}

The following result is a combination of Proposition \ref{proptaylor} together with the existence of a non-zero global section of $L$ (see the remark following the statement; actually, $L$ is holomorphically trivial on $D_R$) and Proposition \ref{boundcompact}.

\begin{coro}\label{corotaylor}
Let $R>0$ be a real number, $\overline{L} = (L,\| \ \|)$ be a semipositive Hermitian line bundle on $D_R$, $q \in D_R\minus \{0\}$, and $s_0$ be a holomorphic trivialization of $L$ in a neighborhood of $q$. Assume moreover that $\overline{L}$ has moderate growth. Then there exist real numbers $\kappa_i>1$, $i=0,\ldots,4$, and an integer $d_0\ge 1$, such that, for any integer $J\ge 1$, any integer $d\ge d_0$, and every bounded section $s \in \Gamma(D_R,L^{\tensor d})\minus \{0\}$ for which $m\defeq \ord_0 s$ satisfies $m\ge \kappa_0d$, if $f$ denotes the germ of holomorphic function at $q$ such that  $s = fs_0^{\tensor d}$ in a neighborhood of $q$, we have
  \begin{align*}
    \log \|j_0^ms\|_R \le \log \left(\frac{m^{\kappa_1d}}{\kappa_2^J}\|s\|_{L^{\infty}(D_R)} + \kappa_3^{d+m+J} \max_{0\le j < J}\frac{|f^{(j)}(q)|}{j!}\right) + \kappa_4d\text{.}
  \end{align*}
\end{coro}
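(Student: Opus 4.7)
The plan is to obtain the statement by composing the two previous results, Proposition~\ref{proptaylor} and Proposition~\ref{boundcompact}, after a harmless passage from a global to the given local trivialization of $L$ near~$q$.

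First I would fix an auxiliary radius $R_0 \in (|q|,R)$, say $R_0 \defeq (R+|q|)/2$, and exploit the fact that $D_R$ is Stein and contractible so that $L$ is holomorphically trivial over $D_R$; pick a nowhere-vanishing global section $\tilde s_0 \in \Gamma(D_R,L)$. Applying Proposition~\ref{proptaylor} to this choice produces a constant $\tilde\kappa >1$ such that, writing $s = \tilde f\, \tilde s_0^{\tensor d}$ near~$q$, one has
\begin{align*}
\log \|j_0^m s\|_R \le \log\!\left(\left(\tfrac{R_0}{|q|}\right)^{-J}\!\left(\tfrac{R}{R_0}\right)^m\!\|s\|_{L^\infty(\partial D_{R_0})} + \tilde\kappa^{d+m+J}\!\max_{0\le j<J}\tfrac{|\tilde f^{(j)}(q)|}{j!}\right) + d\cdot T_{\overline L}(R_0).
\end{align*}
Since $R_0$ is fixed and strictly less than $R$, the quantity $T_{\overline L}(R_0)$ is a finite constant (moderate growth is not yet needed here), so the last term is already of the form $\kappa_4 d$.

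Next I would invoke Proposition~\ref{boundcompact} with the compact set $K \defeq \overline{D_{R_0}} \subset D_R$, whose hypotheses require exactly that $\overline L$ has moderate growth. This yields constants $\kappa_0,\kappa_1 >1$ and $d_0 \ge 1$ such that, provided $d \ge d_0$ and $m \ge \kappa_0 d$,
\begin{align*}
\|s(z)\| \le m^{\kappa_1 d}\left(\tfrac{|z|}{R}\right)^m \|s\|_{L^\infty(D_R)}\qquad \text{for all } z \in K\minus\{0\}.
\end{align*}
Taking the supremum over $z \in \partial D_{R_0}$ and multiplying by $(R/R_0)^m$, the ``global'' term in the previous inequality becomes $m^{\kappa_1 d}\|s\|_{L^\infty(D_R)}$, while the factor $(R_0/|q|)^{-J}$ is of the form $\kappa_2^{-J}$ for $\kappa_2 \defeq R_0/|q| >1$.

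The only remaining point is to convert the bound, which is written in terms of the global trivialization $\tilde s_0$, into one in terms of the given local trivialization $s_0$. This is exactly the content of the remark following Proposition~\ref{proptaylor}: writing $s_0 = u\, \tilde s_0$ in a neighborhood of $q$ with $u$ holomorphic and nowhere vanishing, we have $\tilde f = f \cdot u^d$, and Cauchy's inequalities bound $\max_{0\le j<J}|(u^d)^{(j)}(q)|/j!$ by something growing at most exponentially in $d+J$. Plugging this in, the term $\tilde\kappa^{d+m+J}\max_j |\tilde f^{(j)}(q)|/j!$ is replaced by $\kappa_3^{d+m+J}\max_j |f^{(j)}(q)|/j!$ for a suitably enlarged $\kappa_3$. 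Gathering all constants gives the desired inequality; the main subtlety is merely the bookkeeping needed to keep all exponentials of $d$ absorbed into the two advertised forms $m^{\kappa_1 d}$ and $\kappa_3^{d+m+J}$, and in particular the observation that the $(R/R_0)^m$ factor produced by Proposition~\ref{proptaylor} is precisely cancelled by the $(R_0/R)^m$ coming from Proposition~\ref{boundcompact}.
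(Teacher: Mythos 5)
Your proposal is correct and takes essentially the same route as the paper: fix $R_0\in(|q|,R)$, apply Proposition~\ref{proptaylor}, bound $\|s\|_{L^{\infty}(\partial D_{R_0})}$ via Proposition~\ref{boundcompact} (the paper uses $K=\partial D_{R_0}$) so that the $(R/R_0)^m$ factor cancels, set $\kappa_2=R_0/|q|$, $\kappa_4=T_{\overline{L}}(R_0)$, and absorb the change of trivialization into $\kappa_3$ exactly as in the remark following Proposition~\ref{proptaylor}. The only cosmetic differences are your choice of the compact set $\overline{D_{R_0}}$ and your explicit re-derivation of the trivialization change, which the paper simply delegates to that remark.
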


\begin{proof}
Fix any real number $R_0 \in (|q|,R)$.  We take $\kappa_2 \defeq R_0/|q|$, $\kappa_3 \defeq \kappa$ given by Proposition \ref{proptaylor}, and $\kappa_4 \defeq T_{\overline{L}}(R_0)$. Since $\overline{L}$ has moderate growth, we may apply Proposition \ref{boundcompact} to the compact $K= \partial D_{R_0}$ to obtain real numbers $\kappa_0,\kappa_1>0$ and an integer $d_0\ge 1$ such that
\begin{align*}
  \|s\|_{L^{\infty}(\partial D_{R_0})} \le m^{\kappa_1 d}\left(\frac{R_0}{R} \right)^m \|s\|_{L^{\infty}(D_R)}
\end{align*}
  for any integer $d\ge d_0$ and any bounded section $s \in \Gamma(D_R,L^{\tensor d})\minus\{0\}$ such that $m\defeq \ord_0s\ge \kappa_0d$. We conclude by combining this bound with the estimate given by Proposition \ref{proptaylor}.
\end{proof}

In practice, we shall be concerned with the following particular situation.

\begin{coro}\label{corotaylor2}
Let $R>0$ be a real number, $\overline{L} = (L,\| \ \|)$ be a semipositive Hermitian line bundle of moderate growth on $D_R$, and $q \in D_R\minus \{0\}$. Fix a holomorphic trivialization $s_0$ of $L$ in a neighborhood of $q$, real constants $c_0,c_1,c_2 >0$, with $c_0<c_1$, and an integer $n\ge 2$. For any real number $C>0$, there exist real numbers $\gamma_0,\gamma_1>0$ such that, for any sufficiently large integer $d$, and any bounded section $s\in \Gamma(D_R,L^{\tensor d})$ satisfying
  \begin{align*}
   c_0d^n \le m\defeq \ord_0s \le c_1d^n\text{, }\ \ \ \log \|s\|_{L^{\infty}(D_R)}\le c_2d\log d\text{,}
  \end{align*}
 and
  \begin{align*}
   \max_{0\le j< \lceil\gamma_0d\log d\rceil}\log \frac{|f^{(j)}(q)|}{j!} \le -\gamma_1d^n\text{,}
  \end{align*}
  where $s = fs_0^{\tensor d}$ on a neighborhood of $q$, we have
  \begin{align*}
  \log \|j_0^ms\|_R \le -C d\log d \text{.}
  \end{align*}
\end{coro}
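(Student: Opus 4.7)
The plan is to view this corollary as essentially a direct consequence of Corollary \ref{corotaylor}, with the bulk of the work being a careful bookkeeping of the constants and the dominant terms. I would start by choosing $J \defeq \lceil \gamma_0 d \log d \rceil$, where $\gamma_0 > 0$ is to be determined, and let $\kappa_0,\ldots,\kappa_4 > 1$ and $d_0\ge 1$ be the constants produced by Corollary \ref{corotaylor} applied to $\overline{L}$, $q$, and $s_0$. Since $n\ge 2$, the hypothesis $m \ge c_0 d^n$ implies $m \ge \kappa_0 d$ for $d$ large enough, so all hypotheses of Corollary \ref{corotaylor} are satisfied and I obtain
\begin{align*}
\log \|j_0^m s\|_R \le \log\!\left(\frac{m^{\kappa_1 d}}{\kappa_2^J}\|s\|_{L^\infty(D_R)} + \kappa_3^{d+m+J}\max_{0\le j<J}\frac{|f^{(j)}(q)|}{j!}\right) + \kappa_4 d.
\end{align*}

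Since $\log(a+b) \le \log 2 + \max(\log a,\log b)$, it then suffices to bound each of the two summands inside the logarithm separately. For the first summand, using $m\le c_1 d^n$ and the hypothesis on $\|s\|_{L^\infty(D_R)}$,
\begin{align*}
\log\!\left(\frac{m^{\kappa_1 d}}{\kappa_2^J}\|s\|_{L^\infty(D_R)}\right) \le \bigl(\kappa_1 n + c_2 - \gamma_0 \log \kappa_2\bigr) d \log d + O(d).
\end{align*}
The crucial point here is that $\kappa_2 > 1$, which is guaranteed by the construction $\kappa_2 = R_0/|q|$ with $R_0 > |q|$ in the proof of Corollary \ref{corotaylor}. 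Choosing $\gamma_0$ large enough so that $\gamma_0 \log \kappa_2 > \kappa_1 n + c_2 + C + 1$, this summand is $\le -(C+\tfrac{1}{2})d\log d$ for $d$ sufficiently large.

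For the second summand, using $d + m + J = O(d^n)$ dominated by the $m \le c_1 d^n$ term, and the hypothesis on the Taylor coefficients,
\begin{align*}
\log\!\left(\kappa_3^{d+m+J}\max_{0\le j<J}\frac{|f^{(j)}(q)|}{j!}\right) \le (c_1 \log \kappa_3 - \gamma_1) d^n + O(d\log d).
\end{align*}
Choosing $\gamma_1 > c_1 \log \kappa_3$ makes the leading coefficient negative, and since $n\ge 2$ this $-\delta d^n$ term easily dominates any $O(d\log d)$ contribution, giving a bound $\le -(C+1)d\log d$ for large $d$. Combining the two estimates and absorbing the additive $\kappa_4 d + \log 2$ into the main term yields $\log \|j_0^m s\|_R \le -Cd\log d$ for all sufficiently large $d$, as required.

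There is no real obstacle in this argument beyond keeping track of which terms grow as $d$, $d\log d$, or $d^n$: all the analytic content has already been packaged into Corollary \ref{corotaylor}, and what remains is to exploit the hierarchy $d \ll d\log d \ll d^n$ by tuning $\gamma_0$ to absorb the $d\log d$ contributions coming from $m^{\kappa_1 d}$ and $\|s\|_{L^\infty(D_R)}$, and tuning $\gamma_1$ to make the $d^n$ contribution coming from $\kappa_3^{d+m+J}$ negative.
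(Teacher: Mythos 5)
Your proposal is correct and follows essentially the same route as the paper's proof: apply Corollary \ref{corotaylor} with $J=\lceil\gamma_0 d\log d\rceil$, bound the two summands inside the logarithm separately, and choose $\gamma_0>(\log\kappa_2)^{-1}(n\kappa_1+c_2+C)$ and $\gamma_1>c_1\log\kappa_3$ to exploit the hierarchy $d\ll d\log d\ll d^n$. The bookkeeping of constants matches the paper's, so there is nothing to add.
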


\begin{proof}
  Let $\kappa_i>1$, $i=0,\ldots,4$, be the constants given by Corollary \ref{corotaylor}. We claim that it suffices to take $\gamma_0>(\log \kappa_2)^{-1}(n\kappa_1 + c_2 +C)$ and $\gamma_1>c_1\log \kappa_3$.

  Indeed, let $s \in \Gamma(D_R,L^{\tensor d})$ be as in the statement. Since $n\ge 2$ and $m\ge c_0d^n$, if $d$ is sufficiently large, we have $m\ge \kappa_0d$, so that the conclusion of Corollary \ref{corotaylor} for $J\defeq \lceil \gamma_0d\log d \rceil$ applies:
  \begin{align}\label{ct1-1}
    \log \|j_0^ms\|_R \le \log \left(\frac{m^{\kappa_1d}}{\kappa_2^J}\|s\|_{L^{\infty}(D_R)} + \kappa_3^{d+m+J} \max_{0\le j < J}\frac{|f^{(j)}(q)|}{j!}\right) + \kappa_4d\text{.}
  \end{align}
  
  Since $m \le c_1d^n$, $\log \|s\|_{L^{\infty}(D_R)}\le c_2d\log d$, and $J \ge \gamma_0\d\log d$, we obtain
  \begin{align*}
   \log \left(\frac{m^{\kappa_1d}}{\kappa_2^J}\|s\|_{L^{\infty}(D_R)}\right)\le   (n\kappa_1+c_2 - (\log\kappa_2)\gamma_0)d\log d + \kappa_1(\log c_1) d\text{.}
  \end{align*}
  Thus, by our choice of $\gamma_0$, if $d$ is sufficiently large, we get
  \begin{align}\label{ct1-2}
   \log \left(\frac{m^{\kappa_1d}}{\kappa_2^J}\|s\|_{L^{\infty}(D_R)}\right) \le -(C+\varepsilon_1)d\log d\text{,}
  \end{align}
  for some $\varepsilon_1>0$.

  Since $m\le c_1d^n$, $J\le \gamma_0d\log d + 1$, and $\max_{0\le j< J}\log \frac{|f^{(j)}(q)|}{j!} \le -\gamma_1d^n$, we have
  \begin{align*}
 \log \left(\kappa_3^{d+m+J} \max_{0\le j < J}\frac{|f^{(j)}(q)|}{j!}\right) \le (c_1\log \kappa_3 - \gamma_1)d^n + \gamma_0(\log\kappa_3)d\log d + (\log \kappa_3)(d+1)\text{.}
  \end{align*}
  Thus, as $n\ge 2$, and by our choice of $\gamma_1$, if $d$ is sufficiently large, we obtain
  \begin{align}\label{ct1-3}
 \log \left(\kappa_3^{d+m+J} \max_{0\le j < J}\frac{|f^{(j)}(q)|}{j!}\right) \le -\varepsilon_2 d^n
  \end{align}
  for some $\varepsilon_2 >0$.

  We conclude by applying (\ref{ct1-2}) and (\ref{ct1-3}) in (\ref{ct1-1}), and by taking $d$ to be sufficiently large.
\end{proof}

\section{Analytic curves of moderate growth in quasi-projective varieties}\label{mgqp}

This section contains mostly well-known techniques and results in Nevanlinna Theory. These are nevertheless written in the literature in a form not suitable for our purposes. Although our proofs may vary, many of the theory concerning growth of entire analytic maps (``parabolic case'') easily translate into our hyperbolic situation; we refer the reader to the recent monograph \cite{NW04} for a thorough exposition of the general parabolic theory (in several variables). 

\subsection{Analytic curves of moderate growth in compact complex manifolds}

Let $R>0$ be a real number, $M$ be a compact complex manifold, and $\varphi: D_R \to M$ be an analytic map. Fix any Hermitian metric $h$ on $M$, and let $\omega \defeq -\Im h$ be the positive $(1,1)$-form associated to $h$; in other words, if $h = \sum_{k,l=1}^n h_{kl}dz_k\tensor d\bar{z}_l$ in a local chart $(z_1,\ldots,z_n)$ of $M$, then $\omega = \frac{i}{2}\sum_{k,l=1}^n h_{kl}dz_k\wedge d\bar{z}_l$.  

\begin{defi}\label{defmodgrowthcurve}
We say that $\varphi:D_R \to M$ has \emph{moderate growth} if the semipositive (1,1)-form $\varphi^*\omega$ on $D_R$ has moderate growth (see Definition \ref{defmodgrowth}).
\end{defi}

This notion does not depend on the choice of the Hermitian metric $h$. Indeed, since $M$ is compact, any two Hermitian metrics on $M$ are ``comparable'': if $h_0$ is another Hermitian metric on $M$, then there exist real numbers $\alpha,\beta >0$ such that $\alpha \| \ \|_{h_0} \le \| \ \|_h \le \beta \| \ \|_{h_0}$.

\begin{obs}
It follows from Remark \ref{intparts} that $T_{\varphi^*\omega}(r) = \int_0^r\left(\int_{D_t}\varphi^*\omega \right)d\log t$ can be thought of as a logarithmic integral of the areas of the disks $\varphi(D_t)$ in $M$ for $0 <t<r$.
\end{obs}

We next consider a simple example of curves of moderate growth.

\begin{ex}[Bounded derivative]\label{bd}
Let $\varphi: D_R \to M$ be an analytic map, and $h$ be a Hermitian metric on $M$. Then we can write
 \begin{align*}
\varphi^*\omega = \|\varphi'(z)\|_{R,h}^2 d\mu_R\text{,}
\end{align*}
where $d\mu_R$ is the Poincaré form defined in Example \ref{poincareform}, and $\|\varphi'(z)\|_{R,h}$ denotes the norm of the tangent map $D_z\varphi:T_zD_R \to T_{\varphi(z)}M$ with respect to the Poincaré metric on $D_R$, and the Hermitian metric $h$ on $M$. Since $d\mu_R$ has moderate growth, the analytic curve $\varphi$ has moderate growth in $M$ whenever the function $z \mapsto \|\varphi'(z)\|_{R,h}$ is bounded on $D_R$ (e.g., $\varphi$ extends continuously to $\overline{D_R}\subset \CC$).
\end{ex}

\subsection{Nevanlinna's characteristic function}

Let $M$ be a complex manifold, $\overline{L}=(L,\| \ \|)$ be a semipositive Hermitian line bundle on $M$, and $s_0 \in \Gamma(M,L)\minus\{0\}$ be a non-zero global section.

Let $R>0$ be a real number and $\varphi:D_R \to M$ be an analytic map whose image is not contained in the support of $\div(s_0)$. We define, for every $r\in(0,R)$,
\begin{align*}
m_{\varphi,\overline{L},s_0}(r) \defeq \frac{1}{2\pi}\int_0^{2\pi} \log \frac{1}{\|s_0(\varphi(re^{i\theta}))\|}d\theta
\end{align*}
and
\begin{align*}
N_{\varphi,\overline{L},s_0}(r) \defeq (\ord_0\varphi^*s_0)\log r + \sum_{0<|z|< r} (\ord_z\varphi^*s_0) \log \frac{r}{|z|}\text{.}
\end{align*} 
Then we can form the \emph{Nevanlinna characteristic function} on the interval $(0,R)$
\begin{align*}
T_{\varphi,\overline{L},s_0} \defeq m_{\varphi,\overline{L},s_0} + N_{\varphi,\overline{L},s_0}\text{.}
\end{align*}

For the next proposition, we introduce a temporary notation. If $s$ is a global section of $\varphi^*L$ and $m=\ord_0s$, we denote by $\ell(s)$ the unique element of the fiber of $\varphi^*L$ at $0\in D_R$ such that $j_0^ms = \ell(s)\tensor dz^{\tensor m}$ (the ``leading coefficient'' of $s$).

The following classical identity (cf. \cite{NW04} Theorem 2.3.31) is an immediate corollary of Proposition \ref{propmain} applied to the section $s=\varphi^*s_0$ and the point $p=0$.

\begin{prop}[Nevanlinna's First Fundamental Theorem]\label{fft}
For every $r\in(0,R)$, we have
\begin{align*}
  T_{\varphi^*\overline{L}}(r) = T_{\varphi,\overline{L},s_0}(r) + \log \|\ell(\varphi^*s_0)\|\text{.}
\end{align*}\hfill $\blacksquare$
\end{prop}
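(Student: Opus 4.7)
The plan is to apply Proposition~\ref{propmain} directly to the semipositive Hermitian line bundle $\varphi^*\overline{L}$ on $D_R$, the global section $s := \varphi^*s_0 \in \Gamma(D_R, \varphi^*L) \setminus \{0\}$ (non-zero by the assumption on $\varphi$), and the point $p = 0$. Writing $m = \ord_0 \varphi^*s_0$, this yields
\begin{equation*}
\log \|j_0^m \varphi^*s_0\|_r = T_{\varphi^*\overline{L}, 0}(r) + \int \log \|\varphi^*s_0\|\,\pi_{r,0} - \int g_{D_r,0}\,\delta_{\div(\varphi^*s_0) - m[0]}.
\end{equation*}
The task then is purely to match each term with the four objects appearing in the statement.

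First, $T_{\varphi^*\overline{L},0}(r) = T_{\varphi^*\overline{L}}(r)$ by the very definition of the characteristic function. Next, since $p=0$, the measure $\pi_{r,0}$ reduces to the normalized arc-length measure on $\partial D_r$ (because the automorphism $\sigma_{r,0}$ is the identity up to a rotation), so
\begin{equation*}
\int \log \|\varphi^*s_0\|\,\pi_{r,0} = \frac{1}{2\pi}\int_0^{2\pi}\log\|s_0(\varphi(re^{i\theta}))\|\,d\theta = -\,m_{\varphi,\overline{L},s_0}(r).
\end{equation*}
For the divisorial term, $g_{D_r,0}(z) = \log^+(r/|z|)$, which vanishes outside $D_r$ and equals $\log(r/|z|)$ inside. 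Since $\div(\varphi^*s_0) - m[0] = \sum_{z \neq 0}(\ord_z \varphi^*s_0)[z]$, we obtain
\begin{equation*}
\int g_{D_r,0}\,\delta_{\div(\varphi^*s_0) - m[0]} = \sum_{0 < |z| < r}(\ord_z \varphi^*s_0)\log\frac{r}{|z|} = N_{\varphi,\overline{L},s_0}(r) - m\log r.
\end{equation*}

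It remains to handle the left-hand side. The dual of the Poincaré metric $\frac{r}{r^2 - |z|^2}|dz|$ on $\Omega^1_{D_r}$ evaluated at $z = 0$ assigns norm $1/r$ to $dz$, so $\|dz^{\otimes m}\|_r = r^{-m}$ at the origin, and hence $\log\|j_0^m \varphi^*s_0\|_r = \log \|\ell(\varphi^*s_0)\| - m\log r$. Substituting all four identifications into the output of Proposition~\ref{propmain} makes the terms $\pm m\log r$ cancel, yielding
\begin{equation*}
\log \|\ell(\varphi^*s_0)\| = T_{\varphi^*\overline{L}}(r) - m_{\varphi,\overline{L},s_0}(r) - N_{\varphi,\overline{L},s_0}(r) = T_{\varphi^*\overline{L}}(r) - T_{\varphi,\overline{L},s_0}(r),
\end{equation*}
which is the desired equality after rearrangement. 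The only potentially delicate point is the bookkeeping of the Poincaré-metric normalization at the origin and ensuring the sign conventions in $m_{\varphi,\overline{L},s_0}$ (defined with $-\log\|\cdot\|$) match those of $\pi_{r,0}$; neither step presents a real obstacle.
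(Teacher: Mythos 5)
Your overall strategy is exactly the paper's: Proposition \ref{propmain} applied to $\varphi^*\overline{L}$, $s=\varphi^*s_0$, $p=0$, followed by term-by-term identification. The identifications of $T_{\varphi^*\overline{L},0}(r)=T_{\varphi^*\overline{L}}(r)$, of $\int\log\|\varphi^*s_0\|\,\pi_{r,0}=-m_{\varphi,\overline{L},s_0}(r)$ (indeed $\sigma_{r,0}=\mathrm{id}$), and of the divisorial term as $N_{\varphi,\overline{L},s_0}(r)-m\log r$ are all correct.

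The slip is in the jet-norm normalization. The Poincar\'e metric $\frac{r}{r^2-|z|^2}|dz|$ is a metric on the \emph{tangent} bundle, assigning norm $1/r$ to $\partial/\partial z$ at the origin; its \emph{dual} metric on $\Omega^1_{D_r,0}$ therefore assigns norm $r$ (not $1/r$) to $dz$, so that $\log\|j_0^m\varphi^*s_0\|_r=\log\|\ell(\varphi^*s_0)\|+m\log r$. You can check this against Lemma \ref{lemmamainprop} with $s=z^m$ and the trivial metric, or against the scaling relation $\|j_0^ms\|_r=\|j_0^ms\|_R\left(\frac{r}{R}\right)^m$ used in the proof of Proposition \ref{prop1mg}. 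With your value $r^{-m}$ the $\pm m\log r$ terms do \emph{not} cancel as you assert: carried through literally, your computation would give $T_{\varphi^*\overline{L}}(r)=T_{\varphi,\overline{L},s_0}(r)+\log\|\ell(\varphi^*s_0)\|-2m\log r$, which is false whenever $\ord_0\varphi^*s_0>0$ (a case the statement explicitly allows, since $N_{\varphi,\overline{L},s_0}$ carries the term $(\ord_0\varphi^*s_0)\log r$). With the corrected normalization the $m\log r$ on the left cancels against the $+m\log r$ produced by the counting term on the right, and the proof is complete as you intended.
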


As an application we show that, when $R=1$, polynomial growth of Taylor coefficients implies moderate growth.

\begin{ex}\label{polgrowth}
  Let $\varphi=(\varphi_1,\ldots,\varphi_n): D\to \CC^n$ be an analytic map with coordinates $\varphi_i (z) = \sum_{j=0}^{\infty}a_{ij}z^j$.  Assume that there exist a real number $C> 1$ and an integer $d\ge 1$ such that
  \begin{align*}
   |a_{ij}|\le C j^d
  \end{align*}
  for every $1\le i \le n$ and $j\ge 0$. Then, when identifying $\CC^n$ with the open affine subset \linebreak $U_0 = \{(p_0:\cdots:p_n) \in \PP^n(\CC) \mid p_0\neq 0\}$ of $\PP^n(\CC)$ via $(z_1,\ldots,z_n) \mapsto (1:z_1:\cdots :z_n)$, the analytic curve $\varphi: D \to \PP^n(\CC)$ has moderate growth.

  Indeed, let $\overline{\mathcal{O}(1)}$ denote the line bundle $\mathcal{O}(1)$ on $\PP^n(\CC)$ endowed with the Fubini-Study metric; that is,
\begin{align*}
 \|X_i(p)\| = \frac{|p_i|}{\sqrt{|p_0|^2 + \cdots + |p_n|^2}}
\end{align*}
for every $0\le i \le n$, and $p = (p_0:\cdots:p_n)\in \PP^n(\CC)$. Since $N_{\varphi,\overline{\mathcal{O}(1)},X_0}$ vanishes identically, by Proposition \ref{fft}, it is sufficient to prove that there exist $a,b>0$ such that
\begin{align*}
m_{\varphi,\overline{\mathcal{O}(1)},X_0}(r) \le a + b \log \frac{1}{1-r}
\end{align*}
for every $r\in(0,1)$.

For any real numbers $t_1,\ldots,t_m\ge 0$, we have $\log^+(\sum_{i=1}^mt_i) \le \sum_{i=1}^m\log^+t_i +  \log m$, so that
\begin{align*}
\log \frac{1}{\|X_0(\varphi(re^{i\theta}))\|} = \log \sqrt{1 + \sum_{i=1}^n|\varphi_i(re^{i\theta})|^2} \le \sum_{i=1}^n\log^+|\varphi_i(re^{i\theta})| + \log(\sqrt{1+n})\text{.}
\end{align*}
Since
\begin{align*}
|\varphi_i(re^{i\theta})| \le C\sum_{j=0}^{\infty}j^dr^j \le Cd!\left(\frac{1}{1-r} \right)^{d+1}\text{,}
\end{align*}
we may take $a = \log (\sqrt{1+n}) + n\log(Cd!)$ and $b = n(d+1)$. 
\end{ex}

\subsection{The field of moderate functions on a disk}

In this paragraph we study more closely the case $M=\PP^1(\CC)$. We refer to \cite{tsuji75} Chapters V-VII for a survey on the classical work on this subject. 

Let $R>0$ be a real number and $f$ be meromorphic function on $D_R$, i.e., an analytic map $f:D_R \to \PP^1(\CC)$ which is not constant equal to $\infty = (0:1)$.

\begin{defi}
We say that $f$ is a \emph{moderate function} on $D_R$ if the analytic map $f:D_R \to \PP^1(\CC)$ has moderated growth.
\end{defi}

If $\overline{\mathcal{O}(1)}$ denotes the line bundle $\mathcal{O}(1)$ on $\PP^1(\CC)$ endowed with the Fubini-Study metric (see Example \ref{polgrowth}), we denote
\begin{align*}
T_{f}\defeq T_{f^*\overline{\mathcal{O}(1)}}\text{.}
\end{align*}
By Proposition \ref{fft}, we have
\begin{align*}
  T_{f} =  m_{f,\overline{\mathcal{O}(1)},X_0} + N_{f,\overline{\mathcal{O}(1)},X_0} + O(1)\text{,}
\end{align*}
where $O(1)$ denotes a constant. To lighten the notation, we shall write $m_f = m_{f,\overline{\mathcal{O}(1)},X_0}$ (resp. $N_f = N_{f,\overline{\mathcal{O}(1)},X_0}$).

Let $\mathcal{K}_{D_R}$ denote the field of meromorphic functions on $D_R$. It is classical (and easy to prove) that characteristic functions are compatible with the algebraic structure of $\mathcal{K}_{D_R}$ in the following sense: for $f,g\in \mathcal{K}_{D_R}\minus\{0\}$ and $n \in \ZZ\minus\{0\}$, we have
\begin{align}\label{algop}
T_{f+g} \le T_f + T_g + O(1)\text{, }\ \  T_{fg} \le T_f + T_g + O(1)\text{, }\ \ T_{f^{n}} = |n|T_f +O(1)
\end{align}

It follows from the above relations that the subset $\mathcal{K}_{D_R}^{m}$ of $\mathcal{K}_{D_R}$ consisting of moderate meromorphic functions is a field.

\begin{prop}[cf. \cite{NW04} Lemma 2.5.15]\label{algcomp}
  Let $f,f_1,\ldots,f_n$ be meromorphic functions on $D_R$. If $f$ is algebraic over the field $\CC(f_1,\ldots,f_n)\subset \mathcal{K}_{D_R}$, then there exist real numbers $a,b>0$ such that
  \begin{align*}
   T_f \le a + b \sum_{i=1}^nT_{f_i}\text{.}
  \end{align*}
\end{prop}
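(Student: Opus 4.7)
The plan is to reduce to an integrality relation and then bound the proximity and counting parts of the characteristic function separately.

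Since $f$ is algebraic over $\CC(f_1,\ldots,f_n)$, after clearing denominators we may write
$$\sum_{i=0}^d a_i(f_1,\ldots,f_n)\, f^i = 0$$
for some polynomials $a_0,\ldots,a_d \in \CC[X_1,\ldots,X_n]$ with $a_d\neq 0$. The standard trick is to replace $f$ by $g \defeq a_d(f_1,\ldots,f_n)\, f$: multiplying the relation by $a_d(f_1,\ldots,f_n)^{d-1}$, one finds that $g$ satisfies the \emph{monic} equation
$$g^d + \sum_{i=0}^{d-1} C_i\, g^i = 0, \qquad C_i \defeq a_i(f_1,\ldots,f_n)\, a_d(f_1,\ldots,f_n)^{d-1-i}.$$
Each $C_i$ is a polynomial in $f_1,\ldots,f_n$, so by the algebraic properties \eqref{algop} of the characteristic function there exist $\alpha,\beta>0$ with $T_{C_i} \le \alpha + \beta \sum_j T_{f_j}$ for every $i$, and similarly for $T_{a_d(f_1,\ldots,f_n)}$.

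The heart of the argument is then the classical bound $T_g \le \sum_{i<d} T_{C_i} + O(1)$, which I would prove by splitting $T_g = m_g + N_g + O(1)$ via Proposition \ref{fft}. For the \emph{proximity} part, the pointwise root estimate for monic polynomials gives: at any $z \in D_r$ where all $C_i(z)$ are finite and $|g(z)|\ge 1$, the relation $|g(z)|^d \le d\,(\max_i |C_i(z)|)\,|g(z)|^{d-1}$ yields $\log^+|g(z)|\le \log d + \sum_i \log^+|C_i(z)|$; integrating on $\partial D_r$ gives $m_g(r)\le \sum_i m_{C_i}(r) + O(1)$. For the \emph{counting} part, if $g$ has a pole of order $k$ at $z_0$, then from $g^d = -\sum_{i<d} C_i g^i$ and the ultrametric inequality for orders,
$$-dk = \ord_{z_0}(g^d) \ge \min_{i<d}\bigl(\ord_{z_0}(C_i) - ik\bigr),$$
which forces $\ord_{z_0}(C_i) \le -(d-i)k \le -k$ for some $i$, i.e., some $C_i$ has a pole of order at least $k$ at $z_0$. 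Summing over all such $z_0$ in $D_r$ gives $N_g(r)\le \sum_i N_{C_i}(r)$.

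Combining the two estimates yields $T_g \le \sum_i T_{C_i} + O(1) \le \alpha' + \beta'\sum_j T_{f_j}$ for some $\alpha',\beta'>0$. To finish, write $f = g / a_d(f_1,\ldots,f_n)$ and apply \eqref{algop} once more:
$$T_f \le T_g + T_{a_d(f_1,\ldots,f_n)^{-1}} + O(1) = T_g + T_{a_d(f_1,\ldots,f_n)} + O(1),$$
which together with the previous bound provides the desired $T_f \le a + b\sum_j T_{f_j}$. I expect no serious obstacle beyond bookkeeping; the only slightly delicate point is that the local order analysis for $N_g$ must be formulated uniformly, without assuming the $C_i$ are holomorphic --- but the inequality above handles arbitrary values of $\ord_{z_0}(C_i)$, so the argument is robust.
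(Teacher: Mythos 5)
Your proof is correct, but it takes a genuinely different route from the paper's. The paper never decomposes $T$ into proximity and counting parts: it works with the \emph{monic} minimal polynomial $f^d = g_{d-1}f^{d-1}+\cdots+g_0$ over $\CC(f_1,\ldots,f_n)$ (so no denominator-clearing is needed), applies Horner's scheme together with the subadditivity rules (\ref{algop}) to get $T_{f^d}\le (d-1)T_f+\sum_i T_{g_i}+O(1)$, and then cancels against the identity $T_{f^d}=d\,T_f+O(1)$ to conclude $T_f\le\sum_i T_{g_i}+O(1)$; each $g_i$ is then handled by (\ref{algop}) as in your last step. You instead monicize by hand (passing to $g=a_d(f_1,\ldots,f_n)f$) and reprove the key bound $T_g\le\sum_i T_{C_i}+O(1)$ analytically via the root estimate for $m_g$ and the local order analysis for $N_g$ --- essentially a special case of the Valiron--Mokhon'ko lemma. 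The paper's argument is shorter and purely formal; yours is more self-contained at the level of Nevanlinna theory and does not need the exact identity $T_{f^d}=d\,T_f+O(1)$, only upper bounds. Two bookkeeping points in your write-up deserve a word: (i) you need $a_d(f_1,\ldots,f_n)\not\equiv 0$ \emph{as a meromorphic function}, not merely $a_d\neq 0$ as a polynomial (this is arranged by clearing the denominators of the coefficients of the minimal polynomial, whose leading coefficient is $1$); (ii) in the counting estimate the term at $z_0=0$ carries the weight $\log r$, which is negative for $r<1$, so the pointwise comparison of pole orders yields $N_g\le\sum_i N_{C_i}+O(1)$ rather than an exact inequality --- harmless, since the discrepancy is $O(\log r)$, hence bounded on any $[R_1,R)$, while for small $r$ all quantities are already bounded.
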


\begin{proof}
  Let $d$ be the degree of $f$ over $\CC(f_1,\ldots,f_n)$. If $d=0$, then the result follows immediately from formulas (\ref{algop}). Assume that $d\ge 1$, and let $P = X^d - g_{d-1}X^{d-1} - \cdots - g_0 \in \CC(f_1,\ldots,f_n)[X]$ be the minimal polynomial of $f$. Since each $g_i \in \CC(f_1,\ldots,f_n)$, it suffices to prove that $T_f \le \sum_{i=0}^{d-1}T_{g_i} + O(1)$.

  By formulas (\ref{algop}), we have
  \begin{align*}
  T_{f^{d}} = T_{(g_{d-1}f^{d-2} + \cdots + g_1)f + g_0} \le T_{g_{d-1}f^{d-2} + \cdots + g_1} + T_f + T_{g_0} + O(1)\text{.}
  \end{align*}
  By descending induction, we get
  \begin{align*}
  T_{f^d} \le (d-1)T_f + \sum_{i=0}^{d-1}T_{g_i} + O(1)\text{.}
  \end{align*}
  As $T_{f^d} = d\cdot T_f + O(1)$, we obtain
  \begin{align*}
   T_f \le \sum_{i=0}^{d-1}T_{g_i}+ O(1)\text{.}
  \end{align*}
\end{proof}

\begin{coro}
The field of moderate functions $\mathcal{K}_{D_R}^m$ is algebraically closed in $\mathcal{K}_{D_R}$. \hfill $\blacksquare$
\end{coro}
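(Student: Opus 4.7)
The plan is to derive the corollary directly from Proposition \ref{algcomp}, which does almost all the work. Essentially, being algebraically closed in $\mathcal{K}_{D_R}$ means: whenever $f\in\mathcal{K}_{D_R}$ satisfies a polynomial relation with coefficients in $\mathcal{K}_{D_R}^m$, one has $f\in\mathcal{K}_{D_R}^m$.

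So I would proceed as follows. Let $f \in \mathcal{K}_{D_R}$ be algebraic over $\mathcal{K}_{D_R}^m$, and pick a polynomial relation witnessing this, say with coefficients $g_1,\ldots,g_n \in \mathcal{K}_{D_R}^m$. Then $f$ is automatically algebraic over the subfield $\CC(g_1,\ldots,g_n) \subset \mathcal{K}_{D_R}$. Applying Proposition \ref{algcomp} to this situation yields constants $a,b > 0$ such that
\[
T_f(r) \le a + b \sum_{i=1}^n T_{g_i}(r) \quad \text{for every } r \in (0,R).
\]

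Now I use the hypothesis that each $g_i$ is a moderate function, i.e.\ the analytic map $g_i : D_R \to \PP^1(\CC)$ has moderate growth. By Definition \ref{defmodgrowth} (applied to $g_i^*\overline{\mathcal{O}(1)}$), there exist constants $a_i, b_i > 0$ such that
\[
T_{g_i}(r) \le a_i + b_i \log \frac{1}{1-\frac{r}{R}}.
\]
Plugging this into the previous inequality gives $T_f(r) \le A + B \log \frac{1}{1 - r/R}$ for suitable $A, B > 0$. Hence $f^*\overline{\mathcal{O}(1)}$ has moderate growth, i.e.\ $f \in \mathcal{K}_{D_R}^m$.

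There is essentially no hard step left: Proposition \ref{algcomp} (the ``majoration of $T_f$ by the $T_{g_i}$'' statement) is where all the substance lives, and the corollary is a purely formal consequence combining that bound with the definition of moderate growth. The only thing to check carefully is that the subset $\mathcal{K}_{D_R}^m$ is indeed a subfield of $\mathcal{K}_{D_R}$ (which was already noted right before Proposition \ref{algcomp} as a consequence of the relations \eqref{algop}), so that ``algebraically closed in $\mathcal{K}_{D_R}$'' is a meaningful statement.
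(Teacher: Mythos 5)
Your proposal is correct and is exactly the intended (and only natural) argument: the corollary is stated in the paper as an immediate consequence of Proposition \ref{algcomp}, obtained by taking the coefficients $g_1,\ldots,g_n$ of an algebraic relation for $f$ over $\mathcal{K}_{D_R}^m$, bounding $T_f$ by $a + b\sum_i T_{g_i}$, and feeding in the moderate-growth bounds for each $T_{g_i}$. No further comment is needed.
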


In particular, since the inclusion $D_R \to \PP^1(\CC)$ is easily seen to be an analytic map of moderate growth (see Example \ref{bd}), the field $\mathcal{K}_{D_R}^m$ contains the field of (univalued) algebraic meromorphic functions on $D_R$.

\subsection{Birational invariance and moderate growth in quasi-projective varieties}\label{birinv}

In this paragraph, we establish the birational invariance of moderate growth under a non-degeneracy hypothesis. Our arguments follow closely those of \cite{NW04} 2.5; we claim no originality here.

In what follows, if $f$ is a meromorphic function on $D_R$, we denote the divisor of zeros (resp. poles) of $f$ by $\div_0(f)$ (resp. $\div_{\infty}(f)$), so that $\div(f) = \div_0(f) - \div_{\infty}(f)$.

\begin{lemma}[cf. \cite{NW04} Theorem 2.5.7]\label{lemmacomp1}
  Let $M$ be a compact complex manifold endowed with a semipositive Hermitian line bundle $\overline{L} = (L,\|\  \|)$. Fix global sections $s_0,s_1 \in \Gamma(M,L)$, with $s_0\neq 0$. Then, for any analytic map $\varphi : D_R \to M$ whose image is not contained in the support of $\div(s_0)$, if we denote by $f$ the unique meromorphic function on $D_R$ such that $f\varphi^*s_0 = \varphi^*s_1$, we have
  \begin{align*}
   T_{f} \le T_{\varphi, \overline{L}, s_0} + O(1)\text{.}
  \end{align*}
\end{lemma}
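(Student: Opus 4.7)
The plan is to apply Nevanlinna's First Fundamental Theorem (Proposition \ref{fft}) to the meromorphic function $f : D_R \to \PP^1(\CC)$ with respect to the Fubini--Study line bundle $\overline{\mathcal{O}(1)}$ and the section $X_0$, obtaining
$$T_{f} = m_{f} + N_{f} + O(1).$$
The problem then reduces to establishing two separate comparison bounds:
$$m_f \le m_{\varphi,\overline{L},s_0} + O(1) \quad\text{and}\quad N_f \le N_{\varphi,\overline{L},s_0} + O(1),$$
whose sum is $T_{\varphi,\overline{L},s_0} + O(1)$ by definition.

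For the counting function, I use the identity $f = \varphi^*s_1/\varphi^*s_0$ of meromorphic functions on $D_R$. This gives $\ord_z f = \ord_z \varphi^*s_1 - \ord_z\varphi^*s_0$ and hence, for every $z \in D_R$,
$$\ord_z f^*X_0 = \max(0,-\ord_z f) = \max(0,\ord_z\varphi^*s_0 - \ord_z\varphi^*s_1) \le \ord_z \varphi^*s_0.$$
Plugging this term-by-term into the definitions of $N_f$ and $N_{\varphi,\overline{L},s_0}$ yields $N_f(r) \le N_{\varphi,\overline{L},s_0}(r) + O(1)$; here the $O(1)$ absorbs the discrepancy $(\ord_0\varphi^*s_0-\ord_0 f^*X_0)\log r$, which is bounded on any interval $[r_0,R)$ with $r_0>0$.

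The heart of the argument lies in bounding $m_f$, and this is where the compactness of $M$ enters. Pointwise one has $|f(z)| = \|\varphi^*s_1(z)\|/\|\varphi^*s_0(z)\|$, so
$$\sqrt{1+|f(z)|^2} \;=\; \frac{\sqrt{\|\varphi^*s_0(z)\|^2 + \|\varphi^*s_1(z)\|^2}}{\|\varphi^*s_0(z)\|}.$$
Since $M$ is compact and the sections $s_0,s_1$ are continuous, the function $\|s_0\|^2+\|s_1\|^2$ is bounded on $M$ by some constant $C>0$, giving
$$\log\sqrt{1+|f(z)|^2} \;\le\; \tfrac{1}{2}\log C \,+\, \log\frac{1}{\|\varphi^*s_0(z)\|}.$$
Averaging over $\partial D_r$ and recalling that $\log(1/\|X_0(f(z))\|) = \tfrac{1}{2}\log(1+|f(z)|^2)$ for the Fubini--Study metric yields $m_f(r) \le m_{\varphi,\overline{L},s_0}(r) + \tfrac{1}{2}\log C$ for every $r\in (0,R)$. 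Combining this with the bound on $N_f$ and the decomposition above completes the proof. The only conceptual step is the compactness-based pointwise inequality on $\sqrt{1+|f|^2}$; the rest is the standard bookkeeping separating the proximity and counting contributions to the Nevanlinna characteristic.
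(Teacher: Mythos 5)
Your proof is correct and follows essentially the same route as the paper's: the proximity term is handled by the pointwise identity $\sqrt{1+|f|^2}=\sqrt{\|\varphi^*s_0\|^2+\|\varphi^*s_1\|^2}\,/\,\|\varphi^*s_0\|$ together with compactness of $M$, and the counting term by the divisor inequality $\div_\infty(f)\le\div(\varphi^*s_0)$. Your explicit remark about absorbing the $(\ord_0\varphi^*s_0-\ord_0 f^*X_0)\log r$ discrepancy into the $O(1)$ is a minor bookkeeping refinement of a step the paper treats as trivial, not a different argument.
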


\begin{proof}
  Let $H$ be the support of $\div(s_0)$. For $p\in M \minus H$, we have
  \begin{align*}
    \log \sqrt{1 + \frac{\|s_1(p)\|^2}{\|s_0(p)\|^2}} = \log \frac{1}{\|s_0(p)\|} + \log \sqrt{\|s_0(p)\|^2 + \|s_1(p)\|^2}\text{.}
  \end{align*}
  Since $M$ is compact, the functions $\|s_i\|$ on $M$ are bounded, so that
\begin{align*}
  \log \sqrt{1 + \frac{\|s_1\|^2}{\|s_0\|^2}} \le \log \frac{1}{\|s_0\|} + O(1)
\end{align*}
over $M \minus H$. In particular, we get
\begin{align*}
 m_{f} \le m_{\varphi, \overline{L},s_0} +O(1)\text{.}
\end{align*}
Since $\div_{\infty}(f) \le \div(\varphi^*s_0)$, the bound
\begin{align*}
N_{f} \le N_{\varphi,\overline{L},s_0}
\end{align*}  
is trivial.
\end{proof}

For the next lemma, we endow the line bundle $\mathcal{O}(1)$ over $\PP^n(\CC)$ with the Fubini-Study metric as in Example \ref{polgrowth}. Moreover, if $E = \sum_{z \in D_R}n_z[z]$ is a divisor in $D_R$, we denote $i(z,E) \defeq n_z$.  

\begin{lemma}\label{lemmacomp2}
  Let $\varphi: D_R \to \PP^n(\CC)$ be an analytic map whose image is not contained in the support of $\div(X_0)$. For $1\le j \le n$, let us denote by $f_j$ the unique meromorphic function on $D_R$ such that $f_j\varphi^*X_0=\varphi^*X_j$. Then
  \begin{align*}
    T_{\varphi, \overline{\mathcal{O}(1)},X_0} \le \sum_{j=1}^n T_{f_j} + O(1)\text{.}
  \end{align*}
\end{lemma}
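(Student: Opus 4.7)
The plan is to bound the proximity and counting functions appearing in $T_{\varphi,\overline{\mathcal{O}(1)},X_0}=m_{\varphi,\overline{\mathcal{O}(1)},X_0}+N_{\varphi,\overline{\mathcal{O}(1)},X_0}$ separately, each by the corresponding sum over $j$, and then to invoke the First Fundamental Theorem (Proposition \ref{fft}) for each $f_j$ to convert $m_{f_j}+N_{f_j}$ back into $T_{f_j}$ up to a bounded error.

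For the proximity function, I will use the explicit form of the Fubini--Study metric: at a point $p=(p_0:\cdots:p_n)$ with $p_0\neq 0$,
\begin{align*}
\log\frac{1}{\|X_0(p)\|}=\tfrac{1}{2}\log\Bigl(1+\sum_{j=1}^{n}|p_j/p_0|^2\Bigr).
\end{align*}
Applied to $p=\varphi(re^{i\theta})$ this gives $\log\tfrac{1}{\|X_0(\varphi(re^{i\theta}))\|}=\tfrac{1}{2}\log(1+\sum_j|f_j(re^{i\theta})|^2)$. Using the elementary inequality
\begin{align*}
\tfrac{1}{2}\log\Bigl(1+\sum_{j=1}^{n}t_j^2\Bigr)\le \tfrac{1}{2}\log(n+1)+\sum_{j=1}^{n}\log^+ t_j,\qquad t_j\ge 0,
\end{align*}
and integrating along $\partial D_r$, I get $m_{\varphi,\overline{\mathcal{O}(1)},X_0}(r)\le \sum_{j=1}^{n}\frac{1}{2\pi}\int_0^{2\pi}\log^+|f_j(re^{i\theta})|\,d\theta+O(1)$. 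The trivial comparison $\log^+ t\le \tfrac{1}{2}\log(1+t^2)$ then identifies each integral as being bounded by $m_{f_j}(r)$, yielding $m_{\varphi,\overline{\mathcal{O}(1)},X_0}\le \sum_{j=1}^{n} m_{f_j}+O(1)$.

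For the counting function, the crucial geometric input is that $X_0,X_1,\ldots,X_n$ generate $\mathcal{O}(1)$ on $\PP^n(\CC)$, so their pullbacks $\varphi^*X_0,\ldots,\varphi^*X_n$ generate $\varphi^*\mathcal{O}(1)$ on $D_R$ and hence have no common zero. Consequently, at every point $z\in D_R$ at which $\ord_z(\varphi^*X_0)=k\ge 1$, there must exist an index $j\in\{1,\ldots,n\}$ with $\ord_z(\varphi^*X_j)=0$; for such $j$, $f_j=\varphi^*X_j/\varphi^*X_0$ has a pole of order exactly $k$ at $z$. This gives the pointwise bound
\begin{align*}
\ord_z(\varphi^*X_0)\le \sum_{j=1}^{n}i\bigl(z,\div_\infty(f_j)\bigr),
\end{align*}
valid at $z=0$ as well as at all other points of $D_r$. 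Summing with the appropriate logarithmic weights $\log(r/|z|)$ (and $\log r$ for $z=0$) yields $N_{\varphi,\overline{\mathcal{O}(1)},X_0}(r)\le \sum_{j=1}^{n}N_{f_j}(r)$.

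Adding the two inequalities produces $T_{\varphi,\overline{\mathcal{O}(1)},X_0}\le \sum_{j=1}^{n}(m_{f_j}+N_{f_j})+O(1)$, and the First Fundamental Theorem applied to each $f_j$ gives $m_{f_j}+N_{f_j}=T_{f_j}+O(1)$, completing the proof. There is no real obstacle here; the only care needed is to verify the ``no common zero'' property that controls the counting function, and to carry the $O(1)$ constants coming from both the Fubini--Study comparison and the FFT.
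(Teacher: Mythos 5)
Your proposal is correct and follows essentially the same route as the paper: bound the proximity term by $\sum_j m_{f_j}$ via an elementary logarithmic inequality for the Fubini--Study expression, bound the counting term by $\sum_j N_{f_j}$ using the fact that the coordinates $X_0,\ldots,X_n$ have no common zero along $\varphi$ (so a zero of $\varphi^*X_0$ of order $k$ forces a pole of order $k$ of some $f_j$), and then convert $m_{f_j}+N_{f_j}$ into $T_{f_j}+O(1)$ by the First Fundamental Theorem. The only cosmetic difference is your detour through $\log^+$ with an explicit $\tfrac12\log(n+1)$ constant, where the paper uses $\log(1+\sum t_j)\le\sum\log(1+t_j)$ directly.
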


\begin{proof}
  We first prove that $m_{\varphi,\overline{\mathcal{O}(1)},X_0} \le \sum_{j=1}^n m_{f_j}$. 
For any real numbers $t_1,\ldots,t_n\ge 0$, we have
  \begin{align*}
  \log \left(1+ \sum_{j=1}^nt_j \right) \le \sum_{j=1}^n \log(1+t_j)\text{.}
  \end{align*}
  Thus, for any $r\in(0,R)$,
  \begin{align*}
    m_{\varphi,\overline{\mathcal{O}(1)},X_0}(r) &= \frac{1}{2\pi} \int_0^{2\pi}\log \sqrt{1 + \sum_{j=1}^{n}|f_j(re^{i\theta})|^2}d\theta \\
                                                 &\le \sum_{j=1}^n \frac{1}{2\pi}\int_0^{2\pi}\log \sqrt{1 + |f_j(re^{i\theta})|^2}d\theta = \sum_{j=1}^nm_{f_j}(r)
  \end{align*}

Next, observe that to prove that $N_{\varphi,\overline{\mathcal{O}(1)},X_0}\le \sum_{j=1}^n N_{f_j}$ it suffices to show that
\begin{align*}
  \div(\varphi^*X_0) \le \sum_{j=1}^n{\div}_{\infty}(f_j)\text{.}
\end{align*}
Since each $\div_{\infty}(f_j)$ is an effective divisor, it is sufficient to prove that, for every $z\in D_R$, there exists $1\le j\le n$ such that $i(z,\div (\varphi^*X_0)) \le i(z, \div_{\infty}(f_j))$. Now, for any $1\le j \le n$, since $f_j \varphi^*X_0 = \varphi^*X_j$, we  may write
\begin{align*}
{\div}_{\infty}(f_j) = \div (\varphi^*X_0) + {\div}_0(f_j) - \div(\varphi^*X_j)\text{.}
\end{align*}
Finally, we simply remark that for any $z\in D_R$ for which $X_0(\varphi(z))=0$ (i.e., $\ord_{z}\varphi^*X_0 >0$ or, equivalently, $i(z,\div(\varphi^*X_0))>0$), there exists $1\le j \le n$ such that $X_j(\varphi(z))\neq 0$ (i.e., $\ord_{z}\varphi^*X_j = 0$), so that $i(z,\div_{\infty}(f_j)) = i(z,\div(\varphi^*X_0)) + i(z,\div_0(f_j)) \ge i(z,\div(\varphi^*X_0))$.
\end{proof}

Let $f,g : I \to \RR$ be real functions defined on some interval $I\subset \RR$. We say that $f$ and $g$ are \emph{comparable} if there exist real numbers $a,b,c,d>0$  such that
\begin{align*}
af - b \le g \le cf+d
\end{align*}
everywhere on $I$.

\begin{theorem}[cf. \cite{NW04} Theorem 2.5.18]\label{funcfield}
Let $R>0$ be a real number, $X$ be a smooth projective variety of dimension $n$ over $\CC$, and $\varphi: D_R \to X^{\an}$ be an analytic map whose image is Zariski-dense in $X$.  Then, for any positive $(1,1)$-form $\omega$ on $X$, and any transcendence basis $(f_1,\ldots,f_n)$ of the function field $\CC(X)$ of $X$, the real functions $T_{\varphi^*\omega}$ and $\sum_{j=1}^nT_{f_j\circ \varphi}$ on $(0,R)$ are comparable. In particular, $\varphi$ has moderate growth in $X$ if and only if $f_j\circ \varphi$ are moderate functions on $D_R$ for every $1\le j \le n$. 
\end{theorem}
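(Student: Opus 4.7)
The plan is to introduce an auxiliary ample Hermitian line bundle $\overline{L}=(L,\|\cdot\|)$ on $X$ whose Chern curvature $c_1(\overline{L})$ is positive, and to reduce the statement to the comparability of $T_{\varphi^{*}\overline{L}}$ with $\sum_{j=1}^{n}T_{f_{j}\circ\varphi}$. Since $X$ is compact and both $\omega$ and $c_1(\overline{L})$ are positive $(1,1)$-forms, they are pointwise comparable as Hermitian forms: there exist constants $\alpha,\beta>0$ with $\alpha\cdot c_1(\overline{L})\le\omega\le\beta\cdot c_1(\overline{L})$, which integrates to $\alpha\,T_{\varphi^{*}\overline{L}}\le T_{\varphi^{*}\omega}\le\beta\,T_{\varphi^{*}\overline{L}}$.

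For one direction I would show that $T_{\varphi^{*}\overline{L}}$ is dominated by an affine function of $\sum_{j}T_{f_{j}\circ\varphi}$. Fix $k\ge 1$ such that $L^{\otimes k}$ is very ample, and use its global sections to define a closed immersion $\iota\colon X\hookrightarrow\PP^{N}$ satisfying $\iota^{*}\overline{\mathcal{O}(1)}\simeq\overline{L}^{\otimes k}$ with respect to the Fubini--Study metric. Writing $s_{i}=\iota^{*}X_{i}\in\Gamma(X,L^{\otimes k})$ and using Zariski-density of $\varphi(D_{R})$ to ensure that $\iota\circ\varphi$ is not contained in $\{X_{0}=0\}$, Proposition \ref{fft} combined with Lemma \ref{lemmacomp2} gives
\begin{align*}
k\,T_{\varphi^{*}\overline{L}}=T_{\iota\circ\varphi,\overline{\mathcal{O}(1)},X_{0}}+O(1)\le\sum_{i=1}^{N}T_{(s_{i}/s_{0})\circ\varphi}+O(1).
\end{align*}
Each ratio $s_{i}/s_{0}\in\CC(X)$ is algebraic over $\CC(f_{1},\ldots,f_{n})$, since the $f_{j}$ form a transcendence basis of $\CC(X)$. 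Proposition \ref{algcomp} therefore bounds every $T_{(s_{i}/s_{0})\circ\varphi}$ by an affine function of $\sum_{j}T_{f_{j}\circ\varphi}$, finishing this direction.

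For the reverse bound I would use ampleness of $L$ to write, for each $j$, a representation $f_{j}=t_{j}^{1}/t_{j}^{0}$ with $t_{j}^{0},t_{j}^{1}\in\Gamma(X,L^{\otimes k_{j}})$ for some $k_{j}\ge 1$, where Zariski-density of $\varphi$ ensures $\varphi^{*}t_{j}^{0}\not\equiv 0$. Lemma \ref{lemmacomp1} applied to $\overline{L}^{\otimes k_{j}}$ with $s_{0}=t_{j}^{0}$, $s_{1}=t_{j}^{1}$, combined once more with Proposition \ref{fft}, yields $T_{f_{j}\circ\varphi}\le T_{\varphi,\overline{L}^{\otimes k_{j}},t_{j}^{0}}+O(1)=k_{j}\,T_{\varphi^{*}\overline{L}}+O(1)$, and summing over $j$ completes the comparability. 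The second assertion of the theorem is then immediate: each $T_{f_{j}\circ\varphi}$ is non-negative, so moderate growth of $\varphi$ in $X$ (i.e.\ $T_{\varphi^{*}\omega}=O(\log\frac{1}{1-r/R})$) is by comparability equivalent to $\sum_{j}T_{f_{j}\circ\varphi}$ satisfying the same bound, and hence to every individual $f_{j}\circ\varphi$ being a moderate function on $D_{R}$.

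I do not expect a significant obstacle, as the main analytic content is already encoded in Lemmas \ref{lemmacomp1}--\ref{lemmacomp2}, Proposition \ref{fft}, and Proposition \ref{algcomp}. The only subtle point to verify in each application is the non-degeneracy hypothesis that $\varphi$ (respectively $\iota\circ\varphi$) does not land in the vanishing locus of the chosen section; but this is automatic from Zariski-density of $\varphi(D_{R})$, since any nonzero global section of an ample line bundle on $X$ has proper vanishing locus.
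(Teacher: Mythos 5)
Your argument is correct and follows the same skeleton as the paper's proof: embed $X$ in projective space, use Proposition \ref{fft} to pass between curvature-type and Nevanlinna-type characteristic functions, and use Lemmas \ref{lemmacomp1}--\ref{lemmacomp2} together with Proposition \ref{algcomp} to reach the transcendence basis. The one genuine variation is in the reverse bound $\sum_j T_{f_j\circ\varphi}\lesssim T_{\varphi^*\omega}$: the paper writes each $f_j$ as a rational expression in the coordinate ratios $g_i=X_i/X_0$ and invokes the elementary formulas (\ref{algop}), whereas you represent each $f_j$ directly as a quotient $t_j^1/t_j^0$ of global sections of $L^{\otimes k_j}$ and apply Lemma \ref{lemmacomp1} plus Proposition \ref{fft} once more; this is a clean alternative that bypasses (\ref{algop}) at the cost of the (standard, but worth stating) fact that on a smooth projective variety any rational function is a quotient of sections of a power of an ample line bundle, with the Zariski-density hypothesis guaranteeing $\varphi^*t_j^0\not\equiv 0$. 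Two cosmetic points: the identity $kT_{\varphi^*\overline{L}}=T_{\iota\circ\varphi,\overline{\mathcal{O}(1)},X_0}+O(1)$ presumes the metric on $L^{\otimes k}$ is the pullback Fubini--Study metric; for an arbitrary choice of metric the two differ by a bounded smooth factor, which again only changes characteristic functions within comparability (this is exactly the compactness remark the paper appeals to), so nothing is lost.
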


Observe that the Zariski-density hypothesis above ensures that, for any rational function $f$ on $X$, the image of $\varphi$ is not contained in the indeterminacy locus of $f$, so that $f\circ \varphi$ is a well-defined meromorphic function on $D_R$.

\begin{proof}
Let $i:X \to \PP^{N}_{\CC} = \Proj \CC[X_0,\ldots,X_N]$ be a closed immersion such that $i\circ \varphi(D_R)$ is not contained in the support of $\div(X_0)$, and consider the rational functions $g_j \in \CC(X)$, $1\le j \le N$, given by restriction of $X_j/X_0$ to $X$. 

It follows from the compactness of $X^{\an}$ (cf. remark following Definition \ref{defmodgrowthcurve}), and from Proposition \ref{fft}, that the functions $T_{\varphi^*\omega}$ and $T_{i\circ \varphi,\overline{\mathcal{O}(1)},X_0}$ are comparable. By Lemmas \ref{lemmacomp1} and \ref{lemmacomp2}, the functions $T_{i\circ \varphi,\overline{\mathcal{O}(1)},X_0}$ and $\sum_{j=1}^N T_{g_j\circ\varphi}$ are comparable. Furthermore, as $\CC(X) = \CC(g_1,\ldots,g_N)$ is an algebraic extension of $\CC(f_1,\ldots,f_n)$, we deduce from formulas (\ref{algop}) and from Proposition \ref{algcomp} that $\sum_{j=1}^NT_{g_j\circ \varphi}$ and $\sum_{j=1}^nT_{f_j\circ \varphi}$ are comparable. Our statement follows by transitivity of comparability.
\end{proof}

In particular, moderate growth in projective varieties is a birational invariant.

\begin{coro}\label{corobirinv}
Let $f:X \to Y$ be a birational morphism between smooth projective varieties over $\CC$. If $R>0$ is a real number, then an analytic map $\varphi:D_R \to X^{\an}$ with Zariski-dense image has moderated growth if and only if $f\circ \varphi :D_R \to Y^{\an}$ has moderate growth. \hfill $\blacksquare$
\end{coro}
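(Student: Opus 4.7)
The plan is to deduce the corollary as a direct consequence of Theorem \ref{funcfield}, exploiting the fact that a birational morphism between integral varieties induces an isomorphism of function fields. Since $f:X\to Y$ is birational, $\dim X=\dim Y=n$, and the pullback $f^*:\CC(Y)\stackrel{\sim}{\to}\CC(X)$ is a field isomorphism. I would first choose a transcendence basis $(g_1,\ldots,g_n)$ of $\CC(Y)$ over $\CC$, so that $(f^*g_1,\ldots,f^*g_n)$ is automatically a transcendence basis of $\CC(X)$ over $\CC$.

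The next step is to verify that $f\circ\varphi:D_R\to Y^{\an}$ has Zariski-dense image. Let $Z\subset Y$ be the Zariski closure of the image of $f\circ\varphi$. Then $f^{-1}(Z)$ is a Zariski-closed subset of $X$ containing the image of $\varphi$, which is Zariski-dense by hypothesis; hence $f^{-1}(Z)=X$, so $f(X)\subset Z$. Since $f$ is birational (in particular dominant), $f(X)$ is Zariski-dense in $Y$, and therefore $Z=Y$. Thus $f\circ\varphi$ satisfies the non-degeneracy hypothesis required to invoke Theorem \ref{funcfield}.

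Now fix any positive $(1,1)$-forms $\omega_X$ on $X^{\an}$ and $\omega_Y$ on $Y^{\an}$ (these exist since $X$ and $Y$ are smooth projective, hence Kähler). Applying Theorem \ref{funcfield} to $\varphi$ and $X$ with the transcendence basis $(f^*g_1,\ldots,f^*g_n)$, the characteristic function $T_{\varphi^*\omega_X}$ is comparable to $\sum_{i=1}^n T_{(f^*g_i)\circ\varphi}$. Applying Theorem \ref{funcfield} to $f\circ\varphi$ and $Y$ with the transcendence basis $(g_1,\ldots,g_n)$, the characteristic function $T_{(f\circ\varphi)^*\omega_Y}$ is comparable to $\sum_{i=1}^n T_{g_i\circ(f\circ\varphi)}$. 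Since $(f^*g_i)\circ\varphi=g_i\circ f\circ\varphi$ as meromorphic functions on $D_R$, the two right-hand sides coincide, and by transitivity of comparability $T_{\varphi^*\omega_X}$ and $T_{(f\circ\varphi)^*\omega_Y}$ are comparable.

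Finally, I would observe that the defining inequality for moderate growth, namely $T(r)\le a+b\log\frac{1}{1-r/R}$, is manifestly stable under comparability in the sense introduced before Theorem \ref{funcfield}: if $a_1 T_1-b_1\le T_2\le c_1 T_1+d_1$ everywhere on $(0,R)$ and $T_1$ satisfies such a logarithmic bound, then so does $T_2$ (with updated constants). This immediately gives the equivalence $\varphi$ has moderate growth $\iff$ $f\circ\varphi$ has moderate growth. There is no real obstacle here: all the work has been done in Theorem \ref{funcfield}, and the corollary is simply the observation that a birational morphism preserves the intrinsic invariant $\sum_i T_{f_i\circ\varphi}$ attached to a transcendence basis of the function field.
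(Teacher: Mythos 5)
Your proposal is correct and is exactly the argument the paper intends: the corollary is stated as an immediate consequence of Theorem \ref{funcfield}, using that a birational morphism identifies $\CC(Y)$ with $\CC(X)$ so a transcendence basis transports along $f^*$, that $f\circ\varphi$ inherits Zariski-density, and that the moderate-growth bound is stable under comparability. Nothing to add.
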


Combining the standard argument in the proof of Corollary \ref{corostandard} with a resolution of singularities yields the following.

\begin{coro}
 Let $X$ be a smooth quasi-projective variety over $\CC$, and let $j_i : X \hookrightarrow \overline{X}_i$, $i=1,2$, be smooth projective compactifications of $X$. If $R>0$ is a real number and $\varphi: D_R \to X^{\an}$ is an analytic map with Zariski-dense image, then $j_1\circ \varphi$ has moderate growth if and only if $j_2\circ \varphi$ has moderate growth. \hfill $\blacksquare$
\end{coro}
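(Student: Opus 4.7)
The plan is to reduce to Corollary \ref{corobirinv} by producing a single smooth projective variety that dominates both compactifications, exactly mirroring the strategy of Corollary \ref{corostandard}. First, I form the scheme-theoretic image $\overline{X}$ of the map $(j_1,j_2) : X \to \overline{X}_1\times_{\CC}\overline{X}_2$. Then $\overline{X}$ is an integral projective variety containing (the image of) $X$ as a dense open subscheme, and the natural projections $\pi_i : \overline{X} \to \overline{X}_i$ are proper birational morphisms which restrict to the isomorphism $X\xrightarrow{\sim} j_i(X)$ on $X$.

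Since $\overline{X}$ need not be smooth, I apply Hironaka's resolution of singularities to get a proper birational morphism $p:\tilde{X}\to \overline{X}$ with $\tilde{X}$ smooth projective. Because $X$ is already smooth, I may choose $p$ so that it restricts to an isomorphism over $X \subset \overline{X}$; this yields an open immersion $\tilde{\jmath}:X\into \tilde{X}$ with $p\circ \tilde{\jmath}$ equal to the inclusion $X\into \overline{X}$. The composites $\tilde{\pi}_i \defeq \pi_i \circ p : \tilde{X}\to \overline{X}_i$ are then birational morphisms between smooth projective varieties, and they satisfy $\tilde{\pi}_i\circ \tilde{\jmath} = j_i$ for $i=1,2$.

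Set $\tilde{\varphi} \defeq \tilde{\jmath}\circ \varphi : D_R \to \tilde{X}^{\an}$. Since $X$ is dense open in $\tilde{X}$ and $\varphi(D_R)$ is Zariski-dense in $X$, the image $\tilde{\varphi}(D_R)$ is Zariski-dense in $\tilde{X}$. Applying Corollary \ref{corobirinv} to each birational morphism $\tilde{\pi}_i : \tilde{X}\to \overline{X}_i$, I conclude that $\tilde{\varphi}$ has moderate growth in $\tilde{X}$ if and only if $\tilde{\pi}_i\circ \tilde{\varphi} = j_i \circ \varphi$ has moderate growth in $\overline{X}_i$. Comparing the two cases $i=1,2$ gives the desired equivalence.

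The only non-formal ingredient is the use of Hironaka's theorem in the strong form that permits the resolution $p$ to be an isomorphism over the smooth locus, which is standard; everything else is a diagram-chase transporting the hypothesis of Zariski-density and the conclusion of moderate growth through the birational morphisms $\tilde{\pi}_1$ and $\tilde{\pi}_2$, where Corollary \ref{corobirinv} does all the analytic work.
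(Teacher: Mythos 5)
Your argument is correct and coincides with the paper's intended proof, which is stated only as ``combine the standard argument in the proof of Corollary \ref{corostandard} with a resolution of singularities'': you form the scheme-theoretic image of $(j_1,j_2)$, resolve it to a smooth projective variety dominating both compactifications via birational morphisms restricting to the identity on $X$, and apply Corollary \ref{corobirinv} to each projection. The details you supply (openness of $X$ in the scheme-theoretic image, Zariski-density of the lifted curve, and choosing the resolution to be an isomorphism over the smooth open subset $X$) are exactly the right ones.
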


We may thus define an unambiguous notion of moderate growth for Zariski-dense analytic curves in smooth quasi-projective varieties.

\begin{defi}
Let $X$ be a smooth quasi-projective variety, $R>0$ be a real number, and $\varphi:D_R \to X^{\an}$ be an analytic map with Zariski-dense image. We say that $\varphi$ has \emph{moderate growth} if there exists a smooth projective compactification $j: X \hookrightarrow \overline{X}$ of $X$ such that $j\circ\varphi:D_R \to \overline{X}^{\an}$ has moderate growth.
\end{defi}

\section{Construction of auxiliary sections}\label{secauxsec}

We prove in this section Theorem \ref{auxsec} below, generalizing the construction of auxiliary polynomials in Nesterenko's method. Our approach, based on Bost's method of slopes, differs from the classical combinatorial one. However, the backbone of the argument remains the same: Minkowski's theorem on minima of lattices (see Proposition \ref{minkthm} below).

\subsection{Notation and statement}\label{notstat}

Let $K$ be a number field and $\mathcal{O}_K$ be its ring of integers. Recall that, if $\mathcal{X}$ is an \emph{arithmetic scheme} over $\mathcal{O}_K$ (i.e., an integral scheme $\mathcal{X}$ with a separated and flat morphism of finite type $\mathcal{X} \to \Spec \mathcal{O}_K$) with smooth generic fiber $\mathcal{X}_K$, a Hermitian line bundle $\overline{L}=(L,(\| \ \|_{\sigma})_{\sigma: K \into \CC})$ over $\mathcal{X}$ is the data of a line bundle $L$ on $\mathcal{X}$ and a family of $C^{\infty}$ Hermitian metrics $\| \ \|_{\sigma}$ on the holomorphic line bundles $L_{\sigma}$ over $\mathcal{X}_{\sigma}^{\an}$ deduced from $L$ by the field embeddings $\sigma: K \into \CC$ that is invariant under complex conjugation. 

If $d\ge 1$ is an integer, and $s \in \Gamma(\mathcal{X},L^{\tensor d})$ is a global section, we denote
\begin{align*}
 \|s\|_{\mathcal{X}} \defeq \max_{\sigma}\|s\|_{\sigma,L^{\infty}(\mathcal{X}_{\sigma}^{\an})}\text{.}
\end{align*}

This section is devoted to the proof of the following theorem.

\begin{theorem}\label{auxsec}
  Let $\mathcal{X}$ be a projective arithmetic scheme of relative dimension $n\ge 1$ over $\mathcal{O}_K$ with smooth generic fiber $\mathcal{X}_K$, and fix any Hermitian line bundle $\overline{L} = (L,(\| \ \|_{\sigma})_{\sigma: K \into \CC})$ on $\mathcal{X}$ such that $L_K$ is ample on $\mathcal{X}_K$. Let $\hat{\varphi} : \Spf \mathcal{O}_K[\![q]\!] \to \mathcal{X}$ be a morphism of formal $\mathcal{O}_K$-schemes such that, for every field embedding $\sigma: K \hookrightarrow \CC$, the formal curve $\hat{\varphi}_{\sigma} : \Spf \CC[\![q]\!] \to \mathcal{X}_{\sigma}$ lifts to an analytic map $\varphi_{\sigma}: D_{R_{\sigma}} \to \mathcal{X}_{\sigma}^{\an}$ defined on some complex disk of radius $R_{\sigma}>0$, and assume that
  $$
  \prod_{\sigma:K\into \CC}R_{\sigma}=1\text{.}
  $$
  If, moreover,
  \begin{enumerate}
   \item the image of $\hat{\varphi}_K : \Spf K[\![q]\!] \to \mathcal{X}_K$ is Zariski-dense, and
   \item for every field embedding $\sigma:K\into \CC$, $\varphi_{\sigma}$ has moderate growth in $\mathcal{X}_{\sigma}^{\an}$,
  \end{enumerate}
then, there are constants $c_1,c_2,c_3>0$ such that, for every large enough positive integer $d$, there exists $s \in \Gamma(\mathcal{X},L^{\tensor d})\setminus\{0\}$ such that
\begin{align*}
m\defeq \ord_0 \hat{\varphi}^*s > c_1d^n
\end{align*}
 and
  \begin{align*}
    \log \|s\|_{\mathcal{X}} \le c_2d + c_3d\log m\text{.}
  \end{align*}
\end{theorem}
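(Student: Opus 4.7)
The plan is to apply Bost's method of slopes in Arakelov geometry. Form the Hermitian vector bundle $\overline{E}_d$ by equipping $E_d := \Gamma(\mathcal{X}, L^{\otimes d})$ at each archimedean place $\sigma$ with the sup-norm $\|s\|_\sigma = \|s\|_{\sigma,L^\infty(\mathcal{X}_\sigma^{\an})}$ (replaced if needed by a comparable Hermitian norm via John's ellipsoid, at the cost of a factor polynomial in $\rk E_d$, which is negligible). Ampleness of $L_K$ and the Hilbert polynomial yield $N_d := \rk_{\mathcal{O}_K} E_d = \tfrac{\deg_{L_K}\mathcal{X}_K}{n!} d^n + O(d^{n-1})$. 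Fix $c_1 > 0$ strictly smaller than $\deg_{L_K}\mathcal{X}_K / n!$ and set $m := \lceil c_1 d^n \rceil + 1$, so that $N_d > m$ for $d$ large. The auxiliary section will be a suitably small element of $K_{m,d} := \ker \phi_{m,d}$, where
\begin{equation*}
\phi_{m,d} : E_d \longrightarrow J_m := \hat{\varphi}^* L^{\otimes d} \otimes_{\mathcal{O}_K[\![q]\!]} \mathcal{O}_K[q]/(q^m)
\end{equation*}
is the truncation-of-Taylor-expansion map along $\hat{\varphi}$; here $J_m$ is free of rank $m$ over $\mathcal{O}_K$.

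The hypothesis $\prod_\sigma R_\sigma = 1$ enters decisively through the Arakelov structure of $\overline{J}_m$. Equip $J_m$ at each $\sigma$ via the natural filtration $J_m \supset q J_m \supset \cdots \supset q^{m-1} J_m$, whose graded pieces are $\hat{\varphi}(0)^* \overline{L}^{\otimes d} \otimes (\mathfrak{m}_0/\mathfrak{m}_0^2)^{\otimes i}$, using on the cotangent $\mathfrak{m}_0/\mathfrak{m}_0^2$ at $0 \in D_{R_\sigma}$ the Poincaré metric $\|dq\|_{\sigma,0} = 1/R_\sigma$. Since $\sum_\sigma \log R_\sigma = 0$, one obtains $\adeg(\mathfrak{m}_0/\mathfrak{m}_0^2) = 0$, so every graded piece has Arakelov degree exactly $d \cdot \adeg(\hat{\varphi}(0)^* \overline{L})$, and in particular $\hat{\mu}_{\max}(\overline{J}_m) = O(d)$. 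Meanwhile, moderate growth of each $\varphi_\sigma$ controls the operator norm of $\phi_{m,d,\sigma}$: Proposition \ref{prop1mg} handles the top-order jet, and a straightforward adaptation to intermediate Taylor coefficients (splitting $s$ into a polynomial of degree $< m$ plus a remainder of high vanishing order at $0$, in the spirit of the proof of Proposition \ref{proptaylor} together with Proposition \ref{boundcompact}) yields $\log \|\phi_{m,d,\sigma}\|_{\text{op}} \leq O(d \log m)$.

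From the short exact sequence $0 \to \overline{K}_{m,d} \to \overline{E}_d \to \overline{V} \to 0$ with quotient metric on $V := E_d / K_{m,d}$, additivity of the Arakelov degree combined with the standard comparison between the quotient metric on $V$ and the sub-bundle metric induced from $\phi_{m,d}(E_d) \subset J_m$ yields
\begin{equation*}
\adeg(\overline{K}_{m,d}) \;\geq\; \adeg(\overline{E}_d) - m \, \hat{\mu}_{\max}(\overline{J}_m) - m \sum_\sigma \log \|\phi_{m,d,\sigma}\|_{\text{op}}.
\end{equation*}
A crude lower bound $\adeg(\overline{E}_d) \geq -O(d^{n+1} \log d)$, obtainable from the arithmetic Hilbert--Samuel formula (or elementarily by exhibiting an explicit basis with controlled sup-norms), then gives $\hat{\mu}(\overline{K}_{m,d}) \geq -O(d \log d)$, since $\rk K_{m,d} \geq N_d - m \sim N_d$. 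Minkowski's theorem on the first minimum of a Hermitian vector bundle over $\mathcal{O}_K$ produces $s \in K_{m,d} \setminus \{0\}$ with $\log \|s\|_{\mathcal{X}} \leq -\hat{\mu}(\overline{K}_{m,d}) + O(\log N_d) \leq c_2 d + c_3 d \log m$; by construction $\ord_0 \hat{\varphi}^* s \geq m > c_1 d^n$.

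I expect the main obstacle to be the operator-norm estimate $\log \|\phi_{m,d,\sigma}\|_{\text{op}} \leq O(d \log m)$: Proposition \ref{prop1mg} only controls the top-order jet once $\ord_0 s$ is fixed, whereas the operator norm demands a uniform bound across all Taylor coefficients of order $< m$ for arbitrary $s \in E_d$. This is precisely the step for which the moderate-growth framework of Section \ref{mgje} was engineered; without it one would need the polynomial-growth-of-Taylor-coefficients condition in Nesterenko's original argument. The Zariski-density of the image of $\hat{\varphi}_K$ enters implicitly in ensuring the jet map has image of rank $\min(N_d, m)$ after base change to $K$, which is needed to make the slope computation meaningful.
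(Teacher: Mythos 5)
Your overall architecture — sup/John norms on $E_d=\Gamma(\mathcal{X},L^{\tensor d})$, a jet map along $\hat{\varphi}$, the product condition $\prod_\sigma R_\sigma=1$ used to make the relevant line of differentials of Arakelov degree zero, slope inequality plus Minkowski — is the same as the paper's, but your packaging is genuinely different: you fix $m\approx c_1d^n$ in advance and work with the kernel of the full truncated jet map $\phi_{m,d}:E_d\to J_m$, whereas the paper filters $E_d$ by the vanishing order $\ord_0\hat{\varphi}^*s$, applies the slope inequality only to the rank-$\le 1$ graded pieces via the \emph{leading}-jet maps $\gamma^m_d:E^m_d/E^{m+1}_d\to\hat{\varphi}(0)^*L^{\tensor d}\tensor\Omega^{\tensor m}$, and extracts a short vector from the deep piece $E_d^{m'}$, $m'=\lfloor r_d/2\rfloor$, by combining Arakelov-degree additivity along the filtration with Minkowski (Proposition \ref{lemmaaux}). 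This difference is not cosmetic, because your route stands or falls with the estimate $\log\|\phi_{m,d,\sigma}\|_{\mathrm{op}}=O(d\log m)$, i.e.\ a bound on \emph{all} Taylor coefficients of order $<m$ of $\varphi_\sigma^*s$ for \emph{arbitrary} $s$, while the paper only ever needs Proposition \ref{prop1mg}, which (via the Green's function identity of Proposition \ref{propmain}) controls the \emph{first non-vanishing} jet.

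That operator-norm estimate is false under the hypotheses of the theorem, so the step you flag as a ``straightforward adaptation'' of Propositions \ref{proptaylor} and \ref{boundcompact} cannot be carried out: those results again concern leading jets (and values), not intermediate coefficients. Moderate growth gives no control on coefficients beyond the leading one when the analytic curve meets, inside $D_{R_\sigma}$, the zero divisor of whatever trivialization is used to split $J_m$. Concretely, take $K=\QQ$, $n\ge 2$, $\mathcal{X}=\PP^n_{\ZZ}$ with the Fubini--Study metric on $\mathcal{O}(1)$, $R_\sigma=1$, and $\hat{\varphi}(q)=(1:g(q):h(q):\cdots)$ with $g(q)=1/(1-2q)=\sum_i 2^iq^i\in\ZZ[\![q]\!]$ and $h$ a bounded-coefficient lacunary series chosen so that the image is Zariski-dense; the analytic lift $(1-2q:1:(1-2q)h(q):\cdots)$ is defined on all of $D_1$ (it simply crosses the hyperplane $X_0=0$ at $q=1/2$), has moderate growth, and satisfies all hypotheses of Theorem \ref{auxsec}. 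For $s=X_1^{\tensor d}$ one has $\|s\|_{\mathcal{X}}\le 1$, yet the $i$-th coefficient of $\hat{\varphi}^*s$ in the $q$-splitting of $J_m$ has norm $\binom{i+d-1}{d-1}2^{i}$ up to a factor $e^{O(d)}$, so $\log\|\phi_{m,d,\sigma}\|_{\mathrm{op}}\ge m\log 2-O(d)$, incompatible with $O(d\log m)$ once $m\sim d^n$, $n\ge2$; rescaling the metric on $J_m$ to tame these directions destroys the bound $\hat{\mu}_{\max}(\overline{J}_m)=O(d)$, so the slope inequality applied to $E_d/K_{m,d}\into J_m$ yields nothing. (A secondary imprecision: you never define the metric on $J_m$ itself, only on the graded pieces of its $q$-adic filtration, and the Poincaré normalization of $\|dq\|$ at $0$ is $R_\sigma$, not $1/R_\sigma$.) The repair is exactly the paper's device: filter $E_d$ itself by vanishing order so that only leading jets are ever estimated, and do the Minkowski step on $E_d^{m'}$ after bounding its slope by degree additivity.
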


\begin{obs}
By Paragraph \ref{birinv} one could also assume that $\mathcal{X}$ is only quasi-projective over $\Spec \mathcal{O}_K$, and then construct ``auxiliary sections'' on any projective compactification of $\mathcal{X}$ having smooth generic fiber.
\end{obs}

If we require the stronger condition of ZL-density of $\hat{\varphi}_K$ instead of Zariski-density, we obtain the following.

\begin{coro}\label{coroauxsec}
With hypotheses and notation as in Theorem \ref{auxsec}, if moreover $\hat{\varphi}_K: \Spf K[\![q]\!] \to \mathcal{X}_K$ is ZL-dense, then there exist constants $c_1,c_2>0$ such that, for every large enough positive integer $d$, there exists $s \in \Gamma(\mathcal{X},L^{\tensor d})\minus \{0\}$ such that
\begin{align*}
\ord_0 \hat{\varphi}^*s > c_1d^n
\end{align*}
and
\begin{align*}
\log \|s\|_{\mathcal{X}}\le c_2d\log d\text{.}
\end{align*}
\end{coro}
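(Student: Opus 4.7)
The plan is to deduce the corollary from Theorem~\ref{auxsec} by using ZL-density to upgrade the \emph{a priori} bound on $m$ into the logarithmic bound on $\log \|s\|_{\mathcal{X}}$.

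First, I would check that Theorem~\ref{auxsec} applies under the hypotheses of the corollary. Since $L_K$ is ample on $\mathcal{X}_K$, ZL-density of $\hat{\varphi}_K$ implies Zariski-density of its image (as recalled in Section~\ref{strongzd}), so hypothesis~(1) of Theorem~\ref{auxsec} is satisfied; hypothesis~(2) is assumed directly. Thus for every sufficiently large $d$ there exist constants $c_1, c_2', c_3 > 0$ and a section $s \in \Gamma(\mathcal{X}, L^{\otimes d}) \setminus \{0\}$ with
\[
m \defeq \ord_0 \hat{\varphi}^* s > c_1 d^n
\quad \text{and} \quad
\log \|s\|_{\mathcal{X}} \le c_2' d + c_3 d \log m.
\]

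The key observation is that the same section is controlled from above by ZL-density. Since $m > c_1 d^n$ is finite, the pullback $\hat{\varphi}^* s$ is nonzero, hence so is its generic fiber $\hat{\varphi}_K^* s_K \in K[\![q]\!]$, and $m = \ord_0 \hat{\varphi}^* s = \ord_0 \hat{\varphi}_K^* s_K$. By Proposition~\ref{amplediv} applied to the ample line bundle $L_K$ on the projective $K$-variety $\mathcal{X}_K$, ZL-density of $\hat{\varphi}_K$ furnishes a constant $C > 0$, independent of $d$, such that
\[
m \le C d^n.
\]

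Substituting this upper bound into the logarithmic term gives
\[
c_3 d \log m \le c_3 d \bigl( \log C + n \log d \bigr) = c_3 n \, d \log d + O(d),
\]
and combining with the $c_2' d$ term yields $\log \|s\|_{\mathcal{X}} \le c_2 d \log d$ for some constant $c_2 > 0$ and all sufficiently large $d$. This is the claimed estimate, and the lower bound $\ord_0 \hat{\varphi}^* s > c_1 d^n$ is inherited directly from Theorem~\ref{auxsec}. The deduction is essentially bookkeeping once Theorem~\ref{auxsec} and Proposition~\ref{amplediv} are in place; there is no real obstacle, as all the analytic and arithmetic work is packaged into Theorem~\ref{auxsec}, and the role of the stronger hypothesis is precisely to provide the two-sided control on $m$ that collapses the $d \log m$ term into $d \log d$.
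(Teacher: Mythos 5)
Your proposal is correct and is exactly the deduction the paper intends (the corollary is stated without proof): apply Theorem~\ref{auxsec}, then use ZL-density via Proposition~\ref{amplediv} to get the upper bound $m \le C d^{n}$, which turns $c_3 d\log m$ into $O(d\log d)$. The only cosmetic point is that finiteness of $m$ is best justified by noting $\hat{\varphi}_K^* s_K \neq 0$ (Zariski-density) or directly by the ZL-density bound, rather than by reading it off from $m > c_1 d^n$.
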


\subsection{Recollections of Arakelov theory; the slope inequality}

For the convenience of the reader, we recollect in this paragraph some fundamental notions and results concerning Hermitian vector bundles over rings of algebraic integers. Proofs and further developments can be found in \cite{bost95} Appendix A, \cite{bost01} Paragraphs 4.1-4.2, and \cite{BK10} Paragraphs 3.1-3.3.

Let $K$ be number field, $\mathcal{O}_K$ be its ring of integers, and set $S\defeq \Spec \mathcal{O}_K$. Recall that a \emph{Hermitian vector bundle} over $S$ is a couple $\overline{E} = (E,(\| \ \|_{\sigma})_{\sigma: K \into \CC})$, where $E$ is a projective $\mathcal{O}_K$-module of finite type, and $(\| \ \|_{\sigma})_{\sigma:K\into \CC}$ is a family of Hermitian norms over $E_{\sigma}\defeq E\tensor_{\sigma:\mathcal{O}_K \into \CC}\CC$, invariant under complex conjugation. If $\rk E = 1$, we say that $\overline{E}$ is a \emph{Hermitian line bundle} over $S$.

The multilinear constructions in the category of projective modules over $\mathcal{O}_K$ (e.g., tensor products, quotients, Hom) make sense in the category of Hermitian vector bundles over $S$.

\begin{defi}
  Let $\overline{E} = (E,(\| \ \|_{\sigma})_{\sigma:K \into \CC})$ be a Hermitian vector bundle over $S$, and fix $s \in \det E \smallsetminus \{0\}$, where $\det E \defeq \bigwedge^{\rk E} E$. We define the \emph{Arakelov degree} of $\overline{E}$ by
  \begin{align*}
    \widehat{\deg}(\overline{E}) \defeq \log |(\det E)/\mathcal{O}_Ks| - \sum_{\sigma: K \into \CC}\log \|s\|_{\sigma} \in \RR\text{.}
  \end{align*}
 This is easily seen not to depend on the choice of $s$. We define moreover the \emph{normalized Arakelov degree} of $\overline{E}$ by
  \begin{align*}
\widehat{\deg}_n(\overline{E}) \defeq \frac{1}{[K:\QQ]} \widehat{\deg}(\overline{E})
  \end{align*}
  and the \emph{slope} of $\overline{E}$ by
  \begin{align*}
  \hat{\mu} (\overline{E}) \defeq \frac{1}{\rk E}\widehat{\deg}_n(\overline{E})
  \end{align*}
  when $\rk E>0$, and $\hat{\mu}(\overline{E})\defeq-\infty$ when $\rk E = 0$.
\end{defi}

\begin{prop}[cf. \cite{bost01} 4.1.1]\label{genprop}
  The following properties hold:
  \begin{enumerate}
  \item If $\overline{L}$ and $\overline{M}$ are Hermitian line bundles over $S$, we have
    \begin{align*}
     \adeg(\overline{L}\tensor \overline{M}) = \adeg(\overline{L}) + \adeg(\overline{M})\text{.}
    \end{align*}
\item Let $\overline{E}$ be a Hermitian vector bundle over $S$ and
  \begin{align*}
E = E^0 \supset E^1 \supset \cdots \supset E^N \supset \{0\}
  \end{align*}
  be a filtration of $E$ by saturated $\mathcal{O}_K$-submodules. Then
  \begin{align*}
   \adeg(\overline{E}) = \adeg (\overline{E^N}) + \sum_{i=0}^{N-1}\adeg(\overline{E^i/E^{i+1}})\text{,}
  \end{align*}
  where $\overline{E^N}$ (resp. $\overline{E^i/E^{i+1}}$) denotes the Hermitian vector bundle with underlying module $E^N$ (resp. $E^i/E^{i+1}$) and Hermitian structure induced by $\overline{E}$.
   \item For every Hermitian vector bundle $\overline{E}$ over $S$, and every Hermitian line bundle $\overline{L}$ over $S$, we have
  \begin{align*}
   \hat{\mu}(\overline{E}\tensor \overline{L}) = \hat{\mu}(\overline{E}) + {\adeg}_n (\overline{L})\text{.}
  \end{align*}
\end{enumerate}
\end{prop}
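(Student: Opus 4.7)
The plan is to verify each of the three identities by a direct computation from the definition of $\widehat{\deg}$, reducing everything to the case of line bundles and to multilinear algebra over the Dedekind domain $\mathcal{O}_K$.

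For (1), I pick non-zero elements $s\in L$ and $t\in M$. By definition of the tensor product of Hermitian metrics one has $\|s\tensor t\|_\sigma = \|s\|_\sigma\|t\|_\sigma$ at each Archimedean place $\sigma$, so the Archimedean contributions to $\widehat{\deg}$ are additive. For the finite part, tensor the short exact sequence $0 \to \mathcal{O}_Ks \to L \to L/\mathcal{O}_Ks \to 0$ with the flat $\mathcal{O}_K$-module $M$, producing $0\to M\to L\tensor M \to (L/\mathcal{O}_Ks)\tensor M\to 0$ via $m\mapsto s\tensor m$; since $M$ is locally free of rank one, tensoring with $M$ preserves lengths of torsion modules at each prime, and combining with the analogous sequence for $M$ inside the middle copy yields
\begin{align*}
\bigl|(L\tensor M)/\mathcal{O}_K(s\tensor t)\bigr| \;=\; |L/\mathcal{O}_Ks|\cdot |M/\mathcal{O}_Kt|.
\end{align*}
Plugging this and the Archimedean identity into the defining formula gives (1).

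For (2), by induction on $N$ it suffices to treat a single short exact sequence $0\to E'\to E\to E''\to 0$ in which $E'$ is a saturated $\mathcal{O}_K$-submodule (so that $E''$ is torsion-free, hence projective). The canonical isomorphism $\det E\cong \det E'\tensor \det E''$ holds at the level of $\mathcal{O}_K$-modules via $s'\wedge \widetilde{s''}\mapsto s'\tensor s''$ for any lift $\widetilde{s''}$ of $s''$. At each place $\sigma$, identifying $E''_\sigma$ with the orthogonal complement of $E'_\sigma$ inside $E_\sigma$ shows that the induced quotient metric on $\det E''$ agrees with the restricted metric, and that the above algebraic isomorphism is an \emph{isometry} of Hermitian lines. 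Applying (1) to $\overline{\det E'}\tensor \overline{\det E''}$ then yields the additivity of $\widehat{\deg}$ in this exact sequence, and iterating along the filtration produces (2).

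Part (3) follows from (1) together with the canonical isometric isomorphism $\det(\overline{E}\tensor \overline{L})\cong \overline{\det E}\tensor \overline{L}^{\tensor \rk E}$, which one checks Archimedean place by Archimedean place. Iterated use of (1) gives
\begin{align*}
\widehat{\deg}(\overline{E}\tensor \overline{L}) \;=\; \widehat{\deg}(\overline{E}) + (\rk E)\,\widehat{\deg}(\overline{L}),
\end{align*}
and dividing both sides by $(\rk E)[K:\QQ]$ yields (3). The main bookkeeping point throughout is the compatibility in (2) between the quotient Hermitian structure on $E/E'$ and the Hermitian structure obtained by identifying $E/E'$ with the orthogonal complement of $E'$; everything else reduces to elementary manipulations of the defining formula for the Arakelov degree.
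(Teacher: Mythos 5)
Your proof is correct. The paper itself gives no argument for this proposition --- it is quoted from \cite{bost01} 4.1.1 without proof --- and your direct verification is the standard one: multiplicativity of the index $|(L\tensor M)/\mathcal{O}_K(s\tensor t)|$ checked prime by prime for (1), the isometric splitting $\overline{\det E}\cong\overline{\det E'}\tensor\overline{\det E''}$ via orthogonal complements for (2), and the isometry $\det(\overline{E}\tensor\overline{L})\cong\overline{\det E}\tensor\overline{L}^{\tensor\rk E}$ for (3). All the key compatibilities you invoke (quotient metric versus orthogonal complement, saturation of $E^{i+1}$ in $E^i$ so that each subquotient is projective) are exactly the points that need checking, and you check them.
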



Let $\overline{E}$ and $\overline{F}$ be Hermitian vector bundles over $S$. For every maximal ideal $\pp$ of $\mathcal{O}_K$, we denote by $K_{\pp}$ the completion of $K$ at $\pp$ and by $\| \ \|_{\pp}$ the non-archimedean norm over $\Hom_{K_{\pp}}(E_{K_{\pp}},F_{K_{\pp}})$ associated to the $\mathcal{O}_{K_{\pp}}$-lattice $\Hom_{\mathcal{O}_{K_{\pp}}}(E_{\mathcal{O}_{K_{\pp}}},F_{\mathcal{O}_{K_{\pp}}})$; explicitly, if $\varphi \in \Hom_{K_{\pp}}(E_{K_{\pp}},F_{K_{\pp}}) \minus \{0\}$, then $\|\varphi\|_{\pp} \defeq |\mathcal{O}_K/\pp|^{-v_{\pp}(\varphi)}$, where $v_{\pp}(\varphi) = \max\{n \in \ZZ  \mid \pi_{\pp}^{-n}\varphi \in \Hom_{\mathcal{O}_{K_{\pp}}}(E_{\mathcal{O}_{K_{\pp}}},F_{\mathcal{O}_{K_{\pp}}})\}$ and $\pi_{\pp}$ denotes some uniformizer of $\mathcal{O}_{K_{\pp}}$. For a field embedding $\sigma : K \into \CC$, we consider the operator norm on $\Hom_{\CC}(E_{\sigma},F_{\sigma})$:
\begin{align*}
\|\varphi \|_{\sigma} = \max_{v \in E_{\sigma}\minus \{0\}} \frac{\|\varphi(v)\|_{\sigma}}{\|v\|_{\sigma}}\text{.}
\end{align*}
Then, the \emph{height} of a non-zero $K$-linear map $\varphi : E_K \to F_K$ is defined by
  \begin{align*}
 h_{\overline{E},\overline{F}}(\varphi) = \frac{1}{[K:\QQ]}\left(\sum_{\pp} \log \|\varphi\|_{\pp} + \sum_{\sigma} \log \|\varphi\|_{\sigma} \right)\text{.}
  \end{align*}
  If $\varphi=0$, our convention is that $h_{\overline{E},\overline{F}}(\varphi) \defeq -\infty$.

\begin{prop}[Slope inequality; \cite{bost01} Proposition 4.5]\label{slopeineq}
  With the above notation, if \mbox{$\varphi : E_K \to F_K$} is injective, then
  \begin{align*}
   \hat{\mu}(\overline{E}) \le \hat{\mu}_{\max}(\overline{F}) + h_{\overline{E},\overline{F}}(\varphi)\text{,}
  \end{align*}
  where $\hat{\mu}_{\max}(\overline{F}) \defeq \sup \{\hat{\mu}(\overline{F'}) \mid F'\text{ is an } \mathcal{O}_K\text{-submodule of } F\}$.
\end{prop}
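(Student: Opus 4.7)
The plan is to prove the slope inequality in two stages: first reduce to the case where $\varphi$ becomes a $K$-linear isomorphism after base change, and then exploit the determinant to turn a rank-$r$ statement into a rank-one computation on a single Hermitian line bundle.

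For the reduction, set $r\defeq \rk E$ and consider, inside $F_K$, the $r$-dimensional $K$-subspace $\varphi(E_K)$. Define $F'\defeq F\cap\varphi(E_K)$; this is a saturated $\mathcal{O}_K$-submodule of $F$ of rank $r$, equipped with the Hermitian metrics induced from $\overline{F}$, and by the definition of $\hat{\mu}_{\max}$ one has $\hat{\mu}(\overline{F'})\le \hat{\mu}_{\max}(\overline{F})$. The heights computed with target $F$ or $F'$ coincide: at a finite place $\pp$, $F_{\mathcal{O}_{K_\pp}}$ is free over the DVR $\mathcal{O}_{K_\pp}$ and hence saturated in $F_{K_\pp}$, so $F'_{\mathcal{O}_{K_\pp}}=F_{\mathcal{O}_{K_\pp}}\cap\varphi(E_{K_\pp})$ and the lattice norm $\|\varphi\|_\pp$ is unchanged; at an archimedean place $\sigma$, the Hermitian norm on $F'_\sigma$ is the restriction of that on $F_\sigma$, so the operator norms $\|\varphi\|_\sigma$ agree. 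It therefore suffices to prove the inequality under the additional assumption that $\varphi:E_K\to F'_K$ is a $K$-linear isomorphism.

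Next, take determinants. The map $\det\varphi:\det E_K\to \det F'_K$ is a non-zero $K$-linear map between one-dimensional $K$-vector spaces, and the central ingredient is the place-by-place estimate $\|\det\varphi\|_v\le \|\varphi\|_v^r$. At an archimedean $\sigma$, the singular-value decomposition provides orthonormal bases in which $\varphi=\operatorname{diag}(s_1,\ldots,s_r)$ with $s_1\ge\cdots\ge s_r>0$, so $\|\varphi\|_\sigma=s_1$ while $\|\det\varphi\|_\sigma=s_1\cdots s_r\le s_1^r$. At a finite place $\pp$, Smith normal form over the DVR $\mathcal{O}_{K_\pp}$ yields bases in which $\varphi=\operatorname{diag}(\pi_\pp^{a_1},\ldots,\pi_\pp^{a_r})$ with $a_1\le\cdots\le a_r$; then $v_\pp(\varphi)=a_1$ while $\det\varphi$ has valuation $a_1+\cdots+a_r\ge r a_1$, giving $\|\det\varphi\|_\pp\le \|\varphi\|_\pp^r$. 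Summing log-norms over all places yields $h_{\det\overline{E},\det\overline{F'}}(\det\varphi)\le r\cdot h_{\overline{E},\overline{F'}}(\varphi)$.

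To conclude, view $\det\varphi$ as a non-zero element of the generic fibre of the Hermitian line bundle $\overline{L}\defeq\det\overline{F'}\otimes(\det\overline{E})^\vee$. The definition of $\widehat{\deg}$ extends to arbitrary non-zero elements of $L_K$ by clearing denominators and yields $\widehat{\deg}_n(\overline{L})=-h_{\det\overline{E},\det\overline{F'}}(\det\varphi)$; combining Proposition \ref{genprop}(1) with the identity $\widehat{\deg}_n(\det\overline{V})=\rk(V)\cdot\hat{\mu}(\overline{V})$ shows that this same degree equals $r\bigl(\hat{\mu}(\overline{F'})-\hat{\mu}(\overline{E})\bigr)$. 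Hence
\begin{align*}
r\bigl(\hat{\mu}(\overline{F'})-\hat{\mu}(\overline{E})\bigr)\ge -r\cdot h_{\overline{E},\overline{F'}}(\varphi),
\end{align*}
and dividing by $r$ and using $\hat{\mu}(\overline{F'})\le \hat{\mu}_{\max}(\overline{F})$ gives the desired bound. The substantive step is the place-by-place determinantal estimate, which is what reduces the rank-$r$ slope comparison to a computation on a single Hermitian line bundle; the rest is essentially bookkeeping around the reduction to the isomorphism case.
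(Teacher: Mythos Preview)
The paper does not give its own proof of this proposition; it is stated as a recollection and attributed to \cite{bost01} Proposition 4.5, with the surrounding paragraph pointing to \cite{bost95}, \cite{bost01}, and \cite{BK10} for proofs. Your argument is correct and is essentially the standard proof found in those references: reduce to the saturated image $F'=F\cap\varphi(E_K)$, pass to determinants, and use the place-by-place inequality $\|\det\varphi\|_v\le\|\varphi\|_v^r$ (via singular values at archimedean places and Smith normal form at finite places) to compare $\widehat{\deg}_n(\det\overline{F'}\otimes(\det\overline{E})^\vee)$ with $r\cdot h_{\overline{E},\overline{F'}}(\varphi)$. There is nothing to contrast here, since the paper supplies no alternative argument.
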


Let us point out that $\hat{\mu}_{\max}(\overline{F})$ is attained by a saturated submodule of $F$ (cf. \cite{bost95} A.3). In particular, if $\rk F=1$, then $\hat{\mu}_{\max}(\overline{F}) = \hat{\mu}(\overline{F})$.

\subsection{Short vectors in filtered Hermitian vector bundles}

Let $K$ be a field, $\mathcal{O}_K$ be its ring of integers, and $S=\Spec \mathcal{O}_K$. Let $\overline{E} = (E, (\| \ \|_{\sigma})_{\sigma: K \into \CC})$ be a non-zero Hermitian vector bundle over $S$; we denote its \emph{first successive minimum} by
\begin{align*}
\lambda_1(\overline{E}) \defeq \inf \left.\left\{\max_{\sigma }\|s\|_{\sigma} \right| s \in E\minus \{0\}\right\}\text{.}
\end{align*}
Since $s\mapsto s\tensor 1$ identifies $E$ with a lattice in the $\RR$-vector space $E\tensor_{\ZZ}\RR$, the first successive minimum is attained by some element $s \in E\minus \{0\}$.

\begin{prop}[Minkowski]\label{minkthm}
  Let $\overline{E}$ be a non-zero Hermitian vector bundle over $S$. Then
  \begin{align*}
\log \lambda_1(\overline{E}) \le -\hat{\mu}(\overline{E}) + \frac{1}{2}\log (\rk E) + \frac{\log |\Delta_K|}{2[K:\QQ]} + \frac{1}{2}\log [K:\QQ]\text{,}
  \end{align*}
  where $\Delta_K$ denotes the discriminant of $K$ over $\QQ$.
\end{prop}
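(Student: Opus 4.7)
The plan is to view $E$ as a Euclidean lattice and invoke the classical Minkowski convex body theorem. Set $n \defeq \rk E$ and $d \defeq [K:\QQ]$, and consider the real vector space $V \defeq E \tensor_{\ZZ} \RR$, of dimension $N \defeq nd$. It identifies naturally with the subspace of $\bigoplus_{\sigma: K \into \CC} E_\sigma$ fixed under complex conjugation, and the family of Hermitian norms $(\|\cdot\|_\sigma)$ assembles into a Euclidean structure on $V$ via $\|v\|_2^2 \defeq \sum_\sigma \|v_\sigma\|_\sigma^2$. The elementary inequality $\max_\sigma \|v_\sigma\|_\sigma \le \|v\|_2$ will then allow us to control $\lambda_1(\overline{E})$ from an $\ell^2$-bound.

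The crux is the covolume identity
$$\mathrm{covol}_V(E) \;=\; |\Delta_K|^{n/2} \cdot \exp\!\bigl(-\adeg(\overline{E})\bigr),$$
where the covolume is computed with respect to the Lebesgue measure on $V$ induced by $\|\cdot\|_2$. For the trivial bundle $\overline{\mathcal{O}_K}$ this reduces to the classical $\mathrm{covol}(\mathcal{O}_K) = |\Delta_K|^{1/2}$, which I would obtain by computing the Gram matrix of a $\ZZ$-basis of $\mathcal{O}_K$ and recognizing its determinant as $|\Delta_K|$. The extension to general Hermitian line bundles follows by rescaling the metric and replacing $\mathcal{O}_K$ with a fractional ideal; the extension to arbitrary rank $n$ comes from choosing any filtration of $E$ by saturated $\mathcal{O}_K$-submodules with line-bundle graded pieces and combining multiplicativity of covolumes across short exact sequences with the additivity of $\adeg$ recorded in Proposition~\ref{genprop}(2).

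With the covolume identity available, Minkowski's first theorem applied to the Euclidean ball of radius $r$ in $V$, whose volume is $r^N \omega_N$ with $\omega_N = \pi^{N/2}/\Gamma(N/2+1)$, produces some $s \in E \minus \{0\}$ satisfying $\|s\|_2 \le 2(\mathrm{covol}_V(E)/\omega_N)^{1/N}$. An elementary verification yields the clean inequality $\omega_N \ge (2/\sqrt{N})^N$ for all $N \ge 1$ (direct check for small $N$ and the asymptotic $\omega_N^{1/N} \sim \sqrt{2\pi e/N}$ for large $N$), so that $2\omega_N^{-1/N} \le \sqrt{N}$. Combining this with $\|s\|_\infty \le \|s\|_2$, taking logarithms, substituting the covolume formula, and splitting $\tfrac{1}{2}\log N = \tfrac{1}{2}\log(\rk E) + \tfrac{1}{2}\log[K:\QQ]$ gives the stated bound. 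The only step that requires real work is the covolume identity — standard folklore in Arakelov geometry, but demanding care in tracking measures at real and complex places; everything else is Minkowski plus bookkeeping.
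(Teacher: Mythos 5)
Your argument is correct and is exactly the route the paper indicates: the paper's one-line proof says to take the direct image of $\overline{E}$ along $\Spec \mathcal{O}_K \to \Spec \ZZ$ (which is precisely your rank-$[K:\QQ]\rk E$ Euclidean lattice $(E,\|\cdot\|_2)$, with covolume $|\Delta_K|^{\rk E/2}e^{-\adeg(\overline{E})}$) and apply the classical Minkowski bound as in Bost--K\"unnemann. Your constants check out, since $2\,\omega_N^{-1/N}\le \sqrt{N}$ and $\tfrac{1}{2}\log N$ splits as $\tfrac{1}{2}\log(\rk E)+\tfrac{1}{2}\log[K:\QQ]$, recovering the stated inequality.
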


This statement might be obtained from \cite{BK10} pp. 1027-1028 by considering the Hermitian vector bundle over $\Spec \ZZ$ given by the direct image of $\overline{E}$ via $S \to \Spec \ZZ$.

Let $(\overline{E_d})_{d\ge 1}$ be a family of Hermitian vector bundles over $S$ such that
\begin{align*}
r_d \defeq \rk E_d \to +\infty
\end{align*}
as $d \rightarrow +\infty$. Assume that, for each $d\ge 1$, we are given a \emph{separated} filtration
\begin{align*}
E_d^{0}= E_d \supset E_d^1 \supset E_d^2 \supset \cdots
\end{align*}
by saturated $\mathcal{O}_K$-submodules. We endow each $E_d^m$ with the Hermitian vector bundle structure induced from $\overline{E_d}$.

\begin{prop}\label{lemmaaux}
  With the above notation, assume that there exists an integer $k\ge 1$ and a double sequence $(a_{d,m})_{d\ge 1,m\ge 0}$ of positive real numbers, non-decreasing in $m$ for every $d\ge 1$, such that
  \begin{align}\label{est1}
    \rk (E^m_d/E^{m+1}_d)\le k
  \end{align}
 and
  \begin{align}\label{est2}
\hat{\mu}(\overline{E_d^m/E_d^{m+1}}) \le  a_{d,m}
  \end{align}
  for every $d\ge 1$ and $m\ge 0$. Then, for every $d\ge 1$ such that $r_d\neq 0$, there exists $m\ge \left\lfloor\frac{r_d}{2k}\right\rfloor$ and $s \in E_d^m\minus E_d^{m+1}$ satisfying
  \begin{align*}
    \max_{\sigma} \log \|s\|_{\sigma} \le \max\{0,-2\hat{\mu}(\overline{E}_d)\} + a_{d,m} + \frac{1}{2}\log r_d  + \frac{\log |\Delta_K|}{2[K:\QQ]} + \frac{1}{2}\log [K:\QQ]\text{.}
  \end{align*}
\end{prop}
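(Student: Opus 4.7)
The plan is to apply Minkowski's theorem (Proposition \ref{minkthm}) to the Hermitian subbundle $\overline{E_d^M}$ for $M \defeq \lfloor r_d/(2k)\rfloor$, extract a short non-zero vector $s\in E_d^M$, and let $m$ be the unique integer such that $s\in E_d^m \smallsetminus E_d^{m+1}$. This $m$ is well-defined because the filtration is separated and $E_d$ has finite rank, and by construction $m\ge M$. So the whole content of the proposition reduces to bounding $-\hat{\mu}(\overline{E_d^M})$ appropriately.

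First, the hypothesis $\rk(E_d^i/E_d^{i+1})\le k$ (applied to $i = 0,\dots, M-1$) gives $\rk E_d^M \ge r_d - kM \ge r_d/2$, so $E_d^M$ is non-zero when $r_d \ne 0$. Applying Minkowski to $\overline{E_d^M}$ then produces $s\in E_d^M\smallsetminus\{0\}$ with
\begin{align*}
\max_\sigma \log\|s\|_\sigma \;\le\; -\hat{\mu}(\overline{E_d^M}) + \tfrac{1}{2}\log \rk E_d^M + C_K \;\le\; -\hat{\mu}(\overline{E_d^M}) + \tfrac{1}{2}\log r_d + C_K,
\end{align*}
where $C_K \defeq \tfrac{\log|\Delta_K|}{2[K:\QQ]} + \tfrac{1}{2}\log[K:\QQ]$.

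Second, I would bound $-\hat{\mu}(\overline{E_d^M})$ using the additivity of the Arakelov degree (Proposition \ref{genprop}(2)) applied to the truncated filtration $E_d = E_d^0\supset E_d^1 \supset \cdots \supset E_d^M$:
\begin{align*}
r_d\,\hat{\mu}(\overline{E_d}) \;=\; \rk(E_d^M)\,\hat{\mu}(\overline{E_d^M}) + \sum_{i=0}^{M-1}\rk(E_d^i/E_d^{i+1})\,\hat{\mu}(\overline{E_d^i/E_d^{i+1}}).
\end{align*}
Upper-bounding each quotient slope by $a_{d,i}\le a_{d,M}$ (monotonicity of $a_{d,\cdot}$) and rearranging yields
\begin{align*}
-\hat{\mu}(\overline{E_d^M}) \;\le\; \frac{r_d}{\rk E_d^M}\bigl(-\hat{\mu}(\overline{E_d})\bigr) + \frac{r_d - \rk E_d^M}{\rk E_d^M}\,a_{d,M}.
\end{align*}
Since $\rk E_d^M \ge r_d/2$ we have $r_d/\rk E_d^M \le 2$ and $(r_d-\rk E_d^M)/\rk E_d^M \le 1$, and combining with positivity of $a_{d,M}$ and the further monotonicity $a_{d,M}\le a_{d,m}$ gives $-\hat{\mu}(\overline{E_d^M}) \le \max\{0, -2\hat{\mu}(\overline{E_d})\} + a_{d,m}$.

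The only genuinely delicate point is the bookkeeping of signs: no a priori sign of $\hat{\mu}(\overline{E_d})$ is available, so one must be careful when multiplying by the ratio $r_d/\rk E_d^M \in [1,2]$. Packaging the two cases into the single term $\max\{0,-2\hat{\mu}(\overline{E_d})\}$—equal to $0$ when $\hat{\mu}(\overline{E_d}) \ge 0$ and to $-2\hat{\mu}(\overline{E_d}) > 0$ otherwise—and combining with the Minkowski estimate of the previous paragraph yields the claimed inequality.
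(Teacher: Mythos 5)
Your proposal is correct and follows essentially the same route as the paper: truncate the filtration at $M=\lfloor r_d/(2k)\rfloor$, use the additivity of the Arakelov degree plus the hypotheses to bound $-\hat{\mu}(\overline{E_d^{M}})$ by $\max\{0,-2\hat{\mu}(\overline{E_d})\}+a_{d,M}$, and apply Minkowski to $\overline{E_d^{M}}$, taking $m$ to be the exact drop level of the resulting short vector. The only cosmetic difference is that you bound the sum of quotient ranks by $r_d-\rk E_d^{M}$ where the paper uses $kM$; both give the needed factor at most $1$.
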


\begin{proof}
  Let $d\ge 1$ such that $r_d\neq 0$ and set
  \begin{align*}
   m' \defeq \left\lfloor\frac{r_d}{2k}\right\rfloor\text{,}
  \end{align*}
  so that, by (\ref{est1}),
  \begin{align*}
    r_d^{m'}\defeq \rk E_d^{m'} \ge r_d - km' \ge \frac{1}{2}r_d>0\text{.}
  \end{align*}

  By Proposition \ref{genprop} (2), we have
  \begin{align*}
  \hat{\mu}(\overline{E_d}) = \frac{r_d^{m'}}{r_d} \hat{\mu}(\overline{E_d^{m'}}) + \frac{1}{r_d}\sum_{0\le i < m'} \rk (E_d^i/E_d^{i+1})\hat{\mu}(\overline{E_d^i/E_d^{i+1}})\text{.} 
  \end{align*}
  (When $E^{i}_d/E_d^{i+1}=0$, $\rk (E_d^i/E_d^{i+1}) = 0$ and $\hat{\mu}(\overline{E_d^i/E_d^{i+1}}) = -\infty$, so that $\rk (E_d^i/E_d^{i+1})\hat{\mu}(\overline{E_d^i/E_d^{i+1}}) = 0$ by convention.) Using hypotheses (\ref{est1}) and (\ref{est2}), and that $(a_{d,m})$ is non-decreasing in $m$ for every $d$, we obtain
  \begin{align*}
   \hat{\mu}(\overline{E_d}) \le \frac{r_d^{m'}}{r_d}\hat{\mu}(\overline{E_d^{m'}}) + \frac{m'ka_{d,m'}}{r_d}\text{,}
  \end{align*}
  or, equivalently,
  \begin{align*}
   -\hat{\mu}(\overline{E_d^{m'}})\le - \frac{r_d}{r_d^{m'}}\hat{\mu}(\overline{E_d}) + \frac{m'k}{r_d^{m'}}a_{d,m'}\text{.}
  \end{align*}
  Since $r_d\le 2r_{d}^{m'}$ and $m' \le \frac{r_d}{2k}$, we conclude that
  \begin{align*}
  -\hat{\mu}(\overline{E_d^{m'}}) \le \max\{0,-2\hat{\mu}(\overline{E_d})\} + a_{d,m'}\text{.} 
  \end{align*}
  
  Let $s \in E_d^{m'}\minus\{0\}$ be such that $\max_{\sigma }\|s\|_{\sigma} = \lambda_1(\overline{E_d^{m'}})$. Then Proposition \ref{minkthm} yields
  \begin{align*}
    \max_{\sigma }\log \|s\|_{\sigma} &\le -\hat{\mu}(\overline{E_d^{m'}}) + \frac{1}{2}\log r_d^{m'} +\frac{\log |\Delta_K|}{2[K:\QQ]} + \frac{1}{2}\log [K:\QQ]\\
    &\le \max\{0,-2\hat{\mu}(\overline{E_d})\} + a_{d,m'}+\frac{1}{2}\log r_d^{m'}+\frac{\log |\Delta_K|}{2[K:\QQ]} + \frac{1}{2}\log [K:\QQ]\text{.}
  \end{align*}
 Thus $m \defeq \max\{i \in \NN \mid s \in E_d^i\} \ge m'$ satisfies the conclusion of our statement.
\end{proof}

\subsection{Proof of Theorem \ref{auxsec}} \label{proofauxsec}

Consider the notation and hypotheses of Theorem \ref{auxsec}. Let us first observe that if Theorem \ref{auxsec} holds for some particular choice of Hermitian metric $(\| \ \|_{\sigma})_{\sigma : K \into \CC}$ on $L$, then a similar statement holds for any other choice of metric, up to modifying the constant $c_2$. We may thus assume that each $(L_{\sigma}, \| \ \|_{\sigma})$ is a positive Hermitian line bundle on $\mathcal{X}_{\sigma}^{\an}$.

For every integer $d\ge 1$,
\begin{align*}
E_d \defeq \Gamma(\mathcal{X},L^{\tensor d})
\end{align*}
is a projective $\mathcal{O}_K$-module of finite type. For each field embedding $\sigma:K \into \CC$, we may consider the uniform norm $\| \ \|_{L^{\infty}(\mathcal{X}_{\sigma}^{\an})}$ on $E_{d,\sigma}$ induced by the Hermitian metric $\| \ \|_{\sigma}$ on $L_{\sigma}$.

Note that the norm $\| \ \|_{L^{\infty}(\mathcal{X}_{\sigma}^{\an})}$ is not Hermitian in general. We denote by $\| \ \|_{d,\sigma}$ the \emph{John norm} on $E_{d,\sigma}$ attached to $\| \ \|_{L^{\infty}(\mathcal{X}_{\sigma}^{\an})}$ (cf. \cite{bost15} Appendix F); this is a Hermitian norm on $E_{d,\sigma}$ satisfying
\begin{align}\label{johncomp}
\| \ \|_{L^{\infty}(\mathcal{X}_{\sigma}^{\an})} \le \| \ \|_{d,\sigma} \le (2\rk E_d)^{1/2}\| \ \|_{L^{\infty}(\mathcal{X}_{\sigma}^{\an})}\text{.}
\end{align}
We may thus consider the Hermitian vector bundle over $\Spec \mathcal{O}_K$
\begin{align*}
 \overline{E_d} \defeq (E_d, (\| \ \|_{d,\sigma})_{\sigma: K \into \CC})\text{.} 
\end{align*}

We define a decreasing filtration $(E_d^m)_{m\ge 0}$ by saturated submodules on $E_d$ via
\begin{align*}
E_d^m \defeq \{s \in E_d \mid \ord_0 \hat{\varphi}^*s \ge m\}\text{.}
\end{align*}
Since the image of $\hat{\varphi}_K: \Spf K[\![q]\!]\to \mathcal{X}_K$ is Zariski-dense, $(E_d^m)_{m\ge 0}$ is a separated filtration. The subquotients $E_d^m/E_{d}^{m+1}$ bear Hermitian vector bundle structures $\overline{E_d^m/E_{d}^{m+1}}$ induced by $\overline{E_d}$.

Let us denote by $\Omega$ the fiber of coherent sheaf $\Omega^1_{\Spf \mathcal{O}_K[\![q]\!]/\mathcal{O}_K}$ at the point of $\Spf \mathcal{O}_K[\![q]\!]$ given by the ideal $(q)\subset \mathcal{O}_K[\![q]\!]$. This is a trivial $\mathcal{O}_K$-module generated by $dq$. In what follows, we endow $\Omega$ with a structure of Hermitian line bundle $\overline{\Omega}$, defined by
\begin{align*}
\| \alpha dq\|_{\sigma}=|\sigma(\alpha)|
\end{align*}
for any $\alpha \in \mathcal{O}_K$ and any embedding $\sigma: K \into \CC$. Observe that $\overline{\Omega}$ is isomorphic to the trivial Hermitian line bundle, and therefore $\adeg \overline{\Omega} = 0$.
 
 Let $\hat{\varphi}(0) : \Spec \mathcal{O}_K \to \mathcal{X}$ denote the reduction of $\hat{\varphi}$ modulo $q$, i.e., the composition of $\hat{\varphi}$ with the closed immersion $\Spec \mathcal{O}_K \to \Spf \mathcal{O}_K[\![q]\!]$ associated to the ideal $(q)$. The Hermitian structure on $L$ endows $\hat{\varphi}(0)^*L$ with the structure of a Hermitian line bundle over $\Spec \mathcal{O}_K$.

 For every integers $d\ge 1$ and $m\ge 0$, we have an injective $\mathcal{O}_K$-linear map
 \begin{align*}
   \gamma^m_d : E_d^m/E_d^{m+1} \to \hat{\varphi}(0)^*L^{\tensor d} \tensor_{\mathcal{O}_K} \Omega^{\tensor m}
 \end{align*}
 defined by mapping the class $[s] \in E_d^m/E_d^{m+1}$ of $s \in E_d^m$  to $j^m_0 \hat{\varphi}^*s$ (the jet of order $m$ at $q=0$ of $\hat{\varphi}^*s$).
 
 \begin{lemma}\label{l2}
 There exist constants $\kappa_1,\kappa_2>0$ such that for every integers $d\ge 1$ and $m\ge 0$ we have
 \begin{align*}
h(\gamma^m_{d,K}) \le \kappa_1d + \kappa_2 d \log^+ m\text{,}
 \end{align*}
where $h$ denotes the height of $\gamma^m_{d,K}$ with respect to the Hermitian vector bundles $\overline{E_d^m/E_{d}^{m+1}}$ and $\hat{\varphi}(0)^*\overline{L}^{\tensor d} \tensor_{\mathcal{O}_K} \overline{\Omega}^{\tensor m}$.
 \end{lemma}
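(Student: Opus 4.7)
The plan is to split the height of $\gamma^m_{d,K}$ into its finite and archimedean contributions and bound each with different tools.

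\textbf{Finite places.} The map $\gamma^m_d$ is $\mathcal{O}_K$-linear and preserves integral structures, since the formation of the jet at $q=0$ of a pullback by a morphism $\hat{\varphi}$ defined over $\mathcal{O}_K$ preserves integrality. Consequently $\log\|\gamma^m_{d,K}\|_{\mathfrak{p}}\le 0$ at every maximal ideal $\mathfrak{p}$ of $\mathcal{O}_K$, so the finite places contribute non-positively to $h(\gamma^m_{d,K})$.

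\textbf{Archimedean places.} Fix $\sigma:K\hookrightarrow\mathbb{C}$. After the reductions of Paragraph \ref{proofauxsec}, $\overline{L}_\sigma$ is a positive Hermitian line bundle on the compact manifold $\mathcal{X}_\sigma^{\mathrm{an}}$, so $\varphi_\sigma^*\overline{L}_\sigma$ is semipositive on $D_{R_\sigma}$ and of moderate growth: indeed $c_1(\overline{L}_\sigma)$ is comparable on $\mathcal{X}_\sigma^{\mathrm{an}}$ to the $(1,1)$-form associated with any fixed Hermitian metric, and $\varphi_\sigma$ has moderate growth by hypothesis. Proposition \ref{prop1mg} then yields, for every $s\in\Gamma(\mathcal{X},L^{\otimes d})\setminus\{0\}$ with $m=\ord_0\hat{\varphi}^*s$,
\[
\log\|j_0^m\varphi_\sigma^*s\|_{R_\sigma}\le \kappa_{1,\sigma}d+\kappa_{2,\sigma}d\log^+m+\log\|s\|_{L^\infty(\mathcal{X}_\sigma^{\mathrm{an}})},
\]
where $\|\cdot\|_{R_\sigma}$ uses the Poincaré norm on $\Omega^1_{D_{R_\sigma},0}$, for which $\|dq^{\otimes m}\|_{R_\sigma}=R_\sigma^m$. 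Translating to the flat norm $\|dq\|_\sigma=1$ of $\overline{\Omega}^{\otimes m}$ costs a factor $R_\sigma^{-m}$, and the John-norm comparison \eqref{johncomp} gives $\|s\|_{L^\infty(\mathcal{X}_\sigma^{\mathrm{an}})}\le\|s\|_{d,\sigma}$. Since $\gamma^m_d([s])$ depends only on the class of $s$ in $E_d^m/E_d^{m+1}$, taking the infimum over lifts (the Hermitian quotient norm is attained by orthogonal projection) I would conclude
\[
\log\|\gamma^m_{d,\sigma}\|_\sigma \le \kappa_{1,\sigma}d+\kappa_{2,\sigma}d\log^+m-m\log R_\sigma.
\]

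\textbf{Summation.} Adding over all archimedean $\sigma$ and invoking the hypothesis $\prod_\sigma R_\sigma=1$ from Theorem \ref{auxsec}, the terms $-m\log R_\sigma$ telescope to zero. Combined with the non-positive finite contribution this yields
\[
h(\gamma^m_{d,K})\le \kappa_1 d+\kappa_2 d\log^+m
\]
for appropriate $\kappa_1,\kappa_2>0$, as claimed. The main point—and the whole reason behind the normalization $\prod_\sigma R_\sigma=1$—is the cancellation of the $-m\log R_\sigma$ terms, which arise from the mismatch between the Poincaré metric inherent in the jet estimates of Section \ref{mgje} and the flat Hermitian structure on $\overline{\Omega}$ dictated by the Arakelov setup. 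Without this cancellation one would incur an extra linear-in-$m$ term, ruining the slope inequality applied in the sequel via Proposition \ref{lemmaaux} to produce the auxiliary section.
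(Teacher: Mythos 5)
Your proof is correct and follows essentially the same route as the paper: Proposition \ref{prop1mg} applied to $\varphi_\sigma^*\overline{L}_\sigma$ at each archimedean place, the rescaling $\|j_0^m\hat{\varphi}^*s\|_\sigma=R_\sigma^{-m}\|j_0^m\varphi_\sigma^*s\|_{R_\sigma}$, the John-norm comparison, the cancellation of the $-m\log R_\sigma$ terms via $\prod_\sigma R_\sigma=1$, and $\|\gamma^m_{d,K}\|_{\mathfrak{p}}\le 1$ at finite places because the map is defined over $\mathcal{O}_K$. Your remarks on the quotient norm and on why $\varphi_\sigma^*\overline{L}_\sigma$ has moderate growth only make explicit details the paper leaves implicit.
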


\begin{proof}
  Let $\sigma : K \into \CC$ be a field embedding. Since $\varphi_{\sigma} : D_{R_{\sigma}} \to \mathcal{X}_{\sigma}^{\an}$ has moderate growth, it follows from Proposition \ref{prop1mg} that there exist constants $\kappa_{1,\sigma},\kappa_{2,\sigma}>0$ such that, for any integers $d\ge 1$ and $m\ge 0$ for which and $E^m_d/E^{m+1}_d\neq 0$, and any $s \in E_{d,\sigma}^m\minus E_{d,\sigma}^{m+1}$, we have
  \begin{align}\label{l2e}
   \log \|j^m_0 \varphi_{\sigma}^*s\|_{R_{\sigma}} - \log \|s\|_{L^{\infty}(\mathcal{X}_{\sigma}^{\an})} \le \kappa_{1,\sigma}d + \kappa_{2,\sigma}d\log^+ m\text{.}
  \end{align} 
  Here, the norm $\| \ \|_{R_{\sigma}}$ is the norm $\| \ \|_r$, introduced before Proposition \ref{propmain}, in the special case where $r = R_{\sigma}$ and $\overline{L}$ is $\varphi_{\sigma}^*L_{\sigma}$ equipped with the pullback of $\| \ \|_{\sigma}$.

  Note that
  \begin{align*}
    \|j_0^m\hat{\varphi}^*s\|_{\sigma} = R_{\sigma}^{-m}\|j_0^m\varphi_{\sigma}^*s\|_{R_{\sigma}}\text{,}
  \end{align*}
  where $\|j_0^m\hat{\varphi}^*s\|_{\sigma}$ denotes the norm of $j_0^m\hat{\varphi}^*s = \gamma_{d}^m([s])$ with respect to the Hermitian structure of $\hat{\varphi}(0)^*\overline{L}^{\tensor d}\tensor \overline{\Omega}^{\tensor m}$. The estimate (\ref{l2e}), together with (\ref{johncomp}),  shows that
  \begin{align*}
   \log \|\gamma_{d}^m\|_{\sigma} + m \log R_{\sigma} \le \kappa_{1,\sigma}d + \kappa_{2,\sigma}d\log^+m\text{.}  
  \end{align*}
  
  Since $\prod_{\sigma: K \into \CC}R_{\sigma}=1$, we obtain
  \begin{align*}
   \sum_{\sigma} \log \|\gamma^m_d\|_{\sigma} \le \left(\sum_{\sigma}\kappa_{1,\sigma} \right)d + \left(\sum_{\sigma}\kappa_{2,\sigma}\right)d\log^+ m\text{.}
  \end{align*}
Since $\gamma_d^m$ is defined over $\mathcal{O}_K$, we have $\|\gamma_{d,K}^m\|_{\pp} \le 1$ for every maximal ideal $\pp$ of $\mathcal{O}_K$, so that
\begin{align*}
h(\gamma_{d,K}^m) \le \frac{1}{[K:\QQ]} \sum_{\sigma}\log \|\gamma_d^m\|_{\sigma} \le \frac{1}{[K:\QQ]} \left(\sum_{\sigma}\kappa_{1,\sigma} \right)d+ \frac{1}{[K:\QQ]}\left(\sum_{\sigma}\kappa_{2,\sigma} \right)d\log^+ m\text{.}
\end{align*}
\end{proof}

\begin{proof}[End of proof of Theorem \ref{auxsec}]
  Let us first remark that, as $L_K$ is ample, we have
  \begin{align*}
   r_d = \rk E_d = \dim \Gamma(\mathcal{X}_K,L_K^{\tensor d}) \sim_{d\rightarrow +\infty} \frac{\deg_{L_K} \mathcal{X}_K}{n!}d^n\text{.}  
  \end{align*}
  In particular, $\log r_d = O(d)$ as $d\rightarrow +\infty$.   We shall apply Proposition \ref{lemmaaux} for $(E_{d}^m)_{d\ge 1,m\ge 0}$ defined as above. This suffices by the estimates (\ref{johncomp}).

  Note that condition (\ref{est1}) is trivially verified for $k=1$. Moreover, by the same argument of \cite{bost01} Proposition 4.4 (cf. \cite{bost01} Lemma 4.1) and by the estimates (\ref{johncomp}), there exists a constant $c>0$ such that $-\hat{\mu}(\overline{E_d})\le cd$ for every $d\ge 1$.  Thus, to finish our proof, it is sufficient to find constants $a,b>0$ such that
  \begin{align*}
  \hat{\mu}(\overline{E_d^m/E_{d}^{m+1}})\le a_{d,m}\defeq ad + b d\log^+ m
  \end{align*}
  for every $d\ge 1$ and $m\ge 0$ (condition (\ref{est2})).

  By Lemma \ref{l2}, there exist constants $\kappa_1,\kappa_2>0$ such that, for every $d\ge 1$ and $m\ge 0$ such that $E_d^m/E_d^{m+1}\neq 0$, we have
  \begin{align*}
   h(\gamma_{d,K}^m)\le \kappa_1d + \kappa_2d\log^+m\text{.}
  \end{align*}
  Thus, since $\gamma_{d,K}^m$ is injective, we may apply the Slope Inequality (Proposition \ref{slopeineq}) to obtain
  \begin{align*}
   \hat{\mu}(\overline{E_d^m/E_d^{m+1}}) \le\hat{\mu}(\hat{\varphi}(0)^*\overline{L}^{\tensor d} \tensor_{\mathcal{O}_K} \overline{\Omega}^{\tensor m}) + h(\gamma_{d,K}^m)\le  (\kappa_1 + \hat{\mu}(\hat{\varphi}(0)^*\overline{L}))d + \kappa_2d\log^+m\text{.}
  \end{align*}
\end{proof}

\section{Derivatives of sections of line bundles along vector fields}\label{derivesections}

A crucial step in Nesterenko's method involves applying a certain differential operator (deduced from the Ramanujan equations) to auxiliary polynomials. It is also important to understand how this differential operator affects the degree and the norm $\| \ \|_{\infty}$ of a polynomial. 

Our generalization of Nesterenko's proof replaces polynomials of degree $d$ by global sections of the $d$th tensor power of some ample line bundle. In this section we explain how to differentiate global sections of tensor powers of a line bundle $L$ along a vector field $v$. Under a projectivity hypothesis, we also explain how $L^{\infty}$ norms with respect to some Hermitian metric on $L$ are affected by a differential operator deduced from $v$.

\subsection{The basic definition}

Let $M$ be a compact connected complex manifold, and $L$ be a line bundle over $M$ endowed with a global holomorphic section $s_0\in \Gamma(M,L)\minus \{0\}$. To $L$ is associated the graded ring $R = \bigoplus_{d\ge 0}R_d$, where $R_d \defeq \Gamma(M,L^{\tensor d})$.

Let $v$ be a meromorphic vector field on $M$, and assume that $v$ is holomorphic on the open subset $M_{s_0} \defeq \{p\in M \mid s_0(p)\neq 0\}$. Then there is a smallest integer $k\ge 0$, the ``order of pole of $v$ at $\div(s_0)$'', such that $v\tensor s_0^{\tensor k}$ defines a global holomorphic section of $TM \tensor L^{\tensor k}$.

The vector field $v$ induces a $\CC$-derivation of degree $k+1$ of the graded ring $R$
$$
\partial_v : R \to R
$$
given as follows. By definition, $\partial_v$ is the zero map on $R_0 \cong \CC$. Let $d\ge 1$ be an integer, $s \in R_d$, and $f: M_{s_0}\to \CC$ be the holomorphic function for which $s = fs_0^{\tensor d}$ over $M_{s_0}$. Then $\partial_vs\in R_{d+k+1}$ is defined as the unique global section of $L^{\tensor d + k + 1}$ such that $\partial_vs =v(f)s_0^{\tensor d+k+1}$ over $M_{s_0}$. The next lemma guarantees that this is well defined.

\begin{lemma}
  With the above notations, $v(f)s_0^{\tensor d+k+1}$ extends to a global holomorphic section of $L^{\tensor d + k +1}$.
\end{lemma}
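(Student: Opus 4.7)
The statement is local on $M$, so the plan is to verify holomorphicity in a small neighborhood $U$ of an arbitrary point of $\div(s_0)$; on $M_{s_0}$ the section $v(f)\, s_0^{\tensor d+k+1}$ is holomorphic by construction. I would choose $U$ small enough so that $L|_U$ admits a holomorphic trivialization $e$, and write
$$
s_0|_U = g \cdot e, \qquad s|_U = h \cdot e^{\tensor d},
$$
with $g, h \in \mathcal{O}(U)$, so that the defining relation $s = f \cdot s_0^{\tensor d}$ on $U \cap M_{s_0}$ translates into $f = h/g^d$.

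The minimality of $k$ says precisely that $v \tensor s_0^{\tensor k}$ is a holomorphic section of $TM \tensor L^{\tensor k}$, so over $U$ it can be written as $w \tensor e^{\tensor k}$ for a unique holomorphic vector field $w$ on $U$. Since $s_0^{\tensor k}|_U = g^k\, e^{\tensor k}$, comparing these expressions yields $v = g^{-k}\, w$ on $U \cap M_{s_0}$. The Leibniz rule then gives
$$
v(f) \;=\; \frac{w}{g^k}\!\left(\frac{h}{g^d}\right) \;=\; \frac{g \cdot w(h) \;-\; d \cdot h \cdot w(g)}{g^{d+k+1}}.
$$
Multiplying by $s_0^{\tensor d+k+1}|_U = g^{d+k+1}\, e^{\tensor d+k+1}$ cancels the denominator exactly, producing
$$
v(f) \cdot s_0^{\tensor d+k+1}\bigr|_{U \cap M_{s_0}} \;=\; \bigl(g \cdot w(h) \;-\; d \cdot h \cdot w(g)\bigr)\, e^{\tensor d+k+1},
$$
a section whose right-hand side is visibly holomorphic on all of $U$. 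This supplies the holomorphic extension across $U \cap \div(s_0)$; extensions on different opens agree on overlaps by uniqueness of analytic continuation from the dense open $M_{s_0}$.

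There is essentially no obstacle here: the whole argument is a one-line Leibniz computation, and the only point to verify is that the pole orders along $g$ balance, which is exactly what the minimality of $k$ guarantees. Note that the compactness and connectedness hypotheses on $M$ play no role in this lemma; they enter only in the surrounding discussion (e.g.\ to ensure that $R_0 \cong \CC$ and that $R$ is the correct graded ring attached to $L$).
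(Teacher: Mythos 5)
Your proof is correct and is essentially the paper's argument: both pass to a local trivialization (your $g$ is the paper's local equation $g_i$ of $\div(s_0)$ and your $h$ is its $g_i^{d}f$) and use the Leibniz rule to see that $g^{d+k+1}v(f)=g^{k+1}v(h)-d\,h\,g^{k}v(g)$ is holomorphic, the only cosmetic difference being that the paper applies the product rule to $v(g_i^{d}f)$ while you apply the quotient rule with $w=g^{k}v$. Nothing further is needed.
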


\begin{proof}
  The couple $(L,s_0)$ corresponds canonically to an effective analytic Cartier divisor $E$ on $M$; let $(g_i,U_i)_{i\in I}$ be a family of local equations of $E$ for some open covering $M=\bigcup_{i\in I}U_i$. By hypothesis, for every $i\in I$, $g_i^kv$ extends to a holomorphic vector field on $U_i$.

  An element $s \in R_d$ might be identified with a meromorphic function $f = s/ s_0^{\tensor d}$ on $M$ having pole of order at most $d$ on $E$, i.e., such that $g_i^df$ defines a holomorphic function on $U_i$ for every $i\in I$. Under this identification, our statement is equivalent to the assertion that $g_i^{d+k+1}v(f)$ defines a holomorphic function on $U_i$ for every $i\in I$.

  Now, for $i\in I$, we have
  \begin{align*}
g_i^{k+1}v(g_i^df) = g_i^{k+1}(d\cdot g_i^{d-1}v(g_i)f + g_i^dv(f)) = d\cdot g_i^kv(g_i)g_i^df + g_i^{d+k+1}v(f)\text{,} 
\end{align*}
so that $g_i^{d+k+1}v(f)$ defines a holomorphic function on $U_i$.
\end{proof}

Finally, it is easy to see that the $\CC$-linear map $\partial_v:R \to R$ satisfies Leibniz's rule: if $s \in R_d$ and $t \in R_e$, then
  \begin{align*}
   \partial_{v}(s\tensor t)  = \partial_vs \tensor t + s \tensor \partial_vt
  \end{align*}
  in $R_{d+e+k+1}$.

\subsection{Estimates of uniform norms}

Let us keep the notation of the last paragraph and fix once and for all some $t \in R_{k+1}$ (recall that $k$ denotes the ``order of pole of $v$ at $\div(s_0)$'').

For any integer $j\ge 1$ and $d\ge 1$, we define a differential operator of degree $j(k+1)$
\begin{align*}
  \partial_v^{[j]} : R \to R
  \end{align*}
  as the composition
  \begin{align*}
   \partial_v^{[j]} = \partial_v \circ (\partial_v - t) \circ \cdots \circ (\partial_v -(j-1)t)\text{.}
  \end{align*}

\begin{prop}\label{bounddersec}
With the above notation, assume moreover that $L = i^*\mathcal{O}(1)$ for some projectively normal embedding $i:M \into \PP^n(\CC)$. Let $\| \ \|$ be a Hermitian metric on $L$. Then there exists a constant $C>0$ such that, for any integer $j\ge 1$,  any sufficiently large positive integer $d$, and any $s \in R_d$, we have
  \begin{align*}
    \|\partial^{[j]}_vs\|_{L^{\infty}(M)} \le C^{j+d}(j+d)^{j} \|s\|_{L^{\infty}(M)}\text{,}
  \end{align*}
  where $\|\partial^{[j]}_v(s)\|_{L^{\infty}(M)}$ (resp. $\|s\|_{L^{\infty}(M)}$) denotes the uniform norm on $M$ with respect to the Hermitian metric on $L^{\tensor d + j(k+1)}$ (resp. $L^{\tensor d}$) induced by $\| \ \|$. 
\end{prop}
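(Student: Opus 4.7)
The plan is to reduce to a single-application estimate linear in the degree: there exists $A>0$, depending only on $v$, $L$, $\|\cdot\|$ and $s_0$, such that
$$\|\partial_v u\|_{L^\infty(M)} \le A\, d\, \|u\|_{L^\infty(M)} \qquad \text{for all } u \in R_d, \ d \text{ sufficiently large.}$$
Granted this, the trivial bound $\|tu\|_{L^\infty(M)} \le \|t\|_{L^\infty(M)}\|u\|_{L^\infty(M)}$ shows that each factor $(\partial_v - it)$ acting on $R_{d'}$ in the composition defining $\partial_v^{[j]}$ multiplies the $L^\infty$-norm by at most $B(d'+j)$, with $B = \max(A, \|t\|_{L^\infty(M)})$. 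The intermediate degrees $d + i(k+1)$ are all bounded by $(k+1)(d+j)$, so iterating $j$ times yields
$$\|\partial_v^{[j]} s\|_{L^\infty(M)} \le \bigl(B(k+2)\bigr)^{j}(d+j)^{j}\|s\|_{L^\infty(M)},$$
which implies the stated bound for any $C \ge B(k+2)$.

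For the core estimate, I would cover $M$ by finitely many open sets $U_\alpha$ with $\overline{U_\alpha}$ contained in a relatively compact coordinate polydisk $V_\alpha$ on which $L$ admits a nowhere-vanishing holomorphic trivialization $e_\alpha$. On $V_\alpha$ I would write $u = f_\alpha e_\alpha^{\otimes d}$, $s_0 = \phi_\alpha e_\alpha$, and $v = \phi_\alpha^{-k} w_\alpha$ with $w_\alpha$ a holomorphic vector field (which exists by the very definition of $k$ as the order of pole of $v$ along $\div(s_0)$). A direct calculation in the spirit of the lemma preceding the proposition yields
$$\partial_v u = \bigl(\phi_\alpha w_\alpha(f_\alpha) - d f_\alpha w_\alpha(\phi_\alpha)\bigr)\, e_\alpha^{\otimes d + k + 1}.$$
The second summand already carries the explicit factor $d$ and is immediately dominated by a constant times $d \|u\|_{L^\infty(M)}/\|e_\alpha\|^d$; the real work is a linear-in-$d$ bound on $|w_\alpha(f_\alpha)|$.

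The main obstacle is that applying Cauchy's inequality on a polydisk of fixed radius $r_0$ controls $|w_\alpha(f_\alpha)(p)|$ by $\sup_{\Delta(p,r_0)}|f_\alpha|$, which via $|f_\alpha| = \|u\|/\|e_\alpha\|^d$ is amplified by the exponential-in-$d$ factor $\bigl(\max_{\Delta(p,r_0)}\|e_\alpha\|/\|e_\alpha(p)\|\bigr)^d$ coming from the variation of $\|e_\alpha\|^d$. The decisive trick is to shrink the Cauchy radius with $d$: taking $r \defeq 1/(L_\alpha d)$, where $L_\alpha$ is a Lipschitz constant of $\log\|e_\alpha\|$ on $\overline{V_\alpha}$, the Lipschitz bound gives $\inf_{\Delta(p,r)}\|e_\alpha\|^d \ge \exp(-L_\alpha d r)\|e_\alpha(p)\|^d = \exp(-1)\|e_\alpha(p)\|^d$, compressing the variation to an absolute constant, while $1/r = L_\alpha d$ supplies precisely the linear-in-$d$ factor demanded by $|\partial f_\alpha/\partial z_\ell(p)| \le r^{-1}\sup_{\Delta(p,r)}|f_\alpha|$. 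Assembling gives $|w_\alpha(f_\alpha)(p)| \le A' d \|u\|_{L^\infty(M)}/\|e_\alpha(p)\|^d$ uniformly on $U_\alpha$, and multiplying through by the weight $\|e_\alpha(p)\|^{d+k+1}$ cancels the $\|e_\alpha(p)\|^{-d}$ factor, producing the core estimate. The \emph{sufficiently large $d$} hypothesis of the proposition is precisely the requirement that $1/(L_\alpha d) \le \mathrm{dist}(\overline{U_\alpha}, \partial V_\alpha)$ for every $\alpha$.
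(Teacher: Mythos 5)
Your proposal is correct, but it proves the proposition by a genuinely different route than the paper. The paper uses the hypothesis $L=i^*\mathcal{O}(1)$ with $i$ projectively normal in an essential way: it lifts $\partial_v$ to a derivation $\partial$ of $\CC[X_0,\ldots,X_n]$, iterates the trivial coefficient-norm bound $\|\partial P\|_{\infty}\le d(\max_i\|P_i\|_{\infty})\|P\|_{\infty}$ to get $\|\partial^{[j]}P\|_{\infty}\le \kappa^j(d+j)^j\|P\|_{\infty}$, and then transfers back and forth between $M$ and $\PP^n(\CC)$ using the norm comparisons of Lemma \ref{compnorm} and the lifting result of \cite{bost04} Proposition 3.5 (which is where the exponential factors $C^d$ and the ``sufficiently large $d$'' enter). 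You instead prove an intrinsic single-application estimate $\|\partial_v u\|_{L^{\infty}(M)}\le A\,d\,\|u\|_{L^{\infty}(M)}$ and iterate; the key point, which you identify correctly, is that a fixed-radius Cauchy estimate would cost $e^{cd}$ per application (fatal after $j\sim d\log d$ iterations), whereas taking the Cauchy radius $\asymp 1/d$ compresses the variation of $\|e_\alpha\|^d$ to an absolute constant while contributing exactly the linear factor $d$ — a Bernstein--Markov-type gradient bound for sections of $L^{\tensor d}$. Your local formula $\partial_v u=(\phi_\alpha w_\alpha(f_\alpha)-d f_\alpha w_\alpha(\phi_\alpha))e_\alpha^{\tensor(d+k+1)}$ and the bookkeeping of intermediate degrees in $\partial_v^{[j]}$ are both correct, so the stated bound follows with $C\ge\max\{1,B(k+2)\}$. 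What each approach buys: yours is self-contained, purely local, and does not use projective normality at all (only compactness of $M$), so it actually proves a more general statement; the paper's route keeps everything at the level of explicit polynomial coefficients and reuses machinery (Lemma \ref{compnorm}, Bost's lifting proposition) that is needed elsewhere in the article anyway. Two trivial points to tighten in a final write-up: take the Lipschitz constant of $\log\|e_\alpha\|$ on a compact neighborhood of $\overline{U_\alpha}$ inside $V_\alpha$ (or arrange the trivialization on a neighborhood of $\overline{V_\alpha}$), and require $1/(L_\alpha d)$ strictly smaller than $\mathrm{dist}(\overline{U_\alpha},\partial V_\alpha)$ so the polydisks stay in the region where the Lipschitz bound is available.
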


Our proof is a reduction to the case $M=\PP^n(\CC)$. Let $\| \ \|$ denote the Fubini-Study metric on the line bundle $\mathcal{O}(1)$ over  $\PP^n_{\CC} = \Proj \CC[X_0,\ldots,X_n]$ (cf. Example \ref{polgrowth}), and let us identify $\Gamma(\PP^n_{\CC},\mathcal{O}(d))$ with the $\CC$-vector space $\CC[X_0,\ldots,X_n]_d$ of homogeneous polynomials of degree $d$. If $P = \sum_{|I|=d}a_IX^I$, we consider the norms
  \begin{align*}
    \|P\|_{\infty} \defeq \max_{|I|=d}|a_I|\ \ \text{ and } \ \  \|P\|_1\defeq \sum_{|I|=d}|a_I|\text{.}  
  \end{align*}
  The uniform norm of $P$, seen as an element of $\Gamma(\PP^n(\CC),\mathcal{O}(d))$, with $\mathcal{O}(1)$ equipped with the Fubini-Study metric, is given by
  \begin{align*}
\|P\|_{L^{\infty}(\PP^n(\CC))} = \sup_{z \in \CC^{n+1}\minus\{0\}}\frac{|P(z)|}{\left(\sum_{i=0}^n|z_i|^2\right)^{\frac{d}{2}}}\text{.}
  \end{align*}
  
\begin{lemma}\label{compnorm}
  For any $P \in \Gamma(\PP^n_{\CC},\mathcal{O}(d))$, we have
  \begin{align*}
  (n+1)^{-\frac{d}{2}}\|P\|_{\infty} \le \|P\|_{L^{\infty}(\PP^n(\CC))}\le  \|P\|_{1} \le \binom{d+n}{n}\|P\|_{\infty}\text{.}
  \end{align*}
\end{lemma}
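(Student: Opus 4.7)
The three inequalities in \ref{compnorm} are essentially routine, so I will sketch a short direct approach without heavy machinery.

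The third inequality is immediate: the cardinality of $\{I \in \NN^{n+1} \mid |I|=d\}$ is $\binom{d+n}{n}$, so
\begin{align*}
\|P\|_1 = \sum_{|I|=d}|a_I| \le \binom{d+n}{n}\max_{|I|=d}|a_I| = \binom{d+n}{n}\|P\|_{\infty}.
\end{align*}

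For the middle inequality $\|P\|_{L^\infty(\PP^n(\CC))} \le \|P\|_1$, I would restrict attention to $z \in \CC^{n+1}$ with $\sum_{i=0}^n |z_i|^2 = 1$, which is enough by homogeneity. For such $z$, each $|z_i| \le 1$, hence $|X^I(z)| = \prod_i |z_i|^{I_i} \le 1$ for every multi-index $I$ with $|I|=d$. The triangle inequality then gives $|P(z)| \le \sum_{|I|=d}|a_I| = \|P\|_1$, and taking the supremum yields the bound.

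The first inequality is the only one requiring an actual argument. The plan is to extract each coefficient $a_I$ as a Fourier coefficient of $P$ along the unit torus $T = \{z \in \CC^{n+1} \mid |z_i|=1 \text{ for all } i\}$. Indeed, writing $P(e^{i\theta_0},\ldots,e^{i\theta_n}) = \sum_{|I|=d} a_I e^{i\langle I,\theta\rangle}$ and applying the orthogonality of characters on $(S^1)^{n+1}$ gives
\begin{align*}
a_I = \frac{1}{(2\pi)^{n+1}}\int_{[0,2\pi]^{n+1}} P(e^{i\theta_0},\ldots,e^{i\theta_n})\, e^{-i\langle I,\theta\rangle}\, d\theta_0\cdots d\theta_n,
\end{align*}
whence $|a_I| \le \sup_{z\in T}|P(z)|$. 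Now for any $z \in T$ we have $\sum_{i=0}^n|z_i|^2 = n+1$, so by definition of the uniform norm,
\begin{align*}
|P(z)| \le (n+1)^{d/2}\,\|P\|_{L^\infty(\PP^n(\CC))}.
\end{align*}
Combining these bounds and taking the maximum over $I$ yields $\|P\|_{\infty} \le (n+1)^{d/2}\|P\|_{L^\infty(\PP^n(\CC))}$, which is the first inequality. There is no real obstacle here; the only minor subtlety is remembering to use the torus (where all $|z_i|=1$ simultaneously) rather than the unit sphere, so that Cauchy/Fourier can be applied coordinate-by-coordinate.
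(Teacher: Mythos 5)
Your proof is correct and follows essentially the same route as the paper: the coefficient bound via integration over the unit torus (your Fourier-orthogonality identity is just the multivariable Cauchy formula the paper uses), the triangle-inequality bound $|z^I|\le (\sum_i|z_i|^2)^{d/2}$ for the middle inequality (which you phrase via homogeneity and the unit sphere), and the count $\binom{d+n}{n}$ for the last one. No gaps.
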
  

\begin{proof}
  If we write $P = \sum_{|I|=d} a_IX^I$, then Cauchy's integral formula gives, for any multi-index $I$,
  \begin{align*}
   a_I =\frac{1}{(2\pi i)^{n+1}}\int_{(\partial D)^{n+1}}\frac{P(z)}{z^{I+\mathbf{1}}}dz_0\cdots dz_n\text{,}
  \end{align*}
  where $D$ denotes the unit disk in $\CC$ and $\mathbf{1}$ the multi-index of order $n+1$ having $1$ at each coordinate. Thus, if $[z]$ denotes the image in $\PP^n(\CC)$ of a point $z \in \CC^{n+1}\minus\{0\}$, 
  \begin{align*}
  |a_I| \le \sup_{z \in (\partial D)^{n+1}}|P(z)| = (n+1)^{\frac{d}{2}}\sup_{z \in (\partial D)^{n+1}}\|P([z])\|\text{.}
  \end{align*}
  This proves that $(n+1)^{-\frac{d}{2}}\|P\|_{\infty} \le \|P\|_{L^{\infty}(\PP^n(\CC))}$.

  For any $z \in \CC^{n+1}\minus\{0\}$, we have
  \begin{align*}
   \|P([z])\| = \frac{|P(z)|}{\left(\sum_{i=0}^n |z_i|^2\right)^{\frac{d}{2}}} \le \frac{\sum_{|I|=d}|a_I||z^I|}{\left(\sum_{i=0}^n |z_i|^2\right)^{\frac{d}{2}}} \le \frac{\max_{|I|=d}|z^I|}{\left(\sum_{i=0}^n |z_i|^2\right)^{\frac{d}{2}}}\|P\|_1\text{.}
  \end{align*}
Now, if $I=(i_0,\ldots,i_n)$ is a multi-index satisfying $|I|=d$, then it is clear that
  \begin{align*}
   |z^I|^2 = (|z_0|^2)^{i_0}\cdots (|z_n|^2)^{i_n} \le \left(\sum_{i=0}^n|z_i|^{2}\right)^d\text{.}
  \end{align*}
  We thus obtain $\|P\|_{L^{\infty}(\PP^n(\CC))} \le \|P\|_1$.

  The inequality $\|P\|_{1} \le \binom{n+d}{n} \|P\|_{\infty}$ is an immediate consequence of $\dim \Gamma(\PP^n(\CC),\mathcal{O}(d)) = \binom{n+d}{n}$.
\end{proof}

\begin{proof}[Proof of Proposition \ref{bounddersec}]
  Since $M$ is compact, if the conclusion of the statement holds for some Hermitian metric $\| \ \|$, then, up to replacing the constant $C$, it also holds for any other Hermitian metric on $L$. We may thus assume that $\| \ \|$ is induced by the Fubini-Study metric on $\mathcal{O}(1)$ via the embedding $i$.

  Let $(X_0,\ldots ,X_n)$ denote the projective coordinates of $\PP^n(\CC)$, seen as global sections of $\mathcal{O}(1)$, and let $t_j\in R_1$ be the restriction of $X_j$ to $M$ for every $0\le j \le n$. Since $i: M \into \PP^n(\CC)$ is projectively normal, for any integer $d\ge 1$, $R_d$ is generated as a $\CC$-vector space by the monomials of degree $d$ in $t_0,\ldots,t_n$.

  We lift $v$ to $\PP^n(\CC)$ as follows. For every $0\le j \le n$, let $P_j \in \Gamma(\PP^n(\CC),\mathcal{O}(k+2)) = \CC[X_0,\ldots,X_n]_{k+2}$ be a lifting of $\partial_vt_j \in R_{k+2}$. Then there exists a unique  $\CC$-derivation $\partial$ of $\bigoplus_{d\ge 0}\Gamma(\PP^n(\CC),\mathcal{O}(d))=\CC[X_0,\ldots,X_n]$, of degree $k+1$, such that $\partial X_j = P_j$ for every $0\le j \le n$. It is easy to see that, for every integer $d\ge 0$, the diagram
 $$
 \begin{tikzcd}
   \CC[X_0,\ldots,X_n]_d \arrow{r}{\partial} \arrow{d}[swap]{i^*} & \CC[X_0,\ldots,X_n]_{d+k+1} \arrow{d}{i^*}\\
   R_d \arrow{r}{\partial_v} & R_{d+k+1}
 \end{tikzcd}
 $$
 commutes. Moreover, if $Q \in \CC[X_0,\ldots,X_n]_{k+1}$ is any lifting of $t \in R_{k+1}$, then it is clear that, for any $j\ge 1$,
  \begin{align*}
   \partial^{[j]} \defeq \partial \circ (\partial - Q) \circ \cdots \circ (\partial - (j-1)Q)
  \end{align*}
  makes the diagram
$$
 \begin{tikzcd}
   \CC[X_0,\ldots,X_n]_d \arrow{r}{\partial^{[j]}} \arrow{d}[swap]{i^*} & \CC[X_0,\ldots,X_n]_{d+j(k+1)} \arrow{d}{i^*}\\
   R_d \arrow{r}{\partial^{[j]}_v} & R_{d+j(k+1)}
 \end{tikzcd}
 $$
 commute.

For every multi-index $I\in \NN^{n+1}$, we have
\begin{align*}
\| \partial X^I\|_{\infty} \le |I| \max_{0\le i \le n}\|P_i\|_{\infty}\text{.}
\end{align*}
This implies that, for any $d\ge 1$ and any homogeneous polynomial $P \in \CC[X_0,\ldots,X_n]_d$,
\begin{align*}
 \|\partial P\|_{\infty} \le  d\left(\max_{0\le i \le n}\|P_i\|_{\infty}\right) \|P\|_{\infty}\text{.}
\end{align*}
Thus, if $\kappa \defeq  \|Q\|_{\infty}+(k+1)\max_{0\le i \le n}\|P_i\|_{\infty} $ and $S\in \CC[X_0,\ldots,X_n]$ is a homogeneous polynomial of degree $d+m(k+1)$ for some $0\le m\le j-1$, we have
\begin{align*}
\|(\partial - mQ)S\|_{\infty} \le \left((d+m(k+1))\left(\max_{0\le i \le n}\|P_i\|_{\infty}\right) + m \|Q\|_{\infty}\right)\|S\|_{\infty}\le \kappa(d+j)\|S\|_{\infty}\text{.}
\end{align*}
By induction, we conclude that, for any $d\ge 1$ and any $P\in \CC[X_0,\ldots,X_n]_d$, we have
\begin{align}\label{diffbound}
  \|\partial^{[j]}P\|_{\infty} \le  \kappa^j (d+j)^j\|P\|_{\infty}\text{.}
\end{align}

To complete our proof, we apply a lifting argument. By \cite{bost04} Proposition 3.5, there exists a constant $C_0>0$ such that, for every sufficiently large integer $d$ and every $s \in \Gamma(M, L^{\tensor d})$, there exists a lifting $P \in \Gamma(\PP^n_{\CC},\mathcal{O}(d))$ of $s$ such that
\begin{align}\label{boundlift}
\|P\|_{L^{\infty}(\PP^n(\CC))} \le C^d_0\|s\|_{L^{\infty}(M)}\text{.}
\end{align}
Thus, for any $j\ge 1$,
\begin{align*}
  \|\partial^{[j]}_vs\|_{L^{\infty}(M)} &\le \|\partial^{[j]}P\|_{L^{\infty}(\PP^n(\CC))} \le  \|\partial^{[j]}P\|_{1} \le \binom{d+ j(k+1)}{j(k+1)}\|\partial^{[j]}P\|_{\infty} && \text{by Lemma \ref{compnorm}}\\
    &\le 2^{d+ j(k+1)}\|\partial^{[j]}P\|_{\infty}  \\
                                        &\le 2^{d+ j(k+1)} \kappa^j(d+j)^j\|P\|_{\infty} &&\text{by (\ref{diffbound})}\\
                                        &\le 2^{d+ j(k+1)} \kappa^j(d+j)^j(n+1)^{\frac{d}{2}}\|P\|_{L^{\infty}(\PP^n(\CC))} && \text{by Lemma \ref{compnorm}}\\
                 &\le 2^{d+ j(k+1)} \kappa^j(d+j)^j(n+1)^{\frac{d}{2}}C_0^d\|s\|_{L^{\infty}(M)} && \text{by (\ref{boundlift})}\text{.}
\end{align*}
\end{proof}

\subsection{The arithmetic case}

We shall actually need an arithmetic variant of the above constructions.

Consider the notation and terminology of Paragraph \ref{notstat}. Let $K$ be a number field, $\mathcal{X}$ be a projective arithmetic scheme over $S=\Spec \mathcal{O}_K$ with smooth generic fiber, and $L$ be a line bundle over $\mathcal{X}$ endowed with a global section $s_0 \in \Gamma(\mathcal{X}, L)\minus\{0\}$. Arguing as above, we see that a section $w \in \Gamma(\mathcal{X}_{s_0},\mathcal{D}er_{\mathcal{O}_S}(\mathcal{O}_{\mathcal{X}}))$ induces an $\mathcal{O}_K$-derivation $\partial_w$ of the ring $\bigoplus_{d\ge 0}\Gamma(\mathcal{X},L^{\tensor d})$.

Let us fix $t \in \Gamma(\mathcal{X}, L^{k+1})$, where $k\ge 0$ is the ``order of pole of $w$ at $\div (s_0)$'', and consider the differential operators $\partial_w^{[j]} = \partial_w \circ (\partial_w - t) \circ \cdots \circ (\partial_w -(j-1)t)$, for $j\ge 0$, as above.

By applying Proposition \ref{bounddersec} for each projective embedding $\sigma:K \into \CC$, we obtain the following corollary.

\begin{coro}\label{bounddersecar}
With the above notation, assume moreover that $L = i^*\mathcal{O}(1)$ for some closed immersion $i:\mathcal{X} \into \PP^n_{\mathcal{O}_K}$ over $S$ such that $i_K: \mathcal{X}_K \into \PP^n_K$ is projectively normal. Let $(\| \ \|_{\sigma})_{\sigma: K \into \CC}$ be a Hermitian structure on $L$. Then, there exists a constant $C>0$ such that, for any integer $j\ge 1$,  any sufficiently large positive integer $d$, and any $s \in \Gamma(\mathcal{X},L^{\tensor d})$, we have
  \begin{align*}
    \|\partial^{[j]}_ws\|_{\mathcal{X}} \le C^{j+d}(j+d)^{j} \|s\|_{\mathcal{X}}\text{.}
  \end{align*}
\end{coro}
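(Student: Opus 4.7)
The plan is to reduce the arithmetic statement to the complex-analytic Proposition \ref{bounddersec} one embedding at a time. Since there are only finitely many field embeddings $\sigma: K \hookrightarrow \CC$, it will suffice to produce, for each such $\sigma$, a constant $C_\sigma > 0$ and integer $d_\sigma \ge 1$ such that the stated inequality holds, for $d \ge d_\sigma$, with $\|\partial_w^{[j]}s\|_{\mathcal{X}}$ replaced by $\|\partial_w^{[j]}s\|_{\sigma, L^\infty(\mathcal{X}_\sigma^{\an})}$ and $\|s\|_{\mathcal{X}}$ replaced by $\|s\|_{\sigma, L^\infty(\mathcal{X}_\sigma^{\an})}$. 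Setting $C := \max_\sigma C_\sigma$ and $d_0 := \max_\sigma d_\sigma$, and using that $\|s\|_{\sigma, L^\infty} \le \|s\|_{\mathcal{X}}$ for every $\sigma$ while $\|\partial_w^{[j]} s\|_{\mathcal{X}}$ is, by definition, the maximum of the $\|\partial_w^{[j]} s\|_{\sigma, L^\infty}$, will then yield the conclusion.

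Fix $\sigma: K \hookrightarrow \CC$. Base change along $\sigma$ turns $\mathcal{X}_\sigma^{\an}$ into a compact complex manifold (since $\mathcal{X}_K$ is smooth and projective) and carries the projectively normal closed immersion $i_K: \mathcal{X}_K \hookrightarrow \PP^n_K$ to a projectively normal closed immersion $i_\sigma: \mathcal{X}_\sigma \hookrightarrow \PP^n_\CC$ --- projective normality survives flat base change of the ground field, since it amounts to surjectivity of the restriction maps $\Gamma(\PP^n, \mathcal{O}(d)) \to \Gamma(\mathcal{X}, i^*\mathcal{O}(d))$ for every $d \ge 0$. Under this base change, $L_\sigma = i_\sigma^{\an,*}\mathcal{O}(1)$, equipped with the given metric $\|\ \|_\sigma$; the global section $s_0$ pulls back to a non-zero section $s_{0,\sigma} \in \Gamma(\mathcal{X}_\sigma^{\an}, L_\sigma)$; and the $\mathcal{O}_K$-derivation $w$ on $\mathcal{X}_{s_0}$ pulls back to a meromorphic vector field $w_\sigma$ on $\mathcal{X}_\sigma^{\an}$ that is holomorphic outside the support of $\div(s_{0,\sigma})$, with the same order of pole $k$ there. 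The essential verification is that the derivation $\partial_w$ on $\bigoplus_{d\ge 0}\Gamma(\mathcal{X}, L^{\tensor d})$, and hence its iterates $\partial_w^{[j]}$ after pulling back $t$ as well, commute with base change via $\sigma$. This is a routine check: $\partial_w$ is defined purely locally, by $\partial_w s = w(f) s_0^{\tensor d+k+1}$ on $\mathcal{X}_{s_0}$ whenever $s = f s_0^{\tensor d}$ there, and both the defining formula and the global extension property of $w \tensor s_0^{\tensor k}$ are preserved by the flat ring extension $\mathcal{O}_K \to \CC$ along $\sigma$.

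With the compatibility in hand, Proposition \ref{bounddersec} applied to the triple $(\mathcal{X}_\sigma^{\an}, L_\sigma, w_\sigma)$ together with the section $t_\sigma$ supplies constants $C_\sigma > 0$ and $d_\sigma \ge 1$ such that, for every $j \ge 1$, every $d \ge d_\sigma$, and every $s \in \Gamma(\mathcal{X}, L^{\tensor d})$,
$$
\|\partial_w^{[j]} s\|_{\sigma, L^\infty(\mathcal{X}_\sigma^{\an})} \le C_\sigma^{j+d} (j+d)^j \, \|s\|_{\sigma, L^\infty(\mathcal{X}_\sigma^{\an})}.
$$
Taking the maximum over the finite set of embeddings $\sigma$ then produces the corollary. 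The main (mild) obstacle is exactly the base-change compatibility above; all the genuinely analytic content --- the John-ellipsoid comparison of $L^\infty$ and $\ell^1$ norms, the Cauchy integral bound on monomial coefficients, and the lifting argument through $\PP^n$ --- is already packaged inside Proposition \ref{bounddersec}.
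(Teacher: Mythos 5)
Your proposal is correct and follows exactly the paper's route: the paper also deduces the corollary by applying Proposition \ref{bounddersec} separately for each of the finitely many embeddings $\sigma: K \hookrightarrow \CC$ and taking the maximum of the resulting constants. The base-change compatibility of $\partial_w$ and of projective normality that you verify explicitly is left implicit in the paper, but it is the same argument.
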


\section{Proof of Theorem \ref{lethm}}\label{finalproof}

Recall the notation and hypotheses of Theorem \ref{lethm}: $\mathcal{X}$ is a quasi-projective arithmetic scheme over $\mathcal{O}_K$ of relative dimension $n\ge 2$ with smooth generic fiber, and $\hat{\varphi}: \Spf \mathcal{O}_K[\![q]\!] \to \mathcal{X}$ is a morphism of formal $\mathcal{O}_K$-schemes such that
\begin{enumerate}
   \item [(i)] the formal curve $\hat{\varphi}_K : \Spf K[\![q]\!] \to \mathcal{X}_K$ is ZL-dense in $\mathcal{X}_K$ and satisfies the differential equation
   \begin{align*}
    q\frac{d\hat{\varphi}_K}{dq} = v\circ \hat{\varphi}_K\text{;}
   \end{align*}
   \item [(ii)] for any field embedding $\sigma : K \into \CC$, the formal curve $\hat{\varphi}_{\sigma} : \Spf \CC[\![q]\!] \to \mathcal{X}_{\sigma}$ lifts to an analytic curve $\varphi_{\sigma}: D_{R_{\sigma}}\subset \CC \to \mathcal{X}_{\sigma}^{\an}$ of moderate growth. We also assume that $\prod_{\sigma: K \into \CC}R_{\sigma}=1$. 
\end{enumerate}

Let $\overline{\mathcal{X}}$ be some projective compactification with smooth generic fiber of the arithmetic variety $\mathcal{X}$ over $\mathcal{O}_K$. Fix a Hermitian line bundle $\overline{L} = (L, (\| \ \|_{\sigma})_{\sigma: K \into\CC})$ over $\overline{\mathcal{X}}$ such that $L_K$ is ample and $(L_{\sigma},\| \ \|_{\sigma})$ over $\mathcal{X}_{\sigma}^{\an}$ is positive for every $\sigma: K \into \CC$.

In view of Philippon's algebraic independence criterion (Theorem \ref{aic}), Theorem \ref{lethm} will be a direct consequence of the following.

\begin{theorem}\label{lethm'}
With the above notation, for any field embedding $\sigma:K \into \CC$, and any $z \in D_{R_{\sigma}}\minus \{0\}$, there exist real constants $c_0,c_1,c_2,c_3>0$ such that, for every sufficiently large positive integer $d$, there exists a positive integer $d'\le c_0 d\log d$, and $t \in \Gamma(\overline{\mathcal{X}},L^{\tensor d'})$ satisfying
  \begin{align*}
\log \|t\|_{\overline{\mathcal{X}}} \le c_1 d\log^2d
  \end{align*}
  and
  \begin{align*}
-c_2d^n \le \log \|t(\varphi_{\sigma}(z))\|_{\sigma}\le -c_3 d^n\text{.}
  \end{align*}
\end{theorem}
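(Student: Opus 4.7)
The plan is to mirror Nesterenko's three-step method in the geometric-arithmetic framework, using Corollaries \ref{coroauxsec}, \ref{corotaylor2}, and \ref{bounddersecar} as its main ingredients. After possibly replacing $L$ by a sufficiently high tensor power (which merely rescales the constants), we may assume that $L_K$ defines a projectively normal embedding, so that Corollary \ref{bounddersecar} applies.

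For every sufficiently large $d$, Corollary \ref{coroauxsec} produces $s \in \Gamma(\overline{\mathcal{X}}, L^{\otimes d}) \setminus \{0\}$ with $m := \ord_0 \hat\varphi^* s \ge c_1' d^n$ and $\log \|s\|_{\overline{\mathcal{X}}} \le c_2' d \log d$; ZL-density of $\hat\varphi_K$ combined with Proposition \ref{amplediv} furnishes the matching upper bound $m \le C_1 d^n$. Fix a global section $s_0 \in \Gamma(\overline{\mathcal{X}}, L)$ (replacing $L$ by a tensor power again if necessary) nowhere vanishing at $\varphi_\sigma(z)$ and at each $\hat\varphi_{\sigma'}(0)$, and such that $v$ extends holomorphically on $\overline{\mathcal{X}}_K \setminus \div(s_0)$. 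Write $s = f s_0^{\otimes d}$ near $\varphi_\sigma(z)$ and set $g(q) := f(\varphi_\sigma(q))$.

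The central step is the extraction of a non-negligible Taylor coefficient. Let $\gamma_0, \gamma_1$ be the constants produced by Corollary \ref{corotaylor2} when applied with $c_0 = c_1'$, $c_1 = C_1$, $c_2 = c_2'$, and a prescribed constant $C$ (to be chosen below); I claim that some $j \in [0, \lceil\gamma_0 d\log d\rceil)$ satisfies $\log(|g^{(j)}(z)|/j!) \ge -\gamma_1 d^n$. Otherwise, Corollary \ref{corotaylor2} applied to $\varphi_\sigma^*\overline{L}_\sigma$ on $D_{R_\sigma}$ (which has moderate growth because $\varphi_\sigma$ does) and to $\varphi_\sigma^* s$ would yield $\log\|j_0^m \varphi_\sigma^* s\|_{R_\sigma} \le -C d\log d$. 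For each other embedding $\sigma'$, Proposition \ref{prop1mg} combined with $m = O(d^n)$ and $\log\|s\|_{\overline{\mathcal{X}}} = O(d\log d)$ provides only the crude $\log\|j_0^m \varphi_{\sigma'}^* s\|_{R_{\sigma'}} = O(d\log d)$. Since $\|j_0^m \hat\varphi^* s\|_\sigma = R_\sigma^{-m}\|j_0^m \varphi_\sigma^* s\|_{R_\sigma}$ and $\prod_\sigma R_\sigma = 1$, summing over $\sigma$ gives
\[
\sum_\sigma \log \|j_0^m \hat\varphi^* s\|_\sigma \le -C d\log d + O(d\log d).
\]
But $j_0^m \hat\varphi^* s$ is a non-zero element of the rank-one Hermitian $\mathcal{O}_K$-module $\hat\varphi(0)^*\overline{L}^{\otimes d}\otimes\overline\Omega^{\otimes m}$, of Arakelov degree $d\,\widehat{\deg}(\hat\varphi(0)^*\overline{L}) + m\,\widehat{\deg}(\overline\Omega) = O(d)$ (as $\widehat{\deg}\,\overline\Omega = 0$), so the Arakelov product formula yields $\sum_\sigma \log \|j_0^m \hat\varphi^* s\|_\sigma \ge -O(d)$. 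Choosing $C$ large enough produces the desired contradiction.

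For such a $j$, the Cauchy inequalities applied to $g$ on a small disk around $z$ combined with Proposition \ref{boundcompact} (bounding $\|s(\varphi_\sigma(q))\|_\sigma$ on a compact neighborhood of $z$ by $m^{\kappa_1 d}(|q|/R_\sigma)^m \|s\|_{\overline{\mathcal{X}}}$, a decay of order $\exp(-\Omega(d^n))$) yield the matching upper bound $\log(|g^{(j)}(z)|/j!) \le -c_3' d^n$. Now set $t := \partial_v^{[j]}(s) \in \Gamma(\overline{\mathcal{X}}, L^{\otimes d'})$ with $d' = d + j(k+1) = O(d\log d)$, where in the construction of Section \ref{derivesections} the section in $\Gamma(\overline{\mathcal{X}}, L^{\otimes k+1})$ used to define $\partial_v^{[j]}$ is taken to be $s_0^{\otimes k+1}$ (with $k$ the order of pole of $v$ along $\div(s_0)$). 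The integral-curve equation combined with the operator identity $\theta(\theta-1)\cdots(\theta-(j-1)) = q^j\partial_q^j$ (for $\theta = q\partial_q$) gives $\varphi_\sigma^* t = q^j g^{(j)}(q) \cdot (\varphi_\sigma^* s_0)^{\otimes d'}$ near $z$, so that
\[
\log \|t(\varphi_\sigma(z))\|_\sigma = j \log |z| + \log |g^{(j)}(z)| + d' \log \|s_0(\varphi_\sigma(z))\|_\sigma.
\]
All terms except $\log(|g^{(j)}(z)|/j!) = -\Theta(d^n)$ are $O(d \log^2 d)$, and since $n \ge 2$ the $d^n$ contribution dominates, producing the two-sided bound $-c_2 d^n \le \log \|t(\varphi_\sigma(z))\|_\sigma \le -c_3 d^n$. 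The bound $\log\|t\|_{\overline{\mathcal{X}}} \le c_1 d\log^2 d$ follows from Corollary \ref{bounddersecar} since $C^{j+d}(j+d)^j = \exp(O(d\log^2 d))$. The main obstacle is the central Taylor-extraction step: it is the non-archimedean analog of Nesterenko's integrality argument, and hinges on the delicate balance — enforced by the hypothesis $\prod_\sigma R_\sigma = 1$ — between Corollary \ref{corotaylor2}'s sharp estimate at $\sigma$, crude estimates at the other embeddings, and the Arakelov product-formula lower bound for the leading jet.
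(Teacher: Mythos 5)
Your proposal is essentially the paper's own proof: the same three steps (auxiliary section from Corollary \ref{coroauxsec} with the upper bound on $\ord_0\hat{\varphi}^*s$ coming from ZL-density, extraction of a Taylor coefficient at $z$ by playing Corollary \ref{corotaylor2} at $\sigma$ against Proposition \ref{prop1mg} at the other embeddings and the Arakelov-degree lower bound for the nonzero jet $j_0^m\hat{\varphi}^*s$ in $\hat{\varphi}(0)^*\overline{L}^{\tensor d}\tensor\overline{\Omega}^{\tensor m}$, the Cauchy/Proposition \ref{boundcompact} upper bound, and finally the operator $\partial^{[j]}$ with Corollary \ref{bounddersecar} and the identity $\theta(\theta-1)\cdots(\theta-(j-1))=q^j\partial_q^j$), exactly as in Lemmas \ref{lemma1}--\ref{lemma3}. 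The one point to repair is integrality: $v$ is only a vector field on the generic fiber $\mathcal{X}_K$, so $t=\partial_v^{[j]}(s)$ is a priori a section only over $K$, whereas the statement (and Philippon's criterion downstream) requires $t\in\Gamma(\overline{\mathcal{X}},L^{\tensor d'})$ over $\mathcal{O}_K$; as in the paper one first replaces $v$ by $w=\alpha v$ for some $\alpha\in\mathcal{O}_K\minus\{0\}$ clearing denominators (and uses $\alpha s_0^{\tensor k+1}$ in $\partial_w^{[j]}$), which multiplies your $t$ by $\alpha^j$ and only shifts the norms by $O(d\log d)$, so all estimates survive unchanged.
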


We shall prove this theorem in three steps corresponding to the next three lemmas.

\begin{lemma}[Auxiliary sections]\label{lemma1}
There exist constants $a,b,c>0$ such that, for every sufficiently large positive integer $d$, there is a global section $s \in \Gamma(\overline{\mathcal{X}},L^{\tensor d})$ such that
  \begin{align}\label{boundorderzero}
    ad^n < \ord_0 \hat{\varphi}^*s \le b d^n 
  \end{align}
  and
  \begin{align}\label{boundnorm}
    \log \|s\|_{\overline{\mathcal{X}}}\le  cd\log d \text{.}
  \end{align}
\end{lemma}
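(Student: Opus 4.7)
The plan is to deduce the lemma essentially directly by combining Corollary \ref{coroauxsec} (which gives the lower bound on the vanishing order and the norm bound) with Proposition \ref{amplediv} (which, via ZL-density, yields the matching upper bound on the vanishing order). There is no new ingredient beyond these two tools; the task is simply to package them correctly on the chosen compactification.

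First I would pass from $\mathcal{X}$ to a projective compactification $\overline{\mathcal{X}}$ with smooth generic fiber. The formal curve $\hat{\varphi}$ composes with $\mathcal{X} \hookrightarrow \overline{\mathcal{X}}$ to give a formal curve in $\overline{\mathcal{X}}$, still denoted $\hat{\varphi}$, and by Corollary \ref{corostandard} the ZL-density of $\hat{\varphi}_K$ in $\mathcal{X}_K$ translates to ZL-density of $\hat{\varphi}_K$ in $\overline{\mathcal{X}}_K$ (note that $\hat{\varphi}(0)$ is a regular point since $\mathcal{X}_K$ is smooth, and we may arrange $\overline{\mathcal{X}}_K$ smooth as well). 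The moderate growth hypothesis on each $\varphi_\sigma$ is, by definition in the quasi-projective setting, the moderate growth of $j\circ\varphi_\sigma$ in $\overline{\mathcal{X}}_\sigma$ for some (hence any, by Corollary \ref{corobirinv}) smooth projective compactification. Therefore the hypotheses of Corollary \ref{coroauxsec} are satisfied on $\overline{\mathcal{X}}$, together with the normalization $\prod_\sigma R_\sigma = 1$.

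Applying Corollary \ref{coroauxsec} to $\overline{\mathcal{X}}$ and $\overline{L}$ yields constants $a,c>0$ and, for every sufficiently large integer $d$, a global section $s\in\Gamma(\overline{\mathcal{X}},L^{\tensor d})\setminus\{0\}$ such that
\begin{align*}
\ord_0 \hat{\varphi}^*s > a d^n \qquad\text{and}\qquad \log \|s\|_{\overline{\mathcal{X}}} \le c\, d\log d,
\end{align*}
which establishes the left inequality in \eqref{boundorderzero} and the estimate \eqref{boundnorm}. For the upper bound in \eqref{boundorderzero}, since $\hat{\varphi}_K$ is ZL-dense in $\overline{\mathcal{X}}_K$ and $L_K$ is ample, Proposition \ref{amplediv} provides a constant $b>0$ (independent of $d$ and of $s$) such that
\begin{align*}
\ord_0 \hat{\varphi}^*t \le b\, d^n
\end{align*}
for every $d\ge 1$ and every nonzero $t\in\Gamma(\overline{\mathcal{X}}_K, L_K^{\tensor d})$. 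Applied to $t=s_K$, this yields the required upper bound.

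The main subtlety is not a hard mathematical obstacle but rather a bookkeeping point: one must verify that ZL-density and moderate growth as formulated for the quasi-projective $\mathcal{X}$ truly pass to the chosen compactification $\overline{\mathcal{X}}$, so that Corollary \ref{coroauxsec} and Proposition \ref{amplediv} apply with $L$ ample on $\overline{\mathcal{X}}_K$; this is precisely the content of Corollaries \ref{corostandard} and \ref{corobirinv} and is what makes the whole reduction to the projective case uniform in $d$.
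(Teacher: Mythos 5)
Your proposal is correct and follows essentially the same route as the paper: the lower bound and the norm estimate come from Corollary \ref{coroauxsec}, while the upper bound $\ord_0\hat{\varphi}^*s\le bd^n$ is exactly the content of ZL-density via Proposition \ref{amplediv}, applied to $s_K\neq 0$ (nonzero since $\Gamma(\overline{\mathcal{X}},L^{\tensor d})$ is torsion-free). The compactification bookkeeping you spell out is handled in the paper by the setup preceding Theorem \ref{lethm'}, so nothing is missing.
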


\begin{proof}
 Since $\hat{\varphi}_K$ is ZL-dense in $\mathcal{X}_K$, and $\varphi_{\tau}$ has moderate growth in $\mathcal{X}_{\tau}^{\an}$ for every embedding $\tau: K \into \CC$, our statement follows immediately from Corollary \ref{coroauxsec}.
\end{proof}

Fix a field embedding $\sigma :K \into \CC$ and $z \in D_{R_{\sigma}}\minus \{0\}$. By the projective Prime Avoidance Lemma, there is an integer $k\ge 1$, and a global section $s_0 \in \Gamma(\overline{\mathcal{X}},L^{\tensor k})$ such that $\overline{\mathcal{X}}_{s_0} \defeq (s_0\neq 0)\subset \mathcal{X}$ and $\varphi_{\sigma}(z)\in \overline{\mathcal{X}}_{s_0,\sigma}^{\an}$. Up to replacing $L$ by $L^{\tensor k}$, we may assume that $k=1$ (cf. Remark \ref{rkinv}).

\begin{lemma}\label{lemma2}
  There exist constants $\gamma_0,\gamma_1,\gamma_2>0$ such that, for every sufficiently large positive integer $d$, and every $s \in \Gamma(\overline{\mathcal{X}},L^{\tensor d})$ as in Lemma \ref{lemma1}, there exists $j\le \gamma_0d\log d$ such that, if we write $s = fs_0^{\tensor d}$ over $\overline{\mathcal{X}}_{s_0}$, then
  \begin{align*}
  -\gamma_1d^n\le \log |(\varphi_{\sigma}^*f)^{(j)}(z)|\le -\gamma_2d^n\text{.} 
  \end{align*}
\end{lemma}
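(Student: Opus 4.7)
The plan is to find the required $j$ via two complementary bounds: an arithmetic lower bound forcing at least one derivative $(\varphi_\sigma^*f)^{(j)}(z)$ (for $j$ in the range $[0,\gamma_0 d\log d)$) not to be too small, and a uniform upper bound showing that \emph{every} such derivative is already small enough. Write $F\defeq \varphi_\sigma^*f=f\circ\varphi_\sigma$, a holomorphic function defined where $\varphi_\sigma^*s_0$ does not vanish. The main technical difficulty lies in the lower bound: one must transfer the arithmetic integrality of $s$ through the jet at the \emph{origin} into a pointwise lower bound at $z$, which is where the hypothesis $\prod_\tau R_\tau=1$ and Corollary \ref{corotaylor2} come together.

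\emph{Arithmetic lower bound on some derivative.} Since $\ord_0\hat\varphi^*s=m$, the jet $j_0^m\hat\varphi^*s$ is a nonzero integral element of the rank-one Hermitian $\mathcal{O}_K$-module $\hat\varphi(0)^*\overline L^{\tensor d}\tensor_{\mathcal{O}_K}\overline\Omega^{\tensor m}$, whose Arakelov degree equals $d\cdot \adeg(\hat\varphi(0)^*\overline L)$ (since $\adeg(\overline\Omega)=0$). The product formula gives $\sum_\tau \log\|j_0^m\hat\varphi^*s\|_\tau\ge -C_0 d$ for some constant $C_0>0$; combining this with the identity $\|j_0^m\hat\varphi^*s\|_\tau=R_\tau^{-m}\|j_0^m\varphi_\tau^*s\|_{R_\tau}$ (already used in the proof of Lemma \ref{l2}) and $\prod_\tau R_\tau=1$ yields $\sum_\tau \log\|j_0^m\varphi_\tau^*s\|_{R_\tau}\ge -C_0 d$. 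For each $\tau\ne\sigma$, Proposition \ref{prop1mg} bounds the $\tau$-summand from above by $O(d\log d)$, using $m\le bd^n$, $\log m=O(\log d)$, and $\log\|s\|_{\overline{\mathcal X}}\le cd\log d$. Hence $\log\|j_0^m\varphi_\sigma^*s\|_{R_\sigma}\ge -C_1 d\log d$ for some $C_1>0$. Now apply Corollary \ref{corotaylor2} with $C\defeq C_1+1$, $c_0\defeq a$, $c_1\defeq b$, $c_2\defeq c$: the corollary supplies $\gamma_0,\gamma_1>0$ such that the assumption $\max_{0\le j<\lceil\gamma_0 d\log d\rceil}\log(|F^{(j)}(z)|/j!)\le -\gamma_1 d^n$ would force $\log\|j_0^m\varphi_\sigma^*s\|_{R_\sigma}\le -(C_1+1)d\log d$, contradicting the preceding inequality. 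Consequently some integer $j$ with $0\le j<\gamma_0 d\log d$ satisfies $\log|F^{(j)}(z)|>-\gamma_1 d^n+\log j!\ge -\gamma_1 d^n$.

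\emph{Uniform upper bound on all derivatives.} Choose $r_1>0$ so small that $K\defeq \overline{\{w:|w-z|\le r_1\}}\subset D_{R_\sigma}\setminus\{0\}$ and $\varphi_\sigma^*s_0$ is nowhere zero on $K$. Since $\overline L$ has moderate growth and $m\ge ad^n\gg d$ for large $d$, Proposition \ref{boundcompact} applied to $K$ gives, for all $w\in K$,
\[\log\|\varphi_\sigma^*s(w)\|_\sigma\le \kappa_1 d\log m+m\log(|w|/R_\sigma)+\log\|s\|_{\overline{\mathcal X}}\le -\alpha a d^n+O(d\log d),\]
where $\alpha\defeq \log(R_\sigma/(|z|+r_1))>0$ and we used $m\ge ad^n$. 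As $\log\|\varphi_\sigma^*s_0\|_\sigma$ is bounded on $K$ and $|F(w)|=\|\varphi_\sigma^*s(w)\|_\sigma/\|\varphi_\sigma^*s_0(w)\|_\sigma^d$, we obtain $\log\max_K|F|\le -\alpha a d^n+O(d\log d)$. Cauchy's inequality then yields, for every $j\ge 0$,
\[\log|F^{(j)}(z)|\le \log j!-j\log r_1+\log\max_K|F|.\]
For $j\le \gamma_0 d\log d$ the first two terms contribute $O(d\log^2 d)$, which is negligible compared to $d^n$ since $n\ge 2$. Therefore, for $d$ sufficiently large, $\log|F^{(j)}(z)|\le -\gamma_2 d^n$ holds uniformly in $j$, for any fixed $\gamma_2\in(0,\alpha a)$. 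Any $j$ produced in the first step then satisfies both inequalities, completing the proof.
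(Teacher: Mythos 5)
Your proof is correct and follows essentially the same route as the paper's: the lower bound comes from the Arakelov-degree/product-formula inequality for the integral jet $j_0^m\hat{\varphi}^*s$, the relation $\|j_0^m\hat{\varphi}^*s\|_{\tau}=R_{\tau}^{-m}\|j_0^m\varphi_{\tau}^*s\|_{R_{\tau}}$ with $\prod_{\tau}R_{\tau}=1$, Proposition \ref{prop1mg} at the places $\tau\neq\sigma$, and the contrapositive of Corollary \ref{corotaylor2}; the upper bound comes from Proposition \ref{boundcompact} on a small compact neighborhood of $z$ together with the Cauchy inequalities. The only difference from the paper is presentational (you isolate the lower bound on $\log\|j_0^m\varphi_{\sigma}^*s\|_{R_{\sigma}}$ before invoking Corollary \ref{corotaylor2}, whereas the paper runs the same estimates as a proof by contradiction), so there is nothing to add.
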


\begin{proof}
  Let $d$ be a sufficiently large positive integer and $s \in \Gamma(\overline{\mathcal{X}},L^{\tensor d})$ be as in Lemma \ref{lemma1}. Set $m \defeq \ord_0\hat{\varphi}^*s$.

  According to Proposition \ref{prop1mg} and to the bounds (\ref{boundorderzero}) and (\ref{boundnorm}), for every embedding $\tau \neq \sigma$, there is a constant $\kappa_{\tau}$ (not depending on $d$ or $s$) such that
  \begin{align}\label{esttau}
   \log \|j^m_0\varphi_{\tau}^*s\|_{R_{\tau}} \le \kappa_{\tau}d\log d\text{.}
  \end{align}
  We recall that the norm $\| \ \|_{R_{\tau}}$ above is the norm $\| \ \|_r$, introduced before Proposition \ref{propmain}, in the special case where $r = R_{\tau}$ and $\overline{L}$ is $\varphi_{\tau}^*L_{\tau}$ equipped with the pullback of $\| \ \|_{\tau}$.
  
  Fix any constant $C> \sum_{\tau\neq \sigma}\kappa_{\tau}$. Then Corollary \ref{corotaylor2} shows that there exist real numbers $\gamma_0,\gamma_1>0$ such that, for sufficiently large $d$, if
  \begin{align}\label{boundder}
    \max_{0\le j \le \lfloor\gamma_0d\log d \rfloor}\log \frac{|(\varphi_{\sigma}^*f)^{(j)}(z)|}{j!} < -\gamma_1d^n
  \end{align}
  then
  \begin{align}\label{estsigma}
   \log \|j_0^m\varphi_{\sigma}^*s\|_{R_{\sigma}} \le -C d\log d\text{.} 
  \end{align}

  By contradiction, assume that (\ref{boundder}) holds. Observe that $j_0^m\hat{\varphi}^*s$ is an element of $\hat{\varphi}(0)^*L^{\tensor d}\tensor \Omega^{\tensor m}$, and the Hermitian structure on \mbox{$\hat{\varphi}(0)^*\overline{L}^{\tensor d}\tensor \overline{\Omega}^{\tensor m}$} allows us to consider its norms $(\|j_0^m\hat{\varphi}^*s\|_{\tau})_{\tau:K \into \CC}$. For every field embedding $\tau:K \into \CC$, we have
  \begin{align*}
    \|j_0^m\hat{\varphi}^*s\|_{\tau} = R_{\tau}^{-m}\|j_0^m\varphi_{\tau}^*s\|_{R_{\tau}}\text{.}
  \end{align*}
  Thus, since $\prod_{\tau: K \into \CC}R_{\tau}=1$, we obtain from (\ref{esttau}) and (\ref{estsigma})
  \begin{align*}
    \sum_{\tau:K \into \CC} \log \|j_0^m\hat{\varphi}^*s\|_{\tau} \le -\left(C-\sum_{\tau\neq \sigma}\kappa_{\tau}\right)d\log d\text{.}
  \end{align*}
  On the other hand, by definition of the Arakelov degree, we have 
  \begin{align*}
  \sum_{\tau:K \into \CC} \log \|j_0^m\hat{\varphi}^*s\|_{\tau} \ge -\adeg(\hat{\varphi}(0)^*\overline{L}^{\tensor d}\tensor\overline{\Omega}^{\tensor m}) = -\adeg(\hat{\varphi}(0)^*\overline{L})d\text{.} 
  \end{align*}
  This contradicts our choice of $C$ for $d \gg 0$. We conclude that, for sufficiently large $d$, (\ref{boundder}) cannot hold, so that there exists an integer $j\le \gamma_0d\log d$ for which
  \begin{align}\label{b1}
   \log |(\varphi_{\sigma}^*f)^{(j)}(z)|\ge \log \frac{|(\varphi_{\sigma}^*f)^{(j)}(z)|}{j!} \ge -\gamma_1d^n\text{.}
  \end{align}
  
  Next, we bound $\log |(\varphi_{\sigma}^*f)^{(j)}(z)|$ from above. Let $\Delta$ be a disk centered in $z$, of radius $\varepsilon > 0$ small enough so that $\overline{\Delta} \subset \varphi_{\sigma}^{-1}(\mathcal{X}_{s_0,\sigma}^{\an})$. It follows from Proposition \ref{boundcompact}, and bounds (\ref{boundorderzero}) and (\ref{boundnorm}), that there is a constant $c'>0$ such that
  \begin{align*}
  \log\max_{\zeta \in \partial \Delta} |\varphi^*_{\sigma}f(\zeta)| \le -c'd^n\text{.}
  \end{align*}
 By the Cauchy inequalities, we have
  \begin{align*}
   \frac{|(\varphi_{\sigma}^*f)^{(j)}(z)|}{j!} \le \frac{\max_{\zeta\in \partial \Delta}|\varphi_{\sigma}^*f(\zeta)|}{\varepsilon^j}\text{,}
  \end{align*}
  so that
  \begin{align*}
  \log |(\varphi_{\sigma}^*f)^{(j)}(z)| \le -c'd^n + \log j! - j\log \varepsilon\text{.}
  \end{align*}
  Since $j=O(d\log d)$, we have $\log j! = O(d\log^2d)$, and we conclude that there is a constant $\gamma_2>0$ (not depending on $d$ or $s$) such that
  \begin{align}\label{b2}
   \log |(\varphi_{\sigma}^*f)^{(j)}(z)| \le -\gamma_2d^n
  \end{align}
  for every sufficiently large $d$.
\end{proof}

Again by Remark \ref{rkinv}, up to replacing $L$ by a sufficiently large tensor power of itself, we may assume that there exists a closed immersion $i: \overline{\mathcal{X}} \into \PP^n_{\mathcal{O}_K}$ over $\mathcal{O}_K$ such that $i_K: \overline{\mathcal{X}}_K \into \PP^n_{K}$ is projectively normal and  $L=i^*\mathcal{O}(1)$.

Fix any $\alpha \in \mathcal{O}_K\minus\{0\}$ that ``clears the denominators of $v$'', i.e., such that $w\defeq \alpha v$ defines a non-zero global section of $\mathcal{D}er_{\mathcal{O}_K}(\mathcal{O}_{\mathcal{X}})$. Let $k\ge 0$ be the smallest integer for which $w\tensor s_0^{\tensor k}$ defines a global section of $\mathcal{D}er_{\mathcal{O}_K}(\mathcal{O}_{\overline{\mathcal{X}}})\tensor L^{\tensor k}$, and let $\partial_w$ be the $\mathcal{O}_K$-derivation of degree $k+1$ of the ring $\bigoplus_{d\ge 0}\Gamma(\overline{\mathcal{X}},L^{\tensor d})$ defined in Section \ref{derivesections}. For any integer $j\ge 1$, set
\begin{align*}
  \partial_w^{[j]} = \partial_w \circ (\partial_w -\alpha s_0^{\tensor k+1})\circ \cdots \circ (\partial_w - (j-1)\alpha s_0^{\tensor k+1})\text{.}
  \end{align*}

\begin{lemma}\label{lemma3}
  There exist constants $c_1,c_2,c_3>0$  such that, for every sufficiently large positive integer $d$, and every $s \in \Gamma(\overline{\mathcal{X}},L^{\tensor d})$ as in Lemma \ref{lemma1}, if $j$ denotes the integer constructed in Lemma \ref{lemma2}, then the section $t\defeq \partial_w^{[j]}(s) \in \Gamma(\overline{\mathcal{X}},L^{\tensor d+ j(k+1)})$ satisfies
  \begin{align*}
\log \|t\|_{\overline{\mathcal{X}}} \le c_1 d\log^2d
  \end{align*}
  and
  \begin{align*}
-c_2d^n \le \log \|t(\varphi_{\sigma}(z))\|_{\sigma}\le -c_3 d^n\text{.}
  \end{align*}
\end{lemma}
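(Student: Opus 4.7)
The plan is two-fold: the norm bound on $t$ follows directly from Corollary \ref{bounddersecar}, while the evaluation bound is extracted from Lemma \ref{lemma2} by transporting the differential operator $\partial_w^{[j]}$ to the disk via $\varphi_\sigma$, where it becomes the classical operator $\alpha^j q^j \, d^j/dq^j$ thanks to the identity $\theta(\theta-1)\cdots(\theta-(j-1)) = q^j \, d^j/dq^j$ for $\theta \defeq q \, d/dq$.

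For the first estimate, Corollary \ref{bounddersecar} (applicable since $L = i^* \mathcal{O}(1)$ with $i_K$ projectively normal) yields a constant $C > 0$ with
$$\log \|t\|_{\overline{\mathcal{X}}} \le (j+d) \log C + j \log(j+d) + \log \|s\|_{\overline{\mathcal{X}}}.$$
Since $j \le \gamma_0 d \log d$ by Lemma \ref{lemma2} and $\log \|s\|_{\overline{\mathcal{X}}} \le c \, d \log d$ by Lemma \ref{lemma1}, each summand on the right is $O(d \log^2 d)$, giving the first inequality for a suitable $c_1$.

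For the second estimate, I express $\varphi_\sigma^* t$ near $z$ in terms of $F^{(j)}$, where $F \defeq \varphi_\sigma^* f$ and $s = f s_0^{\tensor d}$ on $\overline{\mathcal{X}}_{s_0}$. By the construction of $\partial_w$ in Section \ref{derivesections}, for a section $u = g \, s_0^{\tensor e}$ on $\overline{\mathcal{X}}_{s_0}$ we have $\partial_w u = w(g) s_0^{\tensor (e+k+1)}$ and $\alpha s_0^{\tensor (k+1)} \cdot u = \alpha g \, s_0^{\tensor (e+k+1)}$. Setting $G_u \defeq \varphi_\sigma^* g$, the differential equation $q \, d\varphi_\sigma/dq = v \circ \varphi_\sigma$ (which extends from formal to analytic by analytic continuation of Taylor series) gives $\theta \, \varphi_\sigma^* g = \varphi_\sigma^*(v g)$. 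Since $w = \alpha v$, this translates into $G_{\partial_w u} = \alpha \theta G_u$ and $G_{\alpha s_0^{\tensor (k+1)} u} = \alpha G_u$, so $G_{(\partial_w - i \alpha s_0^{\tensor (k+1)}) u} = \alpha(\theta - i) G_u$. Iterating this and invoking the classical factorial identity, I obtain
$$\varphi_\sigma^* t = \alpha^j q^j F^{(j)} \cdot (\varphi_\sigma^* s_0)^{\tensor (d + j(k+1))},$$
and evaluating at $z$ yields
$$\log \|t(\varphi_\sigma(z))\|_\sigma = \log |F^{(j)}(z)| + j (\log |\alpha| + \log |z|) + (d + j(k+1)) \log \|s_0(\varphi_\sigma(z))\|_\sigma.$$

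Since $\alpha \neq 0$, $z \neq 0$, and $s_0(\varphi_\sigma(z)) \neq 0$ by the choice of $s_0$, the last two terms together are $O(d \log d)$ (as $j = O(d \log d)$), hence negligible compared to $d^n$ because $n \ge 2$. Combined with $-\gamma_1 d^n \le \log |F^{(j)}(z)| \le -\gamma_2 d^n$ from Lemma \ref{lemma2}, this gives constants $c_2, c_3 > 0$ with $-c_2 d^n \le \log \|t(\varphi_\sigma(z))\|_\sigma \le -c_3 d^n$ for $d$ large enough. The conceptual core of the argument — and the only nontrivial step — is the local identity displayed above: the integer shifts $-i \alpha s_0^{\tensor (k+1)}$ inserted in the definition of $\partial_w^{[j]}$ are calibrated precisely so that, after pullback to the disk, the composition telescopes into the pure $j$-th derivative $q^j \, d^j/dq^j$, which is the geometric incarnation of Nesterenko's trick with $12^{j_n} v \circ (v-1) \circ \cdots \circ (v - (j_n-1))$ recalled in the introduction.
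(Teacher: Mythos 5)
Your proposal is correct and follows essentially the same route as the paper: the norm bound via Corollary \ref{bounddersecar} together with $j=O(d\log d)$, and the evaluation bound via the telescoping identity that turns $\partial_w^{[j]}$, after pullback along the integral curve, into $(\alpha q)^j\,d^j/dq^j$ applied to $\varphi_\sigma^*f$, combined with the two-sided estimate from Lemma \ref{lemma2}. The only cosmetic difference is that the paper states the operator identity at the level of the formal curve $\hat{\varphi}$ and then remarks that the same formula holds for $\varphi_\sigma$, whereas you work directly on the disk; both are fine.
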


\begin{proof}
Since $j$ grows at the order of $d\log d$, by Corollary \ref{bounddersecar} and bound (\ref{boundnorm}), there exists $c_1>0$ such that
  \begin{align*}
   \log \|t\|_{\overline{\mathcal{X}}} \le c_1d\log^2 d\text{.}
  \end{align*}
  
 In order to bound $\|t(\varphi_{\sigma}(z))\|_{\sigma}$, we first remark that the formal identity of differential operators
  \begin{align*}
   q^j\frac{d^j}{dq^j} = q\frac{d}{dq}\left(q\frac{d}{dq} - 1 \right)\cdots \left(q\frac{d}{dq} - (j-1) \right)
  \end{align*}
  and the differential equation
  \begin{align*}
   D\hat{\varphi}_K\left(q\frac{d}{dq} \right)=\hat{\varphi}^*_Kv
  \end{align*}
  yield:
  \begin{align*}
    \hat{\varphi}^*t &= \hat{\varphi}^*(\partial_w(\partial_w-\alpha s_0^{\tensor k+1})\cdots (\partial_w - (j-1)\alpha s_0^{\tensor k+1})(s))\\
                       & = \alpha^j\left[q\frac{d}{dq}\left(q\frac{d}{dq} - 1 \right)\cdots \left(q\frac{d}{dq} - (j-1) \right)\hat{\varphi}^*(f)\right]\hat{\varphi}^*(s_0)^{\tensor d+j(k+1)}= (\alpha q)^j\frac{d^j\hat{\varphi}^*(f)}{dq^j}\hat{\varphi}^*(s_0)^{\tensor d+j(k+1)}\text{.}
  \end{align*}
  A similar formula holds for $\varphi_{\sigma}$. Thus
  \begin{align*}
   \log \|t(\varphi_{\sigma}(z))\|_{\sigma} = \log |(\varphi_{\sigma}^*f)^{(j)}(z)| + j\log |\alpha z| + (d+j(k+1))\log \|\varphi_{\sigma}^*s_0(z)\|_{\sigma}\text{.}
  \end{align*}
  Since $j$ grows at the order of $d\log d$, we conclude from (\ref{b1}) and (\ref{b2}) that there exist real constants $c_2>c_3>0$ such that
  \begin{align*}
  -c_2d^n \le  \log \|t(\varphi_{\sigma}(z))\|_{\sigma} \le - c_3d^n
  \end{align*}
  for sufficiently large $d$.  
\end{proof}

To finish the proof, we simply remark that, if $c_0$ is any real number satisfying $c_0> (k+1)\gamma_0$, then the degree $d'\defeq d+j(k+1)$ of $t$ constructed above satisfies $d' \le d + (k+1)\gamma_0d\log d \le c_0d\log d$, for $d$ sufficiently large. 

\begin{appendix}

 \section{Philippon's algebraic independence criterion for projective varieties}\label{genphil}

 
 Let $K$ be a number field, $\mathcal{X}$ be a projective arithmetic scheme over $\mathcal{O}_K$ of relative dimension $n\ge 2$ (cf. definition in Paragraph \ref{notstat}), and $\overline{L} = (L,(\| \ \|_{\sigma})_{\sigma: K \into \CC})$ be a Hermitian line bundle over $\mathcal{X}$ with $L$ relatively ample over $\Spec \mathcal{O}_K$. Recall that, if $s \in \Gamma(\mathcal{X}, L^{\tensor d})$ for some integer $d\ge 1$, then we denote $\|s\|_{\mathcal{X}} = \max_{\sigma} \|s\|_{\sigma,L^{\infty}(\mathcal{X}_{\sigma}^{\an})}$, where $\sigma$ runs through the set of field embeddings of $K$ in $\CC$.

 The proof of the main theorem of this article relies on the following generalized version of an algebraic independence criterion of Philippon (cf. \cite{philippon86} Théorème 2.11 and \cite{nesterenko96} Lemma 2.5).
 
\begin{theorem}\label{aic}
  Let $\sigma :K \into \CC$ be a field embedding and $p \in \mathcal{X}_{\sigma}(\CC)$. Suppose that there exist an integer $m\in [2,n]$, a non-decreasing sequence of positive real numbers $(\ell_d)_{d\ge 1}$ satisfying $\ell_d^{m-1}=o(d)$ as $d\rightarrow +\infty$, and real constants $a>b>0$ such that, for every sufficiently large positive integer $d$, there exists an integer $d' \le d\ell_d$ and a section $s \in \Gamma(\mathcal{X},L^ {\tensor d'})$ satisfying
  \begin{align*}
   \log \|s\|_{\mathcal{X}} \le d\ell_d 
  \end{align*}
  and
  \begin{align*}
 -ad^m \le \log \|s_{\sigma}(p)\|_{\sigma}\le -bd^m\text{.}
  \end{align*}
  Then the field of definition $K(p)$ of the complex point $p$ in $\mathcal{X}_K$ satisfies
  \begin{align*}
  \trdeg_{\QQ} K(p) \ge m-1\text{.}
  \end{align*}
\end{theorem}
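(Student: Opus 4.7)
The plan is to reduce Theorem \ref{aic} to Philippon's classical algebraic independence criterion for $\PP^N$ over number fields (\cite{philippon86} Théorème 2.11; for $K=\QQ$ this is the form used in \cite{nesterenko96} Lemma 2.5). Since $L$ is relatively ample on $\mathcal{X}/\mathcal{O}_K$, fix once and for all an integer $k_0 \ge 1$ such that $L^{\tensor k_0}$ is very ample and induces a projectively normal closed immersion $i : \mathcal{X} \into \PP^N_{\mathcal{O}_K}$ over $\mathcal{O}_K$ with $i^*\mathcal{O}(1) \cong L^{\tensor k_0}$. Setting $\tilde p \defeq i_\sigma(p) \in \PP^N(\CC)$, the equality $K(\tilde p) = K(p)$ holds since $i_K$ is a closed immersion over $K$, so it suffices to prove $\trdeg_{\QQ} K(\tilde p) \ge m-1$.

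For each sufficiently large $d$, starting from the section $s \in \Gamma(\mathcal{X}, L^{\tensor d'})$ provided by the hypothesis, consider its $k_0$-th tensor power $s^{\tensor k_0} \in \Gamma(\mathcal{X}, L^{\tensor d'k_0}) = \Gamma(\mathcal{X}, i^*\mathcal{O}(d'))$. Then $\log \|s^{\tensor k_0}\|_{\mathcal{X}} = k_0 \log \|s\|_{\mathcal{X}} \le k_0 d\ell_d$ and $\log \|s^{\tensor k_0}(p)\|_\sigma = k_0 \log \|s(p)\|_\sigma \in [-k_0 a\, d^m,\, -k_0 b\, d^m]$. By projective normality of $i_K$, the section $s^{\tensor k_0}$ lifts over $K$ to a homogeneous polynomial of degree $d'$ in $N+1$ variables. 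Invoking an arithmetic refinement of this lifting --- combining Bost's $L^\infty$-lifting theorem (\cite{bost04} Proposition 3.5) at each archimedean place with control of the denominators appearing in the generic surjection $\Gamma(\PP^N_{\mathcal{O}_K}, \mathcal{O}(d')) \to \Gamma(\mathcal{X}, i^*\mathcal{O}(d'))$ --- one obtains an integral lift $P \in \mathcal{O}_K[X_0, \ldots, X_N]$ of degree $d'$ whose coefficients satisfy $\log \max_I |\tau(a_I)| = O(d\ell_d)$ uniformly for all embeddings $\tau : K \into \CC$.

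To conclude, fix a representative $(x_0, \ldots, x_N) \in \CC^{N+1}$ of $\tilde p$ normalized by $\max_j |x_j| = 1$. Then $|P(x_0, \ldots, x_N)|$ equals $\|s^{\tensor k_0}(p)\|_\sigma$ up to a bounded multiplicative factor depending only on the choice of Hermitian metric on $\mathcal{O}(1)$ (say, Fubini--Study) and independent of $d$; in particular $\log |P(x_0, \ldots, x_N)| \in [-a' d^m,\, -b' d^m]$ for some fixed $a' > b' > 0$. Thus, for every large $d$, we produce a polynomial $P_d \in \mathcal{O}_K[X_0, \ldots, X_N]$ of degree $\le d\ell_d$, logarithmic height $O(d\ell_d)$, and $\log |P_d(\tilde p)| \in [-a' d^m,\, -b' d^m]$. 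Since $\ell_d^{m-1} = o(d)$, these are exactly the asymptotic bounds needed to apply Philippon's classical criterion on $\PP^N$ over $K$, which yields $\trdeg_{\QQ} K(\tilde p) \ge m-1$, as required. The only non-routine point is the arithmetic lifting with simultaneous $L^\infty$-control at every archimedean place; the rest is straightforward bookkeeping of the asymptotic orders.
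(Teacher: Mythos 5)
Your overall route coincides with the paper's: embed $\mathcal{X}$ into $\PP^N_{\mathcal{O}_K}$ via a power of $L$, transport the given sections to homogeneous polynomials with $\mathcal{O}_K$-coefficients and controlled archimedean norms, compare these norms with the height entering Philippon's criterion, and conclude by \cite{philippon86} Th\'eor\`eme 2.11. The genuine gap is exactly the step you flag as ``the only non-routine point'': the existence of an \emph{integral} lift $P\in\mathcal{O}_K[X_0,\ldots,X_N]$ of $s^{\tensor k_0}$ with $\log\max_I|\tau(a_I)|=O(d\ell_d)$ at every embedding $\tau$. Bost's result (\cite{bost04} Proposition 3.5) only produces a lift with real (or complex) coefficients satisfying the $L^{\infty}$ bound; ``controlling the denominators in the generic surjection'' is not routine, because what one must actually do is exhibit an element of the coset $\rho_{d'}^{-1}(s^{\tensor k_0})$ of the kernel lattice $\Gamma(\PP^N_{\mathcal{O}_K},\mathcal{I}_{\mathcal{X}}\tensor\mathcal{O}(d'))$ lying at distance at most $C^{d'}$ from the Bost lift, i.e.\ bound the covering radius of that lattice exponentially in $d'$. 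This is precisely the paper's Lemma \ref{lift}, whose proof needs a further argument (surjectivity of multiplication maps $\Gamma(\mathcal{I}_{\mathcal{X}}\tensor\mathcal{O}(r))\tensor\Gamma(\mathcal{O}(n))^{\tensor q}\to\Gamma(\mathcal{I}_{\mathcal{X}}\tensor\mathcal{O}(nq+r))$ together with Zhang's Lemma 1.7, following \cite{charles17}) that your sketch does not supply. Without some such argument the reduction to Philippon is incomplete; with it, your proof is essentially the paper's.

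A secondary, harmless inaccuracy: with a representative normalized by $\max_j|x_j|=1$, the value $|P(x_0,\ldots,x_N)|$ differs from the Fubini--Study norm $\|s^{\tensor k_0}(p)\|$ by the factor $\bigl(\sum_j|x_j|^2\bigr)^{d'/2}$, and the pulled-back Fubini--Study metric differs from $\|\ \|_{\sigma}^{\tensor k_0}$ by a factor exponential in $d'$, not by a constant independent of $d$ as you assert. Since $d'\le d\ell_d=o(d^m)$, these discrepancies are absorbed into the constants $a',b'$, so your final asymptotics survive, but the intermediate claims should be corrected (the paper handles the analogous comparisons via Lemma \ref{lemmaheight} and Remark \ref{rkinv}).
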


\begin{obs}\label{rkinv}
  For any integer $k\ge 1$, the conditions in the above statement are verified for the Hermitian line bundle $\overline{L}$ if and only if similar conditions hold for the tensor power $\overline{L}^{\tensor k}$ of $\overline{L}$ (up to multiplying $\ell_d$, $a$, and $b$ by suitable constants).

 Moreover, since $\mathcal{X}$ is proper over $\Spec \mathcal{O}_K$, it is easy to see that if the above statement is true for a particular choice of Hermitian structure on $L$, then it also holds for any other Hermitian structure on $L$.
\end{obs}

In what follows, we explain how to deduce the above statement from Philippon's original result concerning $\mathcal{X} = \PP^n_{\mathcal{O}_K}$. The main technical tool is the following ``integral lifting lemma''.

\begin{lemma}\label{lift}
  Let $\mathcal{X}$ and $\mathcal{Y}$ be projective arithmetic schemes over $\mathcal{O}_K$, $\overline{L}$ be a Hermitian line bundle over $\mathcal{X}$, with $L$ relatively ample over $\Spec \mathcal{O}_K$, and $\mathcal{Y}\to \mathcal{X}$ be a closed immersion over $\Spec \mathcal{O}_K$. Endow $L|_{\mathcal{Y}}$ with the induced Hermitian structure. Then, there exists a real number $C>0$ such that, for every sufficiently large positive integer $d$, any section $s \in \Gamma(\mathcal{Y}, L|_{\mathcal{Y}}^{\tensor d})$ can be lifted to a section $\tilde{s}\in \Gamma(\mathcal{X},L^{\tensor d})$ satisfying
  \begin{align*}
   \|\tilde{s}\|_{\mathcal{X}} \le C^d\|s\|_{\mathcal{Y}} \text{.}
  \end{align*}
\end{lemma}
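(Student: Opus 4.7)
The plan is to reduce to the case of projective space and then combine Bost's complex $L^{\infty}$-lifting (\cite{bost04} Proposition 3.5) with an Arakelov-theoretic step to pass from complex lifts to integral ones.

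\textbf{Reduction to projective space.} Since $L$ is relatively ample, after replacing $L$ by a sufficiently large tensor power (and handling the finitely many residue classes of $d$ modulo that power separately, which only affects the constant $C$), one may assume $L \cong \iota^{*}\mathcal{O}(1)$ for a closed immersion $\iota : \mathcal{X} \hookrightarrow \mathbb{P}^{N}_{\mathcal{O}_{K}}$ whose generic fiber is projectively normal. Since the composition $\mathcal{Y} \hookrightarrow \mathbb{P}^{N}_{\mathcal{O}_{K}}$ is again a closed immersion and restriction from $\mathbb{P}^{N}_{\mathcal{O}_{K}}$ to $\mathcal{X}$ decreases $L^{\infty}$ norms at each archimedean place, it suffices to prove the lemma in the case $\mathcal{X} = \mathbb{P}^{N}_{\mathcal{O}_{K}}$, $L = \mathcal{O}(1)$.

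\textbf{Complex lifts.} Set $E_{d} \defeq \Gamma(\mathbb{P}^{N}_{\mathcal{O}_{K}}, \mathcal{O}(d))$ and $F_{d} \defeq \Gamma(\mathcal{Y}, \mathcal{O}(d)|_{\mathcal{Y}})$. By Serre vanishing applied to the ideal sheaf of $\mathcal{Y}$, the restriction $\rho_{d} : E_{d} \to F_{d}$ is surjective for $d$ sufficiently large; let $K_{d} \defeq \ker \rho_{d}$. Equipped with the John norms of the natural $L^{\infty}$ norms, $\overline{E_{d}}$, $\overline{F_{d}}$, and $\overline{K_{d}}$ become Hermitian vector bundles over $\Spec \mathcal{O}_{K}$, and for each $\sigma: K \hookrightarrow \CC$, Bost's Proposition 3.5 gives a $\CC$-linear section $\psi_{\sigma} : F_{d,\sigma} \to E_{d,\sigma}$ of $\rho_{d,\sigma}$ of operator norm at most $C_{\sigma}^{d}$ (up to polynomial factors in $d$).

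\textbf{Integral correction.} Fix any $\mathcal{O}_{K}$-linear splitting $\tau: F_{d} \to E_{d}$ of $\rho_{d}$, which exists because $F_{d}$ is projective, and for each $s \in F_{d}$ consider the family of complex corrections $\tau(s) - \psi_{\sigma}(s) \in K_{d,\sigma}$. I would approximate this family simultaneously by a single integral vector $z \in K_{d}$ at a distance bounded by the covering radius of the Hermitian lattice $\overline{K_{d}}$. The rectified lift $\tilde{s} \defeq \tau(s) - z$ then lies in $E_{d}$, satisfies $\rho_{d}(\tilde{s}) = s$, and at each place obeys $\|\tilde{s}\|_{\sigma} \le \|\psi_{\sigma}(s)\|_{\sigma} + (\text{covering radius of } \overline{K_{d}})$, which we want to bound by $C^{d}\|s\|_{\mathcal{Y}}$.

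\textbf{Main obstacle.} The crux is to bound the covering radius of $\overline{K_{d}}$ by $C^{d}$ for some constant $C$ independent of $d$, which reduces to a uniform slope estimate of the form $\hat{\mu}(\overline{K_{d}}) \geq -O(d)$. This I would derive by applying Proposition \ref{genprop}(2) to the exact sequence $0 \to K_{d} \to E_{d} \to F_{d} \to 0$, combined with the standard slope bounds $\hat{\mu}(\overline{E_{d}}) = O(d)$ (coming from the explicit description of sections on $\mathbb{P}^{N}_{\mathcal{O}_{K}}$) and $\hat{\mu}_{\max}(\overline{F_{d}}) = O(d)$, together with the polynomial rank growth $\rk E_{d}, \rk F_{d} = O(d^{N})$. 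Converting the slope bound into a uniform exponential bound on successive minima, and hence on the covering radius, via Minkowski-Hermite type estimates in the vein of Proposition \ref{minkthm}, is the delicate technical point of the argument.
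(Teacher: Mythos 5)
Your overall architecture (complex lifts via \cite{bost04} Proposition 3.5, an integral correction by a nearby vector of the kernel lattice $K_d=\Gamma(\mathcal{X},\mathcal{I}_{\mathcal{Y}}\otimes L^{\otimes d})$, and an exponential bound on how far one may need to move) is the same as the paper's, but the step you yourself single out as the crux does not go through as proposed. An average slope bound $\hat{\mu}(\overline{K_d})\ge -O(d)$, even combined with $\rk K_d=O(d^N)$, does \emph{not} bound the covering radius of $\overline{K_d}$ by $C^{d}$: Minkowski--Hermite type statements (such as Proposition \ref{minkthm}) convert a slope bound only into a bound on the \emph{first} successive minimum, whereas the covering radius is governed by the \emph{last} successive minimum (equivalently by $\hat{\mu}_{\min}(\overline{K_d})$, or by the first minimum of the dual lattice). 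A lattice can have slope $\ge -O(d)$ and still contain a direction in which the quotient torus is exponentially-in-$d^N$ long, so the ``integral correction'' vector $z$ you need may not exist within distance $C^{d}$. What is really required is a lower bound on the minimal slope, or, what the paper actually produces, an explicit basis of $K_d$ of norm $\le \kappa^{d}$: following \cite{charles17}, one uses that for $n$ fixed and $r$ large the multiplication maps $\Gamma(\mathcal{X},\mathcal{I}_{\mathcal{Y}}\otimes L^{\otimes r})\otimes_{\ZZ}\Gamma(\mathcal{X},L^{\otimes n})^{\otimes q}\to \Gamma(\mathcal{X},\mathcal{I}_{\mathcal{Y}}\otimes L^{\otimes nq+r})$ are surjective (finite generation of the graded module in bounded degrees), which yields a full-rank family of vectors of exponentially bounded norm, and then \cite{zhang92} Lemma 1.7 upgrades this to a basis of exponentially bounded norm, hence to the needed bound on the diameter of a fundamental domain. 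This geometric input is exactly what your slope-averaging argument cannot supply, so as written the proposal has a genuine gap at its decisive step.

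Two secondary remarks. First, your reduction to $\mathcal{X}=\PP^{N}_{\mathcal{O}_K}$ is not innocuous: after replacing $L$ by $L^{\otimes k}=\iota^{*}\mathcal{O}(1)$, the sheaves $L^{\otimes d}$ for $d\not\equiv 0 \pmod k$ are of the form $\iota^{*}\mathcal{O}(q)\otimes L^{\otimes r}$ and are not restrictions of sheaves on $\PP^{N}$, so ``lifting to projective space'' does not even typecheck for those $d$; the reduction is also unnecessary, since \cite{bost04} Proposition 3.5 applies directly to $\mathcal{Y}_{\sigma}\subset\mathcal{X}_{\sigma}$ with the ample $L_{\sigma}$, which is how the paper proceeds. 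Second, your slope estimate for $K_d$ via Proposition \ref{genprop}(2) needs the quotient metric on $E_d/K_d$ to be compared from below with the sup norm on $\Gamma(\mathcal{Y},L|_{\mathcal{Y}}^{\otimes d})$, which again invokes the archimedean lifting bound; this is fixable, but it underlines that the only genuinely new content of the lemma is the lattice-theoretic step discussed above.
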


This type of result is well known in Arakelov Geometry and goes back to Zhang's work on arithmetic ampleness \cite{zhang92}. For lack of reference, we sketch a proof.

\begin{proof}[Sketch of the proof]
If $R$ is a ring, we denote $\Gamma(\mathcal{X},L^{\tensor d})_R \defeq \Gamma(\mathcal{X},L^{\tensor d})\tensor_{\ZZ}R$. 

Let $d$ be large enough so that the restriction map $\rho_d:\Gamma(\mathcal{X},L^{\tensor d}) \to \Gamma(\mathcal{Y},L|_{\mathcal{Y}}^{\tensor d})$ is surjective. Since $\bigoplus_{\sigma}\Gamma(\mathcal{X}_{\sigma},L_{\sigma}^{\tensor d}) \cong \Gamma(\mathcal{X},L^{\tensor d})_{\CC}$, we may apply \cite{bost04} Proposition 3.5 to obtain a constant $C_0>0$, not depending on $d$ or $s$, and an element $t \in  \Gamma(\mathcal{X},L^{\tensor d})_{\RR} \subset \Gamma(\mathcal{X},L^{\tensor d})_{\CC}$ lifting $s$ (i.e. $(\rho_d\tensor 1) (t) = s\tensor 1$) and satisfying
 $$
 \|t\|_{\mathcal{X}} \le C_0^d \|s\|_{\mathcal{Y}}\text{.}
 $$
 The idea now is to define $\tilde{s}$ as the element of $\rho_d^{-1}(s)$ minimizing the distance $\|t-\tilde{s}\|_{\mathcal{X}}$ in $\Gamma(\mathcal{X},L^{\tensor d})_{\RR}$.
 
 To finish the proof, we must show that the diameter of the fundamental domain of the lattice $\Gamma(\mathcal{X},\mathcal{I}_{\mathcal{Y}}\tensor L^{\tensor d}) = \ker \rho_d$ in $\Gamma(\mathcal{X},\mathcal{I}_{\mathcal{Y}}\tensor L^{\tensor d})_{\RR}$ grows at most exponentially in $d$. We mimic the argument in the proof of \cite{charles17} Proposition 2.5. Since $L$ is ample, there exists an integer $n\ge 1$ such that, for any sufficiently large integer $r$, and any positive integer $q$, the morphism
  \begin{align*}
   \Gamma(\mathcal{X}, \mathcal{I}_{\mathcal{Y}}\tensor L^{\tensor r}) \tensor_{\ZZ} \Gamma(\mathcal{X},L^{\tensor n})^{\tensor q}  \to  \Gamma(\mathcal{X}, \mathcal{I}_{\mathcal{Y}}\tensor L^{\tensor nq+r})
  \end{align*}
  is surjective. Choose sufficiently large integers $r_1,\ldots,r_n$ forming a complete residue system modulo $n$. Fixing bases of the finite free $\ZZ$-modules $\Gamma(\mathcal{X}, L^{\tensor n}), \Gamma(\mathcal{X},\mathcal{I}_{\mathcal{Y}}\tensor L^{\tensor r_1}), \ldots,\Gamma(\mathcal{X},\mathcal{I}_{\mathcal{Y}}\tensor L^{\tensor r_n})$, we see that there exists a constant $B>1$ such that any $\Gamma(\mathcal{X},\mathcal{I}_{\mathcal{Y}}\tensor L^{\tensor nq + r_i})$ admits a full rank submodule having a basis whose elements have norm bounded by $B^q$. By \cite{zhang92} Lemma 1.7, the $\ZZ$-module $\Gamma(\mathcal{X},\mathcal{I}_{\mathcal{Y}}\tensor L^{\tensor nq + r_i})$ admits a basis whose elements have norm bounded by $rB^q$, where $r$ denotes the rank of $\Gamma(\mathcal{X},\mathcal{I}_{\mathcal{Y}}\tensor L^{\tensor nq + r_i})$. Since $r$ grows polynomialy in $q$, and $r_1,\ldots,r_n$ form a complete residue system modulo $n$, we conclude that there exists a constant $\kappa >0$ such that, for any sufficiently large integer $d$, the $\ZZ$-module $\Gamma(\mathcal{X},\mathcal{I}_{\mathcal{Y}}\tensor L^{\tensor d})$ admits a basis consisting of elements with norm bounded by $\kappa^d$.
\end{proof}

To handle the case $\mathcal{X}=\PP^n_{\mathcal{O}_K}$, we compare the \emph{height} $\overline{\mathbf{h}}(P)$ of a homogeneous polynomial $P \in \mathcal{O}_K[X_0,\ldots,X_n]$ of degree $d$ used in \cite{philippon86} with the Fubini-Study norm $\|s\|_{\PP^{n}_{\mathcal{O}_K}}$ of the corresponding section $s \in \Gamma(\PP^n_{\mathcal{O}_K},\mathcal{O}(d))$. By definition,
\begin{align*}
  \overline{\textbf{h}}(P) = \frac{1}{[K:\QQ]}\sum_{\sigma:K \into \CC}\log^+M_{\sigma}(P)\text{,}
\end{align*}
where, for any field embedding $\sigma: K \into \CC$, we set
\begin{align*}
M_{\sigma}(P) \defeq \exp \left(\frac{1}{(2\pi)^{n+1}}\int_0^{2\pi} \cdots \int_0^{2\pi} \log |P^{\sigma}(e^{i\theta_0},\ldots,e^{i\theta_n})| d\theta_0\cdots d\theta_n\right)\text{.} 
\end{align*}

\begin{lemma}\label{lemmaheight}
  Let $(\| \ \|_{\sigma})_{\sigma:K \into \CC}$ denote the Fubini-Study Hermitian structure on the line bundle $\mathcal{O}(1)$ over the arithmetic scheme $\PP^n_{\mathcal{O}_K}$. For any integer $d\ge1$, and any section $s \in \Gamma(\PP^n_{\mathcal{O}_K},\mathcal{O}(d))$, if $P \in \mathcal{O}_K[X_0,\ldots,X_n]$ denotes the homogeneous polynomial of degree $d$ corresponding to $s$, then
  \begin{align*}
   \overline{\textbf{h}}(P) \le\log^+ \|s\|_{\PP^n_{\mathcal{O}_K}} + \frac{(n+1)}{2}d\text{.}
  \end{align*}
\end{lemma}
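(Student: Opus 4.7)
The plan is to relate the Mahler measure $M_\sigma(P)$ pointwise to the Fubini--Study norm $\|s\|_\sigma$ by evaluating the latter on the unit torus $\{(e^{i\theta_0},\ldots,e^{i\theta_n})\}$, then to integrate and sum over archimedean places.

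First, I would recall from the Fubini--Study formula that, for every $\sigma : K \hookrightarrow \CC$ and every $(z_0,\ldots,z_n) \in \CC^{n+1}\minus\{0\}$,
\begin{align*}
\|s\|_{\sigma,L^{\infty}(\PP^n(\CC))} \ge \frac{|P^{\sigma}(z_0,\ldots,z_n)|}{\left(\sum_{j=0}^n |z_j|^2\right)^{d/2}}.
\end{align*}
Specializing to $z_j = e^{i\theta_j}$ gives $\sum_j|z_j|^2=n+1$, so for every $(\theta_0,\ldots,\theta_n) \in [0,2\pi]^{n+1}$,
\begin{align*}
\log |P^{\sigma}(e^{i\theta_0},\ldots,e^{i\theta_n})| \le \log \|s\|_{\sigma,L^{\infty}(\PP^n(\CC))} + \frac{d}{2}\log(n+1).
\end{align*}

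Next, I would integrate with respect to the Haar measure $\frac{1}{(2\pi)^{n+1}}d\theta_0\cdots d\theta_n$ on $[0,2\pi]^{n+1}$. By the definition of $M_\sigma(P)$, this yields
\begin{align*}
\log M_{\sigma}(P) \le \log \|s\|_{\sigma,L^{\infty}(\PP^n(\CC))} + \frac{d}{2}\log(n+1).
\end{align*}
Then I would pass to $\log^+$: if $M_\sigma(P) \le 1$ the bound on $\log^+ M_\sigma(P) = 0$ is automatic because the right-hand side below is nonnegative, and if $M_\sigma(P) > 1$ we use $\log \|s\|_\sigma \le \log^+ \|s\|_\sigma$. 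In both cases,
\begin{align*}
\log^+ M_{\sigma}(P) \le \log^+ \|s\|_{\sigma,L^{\infty}(\PP^n(\CC))} + \frac{d}{2}\log(n+1).
\end{align*}

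Finally, summing over the $[K:\QQ]$ embeddings $\sigma : K \hookrightarrow \CC$ and using that $\log^+ \|s\|_{\sigma,L^{\infty}(\PP^n(\CC))} \le \log^+ \|s\|_{\PP^n_{\mathcal{O}_K}}$ for every $\sigma$ (by definition of $\|s\|_{\PP^n_{\mathcal{O}_K}}$ as a maximum), I divide by $[K:\QQ]$ to obtain
\begin{align*}
\overline{\mathbf{h}}(P) \le \log^+ \|s\|_{\PP^n_{\mathcal{O}_K}} + \frac{d}{2}\log(n+1).
\end{align*}
The conclusion then follows from the elementary bound $\log(n+1) \le n+1$. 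There is no real obstacle here; the only thing to watch is the passage from $\log$ to $\log^+$, which is why the $\log^+$ appears on the right-hand side of the statement rather than $\log$.
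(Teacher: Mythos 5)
Your proposal is correct and follows essentially the same route as the paper: evaluate the Fubini--Study norm on the unit torus (where $\sum_j|z_j|^2=n+1$), integrate to bound $\log M_\sigma(P)$, pass to $\log^+$, and average over the $[K:\QQ]$ embeddings, finishing with $\log(n+1)\le n+1$. The only difference is that you spell out the $\log\to\log^+$ step and the final elementary bound, which the paper leaves implicit.
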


\begin{proof}
  For any field embedding $\sigma:K \into \CC$, and any $(\theta_0,\ldots,\theta_n) \in [0,2\pi]^{n+1}$, we have
\begin{align*}
  \|s_{\sigma}(e^{i\theta_0}:\cdots:e^{i\theta_n})\|_{\sigma} = \frac{| P^{\sigma}(e^{i\theta_0},\ldots,e^{i\theta_n})|}{(n+1)^{\frac{d}{2}}}\text{,}
\end{align*}
so that
\begin{align*}
\log M_{\sigma}(P) \le \log \|s\|_{\sigma,L^{\infty}(\PP^n(\CC))} + \frac{(n+1)}{2}d \le \log \|s\|_{\PP^n_{\mathcal{O}_K}} + \frac{(n+1)}{2}d\text{.}
\end{align*}
Clearly, a similar inequality holds with $\log^+$ in place of $\log$. The result follows by taking the arithmetic mean over all $\sigma:K \into \CC$.
\end{proof}

\begin{proof}[Proof of Theorem \ref{aic}]
  The case where $\mathcal{X}=\PP^n_{\mathcal{O}_K}$ and $\overline{L}$ is given by $\mathcal{O}(1)$ endowed with the Fubini-Study metric follows from Lemma \ref{lemmaheight} and \cite{philippon86} Théorème 2.11 (cf. \cite{nesterenko96} Lemma 2.5).

  The general case follows from this one by considering a closed immersion $i:\mathcal{X} \to \PP^n_{\mathcal{O}_K}$ over $\mathcal{O}_K$ satisfying $i^*\mathcal{O}(1) = L^{\tensor k}$ for some $k\ge 1$, and by applying Lemma \ref{lift} and Remark \ref{rkinv}.
\end{proof}


 
 \section{$D$-property and ZL-density in quasi-projective varieties}\label{appzl}

 Let $k$ be a field, $X$ be a smooth quasi-projective variety over $k$, and $\mathcal{F}$ be an $\mathcal{O}_X$-submodule of rank one of the tangent bundle $T_{X/k}$ such that the quotient $T_{X/k}/\mathcal{F}$ is torsion-free, i.e., a one dimensional (possibly singular) \emph{foliation} on $X$.

 Let $p \in X(k)$ be a $k$-point of $X$. We say that a formal curve $\hat{\varphi}: \Spf k[\![q]\!] \to X$ is an \emph{integral curve} of $\mathcal{F}$ at $p$ if $\hat{\varphi}(0)=p$ and if the image of the tangent map
 \begin{align*}
D\hat{\varphi} : T_{\Spf k[\![q]\!]/k} \to \hat{\varphi}^*T_{X/k}
 \end{align*}
 factors through the subbundle $\hat{\varphi}^*\mathcal{F}$ of $\hat{\varphi}^*T_{X/k}$. Moreover, if $\mathcal{F}(p)\defeq \Gamma(\Spec k, p^*\mathcal{F})$ denotes the fiber of $\mathcal{F}$ at $p$, we say that $\hat{\varphi}$ is \emph{smooth} if $\hat{\varphi}'(0) \defeq D_0\hat{\varphi}(\frac{d}{dq}) \in \mathcal{F}(p)$ is non-zero.

 From now on, we assume that $k$ has \emph{characteristic 0}. By a formal version of the Frobenius Theorem, for every $p\in X(k)$ such that $\mathcal{F}(p)\neq 0$, there exists a unique smooth integral curve $\hat{\varphi}$ of $\mathcal{F}$ at $p$, up to composition by an automorphism of $\Spf k[\![q]\!]$.
 
We say that a closed subscheme $Y$ of $X$ is \emph{$\mathcal{F}$-invariant} if the ideal of $Y$ in $\mathcal{O}_X$ is stable under the derivations of $\mathcal{F}\subset T_{X/k}= \mathcal{D}er_k(\mathcal{O}_X)$.

\begin{defi}\label{dprop}
  Let $X$ be a smooth quasi-projective variety over the field $k$, let $\mathcal{F}$ be a one dimensional foliation on $X$, and let $\hat{\varphi} : \Spf k[\![q]\!] \to X$ be a formal integral curve of $\mathcal{F}$. We say that $\hat{\varphi}$ satisfies the \emph{$D$-property} for $\mathcal{F}$ if there exists a constant $C>0$ such that, for every $\mathcal{F}$-invariant closed subvariety $Y$ of $X$, there exists a Cartier divisor $D$ whose support contains $Y$ satisfying
  \begin{align*}
   \mult_{\hat{\varphi}}D\le C\text{.}
  \end{align*}
 \end{defi}

 Observe that, if $\hat{\varphi}$ satisfies the $D$-property, then its image is Zariski-dense in $X$. Indeed, the Zariski-closure of the image of an integral curve of $\mathcal{F}$ is $\mathcal{F}$-invariant.

 \begin{theorem}[Nesterenko-Binyamini]\label{nesbin}
  Let $X$ be a smooth quasi-projective variety over an algebraically closed field $k$ of characteristic 0, $v\in \Gamma(X,T_{X/k})\minus\{0\}$ be a vector field on $X$, and $\hat{\varphi}: \Spf k[\![q]\!] \to X$ be a smooth formal curve satisfying the differential equation
   \begin{align*}
    q\frac{d\hat{\varphi}}{dq} = v \circ \hat{\varphi}\text{.}
   \end{align*}
 If $\hat{\varphi}$ satisfies the $D$-property for the foliation generated by $v$, then $\hat{\varphi}$ is ZL-dense in $X$.
 \end{theorem}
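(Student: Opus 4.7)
The plan is to adapt the Nesterenko--Binyamini strategy to a smooth quasi-projective $X$ using the derivations constructed in Section \ref{derivesections}. By Corollary \ref{corostandard}, ZL-density is independent of the projective compactification, so I would first fix a projective compactification $j: X \hookrightarrow \overline{X}$, an ample line bundle $L$ on $\overline{X}$, and a global section $s_0$ of a sufficiently high power of $L$ that does not vanish at $\hat{\varphi}(0)$ and whose divisor absorbs the poles of $v$ along $\overline{X} \setminus X$ (replacing $L$ by a power if needed). By Proposition \ref{amplediv}, it then suffices to produce a constant $C > 0$ such that $\ord_0 \hat{\varphi}^* s \leq C d^n$ for every $d \geq 1$ and every nonzero $s \in \Gamma(\overline{X}, L^{\otimes d})$.

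Fix such an $s$ with $\hat{\varphi}^* s \neq 0$, and let $m = \ord_0 \hat{\varphi}^* s$. The construction of Section \ref{derivesections} yields a $k$-linear derivation $\partial_w$ of the graded ring $\bigoplus_d \Gamma(\overline{X}, L^{\otimes d})$ of degree $k+1$, where $w$ is a suitable global twist of $v$ by a power of $s_0$. The differential equation $q \cdot d\hat{\varphi}/dq = v \circ \hat{\varphi}$, combined with the fact that $q \cdot d/dq$ preserves orders of vanishing at $q = 0$ and that $\hat{\varphi}^* s_0$ is a unit at $q = 0$, yields the crucial estimate $\ord_0 \hat{\varphi}^*(\partial_w^j s) \geq m$ for every $j \geq 0$. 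Thus the whole infinite family of sections $\{\partial_w^j s\}_{j \geq 0}$ vanishes to order at least $m$ along the formal curve.

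The heart of the proof is a ``Morse-theoretic'' iterative procedure producing, from the sections $s, \partial_w s, \partial_w^2 s, \ldots$, an $\mathcal{F}$-invariant subvariety $Y \subseteq \overline{X}$ through $\hat{\varphi}(0)$. Concretely, one builds a descending chain of closed subschemes $Z_0 \supseteq Z_1 \supseteq \cdots$ through $\hat{\varphi}(0)$, where $Z_i$ is (essentially) an irreducible component of the intersection $\bigcap_{j \leq i} \div(\partial_w^j s)$ meeting the formal curve to sufficient order; since $\dim \overline{X} = n$, the dimensions must stabilize at some step $j_0 \leq n$, and the stabilized component $Y := Z_{j_0}$ is $\mathcal{F}$-invariant because $\partial_w$ then preserves its ideal. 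Applying the $D$-property to $\hat{\varphi}$ produces an effective Cartier divisor $D \supset Y$ with $\mult_{\hat{\varphi}} D \leq C_0$ for a constant $C_0$ independent of $d$ and $s$. Combined with the Bezout-type degree bound $\deg_L Y = O(d^{\,j_0 + 1}) = O(d^n)$, coming from the fact that each $\partial_w^j s$ is a section of $L^{\otimes d + j(k+1)}$ (hence $\deg_L \div(\partial_w^j s) = O(d)$), and with a local intersection-theoretic inequality relating $m$ to $\deg_L Y$ and $\mult_{\hat{\varphi}} D$, this yields the desired bound $m \leq C d^n$.

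The main obstacle is the Morse-theoretic step: making the descending chain $(Z_i)$ precise enough to guarantee an honest dimension drop at each stage until it stabilizes on an $\mathcal{F}$-invariant component, and then converting the resulting degree bound on $Y$ into the multiplicity bound on $m$. This requires careful work with primary decompositions and non-reduced subscheme structures, and constitutes the technical core of Binyamini's original argument \cite{binyamini14}. Adapting that argument to the quasi-projective setting requires only modest modifications: the choice of $s_0$ controls the poles of $v$ along $\overline{X} \setminus X$, and the derivation $\partial_w$ of Section \ref{derivesections} plays the role of the polynomial derivations appearing in the affine treatment, so that Binyamini's intersection-theoretic estimates can be transposed verbatim once their statements are formulated in terms of $\deg_L$ and $\mult_{\hat{\varphi}}$.
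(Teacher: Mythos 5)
Your proposal takes essentially the same route as the paper, which likewise fixes a projective compactification, an ample $L$ with a section $s_0$ cutting out the boundary, the derivation $\partial_w$ of Section \ref{derivesections}, and then defers the intersection-theoretic descent to Binyamini's argument. The paper's only additional content is to make that interface precise: it reformulates the $D$-property in terms of sections of powers of $L$, defines a Samuel multiplicity $\mult_{\hat{\varphi}}$ on formal cycles at $\hat{\varphi}(0)$, and isolates the five properties (Bezout-type bounds, behaviour under $\partial_v$, and the uniform bound $n_0$ on the number of derivatives needed when a subvariety is not contained in a $v$-invariant one) under which the induction of \cite{binyamini14} transposes verbatim --- the last of these being what actually drives the descent, rather than the ``stabilization to an invariant component'' you describe.
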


 Note that $\hat{\varphi}(0)$ is a singular point of $v$. In the non-singular case, i.e., $\hat{\varphi}$ satisfies the differential equation $\frac{d\hat{\varphi}}{dq} = v\circ \hat{\varphi}$, stronger statements are true (cf. \cite{binyamini14} Theorem 2), but an analogous of the above result may be also obtained by virtually the same proof. 
 
 Binyamini's original result (\cite{binyamini14} Corollary 3) builds on ideas of Nesterenko and concerns the case of an analytic integral curve of a polynomial vector field on some affine space over $\CC$.  In what follows, we merely indicate how a slight modification of the geometric methods of Binyamini may be used to prove the Theorem \ref{nesbin} above.

 We start by recasting the $D$-property into a more workable form.
 
 \begin{prop}
Let $\overline{X}$ be any projective compactification of $X$ and $L$ be an ample line bundle on $\overline{X}$. Then, a formal curve $\hat{\varphi} : \Spf k[\![q]\!] \to X$ satisfies the $D$-property for a one dimensional foliation $\mathcal{F}$ on $X$ if and only if there exists a constant $C>0$ such that, for every $\mathcal{F}$-invariant closed subvariety $Y$ of $X$, there exists an integer $d\ge 1$, and a global section $s \in \Gamma(\overline{X},L^{\tensor d})$ vanishing identically on $Y$ such that $\ord_0 \hat{\varphi}^*s \le C$.
\end{prop}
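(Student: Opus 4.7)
The plan is to prove both directions by direct translation between the local and global formulations, with the ampleness of $L$ being the only nontrivial input, used in the ``only if'' direction. For the ``if'' implication, given a section $s\in \Gamma(\overline{X},L^{\tensor d})$ vanishing on $Y$ with $\ord_0\hat{\varphi}^*s\le C$, I would simply take $D \defeq \div(s)|_X$. By construction $Y\subset \supp D$, and choosing a local trivialization $e$ of $L^{\tensor d}$ near $\hat{\varphi}(0)\in X$ and writing $s = fe$ locally, the function $f$ becomes a local equation of $D$, so $\mult_{\hat{\varphi}}D = \ord_0 \hat{\varphi}^*f = \ord_0\hat{\varphi}^*s \le C$. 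This direction is purely formal.

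For the ``only if'' direction, I would extract from the $D$-property a Cartier divisor $D$ on $X$ with $\supp D \supset Y$ and $\mult_{\hat{\varphi}}D\le C$ (automatically effective near $\hat{\varphi}(0)$), consider its scheme-theoretic closure $\overline{D}$ in $\overline{X}$, and work with the coherent ideal sheaf $\mathcal{I}_{\overline{D}}\subset \mathcal{O}_{\overline{X}}$. Ampleness of $L$ then guarantees that, for some possibly large $Y$-dependent integer $d$, the sheaf $\mathcal{I}_{\overline{D}}\tensor L^{\tensor d}$ is globally generated. The crucial observation is that $\hat{\varphi}(0)$ lies in the open locus $X$ where $\overline{D}$ agrees with $D$; consequently $\mathcal{I}_{\overline{D}}$ is locally principal at $\hat{\varphi}(0)$, generated by any local equation $f$ of $D$. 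Picking a global section $s\in \Gamma(\overline{X},\mathcal{I}_{\overline{D}}\tensor L^{\tensor d}) \subset \Gamma(\overline{X}, L^{\tensor d})$ generating the stalk at $\hat{\varphi}(0)$, I can write $s = ufe$ locally for $u$ a unit and $e$ a local trivialization of $L^{\tensor d}$. Then $s$ vanishes identically on $\overline{D}\supset Y$, while $\ord_0\hat{\varphi}^*s = \ord_0\hat{\varphi}^*f = \mult_{\hat{\varphi}}D \le C$, as required.

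I do not expect any genuine obstacle: the argument is essentially bookkeeping once one notices that $d$ is allowed to depend on $Y$ while the constant $C$ is uniform, so one may freely take $d$ as large as Castelnuovo--Mumford regularity requires. The only point warranting care is that $\overline{D}$ need not be Cartier on the whole of $\overline{X}$, but this is harmless since the global-generation step only needs coherence of $\mathcal{I}_{\overline{D}}$, and the decisive local computation takes place at $\hat{\varphi}(0)\in X$, where $\overline{D}$ and $D$ are indistinguishable.
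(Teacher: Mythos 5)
Your proof is correct. The ``if'' direction coincides with the paper's (take $D=\div(s)$). For the ``only if'' direction you and the paper both reduce to the same task --- promoting the local equation $f$ of the divisor $D$ furnished by the $D$-property to a global section of some $L^{\tensor d}$ vanishing on $Y$, without altering the vanishing order along $\hat{\varphi}$ --- but you realize it by a different standard mechanism: you take the ideal sheaf of the closure $\overline{D}$, invoke global generation of $\mathcal{I}_{\overline{D}}\tensor L^{\tensor d}$ for $d\gg0$, and use that this stalk is principal at $\hat{\varphi}(0)$ (plus Nakayama) to find a single section equal to $uf$ times a trivialization, $u$ a unit. The paper instead picks an auxiliary section $s_0\in\Gamma(\overline{X},L^{\tensor m})$ with $\hat{\varphi}(0)\in\overline{X}_{s_0}\subset U$ and extends $f s_0^{\tensor mn}$ to a global section for $n$ large; it also first disposes separately of the case $\hat{\varphi}(0)\notin Y$, which your argument absorbs automatically. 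The two routes are equivalent uses of ampleness; yours is marginally more abstract (and the appeal to Castelnuovo--Mumford regularity is unnecessary --- Serre's global generation theorem, with $d$ depending on $Y$, is all that is needed), while the paper's is more explicit and avoids discussing scheme-theoretic closures. One small point of care, which you flag and which is equally implicit in the paper: the divisor $D$ in the $D$-property must be taken effective (at least near $\hat{\varphi}(0)$) for $\mult_{\hat{\varphi}}D$ to be defined, and your closure construction needs $D$ effective globally; this is the intended reading of the definition.
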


\begin{proof}
  The sufficiency is clear: consider the divisors $\div(s)$.

  Conversely, suppose that $\hat{\varphi}$ satisfies the $D$-property for $\mathcal{F}$ with constant $C>0$ and let $Y$ be an $\mathcal{F}$-invariant closed subvariety of $X$. Since $L$ is ample, we may assume that $Y$ contains $\hat{\varphi}(0)$; otherwise, there exists an integer $d\ge 1$ and a section $s \in \Gamma(\overline{X},L^{\tensor d})$ vanishing on $Y$ such that $s(\hat{\varphi}(0))\neq 0$, so that $\ord_{0}\hat{\varphi}^*s=0\le C$.

  Let $D$ be a divisor whose support contains $Y$ such that $\mult_{\hat{\varphi}}D \le C$, and let $f$ be a local equation for $D$ on some open neighborhood $U$ of $\hat{\varphi}(0)$. Since $L$ is ample, there exists an integer $m\ge 1$ and a section $s_0 \in \Gamma(\overline{X},L^{\tensor m})$ such that $\hat{\varphi}(0) \in \overline{X}_{s_0}$ and $\overline{X}_{s_0}\subset U$. Now, there exists an integer $n\ge 1$, and a global section $s \in \Gamma(\overline{X},L^{\tensor mn})$ such that $s=f s_0^{\tensor mn}$ over $\overline{X}_{s_0}$. It is clear that $s$ vanishes identically on $Y$ and satisfies $\ord_{0}\hat{\varphi}^*s = \ord_0 \hat{\varphi}^*f = \mult_{\hat{\varphi}}D \le C$.
\end{proof}

Consider the hypotheses and notation of Theorem \ref{nesbin}. Fix a projective compactification $\overline{X}$ of $X$, and an ample line bundle $L$ on $\overline{X}$ endowed with a global section $s_0 \in \Gamma(\overline{X},L)$ satisfying $\overline{X}_{s_0}\subset X$. Recall from Section \ref{derivesections} that $v$ defines a $k$-derivation $\partial_v$ on the ring $\bigoplus_{d\ge 0}\Gamma(\overline{X}, L^{\tensor d})$.

Let $p = \hat{\varphi}(0)$. By a \emph{formal cycle} of $X$ at $p$, we mean a cycle in the scheme $\Spec \hat{\mathcal{O}}_{X,p}$, where $\hat{\mathcal{O}}_{X,p}$ denotes the completion of the local ring $\mathcal{O}_{X,p}$ with respect to its maximal ideal. Note that every (global) cycle of $X$ induces, by localization and formal completion, a formal cycle of $X$ at $p$.

Let $Y$ be a prime formal cycle of $X$ at $p$ corresponding to the prime ideal $\pp$ of $\hat{\mathcal{O}}_{X,p}$ and denote by $I_{\hat{\varphi}}$ the ideal of $\im \hat{\varphi}$ in $\hat{\mathcal{O}}_{X,p}$. Assume that $I_{\hat{\varphi}}$ does not contain $\pp$ (i.e., $Y$ does not contain the image of $\hat{\varphi}$). Since $\hat{\varphi}: \Spf k[\![q ]\!] \to X$ is smooth, the image of $I_{\hat{\varphi}}$ in the local ring $\hat{\mathcal{O}}_{X,p}/\pp$ contains some power of the maximal ideal. We may thus consider the \emph{Samuel multiplicity}
\begin{align*}
\mult_{\hat{\varphi}}Y \defeq e_{I_{\hat{\varphi}}/I_{\hat{\varphi}}\cap \pp}(\hat{\mathcal{O}}_{X,p}/\pp)\text{.}
\end{align*}
By additivity, we may extend this definition to every formal cycle of $X$ at $p$ whose components do not contain the image of $\hat{\varphi}$. By abuse of notation, if $Z$ is a (global) cycle of $X$, we denote by $\mult_{\hat{\varphi}}Z$ the multiplicity  $\mult_{\hat{\varphi}}\hat{Z}$ of its completion at $p$.

\begin{prop}
  The multiplicity function constructed above satisfies the following properties:
  \begin{enumerate}
  \item If $Z=\div(f)$, for some $f \in \hat{\mathcal{O}}_{X,p}$, then $\mult_{\hat{\varphi}}(Z) = \ord_0\hat{\varphi}^*f$.
\item If $Z = p$, then $\mult_{\hat{\varphi}}(Z)=1$.
 \item For any closed subvariety $Y$ of $X$, any integer $d\ge 1$, and any $s\in \Gamma(\overline{X},L^{\tensor d})\minus\{0\}$ vanishing identically on $Y$, we have $\mult_{\hat{\varphi}}(Y) \le \ord_0\hat{\varphi}^*s\cdot \mult_p(Y)$.
\item For any closed subvariety $Y$ of $X$, any integer $d\ge 1$, and any $s \in \Gamma(\overline{X},L^{\tensor d})\minus\{0\}$ vanishing identically on $Y$ for which $\partial_vs$ does not vanish identically on $Y$, we have $\mult_{\hat{\varphi}}(Y) \le \mult_{\hat{\varphi}}(Y\cdot \div(\partial_vs))$.
\item There is an integer $n_0\ge 0$ such that, for every closed subvariety $Y$ of $X$ not contained in a $v$-invariant subvariety of $X$, if $d\ge 1$ is the smallest integer for which there is $s \in \Gamma(\overline{X},L^{\tensor d})\minus\{0\}$ vanishing identically on $Y$, then $\min \{n \mid \partial_v^ns\text{ does not vanish identically on }Y\} \le n_0$.
  \end{enumerate}
\end{prop}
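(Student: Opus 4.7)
The plan is to verify the five assertions in sequence. Properties (1) and (2) are essentially unwindings of the definition of Samuel multiplicity. For (1), decomposing $\div(f) = \sum_i n_i[V_i]$ into its irreducible components and invoking the additivity of $\mult_{\hat{\varphi}}$ reduces the claim to the classical identity $\sum_i n_i \cdot e_{I_{\hat{\varphi}}}(\hat{\mathcal{O}}_{X,p}/\pp_i) = \ord_0 \hat{\varphi}^*f$, which expresses the intersection of a principal formal divisor with a formal curve as the order of vanishing of its defining function. For (2), the prime ideal associated to $Y = p$ is the maximal ideal $\mm$, and since $\hat{\mathcal{O}}_{X,p}/\mm \cong k$ has length one, the Samuel multiplicity of any ideal on it equals that length.

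Property (3) is a Bezout-type inequality. By (1), $\ord_0 \hat{\varphi}^*s = \mult_{\hat{\varphi}}(\div(s))$, so the task reduces to comparing $\mult_{\hat{\varphi}}(Y)$ with $\mult_p(Y) \cdot \mult_{\hat{\varphi}}(\div(s))$ when $Y \subset \div(s)$. I would cut $\hat{\mathcal{O}}_{X,p}$ down by $\dim X - \dim Y - 1$ general elements of $\pp$ (reducing to the case where $Y$ is locally a hypersurface in the cut-down ring), then apply the associativity formula for Samuel multiplicities in the hypersurface case, and observe that the length term—which measures the local structure of $Y$ inside $\div(s)$ after cutting—is exactly governed by the Hilbert--Samuel multiplicity $\mult_p(Y)$.

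Property (4) is the crucial derivation identity, where the differential equation $q\frac{d\hat{\varphi}}{dq} = v \circ \hat{\varphi}$ finally enters. After trivializing by $s_0$, this equation translates at the level of pullbacks to $\hat{\varphi}^*(\partial_v s) = \alpha\, q \cdot (\hat{\varphi}^*f)' \cdot (\hat{\varphi}^*s_0)^{d+k+1}$, where $s = f s_0^{\tensor d}$ locally. A local computation at each prime $\mathfrak{q} \supsetneq \pp$ contributing to the refined intersection $Y \cdot \div(\partial_v s)$, combined with the Leibniz rule, shows that the passage from $\pp$ to the collection of such $\mathfrak{q}$ does not decrease the intersection multiplicity with $\hat{\varphi}$, giving the bound $\mult_{\hat{\varphi}}(Y) \le \mult_{\hat{\varphi}}(Y \cdot \div(\partial_v s))$.

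Property (5) is the main obstacle. Existence of a finite $n_0(Y) \defeq \min\{n : \partial_v^n s \not\equiv 0 \text{ on } Y\}$ follows from a clean dichotomy: the ideal of $R \defeq \bigoplus_m \Gamma(\overline{X}, L^{\tensor m})$ generated, after appropriate $s_0$-twists to homogenize degrees, by all $\partial_v^n s$ is stable under $\partial_v$, so its common vanishing locus on $X$ is a $v$-invariant subvariety; if every $\partial_v^n s$ vanished on $Y$, this locus would contain $Y$, contradicting the hypothesis. The hard part is uniform boundedness in $Y$: the strategy is to exploit the minimality of $d = d(Y)$---if $\partial_v^n s$ vanished on $Y$ for all $n \le N$ with $N$ large compared to the relevant invariants of $(\overline{X}, L, v, s_0)$, then the $N+1$ rescaled sections $s_0^{(N-n)(k+1)} \cdot \partial_v^n s$, all lying in $\Gamma(\overline{X}, \mathcal{I}_Y \tensor L^{\tensor d+N(k+1)})$, would, via a descent argument involving repeated division by $s_0$, produce a non-zero section of degree strictly less than $d$ vanishing on $Y$, contradicting the minimality of $d$.
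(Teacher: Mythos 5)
First, note that the paper itself does not prove this proposition: it declares (1) and (2) easy and refers to Binyamini's Lemma 8, Proposition 9, and Section 3 for (3), (4), and (5), so any complete argument here is genuinely new relative to the text. Your (1) and (2) are fine. Your outline of (3) is in the spirit of Binyamini's Lemma 8, but the pivotal step --- identifying the length factors in the associativity formula with $\mult_p(Y)$ after cutting by general elements of $\pp$ --- is asserted rather than proved, and it needs care: the formal curve does not lie in the cut-down scheme, so one must still justify that $\ord_0\hat{\varphi}^*s$, which is computed in $\hat{\mathcal{O}}_{X,p}$, dominates the corresponding Samuel multiplicity computed in the quotient ring.

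The genuine gaps are in (4) and (5). For (4), the identity $\hat{\varphi}^*(\partial_v s)=\alpha q\,(\hat{\varphi}^*f)'\,(\hat{\varphi}^*s_0)^{\tensor (d+k+1)}$ only controls $\ord_0\hat{\varphi}^*(\partial_vs)$, i.e.\ the multiplicity of the divisor $\div(\partial_vs)$ along the curve; property (4) instead compares the Samuel multiplicity of $Y$ with that of the lower-dimensional cycle $Y\cdot\div(\partial_vs)$, and your sentence that ``a local computation at each prime $\mathfrak{q}$ \dots{} does not decrease the intersection multiplicity'' merely restates the conclusion. The actual content (Binyamini's Proposition 9) is an inequality of the shape $e_I(B)\le e_I(B/gB)$ with $B=\hat{\mathcal{O}}_{X,p}/\pp_Y$ and $g$ the restriction of $\partial_vs$, and proving it requires showing that $g$ behaves like an element of the ideal of the curve restricted to $Y$ --- this is precisely where the hypotheses $s|_Y=0$ and ``$\hat{\varphi}$ is an integral curve of $v$'' enter, and none of that appears in your sketch. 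For (5), the finiteness dichotomy (the common zero locus of the $\partial_v^ns$ is $v$-invariant) is correct, but the uniformity argument is broken: the twisted sections $s_0^{\tensor(N-n)(k+1)}\tensor\partial_v^ns$ all have degree $d+N(k+1)\ge d$, and ``repeated division by $s_0$'' cannot lower the degree below $d$; indeed division by $s_0$ is not even possible unless the section vanishes along $\div(s_0)$, which nothing guarantees. So no non-zero section of degree $<d$ vanishing on $Y$ is produced and the minimality of $d$ is never contradicted. The uniform bound in Binyamini's Section 3 comes from a Noetherianity/stabilization argument on the family of minimal defining sections, not from a degree descent; as written, your proof of (5) establishes only that the gap is finite for each fixed $Y$, not that it is bounded independently of $Y$.
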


Properties (1) and (2) are easy. For properties (3) and (4), see \cite{binyamini14} Lemma 8 and Proposition 9. Finally, property (5) follows by an adaptation of the arguments in \cite{binyamini14} Section 3. 

Once this is established, the proof Theorem \ref{nesbin} becomes completely analogous to the proof of \cite{binyamini14} Theorem 3. 
\end{appendix}

\end{document}